\documentclass[10pt,leqno]{amsart}
\usepackage{graphicx}
\baselineskip=16pt

\usepackage{indentfirst,csquotes}

\topmargin= .5cm
\textheight= 20cm
\textwidth= 32cc
\baselineskip=16pt

\evensidemargin= .9cm
\oddsidemargin= .9cm

\usepackage{amssymb,amsthm,amsmath}
\usepackage{braket}
\usepackage{xfrac,mathtools}
\usepackage{paralist,hyperref,fancyhdr,etoolbox}
\usepackage[dvipsnames]{xcolor}
\usepackage[]{paralist}
\usepackage[]{enumerate,enumitem}
\usepackage{tikz}
\usepackage{tikz-cd}
\usetikzlibrary{hobby,patterns,positioning}
\usetikzlibrary{decorations.pathreplacing}
\usepackage{theoremref}
\newtheorem{theorem}{Theorem}[section]
\newtheorem{lemma}[theorem]{Lemma}
\newtheorem{proposition}[theorem]{Proposition}
\newtheorem{corollary}[theorem]{Corollary}

\newtheorem*{theorem*}{Theorem}
\newtheorem*{corollary*}{Corollary}
\theoremstyle{definition}
\newtheorem{definition}[theorem]{Definition}
\theoremstyle{remark}
\newtheorem{example}[theorem]{Example}
\newtheorem{remark}[theorem]{Remark}
\DeclarePairedDelimiter{\abs}{\lvert}{\rvert}

\newcommand{\Z}{\mathbb{Z}}

\newcommand{\Modd}{\operatorname{Mod}}

\begin{document}
\title{A presentation of the even spin mapping class group} 
\author[Filippo Bianchi]{Filippo Bianchi}
\address{Dipartimento di Matematica, Università di Pisa, Pisa, Italy}
\email{fil.bianchi5@gmail.com}
\begin{abstract}
We define a cell complex with an action of the even spin mapping class group, and use it to obtain a finite presentation. We also obtain a finite presentation with Dehn twist generators. 
\end{abstract}
\maketitle


\section{Introduction}

Every closed orientable surface $\Sigma_g$ of genus $g$ admits a spin structure. The group of self-diffeomorphisms of $\Sigma_g$ acts on the set of spin structures by pull-back, and this induces an action of the mapping class group $\Modd(\Sigma_g)$. The stabilizer of some spin structure $\xi$ under this action is the \emph{spin mapping class group} $\Modd(\Sigma_g)[\xi]$. The conjugacy class of $\Modd(\Sigma_g)[\xi]$ only depends on the $\sfrac{\Z}{2\Z}$-valued Arf invariant associated to $\xi$. In this paper, we will be primarily concerned with the even spin mapping class group.

Spin mapping class groups first appeared in the study of moduli spaces of Riemann surfaces with spin structures. Harer \cite{har1,har2} and Sierra \cite{sierra} computed their low-dimensional homology. More generally, Sipe \cite{sip1,sip2} considered the stabilizers of $r$-spin structures, i.e. $r$-th rooths of the canonical bundle. The homology of the corresponding stabilizers was investigated by Randal-Williams \cite{rw1,rw2}.

“Classical" spin mapping class groups have found applications in $4$-manifold topology. Indeed, by Stipsicz \cite{stip}, the monodromy of spin Lefschetz fibrations is a product of Dehn twists that stabilize a fixed spin structure on the regular fiber; see for example \cite[Section 2]{bh} for more details. On the other hand, (higher) spin mapping class group naturally appear in certain monodromy problems in algebraic geometry. In this context, Salter \cite{salt} and Calderon--Salter \cite{cs1,cs2} recently proved that these groups are generated by Dehn twists, and provided explicit generating sets. Their results were improved by Hamenstädt \cite{ham} for classical spin mapping class groups, where finite generating sets had already been found by Hirose \cite{hir1,hir2}.

The main result of this paper is the first finite presentation of the even spin mapping class group.

\begin{theorem*}[see \thref{thm:presDC}]
If $g \ge 3$, the even spin mapping class group $\Modd(\Sigma_g)[\xi]$ admits a finite presentation with Dehn twist generators, and the following relations:
\begin{enumerate}
\item commutators and braid relations between the generators;
\item a hyperelliptic relation of genus $3$;
\item various relations that are products of lanterns with total exponent $0$;
\item various relations that are products of a $3$-chain and some lanterns with total exponent $6$.
\end{enumerate}
\end{theorem*}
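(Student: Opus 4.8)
The plan is to apply the standard technique for reading off a group presentation from an action on a simply connected $2$-complex — in the form due to K.\,S.~Brown — to the cell complex $X$ constructed in this paper, and then to convert the resulting presentation into one with Dehn twist generators by a sequence of Tietze moves. The whole argument runs by induction on the genus $g$; the hypothesis $g \ge 3$ reflects both the genus from which the connectivity of $X$ becomes good enough and the location of the base case.

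The first task is to check the hypotheses of Brown's presentation lemma for the action of $\Modd(\Sigma_g)[\xi]$ on $X$: (i) $X$ is simply connected; (ii) there are finitely many orbits of cells in dimensions $\le 2$; (iii) each vertex stabilizer is finitely presented and each edge stabilizer is finitely generated. Item (ii) is immediate from the construction of $X$. For (iii) I would identify the stabilizer of a cell $\sigma$, up to an extension with finite kernel and finite cokernel, with a product of (even and odd) spin mapping class groups of the smaller-genus — possibly bordered — subsurfaces into which $\Sigma_g$ is cut by the curve system indexing $\sigma$; finite presentability then follows from the inductive hypothesis, once the analogous statement for bordered surfaces is in place, together with a direct treatment of the finitely many base surfaces (which is where the genus-$3$ hyperelliptic relation enters the final presentation).

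The hard part is item (i): proving that $X$ is simply connected. I expect this to be the main technical obstacle, and to be carried out by a surgery argument in the style of Hatcher and Hatcher--Wahl: fix a reference configuration on $\Sigma_g$, define a ``height'' counting geometric intersection numbers with it, and show that any loop (or disk) in $X$ can be pushed to decrease height, with the obstructions realized by the $2$-cells of $X$; alternatively one can compare $X$ to an already-known highly connected complex (a suitable arc or curve complex, or the Hatcher--Thurston cut-system complex) via a map with controlled fibers. What makes this harder than the classical (non-spin) case is that every move used in the surgery must respect the spin structure — only curves $c$ with $T_c(\xi)=\xi$ may be twisted along or substituted — so one must keep the entire connectivity argument inside this constrained class of configurations.

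Granting connectivity, Brown's lemma yields a finite presentation whose generators are those of a chosen vertex stabilizer together with one element $t_e$ per orbit of oriented edge, and whose relators are: the relators of the vertex stabilizers; edge relators, each expressing that $t_e$ intertwines the two inclusions of an edge stabilizer into the adjacent vertex stabilizers; and one relator per orbit of $2$-cell, of the shape $t_{e_1}\cdots t_{e_k} = w$ with $w$ in a vertex stabilizer. To conclude I would: (a) invoke the theorems of Calderon--Salter and Hamenstädt (building on Hirose) that $\Modd(\Sigma_g)[\xi]$, and likewise the spin mapping class groups occurring as stabilizers, are generated by Dehn twists about curves $c$ with $T_c(\xi)=\xi$, and use this to replace every generator by such a twist; and (b) rewrite each relator into the normal form of items (1)--(4). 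By induction the vertex-stabilizer relators are already of that form; the edge and $2$-cell relators, once the $t_e$ are written in Dehn twists, are shown by explicit surface pictures to be products of commutators and braid relations (for disjoint, respectively once-intersecting, curves), of lantern relations — which carry the essential content of the $2$-cells — and of chain relations, with a single genus-$3$ hyperelliptic relation surviving from the base case. The remaining work — removing redundant generators and arranging the exponent sums so that the lantern products have total exponent $0$ and the chain-and-lantern products have total exponent $6$ — is lengthy but routine, and produces the clean list (1)--(4).
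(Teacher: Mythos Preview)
Your overall architecture --- act on a simply connected $2$-complex, read off a Hatcher--Thurston/Brown presentation from the vertex stabilizer together with edge- and face-orbit data, then Tietze to Dehn twists --- matches the paper. But two of your structural claims are wrong in ways that matter.

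First, the vertex stabilizers are not products of smaller-genus spin mapping class groups. A vertex of $X$ is a full cut-system: cutting along it gives a \emph{planar} surface, and its stabilizer $H[\phi]$ is (up to the symmetric group acting on the curves) a finite-index subgroup of a framed-braid-type group. The paper does not present $H[\phi]$ by induction on $g$; it runs Nielsen--Schreier on Wajnryb's presentation of the non-spin stabilizer $H$, using that the spin constraint just forces the exponents of the $a_i$ to be even. Your proposed inductive step for (iii) therefore does not go through as stated, though finite presentability of $H[\phi]$ is still easy for the right reason.

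Second, and more seriously, you locate the hyperelliptic relation in the ``base case'' of an induction. In the paper it is a \emph{face relator}: the complex $X_g$ carries, in every genus $g\ge 3$, a fourth type of $2$-cell (the hyperelliptic face, a $28$-gon built from a $7$-chain of admissible curves) beyond the triangles, squares and pentagons inherited from Hatcher--Thurston. This cell is not optional. The spin constraint produces configurations --- two disjoint homologous $1$-curves that cut $\Sigma_g$ into two \emph{odd} pieces --- in which the usual radius-reduction moves fail because the odd subsurfaces contain no nonseparating $1$-curves (Remark~3.11). Without the hyperelliptic face $X_g$ is not simply connected, so a Hatcher--Wahl-style surgery using only the classical cells cannot close up these loops. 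Identifying this obstruction and engineering the extra $2$-cell that kills it is the main technical idea; your proposal treats it as something that would fall out of a routine adaptation, which it does not.
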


The spin mapping class group is not generated by Dehn twists for $g=1,2$ (see \cite{ham} and Remark \ref{rem:gdt}). In the even case, Hamenstädt found a generating set of Dehn twists for $g \ge 4$. We establish generation by Dehn twists also for $g=3$, although by Hamenstädt's results the generating set cannot be admissible in this case, i.e. the intersection graph of the corresponding curves has cycles. Our generating set coincides with Hamenstädt's for $g=4$, but is different in higher genus, and has a bigger cardinality.

As a corollary, we compute the abelianization of $\Modd(\Sigma_g)[\xi]$, recovering the results of Harer, Randal-Williams and Sierra. 

\begin{corollary*}[see Corollaries \ref{cor:abh} and \ref{cor:ab}]
The abelianization of the even spin mapping class group is
\[
H_1\big(\Modd(\Sigma_g)[\xi]\big)\cong \begin{cases}
\Z\oplus\sfrac{\Z}{4\Z} & \text{if $g=1$},\\
\Z\oplus\sfrac{\Z}{2\Z} & \text{if $g=2$},\\
\sfrac{\Z}{4\Z} & \text{if $g\ge3$}.
\end{cases}
\]
\end{corollary*}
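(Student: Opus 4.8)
The plan is to derive the case $g\ge 3$ formally from \thref{thm:presDC} by abelianising the presentation given there, and to treat the cases $g=1,2$ (to which that theorem does not apply) by hand with classical presentations.

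For $g\ge 3$, recall that the abelianisation of a finitely presented group is the cokernel of the integer matrix whose rows are the exponent‑sum vectors of the relators, so it suffices to read this matrix off the presentation of \thref{thm:presDC} and compute its Smith normal form. The commutator and braid relators all have total exponent $0$; in the abelianisation a braid relator $t_at_bt_a=t_bt_at_b$ merely forces $[t_a]=[t_b]$ whenever $a,b$ are generating curves of geometric intersection number one, and, since conjugate twists have equal image in $H_1$, the generating twists about curves in one $\Modd(\Sigma_g)[\xi]$‑orbit are all identified. Thus the images of the generators span a subgroup of small rank; it must in fact be of rank at least two, for if there were a single generating class $\tau$ then the relators --- all of total exponent $0$ apart from the genus‑$3$ hyperelliptic relator (of some exponent $w$) and the ``$3$‑chain plus lanterns'' relators (of exponent $6$) --- would present $H_1$ as $\sfrac{\Z}{\gcd(w,6)\Z}$, which can never equal $\sfrac{\Z}{4\Z}$. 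Taking two generating classes $\tau_1,\tau_2$ instead, the total‑exponent‑$0$ lantern‑product relators now impose a genuine relation among $\tau_1,\tau_2$, the exponent‑$6$ relators impose another, and the hyperelliptic relator a third; reducing the resulting finitely presented abelian group to Smith normal form then yields $\sfrac{\Z}{4\Z}$. Because \thref{thm:presDC} is a genuine presentation, this is $H_1\big(\Modd(\Sigma_g)[\xi]\big)$ exactly --- no auxiliary lower bound is required --- and in particular it recovers the $\sfrac{\Z}{4\Z}$‑quotient found by Randal-Williams.

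For $g=1$, $\Modd(\Sigma_1)\cong SL_2(\Z)$, and $\Modd(\Sigma_1)[\xi]$ is the stabiliser of the unique nonzero class of $H_1(\Sigma_1;\sfrac{\Z}{2\Z})$ on which the even quadratic form (of Arf invariant $0$) is nonzero, namely the index‑$3$ subgroup conjugate to the theta group $\Gamma_\theta$; from the amalgam $\Gamma_\theta\cong\sfrac{\Z}{4\Z}\ast_{\sfrac{\Z}{2\Z}}\bigl(\Z\times\sfrac{\Z}{2\Z}\bigr)$ --- its two factors generated respectively by an elliptic element of order $4$ and by a parabolic together with $-I$ --- one reads off $H_1\cong\Z\oplus\sfrac{\Z}{4\Z}$. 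For $g=2$, I would start from the Birman--Hilden presentation of $\Modd(\Sigma_2)$, identify $\Modd(\Sigma_2)[\xi]$ as the index‑$10$ subgroup stabilising one of the ten even spin structures, apply Reidemeister--Schreier to produce a finite presentation, and abelianise to obtain $\Z\oplus\sfrac{\Z}{2\Z}$ (alternatively, one simply cites Harer and Sierra). In both of these cases the extra $\Z$‑summand is carried by a Dehn twist that the surviving relations leave of infinite order, in contrast with the situation when $g\ge3$.

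The only substantive work is in the $g\ge 3$ computation, and it is bookkeeping rather than ideas: from the explicit generators and relators of \thref{thm:presDC} one must pin down the number of distinct generating classes (i.e.\ the $\Modd(\Sigma_g)[\xi]$‑orbits of the generating curves and the identifications forced by the braid relators) and the exponent vectors of the lantern‑product, $3$‑chain, and hyperelliptic relators, accurately enough that the cokernel is $\sfrac{\Z}{4\Z}$ on the nose --- in particular excluding $\sfrac{\Z}{2\Z}$ and $\sfrac{\Z}{8\Z}$. A minor additional point for $g=2$ is producing a workable presentation of the index‑$10$ subgroup, which is why quoting the earlier computations there may be preferable.
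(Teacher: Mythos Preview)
Your $g\ge3$ argument contains a genuine gap, and it stems from a misreading of the exponent data together with circular reasoning.

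First, the circularity: you argue that the generating twists cannot collapse to a single class because if they did the relators would present $H_1$ as $\Z/\gcd(w,6)\Z$, ``which can never equal $\Z/4\Z$''. But $\Z/4\Z$ is precisely what you are trying to establish; you cannot use it as a hypothesis to constrain the number of generating classes.

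Second, the premise of that argument is false. In the actual presentation (\thref{thm:presDC} together with \thref{cor:pres3} for $g=3$), the braid relations \ref{t01} \emph{do} collapse all the generators $b_i,\xi_i,\eta_i$ to a single class $x$: the intersection graph of the generating curves is connected, so consecutive braid relations identify them all. The phrase ``total exponent $6$'' in the introduction does not mean the exponent sum of the relator in the Dehn-twist generators; it counts lantern substitutions (see \thref{prop:fake} and Remark~\ref{rem:0lant}). Once you write the shorthands $\overline d_{1,2}$ and $a_1^2$ out via \thref{lem:da} (or \thref{lem:daz} for $g=3$), both have exponent sum $0$, hence $a_i^2\mapsto0$, $\overline d_{i,j}\mapsto0$, and $t_i\mapsto2x$ in the abelianisation. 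The fake $3$-chain $t_1^2=\overline d_{1,2}\overline d_{-2,-1}$ then reads $4x=0$; the $5$-chain \ref{5c} and the hyperelliptic relation \ref{hypg} give $28x=0$ and $56x=0$; the remaining relations impose nothing new. This is exactly the paper's computation, and it yields $\Z/4\Z$ on the nose with a single generating class --- directly contradicting your claimed obstruction.

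For $g=1$ your theta-group amalgam argument is a correct alternative to the paper's explicit two-generator presentation (\thref{prop:g1}). For $g=2$ you have only sketched an approach (Reidemeister--Schreier on the index-$10$ subgroup, or citing earlier work); the paper instead abelianises its own presentation (\thref{cor:g2}), from which $\Z\oplus\Z/2\Z$ drops out after eliminating $t_1$ and $a_1^2$ in favour of $b_1$ and $\overline d_{1,2}$.
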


This agrees with a conjecture of Ivanov \cite{iva}, which predicts that all finite-index subgroups of $\Modd(\Sigma_g)$ have finite abelianization if $g\ge3$. Notice that by a result of Putman \cite{put}, $\Modd(\Sigma_g)[\xi]$ cannot be a counterexample to Ivanov's conjecture, as it contains the Johnson kernel. For $g=2$, our calculation agrees with a result of Taherkhani \cite{tah}; in particular, it follows that $\Modd(\Sigma_2)[\xi]$ is conjugate to the group $H_7$ of \cite[Table 1]{tah}.

We obtain our presentation of $\Modd(\Sigma_g)[\xi]$ via the strategy of Hatcher-Thurston \cite{ht}, as seen through Wajnryb's combinatorial perspective \cite{waj:elem}. Namely, in Section \ref{scc} we construct a 2-dimensional cell complex $X_g$ with an action of $\Modd(\Sigma_g)[\xi]$, and prove that it is connected and simply connected. Then, a presentation of  $\Modd(\Sigma_g)[\xi]$ is obtained from a presentation of the stabilizer of a vertex, adding generators prescribed by the 1-skeleton of $X_g$ and relations prescribed by the $2$-skeleton. This program is carried out in Section \ref{fp}. Finally, in Section \ref{dt} we apply Tietze moves to obtain a presentation with Dehn twist generators. 

Our complex $X_g$ is inspired by Hatcher-Thurston's cut-system complex, but it has three key novelties. First, the vertices are cut-systems of curves with prescribed spin value. Second, there are two types of edges, with different intersection patterns. Finally, in addiction to triangles, squares and pentagons, there is a fourth kind of 2-cell, which we call \emph{hyperelliptic face}. In a forthcoming paper \cite{fb}, we interpret the presence of this extra 2-cell from a 4-dimensional perspective, using the presentation of the spin mapping class group to give a new proof of a classical theorem of Rokhlin \cite{rokh} on the signature of spin 4-manifolds.

It is easy to see that an even spin structure on $\Sigma_g$ extends to some handlebody $H_g$ bounded by $\Sigma_g$. As a byproduct of our construction, we obtain a finite presentation of the \emph{spin handlebody mapping class group} $\Modd(H_g)[\xi]:=\Modd(H_g) \cap \Modd(\Sigma_g)[\xi]$ (see \thref{thm:mhgq}).

\vspace{10pt}
\textbf{Acknowledgements.} The author wishes to thank Riccardo Giannini for his help during the first stages of this project.

\section{Spin mapping class groups}

In this section, we recall some basic facts about higher spin structures on surfaces and their stabilizers. The focus is on classical spin structures, as they will be our sole concern. For a more general treatment, we refer to the papers of Salter \cite{salt} and Calderon--Salter \cite{cs1}.

\subsection{Spin structures}

Fix a surface $\Sigma_g^b$ of genus $g$ with $b$ boundary components. We denote by $\mathcal{C}$ the set of isotopy classes of oriented simple closed curves on $\Sigma_g^b$. The following definition originates in the work of Humphries-Johnson \cite{hujo}.

\begin{definition}
An $r$-spin structure on $\Sigma_g^b$ is a map $\phi \colon\mathcal{C} \rightarrow \sfrac{\Z}{r\Z}$ such that:
\begin{enumerate}
\item $\phi(t_c(d))=\phi(d)+(d\cdot c)\,\phi(c)$ for every $c,d \in \mathcal{C}$, where $t_c$ denotes the Dehn twist along $c$ and $d\cdot c$ is the algebraic intersection number of $c$ and $d$ (\emph{twist linearity});
\item if the union of $c_1,\dots,c_m\in \mathcal{C}$ is the oriented boundary of a subsurface $S \subset \Sigma_g^b$, then $\sum \phi(c_i)=\chi(S)$ (\emph{homological coherence}).
\end{enumerate}
\end{definition}

\begin{remark}
\thlabel{rem:hj}
For closed surfaces, we can give an alternate definition as follows (see \cite{hujo} and \cite{salt}). Denote by $\pi \colon UT\Sigma_g\rightarrow \Sigma_g$ the unit tangent bundle of $\Sigma_g$. The inclusion of the fiber $i \colon S^1 \rightarrow UT\Sigma_g$ induces a short exact sequence
\[
0 \longrightarrow \sfrac{\Z}{r\Z} \overset{i_*}{\longrightarrow} H_1\left(UT\Sigma_g;\sfrac{\Z}{r\Z}\right) \overset{\pi_*}{\longrightarrow} H_1\left(\Sigma_g;\sfrac{\Z}{r\Z}\right) \longrightarrow 0.
\]
An $r$-spin structure is a class $\xi \in H^1(UT\Sigma_g;\sfrac{\Z}{2\Z})$ that evaluates to $1$ on the image of a generator of $\sfrac{\Z}{r\Z}$. Since 
\[
H^1(UT\Sigma_g;\sfrac{\Z}{r\Z})\cong \operatorname{Hom}(\Z^{2g}\oplus\sfrac{\Z}{(2g-2)\Z},\sfrac{\Z}{r\Z}),
\]
an $r$-spin structure exists if and only if $r$ divides $2g-2$. For $r=2$, this recovers Milnor's definition of spin structure \cite{mil:spin}.
\end{remark}

The case $r=2$ is special in many respects. 

\begin{theorem}[Johnson \cite{john0}]
\thlabel{thm:xiq}
Let $\phi$ be a $2$-spin structure on $\Sigma_g^b$. Then:
\begin{enumerate}
\item $\phi$ factors through the natural map $\mathcal{C} \rightarrow H_1(\Sigma_g^b;\sfrac{\Z}{2\Z})$, and we denote again by $\phi$ the induced map $H_1(\Sigma_g^b;\sfrac{\Z}{2\Z})\rightarrow \sfrac{\Z}{2\Z}$;
\item $q_{\phi}:=\phi+1$ is a quadratic enhancement of the intersection form, i.e. $q_{\phi}(a+b)=q_{\phi}(a)+q_{\phi}(b)+a\cdot b$ for all $a,b \in H_1(\Sigma_g^b;\sfrac{\Z}{2\Z})$;
\item the assignment $\phi \mapsto q_{\phi}$ defines a bijection between the set of $2$-spin structures on $\Sigma_g$ and the set of quadratic enhancements on $H_1(\Sigma_g^b;\sfrac{\Z}{2\Z})$.
\end{enumerate}
\end{theorem}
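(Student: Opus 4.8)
The plan is to deduce all three statements at once from a counting argument, using the parametrization of $2$-spin structures in \thref{rem:hj}. The main construction runs opposite to the theorem: to each quadratic enhancement $q$ of the intersection form on $H_1(\Sigma_g^b;\sfrac{\Z}{2\Z})$ I would associate the function $\phi_q\colon\mathcal{C}\to\sfrac{\Z}{2\Z}$ given by $\phi_q(c):=q([c])+1$, where $[c]$ denotes the mod-$2$ homology class of $c$ (this is automatically invariant under reversing the orientation of $c$). The first step is to check that $\phi_q$ is a $2$-spin structure. Twist linearity is a short computation: since $[t_c(d)]=[d]+(d\cdot c)[c]$ in $H_1(\Sigma_g^b;\sfrac{\Z}{2\Z})$, expanding $q$ via the quadratic identity and substituting $\phi_q=q+1$ gives $\phi_q(t_c(d))=\phi_q(d)+(d\cdot c)\phi_q(c)$ (checking the cases $d\cdot c$ even and odd separately). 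Homological coherence is the only point needing a little care: if $c_1,\dots,c_m$ bound a subsurface $S$, the curves $c_i$ are pairwise disjoint, so $[c_i]\cdot[c_j]=0$ for $i\neq j$ and $\sum[c_i]=0$; iterating the quadratic identity then forces $\sum q([c_i])=0$, whence $\sum\phi_q(c_i)=m$. Since each component of $S$ has Euler characteristic congruent mod $2$ to its number of boundary circles, $\chi(S)\equiv m\pmod 2$, so $\sum\phi_q(c_i)=\chi(S)$.

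Next I would count both sides. For closed $\Sigma_g$, \thref{rem:hj} describes a $2$-spin structure as a class $\xi\in H^1(UT\Sigma_g;\sfrac{\Z}{2\Z})$ pairing to $1$ with the fiber, and such classes form a nonempty coset (nonempty since $2\mid 2g-2$) of the set of functionals vanishing on the fiber class, which by the exact sequence of \thref{rem:hj} is $\pi^*H^1(\Sigma_g;\sfrac{\Z}{2\Z})\cong(\sfrac{\Z}{2\Z})^{2g}$; so there are exactly $2^{2g}$ of them. On the other side, the difference of two quadratic enhancements is $\sfrac{\Z}{2\Z}$-linear (the intersection term cancels), so the enhancements form a torsor over $\operatorname{Hom}(H_1(\Sigma_g;\sfrac{\Z}{2\Z}),\sfrac{\Z}{2\Z})\cong(\sfrac{\Z}{2\Z})^{2g}$, nonempty by the usual symplectic-basis construction — again $2^{2g}$. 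Finally, $q\mapsto\phi_q$ is injective: if $q\neq q'$ then $q-q'$ is a nonzero linear functional, nonzero on some class represented by a simple closed curve (every mod-$2$ class is: the zero class by a separating curve, a nonzero class by a non-separating curve built from a primitive integral lift), and on that curve $\phi_q$ and $\phi_{q'}$ disagree.

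Combining: $q\mapsto\phi_q$ is an injection between finite sets of the same size, hence a bijection, so every $2$-spin structure $\phi$ on $\Sigma_g$ equals $\phi_q$ for a unique $q$, i.e. $\phi(c)=q([c])+1$ for all $c\in\mathcal{C}$; this is precisely (1) (with induced map $\phi=q+1$ on homology), then $q_\phi=\phi+1=q$ is (2), and $\phi\mapsto q_\phi$ being the inverse of $q\mapsto\phi_q$ is (3). For (1)--(2) on $\Sigma_g^b$ with $b\geq1$ I would rerun the argument with the relative count (differences of $2$-spin structures parametrized by $H^1(\Sigma_g^b;\sfrac{\Z}{2\Z})$), which is unaffected by the degeneracy of the intersection form. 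The step I expect to be most delicate is not a computation but the reliance on \thref{rem:hj}: the argument deliberately offloads the real content — that $\phi(c)$ depends only on $[c]$ — onto the known enumeration of $2$-spin structures. A self-contained proof would instead establish this factorization directly, via reversal invariance $\phi(\bar c)=\phi(c)$ (an annulus plus homological coherence), the identity $\phi(c')=\phi(a)+\phi(b)$ when $a,b$ meet once and $c'$ is a smoothing (twist linearity applied to $c'=t_a(b)$), the relation $\phi(a)+\phi(b)+\phi(c)=1$ for curves bounding a pair of pants (homological coherence with $\chi=-1$), and then a change-of-coordinates comparison of an arbitrary simple closed curve with a standard representative of its homology class — and it is exactly this last step that the counting argument sidesteps.
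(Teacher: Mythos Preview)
The paper does not supply its own proof of \thref{thm:xiq}: the theorem is stated with attribution to Johnson \cite{john0} and no argument is given. So there is nothing to compare against on the level of approach.

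On its own merits, your argument for the closed case is sound. The verification that $\phi_q$ satisfies twist linearity and homological coherence is correct (in particular, $\chi(S)\equiv m\pmod 2$ holds componentwise since $\chi(\Sigma_{h}^{k})=2-2h-k\equiv k$), the two counts match, and injectivity via representing an arbitrary mod-$2$ class by a simple closed curve is fine. You have also correctly located the soft spot: the count of combinatorial $2$-spin structures is imported from \thref{rem:hj}, and the equivalence asserted there (between the Humphries--Johnson axioms and the $H^1(UT\Sigma_g;\sfrac{\Z}{2\Z})$ description) is itself proved in \cite{hujo} by essentially establishing that $\phi$ factors through homology. So the argument is not circular within the paper's logical structure, but it does rely on an external result whose content overlaps with part~(1). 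Your sketched direct route---reversal invariance, the smoothing and pair-of-pants identities, then change of coordinates---is closer to how one would make the proof self-contained, and is in the spirit of Johnson's original.

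For $\Sigma_g^b$ with $b\ge 1$, your one-line ``rerun with the relative count'' is underspecified: \thref{rem:hj} is stated only for closed surfaces, so you would need to say where the count of combinatorial $2$-spin structures on $\Sigma_g^b$ comes from. The cleanest fix is to cap the boundary components with disks, observe that a combinatorial $2$-spin structure on $\Sigma_g^b$ extends (non-uniquely) to one on the closed surface $\Sigma_g$, apply the closed case there to get factorization through $H_1(\Sigma_g;\sfrac{\Z}{2\Z})$, and then restrict back; parts~(1) and~(2) follow. Note also that part~(3) in the paper is only asserted for closed $\Sigma_g$, so no bijection statement is needed in the bounded case.
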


\begin{definition}
Let $\phi$ be an $r$-spin structure on $\Sigma_g^b$. If $r$ is even, the natural map $\sfrac{\Z}{r\Z}\rightarrow \sfrac{\Z}{2\Z}$ defines an associated $2$-spin structure $\overline{\phi}$. The \emph{Arf invariant} of $\phi$ is the Arf invariant of the corresponding quadratic enhancement $q_{\bar{\phi}}$. Explicitly, if $\{x_1,y_1,\dots,x_g,y_g\}$ is a symplectic basis for $H_1(\Sigma_g^b;\Z)$, we have
\[
\operatorname{Arf}(\phi):=\sum_{i=1}^g\big(\phi(x_i)+1\big)\big(\phi(y_i)+1\big) \quad (\operatorname{mod} \,2).
\]
We say that $\phi$ is \emph{even} or \emph{odd} according to the parity of $\operatorname{Arf}(\phi)$.
\end{definition}

The following theorem records some useful criteria for comparing different $r$-spin structures. Notice that $\operatorname{Mod}(\Sigma_g^b)$ acts naturally on the set of $r$-spin structures by $(f \cdot \phi)(c):=\phi(f^{-1}(c))$.

\begin{theorem}[\cite{hujo}, \cite{salt}]
\thlabel{thm:phipsi}
Let $\phi,\psi$ be two $r$-spin structures on $\Sigma_g^b$. Then:
\begin{enumerate}
\item $\phi=\psi$ if and only if they agree on a basis of $H_1(\Sigma_g^b;\Z)$;
\item if $b=0$, $\phi$ and $\psi$ are in the same $\operatorname{Mod}(\Sigma_g)$-orbit if and only if $r$ is odd or $\operatorname{Arf}(\phi)=\operatorname{Arf}(\psi)$.
\end{enumerate}
\end{theorem}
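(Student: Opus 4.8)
The plan is to derive part~(1) from the cohomological description of $r$-spin structures recalled in Remark~\thref{rem:hj}, and to reduce part~(2) to part~(1) via a normal-form analysis together with the change-of-coordinates principle for surfaces.

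For part~(1) the forward implication is trivial, so I would assume $\phi(c_i)=\psi(c_i)$ for oriented simple closed curves $c_1,\dots,c_n$ whose classes form a basis of $H_1(\Sigma_g^b;\Z)$, treating first the closed case $b=0$, where $n=2g$. Under the correspondence of Remark~\thref{rem:hj} the structures $\phi,\psi$ become cohomology classes $\xi,\eta\in H^1(UT\Sigma_g;\sfrac{\Z}{r\Z})$, with $\phi(c)=\langle\xi,\tilde c\rangle$ for the canonical lift $\tilde c$ of $c$ to $UT\Sigma_g$ given by its unit tangent field. Since $[c_1],\dots,[c_{2g}]$ is a $\Z$-basis of $H_1(\Sigma_g;\Z)$, it reduces to a basis of $H_1(\Sigma_g;\sfrac{\Z}{r\Z})$, so the short exact sequence of Remark~\thref{rem:hj} shows that the lifts $\tilde c_1,\dots,\tilde c_{2g}$ together with the fiber class $\delta$ generate $H_1(UT\Sigma_g;\sfrac{\Z}{r\Z})$. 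As $\xi$ and $\eta$ agree on each $\tilde c_i$ by hypothesis and both evaluate to $1$ on $\delta$ by definition, they coincide, whence $\phi=\psi$. For $b>0$ the same argument applies using the analogous cohomological description of $r$-spin structures on a surface with boundary, where $UT\Sigma_g^b$ is a trivial circle bundle; alternatively one can argue combinatorially, first computing $\phi$ on separating curves from homological coherence.

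For part~(2) the necessity of the condition is short. When $r$ is even, reduction mod $2$ is $\Modd(\Sigma_g)$-equivariant, so $\psi=f\cdot\phi$ forces $\overline\psi=f\cdot\overline\phi$; by Johnson's theorem~\thref{thm:xiq} the associated quadratic enhancements then satisfy $q_{\overline\psi}=q_{\overline\phi}\circ f_*^{-1}$, where $f_*$ is the symplectic automorphism induced by $f$ on $H_1(\Sigma_g;\sfrac{\Z}{2\Z})$, and since the Arf invariant of a quadratic enhancement is unchanged under the symplectic group one gets $\operatorname{Arf}(\psi)=\operatorname{Arf}(\phi)$. When $r$ is odd there is nothing to check.

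The heart of the matter is sufficiency. Acting by a mapping class changes neither the $\Modd(\Sigma_g)$-orbit nor the Arf invariant, so by part~(1) it is enough to prove a normal-form statement: every $r$-spin structure $\phi$ on $\Sigma_g$ takes, on some geometric symplectic basis $\{a_1,b_1,\dots,a_g,b_g\}$ — simple closed curves with $a_i\cdot b_i=1$ and all other intersections $0$ — a fixed tuple of values depending only on $r\bmod 2$ and, when $r$ is even, on $\operatorname{Arf}(\phi)$. Indeed, if $\phi$ and $\psi$ have equal Arf invariants (or $r$ is odd) I would choose such bases for each, invoke the change-of-coordinates principle to obtain a mapping class $f$ carrying one onto the other, and deduce from part~(1) that $f\cdot\phi=\psi$. (For $r=2$, the only case needed in this paper, the normal-form statement is simply the classical transitivity of $\operatorname{Sp}(2g,\sfrac{\Z}{2\Z})$ on quadratic enhancements of each Arf type, read through Johnson's dictionary~\thref{thm:xiq}.) To prove it for general $r$, I would start from an arbitrary geometric symplectic basis and apply elementary moves, each replacing it by another geometric symplectic basis while transforming the value tuple in a controlled way: replacing $b_i$ by $t_{a_i}^{k}(b_i)$ (or $a_i$ by $t_{b_i}^{k}(a_i)$), which shifts the value on that curve by $\pm k\,\phi(a_i)$ (resp.\ $\pm k\,\phi(b_i)$) by twist-linearity; homeomorphisms supported in a genus-$2$ subsurface spanning two handles, which transport values between handles; and constraints forced by homological coherence. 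A Euclidean-algorithm type argument with these moves should reduce the tuple to normal form. I expect the main obstacle to be the final accounting: verifying that the only obstruction to reaching the fully standard tuple is the value of the Arf invariant when $r$ is even, and that no obstruction survives when $r$ is odd — precisely the computation carried out by Humphries--Johnson \cite{hujo} and Salter \cite{salt}.
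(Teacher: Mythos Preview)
Your proposal is correct, but it is considerably more ambitious than what the paper actually does. The paper proves only the case $r=2$ (all that is used thereafter), and dispatches both parts in one line: via Johnson's correspondence (\thref{thm:xiq}) an $r=2$ spin structure is the same as a quadratic enhancement, and it is classical that such enhancements are determined by their values on a basis and classified up to symplectic automorphisms by the Arf invariant.

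Your route is different in scope and method. For part~(1) you work directly with the cohomological model of $r$-spin structures from Remark~\thref{rem:hj}, showing that tangent lifts of a homology basis together with the fiber class generate $H_1(UT\Sigma_g;\sfrac{\Z}{r\Z})$; this gives a clean proof valid for all $r$ in the closed case, rather than appealing to the quadratic-form dictionary, which is special to $r=2$. For part~(2) you outline the normal-form reduction on a geometric symplectic basis that underlies the general statement, correctly noting that the final case analysis is precisely the content of \cite{hujo} and \cite{salt}. The trade-off is clear: the paper's approach is shorter and sufficient for its needs, while yours exposes the mechanism behind the general-$r$ statement and makes explicit why the Arf invariant is the only obstruction when $r$ is even.
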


\begin{proof}[Proof (for $r=2$)]
It is well known that quadratic enhancements on a $\sfrac{\Z}{2\Z}$-vector space equipped with a nondegenerate symplectic pairing are completely determined by their value on a basis, and are completely classified up to automorphisms by their Arf invariant. \qedhere 
\end{proof}

\subsection{Operations on curves and surfaces}

We will often need to construct curves with certain properties and perform cut and paste operations on surfaces. In the spin context, this requires some extra care.  

We first introduce two operations on curves, following \cite[Subsection 3.2]{salt}. The \emph{smoothing} of a family of oriented curves is the multicurve obtained by resolving all intersections according to the orientations. If $\alpha$ and $\beta$ are curves with $\alpha\cdot \beta=1$, then the smoothing of $k$ copies of $\alpha$ and $\ell$ copies of $\beta$ has $\operatorname{gcd}(k,\ell)$ components. 

The \emph{arc sum} of two disjoint curves $\gamma$ and $\delta$ along an arc $c$ connecting them is the simple closed curve $\gamma+_c\delta$ that bounds a tubular neighborhood of the union $\gamma \cup c \cup \delta$ along with $\gamma$ and $\delta$. Clearly, its homology class satisfies $[\gamma+_c\delta]=[\gamma]+[\delta]$.

\begin{lemma}[{\cite[Lemmas 3.11 and 3.13]{salt}}]
\thlabel{lem:operations}
Consider two curves $\alpha,\beta$ on a spin surface $(\Sigma_g,\phi)$.
\begin{enumerate}
\item If $\gamma$ is the smoothing of $k$ copies of $\alpha$ and $\ell$ copies of $\beta$, then $\phi(\gamma)=k\phi(\alpha)+\ell\phi(\beta)$.
\item If $\alpha$ and $\beta$ are disjoint and $c$ is an arc connecting them, then $\phi(\alpha+_c\beta)=\phi(\alpha)+\phi(\beta)+1$.
\end{enumerate}
\end{lemma}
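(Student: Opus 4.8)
The plan is to prove the two statements separately: (2) comes quickly from homological coherence, and (1) from twist linearity via an induction mirroring the Euclidean algorithm.

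For (2), I first record the elementary fact that $\phi(\bar c)=-\phi(c)$ for every oriented simple closed curve $c$, where $\bar c$ denotes $c$ with the reversed orientation: this is homological coherence applied to an annular neighborhood of $c$, whose two boundary components are parallel copies of $c$ and $\bar c$ and whose Euler characteristic is $0$. Now, by construction $\alpha$, $\beta$ and $\gamma:=\alpha+_c\beta$ are the three boundary components of a regular neighborhood $P$ of $\alpha\cup c\cup\beta$, which is a pair of pants, so $\chi(P)=-1$. Orient $\gamma$ so that $[\gamma]=[\alpha]+[\beta]$, as in the definition of the arc sum, and orient the components of $\partial P$ so that the two isotopic to $\alpha$ and $\beta$ carry their orientations; the third component is then $\bar\gamma$, since $[\partial P]=0$ in $H_1(P)$. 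Homological coherence gives $\phi(\alpha)+\phi(\beta)+\phi(\bar\gamma)=\chi(P)=-1$, and substituting $\phi(\bar\gamma)=-\phi(\gamma)$ yields $\phi(\gamma)=\phi(\alpha)+\phi(\beta)+1$. The one delicate point is this orientation bookkeeping for $\partial P$, which has to be matched with the sign conventions built into the two axioms.

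For (1) I would assume $\alpha\cdot\beta=1$ (as in the preceding paragraph) and $\gcd(k,\ell)=1$, so that $\gamma$ is a single curve --- this is the case needed later; the disjoint case is immediate, and the general intersection case is discussed at the end. All parallel copies of $\alpha$ and $\beta$, hence the smoothing $\gamma$, lie in a regular neighborhood $N$ of $\alpha\cup\beta$, a one-holed torus with $H_1(N;\Z)=\Z[\alpha]\oplus\Z[\beta]$; resolving all intersections according to the orientations yields an embedded oriented multicurve representing the primitive class $k[\alpha]+\ell[\beta]$, and since $\gcd(k,\ell)=1$ it is connected, so (essential simple closed curves of a one-holed torus being classified by their primitive homology classes) $\gamma$ is the unique such curve $\gamma_{k,\ell}$. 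I would then prove, by induction on $|m|+|n|$, that for every primitive $(m,n)\in\Z^2$ the curve $\gamma_{m,n}$ representing $m[\alpha]+n[\beta]$ satisfies $\phi(\gamma_{m,n})=m\phi(\alpha)+n\phi(\beta)$; applied to $(k,\ell)$ this is the claim. The base case $(m,n)\in\{(\pm1,0),(0,\pm1)\}$ is immediate from $\phi(\bar c)=-\phi(c)$. For the inductive step, one step of the Euclidean algorithm on $(m,n)$ is realized by a suitable $t_\alpha^{\pm1}$ or $t_\beta^{\pm1}$; for instance $t_\alpha^{\varepsilon}(\gamma_{m,n})=\gamma_{m-\varepsilon n,\,n}$ by Picard--Lefschetz, and by twist linearity $\phi(\gamma_{m,n})=\phi(t_\alpha^{-\varepsilon}\gamma_{m-\varepsilon n,n})=\phi(\gamma_{m-\varepsilon n,n})+\varepsilon n\,\phi(\alpha)$ (using $\gamma_{m-\varepsilon n,n}\cdot\alpha=-n$); combining with the inductive hypothesis gives $\phi(\gamma_{m,n})=(m-\varepsilon n)\phi(\alpha)+n\phi(\beta)+\varepsilon n\phi(\alpha)=m\phi(\alpha)+n\phi(\beta)$, and the $t_\beta^{\pm1}$ case is symmetric.

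The main obstacle is the sign bookkeeping in (1): twist linearity and Picard--Lefschetz must be normalized compatibly, so that at each step the change in $\phi(\gamma_{m,n})$ precisely absorbs the one coming from the inductive hypothesis, and one must check that the oriented resolution is indeed the primitive curve in class $k[\alpha]+\ell[\beta]$ and not a non-reduced multicurve --- this is why $\gcd(k,\ell)=1$ is assumed. A cleaner alternative, valid for arbitrary $\alpha\cdot\beta$, is to work in the unit tangent bundle: under the identification of \thref{rem:hj}, $\phi(c)$ is the pairing of the spin class with the tangential lift of $c$ to $H_1(UT\Sigma_g;\sfrac{\Z}{r\Z})$; the tangential lift of the smoothing represents $k$ times that of $\alpha$ plus $\ell$ times that of $\beta$, plus a multiple of the fiber class recording the net turning introduced at the resolved crossings, and the contributions of the two local arcs at each resolution cancel, so that multiple vanishes --- giving the formula in one line.
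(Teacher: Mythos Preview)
The paper does not give its own proof of this lemma; it is simply quoted from \cite[Lemmas 3.11 and 3.13]{salt}. Your argument is correct and is essentially the standard one: part (2) is exactly the homological-coherence computation on the pair of pants, and your Euclidean-algorithm induction for (1) in the case $\alpha\cdot\beta=1$, $\gcd(k,\ell)=1$ --- which is the only case the paper ever uses --- is clean and matches how Salter proves it. The orientation caveat you raise is real for general $r$ but vacuous in the $r=2$ setting that dominates the paper, since there $\phi(\bar c)=\phi(c)$.
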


The next proposition describes the effect of cutting a $2$-spin surface on its Arf invariant. Here and elsewhere, we will assume that the unique spin structure on $S^2$ has Arf invariant zero. 

\begin{proposition}[Additivity of the Arf invariant]
Let $(\Sigma_g,\phi)$ be a $2$-spin surface, and consider a set of curves $\{\alpha_1,\dots,\alpha_n\}$ whose union separates $\Sigma_g$ into some subsurfaces $S_1,\dots,S_k$. Call $\phi_i$ the pullback spin structure on $S_i$. If $\phi(\alpha_j)=1$ for all $j$, then $\operatorname{Arf}(\phi)=\sum_i \operatorname{Arf}(\phi_i)$.
\end{proposition}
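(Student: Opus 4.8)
The plan is to prove the identity by induction on the number $n$ of cutting curves, reducing each inductive step to a single cut. The induction is most naturally run for \emph{closed} surfaces, so I would first record a capping lemma: if $(S,\psi)$ is a spin surface with non\-empty boundary and $\psi(c)=1$ for every boundary curve $c$, then the spin structure extends (uniquely) over a disk glued to each boundary circle --- homological coherence holds because $1=\chi(D^2)$, and by \thref{thm:xiq} the value $\psi(c)$ depends only on the $\sfrac{\Z}{2\Z}$-homology class and is in particular insensitive to the orientation of $c$ --- yielding a closed spin surface $(\hat S,\hat\psi)$ with $\operatorname{Arf}(\psi)=\operatorname{Arf}(\hat\psi)$; indeed, a symplectic basis of the non-degenerate part of $H_1(S;\sfrac{\Z}{2\Z})$ realised by curves disjoint from $\partial S$ is also a symplectic basis of $H_1(\hat S;\sfrac{\Z}{2\Z})$, and $\psi$ and $\hat\psi$ agree on it. Consequently, capping off any subset of the boundary circles of a piece $S_i$ leaves $\operatorname{Arf}(\phi_i)$ unchanged, so I may freely move between a piece and any of its cappings.

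The two building blocks are elementary consequences of the basis-independence of the Arf form, i.e. of the fact (used in the proof of \thref{thm:phipsi}) that $\operatorname{Arf}(\phi)=\sum_i q_\phi(x_i)q_\phi(y_i)$ does not depend on the choice of symplectic basis $\{x_i,y_i\}$ of $H_1(\Sigma_g;\sfrac{\Z}{2\Z})$. \emph{Separating cut}: if $c$ separates $\Sigma_g$, say $\Sigma_g=A\cup_c B$ (note $\phi(c)=1$ automatically, being $\chi$ of either side mod $2$), then concatenating a symplectic basis of $H_1(A;\sfrac{\Z}{2\Z})$ realised in $A$ with one of $H_1(B;\sfrac{\Z}{2\Z})$ gives a symplectic basis of $H_1(\Sigma_g;\sfrac{\Z}{2\Z})$ --- here $g_A+g_B=g$ and curves on opposite sides of $c$ are disjoint --- so the defining sum splits and $\operatorname{Arf}(\phi)=\operatorname{Arf}(\phi_A)+\operatorname{Arf}(\phi_B)$. \emph{Non-separating cut}: if $c$ is non-separating with $\phi(c)=1$, let $(\Sigma',\phi')$ be the genus $g-1$ closed spin surface obtained by cutting $\Sigma_g$ along $c$ and capping the two resulting circles. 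Choose $c'$ with $c\cdot c'=1$ and complete $\{c,c'\}$ to a symplectic basis $\{c,c',x_2,y_2,\dots,x_g,y_g\}$ of $H_1(\Sigma_g;\sfrac{\Z}{2\Z})$ with $x_2,\dots,y_g$ represented by curves disjoint from $c$; these descend to a symplectic basis of $H_1(\Sigma';\sfrac{\Z}{2\Z})$ on which $\phi'$ agrees with $\phi$, so $\operatorname{Arf}(\phi)=q_\phi(c)q_\phi(c')+\operatorname{Arf}(\phi')$, and $q_\phi(c)=\phi(c)+1=0$ makes the extra term vanish, giving $\operatorname{Arf}(\phi)=\operatorname{Arf}(\phi')$.

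With these in hand, the induction is routine. For $n=0$ there is nothing to prove. If some $\alpha_j$ separates $\Sigma_g$, cut along it, cap the two sides to closed surfaces $\hat A,\hat B$ (with unchanged Arf invariants), apply the separating-cut block, and invoke the inductive hypothesis on $\hat A$ and $\hat B$: each is cut by fewer than $n$ of the remaining curves, still with $\phi$-value $1$, into the cappings of the pieces lying on that side. If no $\alpha_j$ separates $\Sigma_g$, pick any of them, say $\alpha_n$; it is non-separating with $\phi(\alpha_n)=1$, so the non-separating-cut block gives $\operatorname{Arf}(\phi)=\operatorname{Arf}(\phi')$, while $\alpha_1,\dots,\alpha_{n-1}$ cut $\Sigma'$ into the pieces $S_i$ with the circles coming from $\alpha_n$ capped off (which does not change their Arf invariants), all still carrying a spin structure of value $1$ on each $\alpha_i$; the inductive hypothesis applied to $(\Sigma',\phi')$ then finishes the argument. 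The hypothesis $\phi(\alpha_j)=1$ is used in exactly one place --- in the non-separating block, where it forces the term $q_\phi(\alpha_n)$ contributed by the cut curve to vanish, equivalently where it permits capping boundary circles. I expect the main point requiring care to be precisely the setup of the capping lemma and of the adapted symplectic bases (extension and uniqueness of the capped spin structure, invariance of the Arf invariant under capping, and the fact that the complementary $2g-2$ classes can be taken disjoint from the cutting curve); once these are in place the rest is bookkeeping.
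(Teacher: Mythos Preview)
Your argument is correct and rests on exactly the same idea as the paper's: since $\phi(\alpha_j)=1$ forces $q_\phi(\alpha_j)=0$, any symplectic pair in which an $\alpha_j$ participates contributes nothing to the Arf sum, so the Arf invariant of $\Sigma_g$ reduces to the sum of the Arf invariants of the pieces computed in bases supported there. The paper carries this out in a single step---it assembles one global geometric symplectic basis from bases $B_i$ of the $S_i$ together with those $\alpha_j$ that create new genus upon regluing, and reads off the identity---whereas you unwind the same construction into an induction on $n$ via a separating/non-separating dichotomy and a capping lemma. Your packaging is more explicit (and the capping lemma is a useful thing to have stated), but the mathematical content is the same.
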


\begin{proof}
Fix a geometric symplectic basis $B_i$ for each $S_i$, then glue along $\alpha_j$ for $j=1,\dots,g$, and call $J$ the set of indices such that gluing along $\alpha_j$ for $j \in J$ produces new genus. Now, complete $\bigcup_i B_i \cup \{\alpha_j\}_{j \in J}$ to a geometric symplectic basis $B$, and compute the Arf invariant with respect to $B$. \qedhere
\end{proof}

\begin{corollary}
\thlabel{cor:cut}
Let $(\Sigma_g,\phi)$ be a $2$-spin surface. If $\alpha \subset \Sigma_g$ is a curve with $\phi(\alpha)=1$, then the pullback spin structure on $\Sigma \setminus \alpha$ has the same parity as $\phi$. 
\end{corollary}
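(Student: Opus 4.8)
The plan is to read off the statement from the Proposition on additivity of the Arf invariant, applied to the one-element family of curves $\{\alpha\}$; note that its standing hypothesis $\phi(\alpha)=1$ is exactly what we are given. Cutting $\Sigma_g$ along $\alpha$ produces a (possibly disconnected) surface $\Sigma_g\setminus\alpha = S_1\sqcup\cdots\sqcup S_k$ with $k\in\{1,2\}$: here $k=2$ if $\alpha$ separates and $k=1$ if it does not. Writing $\phi_i$ for the pullback spin structure on $S_i$, the Proposition gives $\operatorname{Arf}(\phi)=\sum_{i=1}^k\operatorname{Arf}(\phi_i)$. Since a geometric symplectic basis of the first homology of a disjoint union is the union of such bases for the pieces, the defining formula for the Arf invariant is additive over connected components, so the right-hand side is by definition the Arf invariant of the pullback spin structure on $\Sigma_g\setminus\alpha$. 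Hence the two invariants have the same parity, which is the claim.

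If one prefers not to let the separating/non-separating dichotomy be hidden inside the Proposition, the non-separating case admits a direct argument that I would spell out: choose a simple closed curve $\beta$ with $\beta\cdot\alpha=1$, extend a geometric symplectic basis of $\Sigma_g\setminus\alpha$ to one of $\Sigma_g$ by adjoining the pair $\{\alpha,\beta\}$, and evaluate $\operatorname{Arf}(\phi)$ from the defining formula. The only new summand is $(\phi(\alpha)+1)(\phi(\beta)+1)$, and this vanishes modulo $2$ precisely because $\phi(\alpha)=1$; thus $\operatorname{Arf}(\phi)=\operatorname{Arf}(\phi_1)$, as wanted.

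I do not anticipate a genuine obstacle here: the corollary is essentially the $n=1$ reformulation of the preceding Proposition. The only points needing (routine) verification are that the restriction of a $2$-spin structure to a subsurface is again a $2$-spin structure, which is immediate since twist linearity and homological coherence are inherited, and that the Arf invariant of the possibly disconnected cut surface is by convention the sum of the Arf invariants of its components, so that the phrase \emph{has the same parity as $\phi$} is literally the equality supplied by the Proposition.
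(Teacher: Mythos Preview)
Your proposal is correct and is precisely the intended argument: the paper states the result as an immediate corollary of the additivity Proposition with no separate proof, so your reading of it as the $n=1$ case is exactly right. Your supplementary direct computation in the nonseparating case (adjoining a dual curve $\beta$ and noting the extra summand $(\phi(\alpha)+1)(\phi(\beta)+1)$ vanishes) is a pleasant alternative that in fact mirrors the mechanism inside the Proposition's own proof.
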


\begin{remark}
Notice that the Arf invariant is not additive if we glue along curves with spin value $0$. For example, cut a $2$-spin torus along a curve $\gamma$ with $\phi(\gamma)=0$, obtaining an annulus. Then, the Arf invariant of the torus is decided by any curve that intersects $\gamma$ once, but we cannot read its spin value on the annulus.
\end{remark}

In the following, we are often going to consider $2$-spin surfaces that arise from cutting procedures. \thref{cor:cut} motivates the following standing assumption.

\begin{remark}[Surfaces with boundary]
\thlabel{rem:sgk}
Our spin structures on $\Sigma_g^b$ will always satisfy $\phi(\delta)=1$ for every boundary component $\delta$. In other words, we will only consider spin structures that extend to the surface $\Sigma_g$ obtained by capping all boundary components with disks.

Note that this choice is not standard: see for example \cite[Theorem 5.1]{bhm}. 
\end{remark}

\subsection{Stabilizer subgroups} Recall that $\Modd(\Sigma_g^b)$ acts naturally on the set of $r$-spin structures. 

\begin{definition}
The \emph{$r$-spin mapping class group} $\Modd(\Sigma_g^b)[\phi]$ is the stabilizer of an $r$-spin structure $\phi$ under the action of $\Modd(\Sigma_g^b)$.
\end{definition}

Clearly, the $r$-spin mapping class group is a finite index subgroup of $\Modd(\Sigma_g^b)$. If $b=0$, as a consequence of \thref{thm:phipsi}(2), the $r$-spin mapping class group is unique up to conjugation if $r$ is odd. If instead $r$ is even, there are exactly two conjugation classes, that are defined to be \emph{even} or \emph{odd} according to the parity of the induced $2$-spin structure.

We now introduce some important classes of elements of the $r$-spin mapping class group. Denote by $t_{\delta}$ the Dehn twist along $\delta$. Consider curves $\alpha,\beta,\gamma$ that bound a pair of pants on $\Sigma_g^b$. By homological coherence, we have $\phi(\alpha)+\phi(\beta)+\phi(\gamma)=-1$. Assume that $b:=\phi(\beta)=-\phi(\alpha)$; then $\phi(\gamma)=-1$. We say that $t_{\alpha}t_{\beta}^{-1}t_{\gamma}^b$ is a \emph{fundamental multitwist}.

\begin{lemma}[{\cite[Lemmas 3.15 and 3.18]{salt}}]
\thlabel{lem:elements}
Let $(\Sigma_g^b,\phi)$ be an $r$-spin surface.
\begin{enumerate}
\item Separating twists always preserve $\phi$. 
\item If $\delta$ is a nonseparating curve, then $t_{\delta}^k$ preserves $\phi$ if and only if $k\phi(\delta)\equiv0 \,(\operatorname{mod}\,r)$. 
\item Fundamental multitwists preserve $\phi$. 
\end{enumerate}
\end{lemma}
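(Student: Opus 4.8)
The plan is to verify each of the three statements using the defining axioms of an $r$-spin structure together with \thref{lem:operations}. For part (1), a separating curve $\delta$ bounds a subsurface, so applying twist linearity $\phi(t_\delta(d)) = \phi(d) + (d\cdot\delta)\phi(\delta)$ it suffices to check that $\phi(\delta) = 0$ whenever we twist along a curve $d$ with $d\cdot\delta \neq 0$ — but in fact $d\cdot\delta = 0$ for every curve $d$ when $\delta$ is separating, since a separating curve has trivial homology class and algebraic intersection depends only on homology. Hence $t_\delta(d)$ and $d$ have the same $\phi$-value for all $d$, i.e. $t_\delta$ fixes $\phi$. (Here one should be slightly careful when $b > 0$ and $\delta$ is boundary-parallel, but then $\phi(\delta) = 1$ by homological coherence on the annulus it cuts off, and again $d \cdot \delta = 0$ always, so the conclusion persists.)

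For part (2), again by twist linearity we have, for any curve $d$,
\[
(t_\delta^k \cdot \phi)(d) = \phi(t_\delta^{-k}(d)) = \phi(d) - k\,(d\cdot\delta)\,\phi(\delta),
\]
where I iterate the twist-linearity axiom $k$ times and use that $t_\delta$ acts on homology by the transvection $d \mapsto d + (d\cdot\delta)\delta$. Thus $t_\delta^k$ fixes $\phi$ if and only if $k\,(d\cdot\delta)\,\phi(\delta) \equiv 0 \pmod r$ for every $d$. Since $\delta$ is nonseparating, there exists $d$ with $d\cdot\delta = 1$, which forces $k\phi(\delta) \equiv 0 \pmod r$; conversely this congruence clearly makes the expression vanish for all $d$. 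This gives the claimed equivalence.

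For part (3), let $\alpha,\beta,\gamma$ bound a pair of pants $P$, so $\alpha\cup\beta\cup\gamma = \partial P$ (suitably oriented) and homological coherence gives $[\alpha] + [\beta] + [\gamma] = 0$ in $H_1$, hence $\phi(\alpha) + \phi(\beta) + \phi(\gamma) = \chi(P) = -1$. With $b := \phi(\beta) = -\phi(\alpha)$ we get $\phi(\gamma) = -1$, and the transvection description shows that on homology $t_\alpha t_\beta^{-1} t_\gamma^b$ sends a class $d$ to $d + (d\cdot\alpha)[\alpha] - (d\cdot\beta)[\beta] + b(d\cdot\gamma)[\gamma]$. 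Applying $\phi$ and using twist linearity repeatedly, the change in $\phi(d)$ is $(d\cdot\alpha)\phi(\alpha) - (d\cdot\beta)\phi(\beta) + b(d\cdot\gamma)\phi(\gamma) = -(d\cdot\alpha)b - (d\cdot\beta)b - b(d\cdot\gamma) = -b\big((d\cdot\alpha) + (d\cdot\beta) + (d\cdot\gamma)\big)$. Since $[\alpha]+[\beta]+[\gamma]=0$, the intersection numbers sum to zero for every $d$, so $\phi(d)$ is unchanged and $t_\alpha t_\beta^{-1} t_\gamma^b \in \Modd(\Sigma_g^b)[\phi]$.

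The only genuinely delicate point is bookkeeping the iterated application of twist linearity: the axiom is stated for a single Dehn twist, so for products and powers one must track how $\phi$ transforms step by step and confirm that the "defect" terms telescope into the single homological expression written above. This is routine once one notes that $\phi$ only depends on the homology class (by \thref{thm:xiq}(1) for $r=2$, and in general by working with the induced behaviour under the transvections generated by the twists), so I would streamline the argument by first reducing everything to the linear-algebra statement that the composite symplectic transvection acts trivially relative to the affine structure $\phi$ carries. No serious obstacle is expected; this lemma is essentially a direct unwinding of the axioms.
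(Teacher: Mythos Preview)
The paper does not give its own proof of this lemma; it simply cites Salter \cite[Lemmas 3.15 and 3.18]{salt}. Your argument is the standard direct verification from the twist-linearity and homological-coherence axioms, and it is essentially correct.

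One small point deserves tightening. In part (3) you flag as ``delicate'' the bookkeeping of iterated twist linearity and then resolve it by asserting that $\phi$ depends only on homology. For general $r$ this is not available (\thref{thm:xiq}(1) is stated only for $r=2$), and in fact you do not need it: since $\alpha,\beta,\gamma$ bound a pair of pants they are pairwise disjoint, so the twists commute and, more to the point, applying $t_\gamma^{\pm1}$ or $t_\beta^{\pm1}$ to a curve $d$ does not change its algebraic intersection with $\alpha$ (and cyclically). Hence the iterated twist-linearity formula telescopes exactly to the linear expression you wrote, with no cross-terms. With that observation in place, your computation
\[
(d\cdot\alpha)\phi(\alpha) - (d\cdot\beta)\phi(\beta) + b\,(d\cdot\gamma)\phi(\gamma) = -b\big((d\cdot\alpha)+(d\cdot\beta)+(d\cdot\gamma)\big) = 0
\]
is clean and valid for all $r$.
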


As a consequence of \thref{lem:elements}, we see that the only nonseparating Dehn twists contained in $\Modd(\Sigma_g^b)[\phi]$ are those along curves with spin value $0$. We say that such curves and the corresponding twists are \emph{admissible}. 

\begin{theorem}[Salter \cite{salt}, Calderon-Salter \cite{cs1}, Hamenstädt \cite{ham}]
    The $r$-spin mapping class group is generated by admissible twists if the genus is sufficiently high.
\end{theorem}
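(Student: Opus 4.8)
Write $\Gamma=\Modd(\Sigma_g^b)[\phi]$ and let $G\le\Gamma$ be the subgroup generated by all admissible twists; by \thref{lem:elements} we have $G\le\Gamma$, and since $ft_cf^{-1}=t_{f(c)}$ with $\phi(f(c))=\phi(c)$ for $f\in\Gamma$, the subgroup $G$ is normal in $\Gamma$. We must show $G=\Gamma$ once $g$ exceeds a threshold; we argue for surfaces with boundary as well, so as to induct on $g$. The bound is genuinely necessary: for small closed genus $\Gamma$ is not even generated by Dehn twists, and for $g=0$ there are no admissible curves at all.

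The plan is to let $\Gamma$ act on a complex of admissible configurations and to invoke the standard principle that a group acting on a connected simplicial complex, transitively on vertices, is generated by the stabiliser of one vertex together with the elements carrying that vertex to an adjacent one. Let $\mathcal A=\mathcal A_\phi$ be the complex whose vertices are admissible nonseparating simple closed curves and whose $k$-simplices are the $(k+1)$-element collections of such curves that can be realised disjointly with connected complement. Then $\Gamma$ acts simplicially on $\mathcal A$, and by the change-of-coordinates principle for spin surfaces — \thref{thm:phipsi}, the classification of quadratic forms over $\sfrac{\Z}{2\Z}$, and \thref{lem:operations} to adjust spin values of auxiliary curves — it acts transitively on the vertices, indeed on the simplices of each combinatorial type.

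Two inputs remain. The first, which I expect to be the main obstacle, is that $\mathcal A$ is connected (in fact highly connected) for $g$ above the threshold. One proves this by a surgery argument: join two admissible curves by an edge-path in the ordinary complex of nonseparating curves (connected for $g\ge2$), then homotope that path into $\mathcal A$ by replacing each intermediate curve $c$ with $\phi(c)\ne0$ by an admissible one, performing arc sums and smoothings with auxiliary curves; the identities $\phi(\alpha+_c\beta)=\phi(\alpha)+\phi(\beta)+1$ and additivity of $\phi$ under smoothing from \thref{lem:operations} are exactly what let one return to spin value $0$, and the hypothesis on the genus is what makes room to do this disjointly from the rest of the path. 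The second input is that the vertex stabiliser $\mathrm{Stab}_\Gamma(\alpha)$ lies in $G$: cutting $\Sigma_g^b$ along the admissible curve $\alpha$ produces $\Sigma_{g-1}^{b+2}$ with an induced spin structure (here one must drop the boundary convention of \thref{rem:sgk}), and the Birman-type exact sequence relating $\mathrm{Stab}_\Gamma(\alpha)$ to the spin mapping class group of the cut surface has kernel generated by twists parallel to $\alpha$ and by disk-pushing maps — all products of admissible twists — and quotient a spin mapping class group of smaller genus, contained in $G$ by the inductive hypothesis (the extra generators needed, a side-swap and boundary pushes, are themselves products of admissible twists); the finitely many base cases at the threshold are dealt with directly, e.g.\ using the explicit Dehn-twist generating sets available in the classical case $r=2$. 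Finally, the element carrying $\alpha$ to a given neighbour can be taken in $G$: the corresponding elementary move is realised by an admissible twist, or, when spin values obstruct the obvious candidate, by a short product of admissible twists extracted from a chain relation. The standard principle then yields $\Gamma=G$.

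There is also an algebraic route, cleanest for $r=2$, via reduction modulo the Torelli group. By \thref{thm:xiq}, $\Gamma$ preserves the quadratic enhancement $q_\phi$, and a Dehn twist preserves $q_\phi$ precisely along admissible curves; since $\mathcal I(\Sigma_g)\le\Gamma$, this gives a short exact sequence $1\to\mathcal I(\Sigma_g)\to\Gamma\to\mathrm{Sp}(q_\phi)\to1$ with $\mathrm{Sp}(q_\phi)\le\mathrm{Sp}(2g,\Z)$ the stabiliser of $q_\phi$, and the map onto $\mathrm{Sp}(q_\phi)$ is surjective by \thref{thm:phipsi}. For $g$ large, $\mathrm{Sp}(q_\phi)$ is generated by the transvections in admissible vectors — a fact about integral symplectic groups extracted from the generation of the $\sfrac{\Z}{2\Z}$-orthogonal group by such transvections together with the known generators of the level-two congruence subgroup — and each of these lifts to an admissible twist, so $\Gamma=G\cdot\mathcal I(\Sigma_g)$. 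One is then reduced to proving $\mathcal I(\Sigma_g)\le G$: by Johnson's theorem $\mathcal I(\Sigma_g)$ is generated for $g\ge3$ by genus-one bounding pair maps $t_at_b^{-1}$, for which homological coherence forces $\phi(a)=\phi(b)$; when this value is $0$ the map already lies in $G$, and when it is $1$ one rewrites it as a product of admissible twists using a lantern relation (together with chain relations) inside a genus-two subsurface carrying an even induced spin structure — once again the point where large genus is used. This route trades the connectivity statement for the latter relation computation as its principal difficulty.
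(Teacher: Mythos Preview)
The paper does not supply a proof of this theorem; it is cited from the literature. What the paper does offer, in \thref{rem:gdt}, is a short alternative argument for $r=2$: take Hirose's finite generating set for $\Modd(\Sigma_g)[\phi]$, which consists of admissible twists together with squared twists along $1$-curves and fundamental multitwists, and then factor each of the latter as a product of admissible twists via the explicit lantern identities (\ref{eq:fml}) and (\ref{eq:all}). This is far more direct than either of your routes, at the cost of importing Hirose's result as a black box.

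Your two sketches are both legitimate strategies and are essentially those of the cited references. The first (action on a complex of admissible curves, induction on genus) is the Salter/Calderon--Salter approach; note however that your induction is not quite set up correctly: cutting along an \emph{admissible} curve produces boundary components with $\phi=0$, so you leave the framework of \thref{rem:sgk} and the inductive statement must be reformulated for spin structures with arbitrary boundary values --- you flag this but do not resolve it, and the Arf invariant is then no longer well-defined on the cut piece, so the ``base cases'' need an independent argument. The second route via the Torelli group is only for $r=2$; there your factorisation of a bounding-pair map $t_at_b^{-1}$ with $\phi(a)=\phi(b)=1$ is exactly the kind of lantern manipulation the paper carries out in \thref{rem:gdt} (indeed a bounding-pair map is a fundamental multitwist with $b=1$ when one of the pair-of-pants curves is trivial), so your second approach and the paper's remark converge at the same computational core, but you reach it through a much heavier reduction (Johnson's generation of $\mathcal I$ and generation of $\mathrm{Sp}(q_\phi)$ by admissible transvections) where the paper simply invokes Hirose.
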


\begin{remark}
\thlabel{rem:gdt}
Consider now the case $r=2$. \thref{lem:elements} gives us two classes of elements of $\Modd(\Sigma_g^b)[\phi]$ that are not products admissible twists: namely, squared Dehn twists along curves with spin value $1$ and fundamental multitwists with $b=1$. We now explain a way of factoring these elements as products of admissible twists. This, along with the results of Hirose \cite{hir1,hir2}, can be used to give a short proof of the above theorem.

Consider the lantern $t_{\beta}t_{z_1}t_{y_1}=t_{\alpha}t_{\gamma} t_{d_1}t_{d_2}$ of Figure \ref{fig:lants}. Rearranging, we obtain
\begin{equation}
\label{eq:fml}
t_{\beta}t_{\alpha}^{-1}t_{\gamma}^{-1}=t_{d_1}t_{d_2}t_{y_1}^{-1}t_{z_1}^{-1},
\end{equation}
and this is a factorization of the fundamental multitwist $t_{\beta}t_{\alpha}^{-1}t_{\gamma}^{-1}$.

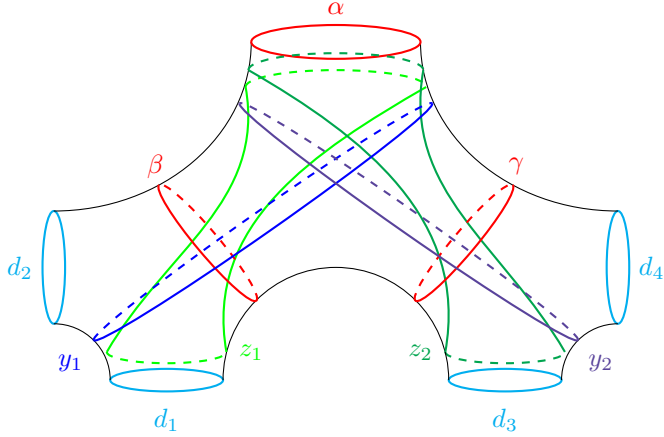
\begin{figure}
\centering
\begin{tikzpicture}[scale=.75]
\draw [cyan, thick] (0,0) arc (180:-180:1 and .2);
\draw [cyan, thick] (6,0) arc (180:-180:1 and .2);
\draw [green, thick, dashed] (-.06,.5) arc (180:360:1.06 and .2);
\draw [Green, thick, dashed] (5.94,.5) arc (180:360:1.06 and .2);
\draw [cyan, thick] (-1,1) arc (-90:270:.2 and 1);
\draw [cyan, thick] (9,1) arc (-90:270:.2 and 1);
\draw (0,0) arc (0:90:1 and 1);
\draw (8,0) arc (180:90:1 and 1);
\draw (2,0) arc (180:0:2 and 2);
\draw (-1,3) arc (-90:0:3.5 and 3);
\draw (9,3) arc (270:180:3.5 and 3);
\draw [red, thick] (2.5,6) arc (180:-180:1.5 and .3);
\draw [green, thick, dashed] (2.4,5.2) arc (180:0:1.6 and .3);
\draw [Green, thick, dashed] (2.45,5.5) arc (180:0:1.55 and .3);
\draw [green, thick] (-.06,.5) to [out=60, in=-80] (2.4,5.2);
\draw [green, thick] (2.06,.5) to [out=100, in=-150] (5.6,5.2);
\draw [Green, thick] (8.06,.5) to [out=120, in=-100] (5.55,5.5);
\draw [Green, thick] (5.94,.5) to [out=80, in=-30] (2.45,5.5);
\draw [red, thick] (2.59,1.41) {[rotate=40] arc[x radius=.2, y radius=1.34, start angle=270, end angle=90]};
\draw [red, thick, dashed] (2.59,1.41) {[rotate=40] arc[x radius=.2, y radius=1.34, start angle=-90, end angle=90]};
\draw [red, thick, dashed] (5.41,1.41) {[rotate=-40] arc[x radius=.2, y radius=1.34, start angle=270, end angle=90]};
\draw [red, thick] (5.41,1.41) {[rotate=-40] arc[x radius=.2, y radius=1.34, start angle=-90, end angle=90]};
\draw [blue, thick] (-.29,.71) {[rotate=-55] arc[x radius=.2, y radius=3.66, start angle=-90, end angle=90]};
\draw [blue, thick, dashed] (-.29,.71) {[rotate=-55] arc[x radius=.2, y radius=3.66, start angle=270, end angle=90]};
\draw [Violet, thick, dashed] (8.29,.71) {[rotate=55] arc[x radius=.2, y radius=3.66, start angle=-90, end angle=90]};
\draw [Violet, thick] (8.29,.71) {[rotate=55] arc[x radius=.2, y radius=3.66, start angle=270, end angle=90]};
\node [cyan] at (1,-.7) {$d_1$};
\node [cyan] at (-1.6,2) {$d_2$};
\node [cyan] at (7,-.7) {$d_3$};
\node [cyan] at (9.6,2) {$d_4$};
\node [red] at (4,6.6) {$\alpha$};
\node [red] at (.8,3.8) {$\beta$};
\node [red] at (7.2,3.8) {$\gamma$};
\node [blue] at (-.7,.3) {$y_1$};
\node [Violet] at (8.7,.3) {$y_2$};
\node [green] at (2.5,.5) {$z_1$};
\node [Green] at (5.5,.5) {$z_2$};
\end{tikzpicture}
\caption{Factoring squared twists and fundamental multitwists as products of admissible twists. All curves but the red ones are admissible.}
\label{fig:lants}
\end{figure}

Similarly, we can factor $t_{\gamma}t_{\beta}^{-1}t_{\alpha}^{-1}$ using the lantern $t_{\gamma}t_{y_2}t_{z_2}=t_{\alpha}t_{\beta}t_{d_3}t_{d_4}$ of Figure \ref{fig:lants}. Thus, we obtain
\begin{equation}
\label{eq:all}
t_{\alpha}^2=(t_{\alpha}^{-1}t_{\beta}t_{\gamma}^{-1})^{-1}(t_{\gamma}t_{\beta}^{-1}t_{\alpha}^{-1})^{-1}=t_{z_1}t_{y_1}t_{d_1}^{-1}t_{d_2}^{-1}t_{y_2}t_{z_2}t_{d_3}^{-1}t_{d_4}^{-1}.
\end{equation} 

Notice that the configuration of Figure \ref{fig:lants} embeds in a closed surface of genus at least $3$, if we require that the image of $\alpha$ be nontrivial.
\end{remark}

\subsection{Spin change of coordinates} The change of coordinates principle \cite[Section 1.3]{primer} can be roughly stated as follows: the mapping class group acts transitively on sets of curves with the same intersection pattern. We will use repeatedly a spin version of this tool, where the curves are also required to have the same spin values. We now illustrate this principle in a series of examples which are relevant for us, working with $2$-spin structures on closed surfaces for simplicity. See \cite[Section 4]{salt} and \cite[Subsection 5.2]{cs1} for a more general treatment. 

\subsubsection*{Geometric symplectic bases}

Let $\mathcal{B}:=\{\alpha_i,\beta_i\}$ and $\mathcal{B}':=\{\alpha_i',\beta_i'\}$ be two geometric symplectic bases for $\Sigma_g$, and assume that $\phi(\alpha_i)=\phi(\alpha_i')$ and $\phi(\beta_i)=\phi(\beta_i')$ for all $i$. By the usual change of coordinates principle, there exists a mapping class $f$ such that $f(\alpha_i)=\alpha_i'$ and $f(\beta_i)=\beta_i'$ for every $i$. By \thref{thm:phipsi}, $f$ fixes $\phi$.

\subsubsection*{Cut-systems}

Recall that a cut-system $\Braket{\alpha_1,\dots,\alpha_g}$ on $\Sigma_g$ is an unordered $g$-tuple of disjoint simple closed curves whose homology classes are linearly independent. Let $\Braket{\alpha_1',\dots,\alpha_g'}$ be another cut-system with $\phi(\alpha_i)=\phi(\alpha_{i}')$ for each $i$.

Complete the cut-systems to geometric symplectic bases $\mathcal{B}:=\{\alpha_i,\beta_i\}$ and $\mathcal{B}':=\{\alpha_i',\beta_i'\}$. Call $e_i$ the spin value of $\alpha_i$ and $\alpha_i'$. If $e_i=1$, then up to replacing $\beta_i'$ with $t_{\alpha_i'}(\beta_i')$ we may assume that $\phi(\beta_i)=\phi(\beta_i')$. 

Call $I$ the set of indices $i$ such that $e_i=0$. Since $\operatorname{Arf}(\phi)$ does not depend on the choice of a basis, the subsets 
\[
J:=\big\{j \in I\big|\phi(\beta_j)=0,\ \phi(\beta_j')=1\big\}, \quad 
J':=\big\{j \in I\big|\phi(\beta_j)=1,\ \phi(\beta_j')=0\big\}
\]
both have an even number of elements. We modify $\mathcal{B}'$ as follows: given $j_1,j_2 \in J'$, let $\gamma$ be the arc sum of $\alpha_{j_1}'$ and $\alpha_{j_2}'$ along an arc disjoint from all the other curves of $\mathcal{B}'$. Then $\phi(\gamma)=1$ by \thref{lem:operations}(2), and we can substitute $\beta_{j_1}'$ by $t_{\gamma}(\beta_{j_1}')$ and $\beta_{j_2}'$ by $t_{\gamma}(\beta_{j_2}')$. We perform this operation until $J'$ is empty, and we do the same for $J$. Now $\phi$ agrees on the two bases, and by the above we find a mapping class $f \in \Modd(\Sigma_g)[\phi]$ such that $f(\alpha_i)=\alpha_i'$ for every $i$.

In particular, $\Modd(\Sigma_g)[\phi]$ acts transitively on curves with the same spin value and on partial cut-systems with fixed spin values.

\subsubsection*{Chains} Recall that an $n$-chain $(\gamma_1,\dots,\gamma_n)$ is a set of curves such that $\abs{\gamma_i\cap \gamma_{i+1}}=1$ for every $i$ and $\gamma_i\cap\gamma_j=\emptyset$ if $\abs{i-j} \ne 1$. It is easy to see that a tubular neighborhood of $\gamma_1\cup\dots\cup\gamma_n$ has two boundary components if $n$ is odd, and a single boundary component if $n$ is even.

Let $(\gamma_1',\dots,\gamma_n')$ be another $n$-chain, and assume that $\phi(\gamma_i)=\phi(\gamma_i')$ for every $i$. Moreover, if $n$ is odd, assume that $\Sigma\setminus \bigcup_i\gamma_i$ is homeomorphic to $\Sigma\setminus \bigcup_i\gamma_i'$, and if they are disconnected, that the induced spin structures on corresponding components have the same Arf invariant. Then there exists an element $f$ of $\Modd(\Sigma_g)[\phi]$ such that $f(\gamma_i)=\gamma_i'$ for every $i$. 

To see this, construct two geometric symplectic bases $\mathcal{B}=\set{\alpha_i,\beta_i}$ and $\mathcal{B}'=\set{\alpha_i',\beta_i'}$ of $\Sigma$ as follows. Set $\beta_k:=\gamma_{2k}$ for all $k$. Orient each $\gamma_i$ so that $\gamma_i\cdot \gamma_{i+1}=1$ for all $i$. Define inductively $\alpha_k$ as follows:
\[
\alpha_1=\gamma_1, \qquad \alpha_{k+1}=\alpha_k+_{c_k}\gamma_{2k+1},
\]
where $c_k$ is the arc of $\gamma_{2k}$ that goes from $\gamma_{2k}\cap\alpha_k$ to $\gamma_{2k}\cap \gamma_{2k+1}$. Now, complete $\set{\alpha_k,\beta_k}$ to a geometric symplectic basis $\mathcal{B}$ on the whole of $\Sigma_g$, in such a way that $\mathcal{B}$ restricts to a geometric symplectic basis on every component of $\Sigma_g \setminus \bigcup_i \gamma_i$. Define similarly $\mathcal{B}'$. 

Now, by construction $\phi(\alpha_i)=\phi(\alpha_i')$ and $\phi(\beta_i)=\phi(\beta_i')$ if $2i \le n$, and by invariance of $\operatorname{Arf}(\phi)$ and the same reasoning as before we may assume that this holds for all $i$. Again, we conclude by the usual change of coordinates principle.

\section{The spin cut-system complex}
\label{scc}

In this section, we define the spin cut-system complex $X_g$ and prove that it is connected and simply connected for every $g \ge 1$. The complex $X_g$ is inspired by Hatcher and Thurston's cut-system complex \cite{ht}. Recall that the vertices of the cut-system complex are cut-systems, while edges and faces are determined by conditions on the intersections between curves in two or more cut-systems. Throughout this section, $\phi$ will be a fixed even $2$-spin structure on $\Sigma_g^b$. If $\phi(\gamma)=\epsilon$, we will say that $\gamma$ is an $\epsilon$-curve.

\subsection{Definition and first properties} Consider a surface $\Sigma_g^b$.

\begin{definition}
The \emph{spin cut-system complex} is the 2-dimensional cell complex $X_g$ defined as follows.
\begin{itemize}
\item[\textbf{--}] The vertices are isotopy classes of cut-systems of $1$-curves.
\item[\textbf{--}] An edge connects two vertices $\Braket{\alpha_1,\dots,\alpha_g}$ and $\Braket{\beta_1,\dots,\beta_g}$ if $\alpha_i=\beta_i$ for $i \ge 2$, and:
\begin{itemize}
    \item[$\bullet$] $\alpha_1$ and $\beta_1$ intersect once (type i), or
    \item[$\bullet$] $\alpha_1$ and $\beta_1$ intersect twice with the same sign (type ii).
\end{itemize}
We will often drop the common curves from the notation and write $\Braket{\alpha_1}-\Braket{\beta_1}$.
\item[\textbf{--}] The faces are of the following four kinds (see Figure \ref{fig:cells}):
\begin{itemize}
    \item[$\bullet$] \emph{triangles} $\Braket{\gamma_1}-\Braket{\gamma_1'}-\Braket{\gamma_1''}-\Braket{\gamma_1}$, where two edges are of type i and the third is of type ii;
    \item[$\bullet$] \emph{squares} $\Braket{\gamma_1,\gamma_2}-\Braket{\gamma_1,\gamma_2'}-\Braket{\gamma_1',\gamma_2'}-\Braket{\gamma_1',\gamma_2}-\Braket{\gamma_1,\gamma_2}$, where all edges are of type i;
    \item[$\bullet$] \emph{pentagons} $\Braket{\gamma_1,\gamma_2}-\Braket{\gamma_1,\gamma_2'}-\Braket{\gamma_1',\gamma_2'}-\Braket{\gamma_1',\gamma_2''}-\Braket{\gamma_2,\gamma_2''}-\Braket{\gamma_1,\gamma_2}$, where four edges are of type i and the fifth is of type ii;
    \item[$\bullet$] \emph{hyperelliptic faces}, which have $28$ edges of type i and will be described in detail later on (see \thref{def:hyp}).
\end{itemize}
\end{itemize} 
\end{definition}

\begin{figure}
\centering
\begin{tikzpicture}[scale=.3]
\draw [blue, thick] (3.5,2) arc(0:360:2 and 2);
\draw [green, thick] (0,-2.7) arc(-90:12:2 and 2);
\draw [green, thick] (0,-2.7) arc(270:32:2 and 2);
\draw [red, thick] (-1.5,0) arc (-90:32:2 and 2);
\draw [red, thick] (-1.5,0) arc (-90:-92:2 and 2);
\draw [red, thick] (-3.5,2) arc (180:250:2 and 2);
\draw [red, thick] (-1.5,4) arc (90:52:2 and 2);
\draw [red, thick] (-1.5,4) to [out=180, in=90] (-3,2.7) to [out=-90, in=180] (-1.5,1.4) to [out=0, in=-90] (0,2.2) arc (0:40:.5 and .8);
\draw [red, thick] (-3.5,2) arc (180:150:2 and 1.5);
\draw [red, thick] (-1.5,3.5) arc (90:123:2 and 1.5);
\draw [red, thick] (-1.5,3.5) arc (90:50:1.5 and 1.5);
\node [red] at (-4.1,2.8) {$\gamma_1$};
\node [blue] at (4.1,2.8) {$\gamma_1'$};
\node [green] at (0,-3.5) {$\gamma_1''$};
\node at (-4,-3.5) {(C1)};
\end{tikzpicture}
\begin{tikzpicture}[scale=.3]
\draw [cyan, thick] (3.5,1) arc(0:360:2 and 2);
\draw [orange, thick] (0,-3.7) arc(-90:12:2 and 2);
\draw [orange, thick] (0,-3.7) arc(270:32:2 and 2);
\draw [blue, thick] (-1,2) arc(0:360:2 and 2);
\draw [red, thick] (-4.5,-2.7) arc(-90:12:2 and 2);
\draw [red, thick] (-4.5,-2.7) arc(270:32:2 and 2);
\node [cyan] at (4,-.8) {$\gamma_2$};
\node [orange] at (0,-4.5) {$\gamma_2'$};
\node [blue] at (-5.5,3) {$\gamma_1$};
\node [red] at (-6.5,-3) {$\gamma_1'$};
\node at (-6,-5) {(C2)};
\end{tikzpicture}
\begin{tikzpicture}[scale=.3]
\draw [blue, thick] (3.5,2) arc(0:360:2 and 2);
\draw [green, thick] (0,-4.7) arc(-90:-15:2 and 2);
\draw [green, thick] (0,-4.7) arc(270:5:2 and 2);
\draw [red, thick] (-1.5,0) arc (-90:32:2 and 2);
\draw [red, thick] (-1.5,0) arc (-90:-135:2 and 2);
\draw [red, thick] (-3.5,2) arc (180:205:2 and 2);
\draw [red, thick] (-1.5,4) arc (90:52:2 and 2);
\draw [red, thick] (-1.5,4) to [out=180, in=90] (-3,2.7) to [out=-90, in=180] (-1.5,1.4) to [out=0, in=-90] (0,2.2) arc (0:40:.5 and .8);
\draw [red, thick] (-3.5,2) arc (180:150:2 and 1.5);
\draw [red, thick] (-1.5,3.5) arc (90:123:2 and 1.5);
\draw [red, thick] (-1.5,3.5) arc (90:50:1.5 and 1.5);
\draw [orange, thick] (-2.5,-3) arc (270:-65:2 and 2);
\draw [cyan, thick] (2.5,-3) arc (270:80:2 and 2);
\draw [cyan, thick] (2.5,-3) arc (270:420:2 and 2);
\node [red] at (-4.1,2.8) {$\gamma_1$};
\node [blue] at (4.1,2.8) {$\gamma_1'$};
\node [green] at (0,-5.5) {$\gamma_2'$};
\node [cyan] at (4.6,-3) {$\gamma_2$};
\node [orange] at (-4.7,-3) {$\gamma_2''$};
\node at (-6,-5.5) {(C3)};
\end{tikzpicture}
\caption{Configurations of curves for the $2$-cells of the spin cut-system complex.}
\label{fig:cells}
\end{figure}
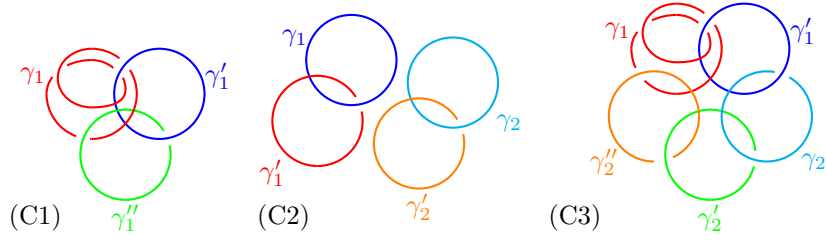

\begin{remark}
\thlabel{rem:112}
The following facts shed some light on the importance of edges of type ii, and will be used repeatedly.
\begin{enumerate}[label=\textit{(\roman*)}]
\item \emph{The 1-1-2 trick.} Every edge of type ii is contained in a triangle. Indeed, let $\Braket{\alpha}-\Braket{\beta}$ be an edge of type ii on a surface $\Sigma$, and call $P$ and $Q$ the two points of intersection of $\alpha$ and $\beta$. Construct two curves $\gamma_1$, $\gamma_2$ as follows: start from $P$, go along $\beta$ until $Q$, then turn right or left respectively, and run along $\alpha$ back to $P$ (Figure \ref{fig:112}). By \thref{thm:xiq}(2), we have 
\[
1=\phi(\alpha)=\phi(\gamma_1)+\phi(\gamma_2)+\gamma_1\cdot \gamma_2+1=\phi(\gamma_1)+\phi(\gamma_2),
\]
so one of the two is a $1$-curve, say $\gamma_1$, and $\Braket{\alpha}-\Braket{\beta}-\Braket{\gamma_1}-\Braket{\alpha}$ is a triangle. 

More generally, consider two nonseparating $1$-curves $\alpha$ and $\beta$ with $\abs{\alpha \cap \beta} \ge 2$, and assume that there is an arc of $\beta$ which connects the two boundary components of $\Sigma_g^b \setminus \alpha$ which correspond to $\alpha$. Call $P$ and $Q$ the endpoints of such arc. Equivalently, assume that there are two consecutive intersection points with the same sign $P$, $Q$ on $\beta$. Then the same trick can be used to obtain a nonseparating $1$-curve that intersects both $\alpha$ and $\beta$ in less than $\abs{\alpha \cap \beta}$ points.
\item \emph{No i-i-i triangles.} \label{noiii} A closed path of length $3$ must have two edges of type i and one edge of type ii. Indeed, assume for example that a closed path $\Braket{\gamma_1}-\Braket{\gamma_2}-\Braket{\gamma_3}-\Braket{\gamma_1}$ on $\Sigma_{g,b}$ only contains edges of type i. A tubular neighborhood $\nu(\gamma_1\cup \gamma_2\cup \gamma_3)$ has 3 boundary components and Euler characteristic $\chi=-3$. Notice that one of the boundary components, call it $\delta_1$, satisfies the relation $[\delta_1]= [\gamma_1]+[\gamma_2]+[\gamma_3]$ in $H_1(\Sigma;\Z/2)$, so $\phi(\delta_1)=0$. In particular, $\delta_1$ is nonseparating, so the complement of $\nu(\gamma_1\cup \gamma_2\cup \gamma_3)$ has at most two connected components. If it has two connected components, they are homeomorphic to $\Sigma_{g_1}^{b_1+1}$ and $\Sigma_{g_2}^{b_2+2}$, with $g_1+g_2=g-2$ and $b_1+b_2=b$. If it is connected, it is homeomorphic to $\Sigma_{g-3}^{b+3}$. In either case, two boundary components out of three are $0$-curves, so it is impossible to find $g-1$ disjoint linearly independent $1$-curves in the complement of $\gamma_1\cup\gamma_2\cup\gamma_3$. The existence of other kinds of triangles (and pentagons) can be ruled out in a similar way.
\item \emph{Other squares.} Squares with edges of type ii are null-homotopic in $X_g$. Indeed, such a square has two opposite edges of type ii, and by the 1-1-2 trick we get the following null-homotopy, where $\gamma_1$ is the curve obtained from $\alpha_1$ and $\beta_1$ via the 1-1-2 trick:
\[
\begin{tikzcd}
\Braket{\alpha_1,\alpha_2} \arrow[rrr, no head] \arrow[dd, no head, "ii"'] \arrow[dr, no head, "i"] & & & \Braket{\alpha_1,\beta_2} \arrow[dl, no head, "i"'] \arrow[dd, no head, "ii"] \\
& \Braket{\gamma_1,\alpha_2} \arrow [r, no head] \arrow[dl, no head, "i"] & \Braket{\gamma_1,\beta_2} \arrow[dr, no head, "i"'] & \\
\Braket{\beta_1,\alpha_2} \arrow[rrr, no head] & & & \Braket{\beta_1,\beta_2}.
\end{tikzcd}
\]
\end{enumerate}
\end{remark}

\begin{figure}
\centering
\begin{tikzpicture}[scale=.7]
\draw [thick, red] (0,-2) -- (0,2);
\draw [thick, blue] (-3,1) -- (3,1);
\draw [thick, blue] (-3,-1) -- (3,-1);
\draw [thick, orange] (-3,-1.2) -- (-.7,-1.2) to [out=0, in=90] (-.3,-2);
\draw [thick, orange] (-.3,2) to [out=-90, in=180] (.1,.8) -- (3,.8);
\draw [thick, green] (-3,-.8) -- (-.2,-.8) to [out=0, in=180] (.7,1.2) -- (3,1.2);
\draw [->, thick, blue] (0,1) -- (.5,1);
\draw [->, thick, blue] (0,-1) -- (.5,-1);
\node [red] at (.3,-1.5) {$\alpha$}; 
\node [blue] at (-2.5,.6) {$\beta$}; 
\node [orange] at (2.5,.4) {$\gamma_1$}; 
\node [green] at (2.5,1.5) {$\gamma_2$}; 
\end{tikzpicture}
\caption{The 1-1-2 trick: either $\gamma_1$ or $\gamma_2$ must be spin.}
\label{fig:112}
\end{figure}
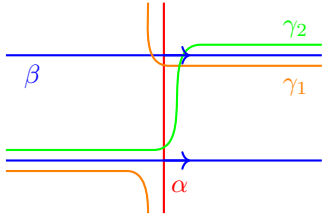

The main result of this section is the following.

\begin{theorem}
\thlabel{thm:x}
The spin cut-system complex $X_g$ is connected and simply connected for every $g \ge1$.
\end{theorem}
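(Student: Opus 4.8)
The plan is to adapt the strategy of Hatcher--Thurston \cite{ht} for the ordinary cut-system complex $\mathcal{C}(\Sigma_g)$ --- which is connected and simply connected --- feeding the spin change of coordinates and the $1$-$1$-$2$ trick of \thref{rem:112} into the classical arguments, and treating connectedness and simple connectedness separately. For connectedness I would fix a standard cut-system $v_0 = \Braket{\alpha_1,\dots,\alpha_g}$ of $1$-curves and show, by induction on the genus, that every vertex can be joined to $v_0$ by an edge-path. The main local move is: given a vertex $\Braket{\gamma_1,\gamma_2,\dots,\gamma_g}$ and a $1$-curve $\gamma_1'$ disjoint from $\gamma_2,\dots,\gamma_g$ with $[\gamma_1']$ independent of $[\gamma_2],\dots,[\gamma_g]$, one can join $\Braket{\gamma_1,\gamma_2,\dots}$ to $\Braket{\gamma_1',\gamma_2,\dots}$ by a path fixing $\gamma_2,\dots,\gamma_g$; this I would prove by induction on $\abs{\gamma_1 \cap \gamma_1'}$, the inductive step being the $1$-$1$-$2$ trick (applied after arranging two consecutive intersection points of the same sign, the algebraically-trivial case treated separately) and the base cases being a single type-i edge or, for disjoint curves, a detour through an intermediate $1$-curve supplied by spin change of coordinates. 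Iterating the move, and using the connectedness of $\mathcal{C}(\Sigma_g)$ to reroute the remaining curves of a cut-system one at a time, connects any vertex to $v_0$.

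For simple connectedness I would argue in three steps. First, since the $1$-$1$-$2$ trick exhibits every type-ii edge as a side of a triangle of $X_g$, any loop in the $1$-skeleton of $X_g$ is homotopic in $X_g$ to one using only type-i edges. Second, such a loop lies in the $1$-skeleton of $\mathcal{C}(\Sigma_g)$, so by simple connectedness of $\mathcal{C}(\Sigma_g)$ it equals, in the fundamental group of that $1$-skeleton, a product of conjugates of boundary loops of triangles, squares and pentagons of $\mathcal{C}(\Sigma_g)$. Third, I would push this filling into $X_g$: a square of $\mathcal{C}(\Sigma_g)$ all of whose vertices are $1$-curve cut-systems is already a square of $X_g$; a triangle or pentagon of $\mathcal{C}(\Sigma_g)$ whose vertices are all $1$-curve cut-systems would be a type-i-only $2$-cell, which is excluded by the Euler-characteristic argument of \thref{rem:112}(ii); and every remaining $2$-cell of $\mathcal{C}(\Sigma_g)$ in the filling passes through a cut-system containing a $0$-curve, so its boundary must be re-filled in $X_g$ by detouring around such vertices. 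These detour disks are assembled from the extra $2$-cells of $X_g$: the type-ii triangles and pentagons, the null-homotopies of squares carrying type-ii edges from \thref{rem:112}(iii), and --- for the configurations that genuinely involve genus $3$ --- the hyperelliptic face of \thref{def:hyp}, whose boundary is tailored to match the resulting lantern-and-chain loops.

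The main obstacle is this final pushing step: organizing the finitely many local pictures that arise when a Hatcher--Thurston $2$-cell meets the $0$-curve locus, and checking that each is null-homotopic rel boundary in $X_g$ --- in particular verifying that the hyperelliptic face has exactly the required boundary, and that the pushing never manufactures a forbidden i-i-i triangle (each potential one being rerouted through a type-ii edge as in \thref{rem:112}(iii)). Finally I would check that the argument is uniform down to $g=1,2$, where $X_g$ is one- or low-dimensional and can be handled directly, so that the bound $g \ge 3$ used elsewhere in the paper is not needed here.
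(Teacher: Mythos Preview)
Your connectedness sketch is broadly sound, though you underspecify the hardest case: when all intersection points of $\gamma_1$ and $\gamma_1'$ have alternating signs, the generalized $1$-$1$-$2$ trick does not apply, and producing an intermediate $1$-curve requires a separate argument (the paper's \thref{prop:conn} handles this by analyzing the boundary components of $\nu(\gamma_1\cup\gamma_1')$ and building an arc-sum).

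The simple-connectedness argument, however, has a genuine gap in Step~3. A null-homotopy of your loop in $\mathcal{C}(\Sigma_g)$ is a disc whose \emph{interior} $1$-skeleton typically consists of cut-systems containing $0$-curves and of edges between them; the boundaries of the individual $2$-cells of this filling are loops in $\mathcal{C}(\Sigma_g)^{(1)}$, not in $X_g^{(1)}$. So there is nothing to ``re-fill in $X_g$'' cell by cell: before you can even ask whether a given $2$-cell boundary is null-homotopic in $X_g$, you must first replace every interior vertex and edge of the disc by something in $X_g$, and then verify that each resulting small loop bounds in $X_g$. That verification is exactly the statement you are trying to prove. In particular, \thref{rem:oddss} and \thref{rem:hyp} explain why no local replacement rule can work uniformly: two disjoint homologous $1$-curves can separate $\Sigma_g$ into two odd subsurfaces containing \emph{no} nonseparating $1$-curves at all, so a $\mathcal{C}$-edge between the two corresponding cut-systems has no nearby substitute in $X_g$. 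This is precisely the obstruction the hyperelliptic face is designed to kill (\thref{prop:hyp}); it is not a ``lantern-and-chain'' phenomenon, and it does not arise from pushing a Hatcher--Thurston $2$-cell but from the failure of radius-$0$ connectivity between certain pairs of vertices of $X_g$.

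The paper's proof does not use $\mathcal{C}(\Sigma_g)$ at all. It argues entirely within $X_g$, by a double induction on the genus and on Wajnryb's \emph{radius} of a closed path about a fixed curve $\alpha$: radius-$0$ loops are handled via a ladder lemma and a hexagon lemma (\thref{prop:r0}), radius-$1$ loops require a delicate case analysis and the hyperelliptic face (\thref{prop:hyp}, \thref{lem:r1s1}), and the inductive step from radius $m$ to $m-1$ rests on a technical curve-construction lemma (\thref{lem:abcd}). If you want to salvage your approach, you would need a retraction-type statement---for instance, that the link in $\mathcal{C}(\Sigma_g)$ of every $0$-curve cut-system is connected inside $X_g$---and even then the odd-torus obstruction would force you back to something like \thref{prop:hyp}.
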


Following \cite{waj:elem}, we will prove \thref{thm:x} by induction on the genus and on a measure of complexity for edge paths, the \emph{radius}. Let $\mathbf{p}$ be a path in $X$ and let $v_0$ be a vertex of $\mathbf{p}$. Fix a curve $\alpha$ of $v_0$. The \emph{distance} of some vertex $v$ from $\alpha$ is defined as
\[
d_{\alpha}(v):=\min\set{\abs{\gamma \cap \alpha}:\gamma \in v}. 
\]
The \emph{radius} of $\mathbf{p}$ around $\alpha$ is the maximum distance of its vertices from $\alpha$. If all its vertices contain $\alpha$, $\mathbf{p}$ is called an $\alpha$-\emph{segment}. We will denote an $\alpha$-segment by a dashed line.

\subsection{Surfaces of genus 1} In this section, we are going to prove that $X_1$ is connected and simply connected.


\begin{proposition}
\thlabel{prop:conn1}
The complex $X_1$ is connected via paths that contain only edges of type i.
\end{proposition}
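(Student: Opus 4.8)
The plan is to prove the stronger statement that any two vertices $\Braket{\alpha}$, $\Braket{\gamma}$ of $X_1$ are joined by an edge-path all of whose edges are of type i, by strong induction on $n:=\abs{\alpha\cap\gamma}$. It helps to keep in mind that in genus $1$ a vertex is just the isotopy class of a single nonseparating $1$-curve, that a type-i edge $\Braket{\alpha}-\Braket{\beta}$ records the condition $\abs{\alpha\cap\beta}=1$, and that for closed $\Sigma_1$ the proposition is precisely the statement that the subgraph of the Farey graph spanned by the two $\phi$-odd slopes is connected; the inductive argument below is a surface-theoretic version of the familiar trick of rerouting a path around an undesirable Farey vertex.

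Base cases and the generic step go as follows. If $n=1$ there is a type-i edge by definition. If $n=0$, the curves are disjoint; for the closed torus they are then homologous and hence isotopic, while in general one produces a $1$-curve $\beta$ with $\abs{\beta\cap\alpha}=\abs{\beta\cap\gamma}=1$: take any simple closed curve $\beta_0$ meeting $\alpha$ and $\gamma$ once each (easily built since $\alpha$ and $\gamma$ are disjoint nonseparating curves), and replace $\beta_0$ by $t_{\alpha}(\beta_0)$ if $\phi(\beta_0)\ne 1$ — because $\beta_0\cdot\alpha$ is odd, exactly one of the two has $\phi$-value $1$, and $t_{\alpha}$ preserves $\abs{\,\cdot\cap\alpha}=1$ and fixes the disjoint curve $\gamma$. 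This yields the type-i path $\Braket{\alpha}-\Braket{\beta}-\Braket{\gamma}$. Now suppose $n\ge 2$ and that, reading along $\gamma$, two consecutive intersection points with $\alpha$ have the same sign. Then the $1$-$1$-$2$ trick of \thref{rem:112} produces a nonseparating $1$-curve $\delta$ with $\abs{\delta\cap\alpha}<n$ and $\abs{\delta\cap\gamma}<n$, and applying the inductive hypothesis to the pairs $(\delta,\alpha)$ and $(\delta,\gamma)$ connects $\Braket{\alpha}$ to $\Braket{\gamma}$ through $\Braket{\delta}$.

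The one case left over, and the main obstacle, is $n\ge 2$ with no consecutive same-sign pair along $\gamma$, i.e. the intersection signs strictly alternate; this forces $n$ even and $[\alpha]\cdot[\gamma]=0$. For the closed torus it does not occur, since $[\alpha]\cdot[\gamma]=0$ there forces $[\gamma]=[\alpha]$, hence $\alpha\simeq\gamma$ and $n=0$; so the difficulty is genuinely a feature of surfaces with boundary, and the $1$-$1$-$2$ trick is unavailable. The plan is again to manufacture an auxiliary $1$-curve $\beta$ with $\abs{\beta\cap\alpha}=1$ and $\abs{\beta\cap\gamma}<n$, and then apply the inductive hypothesis to $(\beta,\alpha)$ and $(\beta,\gamma)$. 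To build $\beta$ one works inside a regular neighborhood $\nu(\alpha\cup\gamma)$: an arc there crossing $\alpha$ exactly once must cross $\gamma$ an odd, hence positive, number of times, and it can be routed so as to cross $\gamma$ exactly once; the complementary arc of $\beta$, lying in $\Sigma_1^b\smallsetminus\nu(\alpha\cup\gamma)$, is then free enough to be chosen so that $\beta$ is nonseparating and $\phi(\beta)=1$, all without disturbing the two intersection numbers. Checking that this construction goes through uniformly in the number of boundary components is the delicate point; granting it, the whole argument collapses, via \thref{rem:112}, to the Farey-style rerouting.
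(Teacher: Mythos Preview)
Your overall structure matches the paper's: induct on $n=\abs{\alpha\cap\gamma}$, handle $n\le 1$ directly, and for $n\ge 2$ use the generalized $1$--$1$--$2$ trick of \thref{rem:112} whenever two consecutive intersection points along $\gamma$ have the same sign. The divergence, and the gap, is in the alternating-sign case.

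Your proposed construction there is not justified. The claim that an arc in $\nu(\alpha\cup\gamma)$ crossing $\alpha$ once must cross $\gamma$ an odd number of times is not obviously true when $\alpha\cdot\gamma=0$, and the assertion that the complementary arc can be chosen so that the resulting $\beta$ is nonseparating with $\phi(\beta)=1$ and $\abs{\beta\cap\gamma}<n$ is exactly the hard part, which you flag as ``delicate'' and then simply grant. So as written the proof has a hole precisely in the one nontrivial case.

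The paper avoids this case altogether rather than confronting it. The key observation (due to Wajnryb) is that one may replace $\alpha$ by a \emph{homologous} $1$-curve so as to remove any ``bigon with holes'' between $\alpha$ and $\gamma$: if on the capped-off torus $\overline{\Sigma}_1$ the images of $\alpha$ and $\gamma$ bound a bigon, reroute $\alpha$ across that region. The new curve differs from $\alpha$ by a sum of boundary classes, all of which have $\phi$-value $1$, so it is again a nonseparating $1$-curve (this uses \thref{thm:xiq} and \thref{rem:sgk}). Iterating, one arrives at representatives whose images on $\overline{\Sigma}_1$ are in minimal position; but on the closed torus minimal position forces all intersection signs to agree, so the alternating case never arises and the $1$--$1$--$2$ trick always applies. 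This is the content of the reference to \cite[Lemma~8]{waj:elem} and the bigon criterion in the paper's proof, and is what you are missing.
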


\begin{proof}
This follows by adapting the proof of \cite[Lemma 8]{waj:elem}. Let $\alpha,\beta$ be two nonseparating spin curves on $\Sigma$. We want to prove that there exists an edge-path from $\Braket{\alpha}$ to $\Braket{\beta}$. 

If $\alpha$ and $\beta$ are disjoint, they have a common geometric dual $\gamma$, and we can assume that it is a $1$-curve by Dehn twisting along $\alpha$ if necessary. Then $\Braket{\alpha}-\Braket{\gamma}-\Braket{\beta}$ is the required path.

In general, after cutting off any bigons as explained in \cite{waj:elem}, we may assume that the geometric intersection and the algebraic intersection between $\alpha$ and $\beta$ coincide by the bigon criterion \cite[Proposition 1.7]{primer}. Then, it suffices to apply the generalized 1-1-2 trick and conclude by induction on $\abs{\alpha \cap \beta}$. \qedhere
\end{proof}

Edges of type ii are necessary for simple connectivity, as the following Lemma shows.

\begin{lemma}[Square lemma]
Let $\mathbf{p}$ be the edge-path $\Braket{\delta_1}-\Braket{\delta_2}-\Braket{\delta_3}-\Braket{\delta_4}-\Braket{\delta_1}$. Assume that all the edges are of type i. If $\abs{\delta_2\cap \delta_4}=0$, then $\mathbf{p}$ is null-homotopic.
\end{lemma}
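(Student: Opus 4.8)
The plan is to exploit the extreme rigidity of simple closed curves on the torus, which makes the hypothesis $\abs{\delta_2\cap\delta_4}=0$ almost self-defeating. First I would recall the standard fact that on the closed surface $\Sigma_1$ any two disjoint essential simple closed curves are isotopic: cutting $\Sigma_1$ along an essential curve $c$ produces an annulus, every essential simple closed curve in an annulus is isotopic to its core, and the core of $\Sigma_1\setminus\nu(c)$ is in turn isotopic to $c$. Since the curves appearing in the vertices of $X_1$ lie in one-curve cut-systems they are nonseparating, hence essential; therefore $\abs{\delta_2\cap\delta_4}=0$ — whether $\delta_2$ and $\delta_4$ have disjoint representatives or are literally the same class — forces $\delta_2$ to be isotopic to $\delta_4$, so $\Braket{\delta_2}=\Braket{\delta_4}$ as vertices of $X_1$.

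Next I would use that $X_1$ is not a multigraph: between any two vertices there is at most one edge, because an edge is determined by the isotopy classes of its endpoints and its type is then forced by their geometric intersection number. In particular the type-i edge joining $\Braket{\delta_2}$ and $\Braket{\delta_3}$ is unique. Writing $e_1$ for the edge $\Braket{\delta_1}-\Braket{\delta_2}$ and $e_2$ for the edge $\Braket{\delta_2}-\Braket{\delta_3}$, the substitution $\Braket{\delta_4}=\Braket{\delta_2}$ identifies the third edge of $\mathbf{p}$ with $\overline{e_2}$ and the fourth with $\overline{e_1}$, so that $\mathbf{p}$ becomes the edge-loop $e_1\,e_2\,\overline{e_2}\,\overline{e_1}$. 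Cancelling the spur $e_2\overline{e_2}$ and then the spur $e_1\overline{e_1}$ shows that $\mathbf{p}$ is null-homotopic — in fact already in the $1$-skeleton of $X_1$, no $2$-cell being needed.

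I do not expect a genuine obstacle here: the whole content of the lemma is the torus-specific observation that disjoint essential curves are isotopic, after which the conclusion is a purely formal spur reduction. The only point deserving a line of care is verifying that the four edges of $\mathbf{p}$ really do collapse in pairs, i.e. that the edge $\Braket{\delta_3}-\Braket{\delta_4}$ is, with reversed orientation, the same edge as $\Braket{\delta_2}-\Braket{\delta_3}$, and similarly $\Braket{\delta_4}-\Braket{\delta_1}=\overline{e_1}$; this is exactly where one invokes that $X_1$ carries at most one edge between any two of its vertices.
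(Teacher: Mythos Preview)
Your argument has a genuine gap: it assumes the ambient surface is the \emph{closed} torus, but the complex $X_1$ is defined for $\Sigma_1^b$ with an arbitrary number of boundary components. This generality is essential, because the inductive proof of simple connectivity of $X_g$ proceeds by cutting along curves of a cut-system, and each such cut raises $b$; in particular \thref{lemma:1conn} reduces a closed $\alpha$-segment in $X_g$ to a closed path in $X_{g-1}$ on a surface with two extra boundary circles, so the base case must cover $\Sigma_1^b$ for all $b\ge 0$.

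On a torus with at least one boundary component your key step fails outright: two disjoint nonseparating $1$-curves need not be isotopic, since they may cobound an annulus containing some of the boundary circles. Hence $\abs{\delta_2\cap\delta_4}=0$ does not imply $\Braket{\delta_2}=\Braket{\delta_4}$, and the path cannot be collapsed by spur reduction. The paper instead follows Wajnryb's argument for the ordinary cut-system complex: one introduces an auxiliary $1$-curve $\beta:=\tau_{\delta_2}^{\pm 2}(\delta_3)$ (the squared twist, rather than a single twist, is what guarantees $\phi(\beta)=1$) and uses it to decompose the square into genuine $2$-cells of $X_1$, invoking along the way the observation of \thref{rem:112}\ref{noiii} that $\beta$ cannot meet $\delta_1$ exactly once.
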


\begin{proof}
This is proven in the same way as \cite[Lemma 9]{waj:elem}, setting $\beta:=\tau_{\delta_2}^{\pm2}(\delta_3)$. Notice that such a curve cannot intersect $\delta_1$ once as there are no i-i-i triangles. \qedhere
\end{proof}

\begin{lemma}[{\cite[Lemma 10]{waj:elem}}]
\thlabel{lem:bigons}
Every closed path $\mathbf{p}$ in $X_1$ where all the edges are of type i is homotopic to another closed path $\mathbf{p}'$ where each curve is homologous to the corresponding curve in $\mathbf{p}$ but no two curves form a bigon. 
\end{lemma}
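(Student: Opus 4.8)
The statement is the genus-$1$ instance of Wajnryb's \cite[Lemma 10]{waj:elem}, and my plan is to prove it along the same lines, adding only one remark to keep the whole construction inside $X_1$: by \thref{thm:xiq}(1) the spin structure $\phi$ factors through $H_1(\Sigma_1^b;\sfrac{\Z}{2\Z})$, so modifying the curves $\delta_i$ of $\mathbf{p}$ within their homology classes changes neither their spin values nor their pairwise algebraic intersection numbers. Hence any such modification keeps every curve a $1$-curve, and once the modified curves are in minimal position the consecutive ones still meet exactly once, so they span a genuine edge-path of $X_1$; this is also why the statement only requires the $i$-th curve of $\mathbf{p}'$ to be homologous to $\delta_i$. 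The problem thus reduces to removing all bigons between the (cyclically many) curves of $\mathbf{p}$ while staying within their homology classes, and realizing this process as a homotopy of $\mathbf{p}$ in $X_1$.

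For the bigon removal itself the quickest route is metric: fix a complete hyperbolic metric on $\operatorname{int}(\Sigma_1^b)$, or a flat metric if $b=0$, and replace each $\delta_i$ by its geodesic representative. Distinct essential simple closed geodesics are pairwise in minimal position, so no two of the new curves form a bigon; and since each geodesic is isotopic to the corresponding $\delta_i$, the path $\mathbf{p}$ is not even changed. Alternatively, following Wajnryb more literally — which is the version that will be imitated in the higher-genus cut-and-paste arguments — I would induct on $\sum_{i<j}\abs{\delta_i\cap\delta_j}$: if this is not minimal then, by the bigon criterion \cite[Proposition 1.7]{primer}, some pair forms a bigon; an innermost such bigon can be surgered away, replacing one curve $\delta_j$ by a homologous curve $\delta_j'$ that meets its two neighbours again algebraically — hence, after clearing bigons with them, geometrically — once. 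The resulting replacement of $\mathbf{p}$ is an elementary homotopy: the local picture at $\Braket{\delta_{j-1}}-\Braket{\delta_j}-\Braket{\delta_{j+1}}$ versus $\Braket{\delta_{j-1}}-\Braket{\delta_j'}-\Braket{\delta_{j+1}}$ is a triangle or a square of $X_1$, except that when $\delta_j$ and $\delta_j'$ happen to meet twice one fills it using the Square lemma together with the absence of i-i-i triangles (\thref{rem:112}).

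In genus $1$ there is essentially no obstacle: the geodesic argument settles the lemma outright, and the only point worth isolating is the homological invariance of $\phi$, which keeps everything among $1$-curves. If one insists instead on the purely combinatorial proof, the delicate part is the verification just sketched — that every bigon-removing surgery is realized by pushing $\mathbf{p}$ across the $2$-cells of $X_1$, and that surgering does not create bigons one cannot subsequently remove — which is handled by always choosing innermost bigons, and by the fact that the Square lemma was set up precisely for the configurations in which two successive curves of the path meet twice.
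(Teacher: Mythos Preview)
The paper does not supply a proof of this lemma; it simply imports \cite[Lemma~10]{waj:elem} verbatim. So there is no argument in the paper to compare against, and your task was really to see why Wajnryb's lemma carries over to $X_1$.

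Your geodesic argument is correct and is the clean way to do it: replacing every $\delta_i$ by its geodesic representative leaves the isotopy classes, hence the vertices of $\mathbf{p}$, untouched, and distinct simple closed geodesics form no bigons. Since consecutive curves already had geometric intersection number~$1$ as isotopy classes, their geodesic representatives realise that minimum and still meet once, so $\mathbf{p}'=\mathbf{p}$ as paths in $X_1$. The spin remark via \thref{thm:xiq}(1) is true but superfluous here, because you never leave the isotopy classes.

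The combinatorial paragraph, however, is over-engineered and slightly confused. Removing a bigon between $\delta_j$ and some $\delta_k$ is an \emph{isotopy} of $\delta_j$; the vertex $\Braket{\delta_j}$ does not change, so there is no ``elementary homotopy'' to realise, no triangle or square to fill, and no call for the Square lemma. The passage ``the local picture \dots\ is a triangle or a square of $X_1$'' describes a non-existent move: $\Braket{\delta_j}=\Braket{\delta_j'}$ already. If you want to keep a combinatorial proof, the honest version is simply: pick representatives, remove innermost bigons by isotopy until none remain; the path in $X_1$ is unchanged throughout. Drop the homotopy-across-$2$-cells rhetoric.
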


\begin{proposition}
\thlabel{prop:1conn1}
The complex $X_1$ is simply connected.
\end{proposition}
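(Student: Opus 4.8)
The plan is to follow the strategy of \cite{waj:elem}, inserting two preliminary reductions to accommodate the edges of type ii that do not occur there. Let $\mathbf{p}$ be a closed edge-path in $X_1$; since $X_1$ is connected, it suffices to show that $\mathbf{p}$ bounds a disk. First I would eliminate the edges of type ii: by the 1-1-2 trick of \thref{rem:112}, every edge of type ii is one side of a triangle whose other two sides are of type i, so homotoping $\mathbf{p}$ across one such triangle at a time produces a homotopic closed path all of whose edges are of type i. Then, invoking \thref{lem:bigons}, I may further assume that no two of the curves occurring along $\mathbf{p}$ bound a bigon, so that in particular any two of them are in minimal position.

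Now fix a curve $\alpha$ with $\Braket{\alpha}$ a vertex of $\mathbf{p}$, and induct on the radius $r$ of $\mathbf{p}$ around $\alpha$. Since a cut-system on a genus-$1$ surface consists of a single curve, an $\alpha$-segment is the constant path, so if $r \le 1$ the path $\mathbf{p}$ can be contracted directly: the absence of i-i-i triangles (\thref{rem:112}) prevents two consecutive vertices, both different from $\Braket{\alpha}$, from meeting $\alpha$ once, and the Square lemma then absorbs the remaining excursions of $\mathbf{p}$. For the inductive step suppose $r \ge 2$, and choose a vertex $\Braket{\delta}$ of $\mathbf{p}$ with $|\delta \cap \alpha| = r$. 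As $\delta$ and $\alpha$ are in minimal position with $|\delta \cap \alpha| \ge 2$, the generalized 1-1-2 trick applies to $\delta$ and $\alpha$ and yields a nonseparating $1$-curve $\epsilon$ with $|\epsilon \cap \alpha| < r$ and $|\epsilon \cap \delta| < r$, together with a triangle on $\delta$, $\epsilon$ and a third curve. Using that triangle and the Square lemma, I would homotope the length-two subpath of $\mathbf{p}$ centered at $\Braket{\delta}$ to one passing through $\Braket{\epsilon}$, thereby replacing $\Braket{\delta}$ by vertices at smaller distance from $\alpha$; performing this at every vertex of distance $r$ lowers the radius, and the inductive hypothesis completes the argument.

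The main obstacle is the radius-lowering step: one must verify that the $2$-cells required to slide $\mathbf{p}$ past a vertex of maximal distance are actually present in $X_1$ — this is precisely where the interplay between the generalized 1-1-2 trick and the Square lemma is used, exactly as in Wajnryb's analysis of the Hatcher--Thurston complex — and that the operation does not create new vertices of distance $r$, which usually forces a secondary induction on the number of such vertices. A further subtlety not present in \cite{waj:elem} is the small-radius endgame: a genus-$1$ surface with boundary carries non-isotopic disjoint $1$-curves, so a loop of radius $\le 1$ need not be literally constant, and one has to contract its excursions through such curves by hand using the Square lemma.
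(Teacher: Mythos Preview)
Your reduction to type-i edges via the 1-1-2 trick and the appeal to \thref{lem:bigons} match the paper exactly, and the overall scaffolding (induction on radius, following Wajnryb) is correct. The gap is in your radius-lowering step.

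You apply the generalized 1-1-2 trick to the pair $(\delta,\alpha)$ and claim this produces, along with the curve $\epsilon$, ``a triangle on $\delta$, $\epsilon$ and a third curve'' which, together with the Square lemma, lets you slide $\mathbf p$ past $\Braket{\delta}$. But the generalized 1-1-2 trick only guarantees $|\epsilon\cap\alpha|<r$ and $|\epsilon\cap\delta|<r$; it does \emph{not} produce a triangular $2$-cell of $X_1$ unless $r=2$, and more importantly it says nothing about how $\epsilon$ meets the two neighbours $\Braket{\gamma_1},\Braket{\gamma_2}$ of $\Braket\delta$ in $\mathbf p$. Those neighbours are the curves you actually need to reattach to, and you have given no reason why the Square lemma (which requires $|\delta_2\cap\delta_4|=0$ for a specific pair of opposite vertices) applies to any quadrilateral involving $\epsilon$ and the $\gamma_i$.

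This is exactly why the paper does \emph{not} use the 1-1-2 trick here. Instead it follows Wajnryb's Proposition~7 verbatim, with a single modification: whenever Wajnryb constructs the auxiliary curve $\beta$ by twisting once along a curve of the path, one twists twice (so $\beta:=\tau_{\delta'}^{\pm2}(\delta'')$ for suitable path-curves $\delta',\delta''$). The point is that $\delta',\delta''$ are $1$-curves, so a single twist would leave the spin world, but a squared twist preserves $\phi$ and still does the intersection-number arithmetic Wajnryb needs. Because $\beta$ is built from curves already in the path, the intersection hypotheses of the Square lemma are available for free --- this is what is missing from your construction of $\epsilon$. The paper flags the same modification already in its proof of the Square lemma (``setting $\beta:=\tau_{\delta_2}^{\pm2}(\delta_3)$''), and your base case $r\le1$ would also be handled by that mechanism rather than by the ad hoc contraction you sketch at the end.
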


\begin{proof}
Let $\mathbf{p}=\Braket{\alpha_1}-\dots-\Braket{\alpha_k}-\Braket{\alpha_1}$ be a closed path. By the 1-1-2 trick, we can assume that it contains only edges of type i (hence $k \ne 3$). Then we proceed as in the proof of \cite[Proposition 7]{waj:elem}, using a squared twist to construct the curve $\beta$ instead of a single twist, just as in the proof of the square lemma. \qedhere
\end{proof}

\subsection{Connectivity}

From now on, $\Sigma_{g,b}$ will be a fixed surface of genus $g\ge2$, and we will call $\overline{\Sigma}_g$ the surface obtained by capping each boundary component with a disk. In this section, we are going to prove that the complex $X_g$ associated to $\Sigma_{g,b}$ is connected.

\begin{remark}
\thlabel{rem:oddss}
As the genus increases, our proof becomes a bit more involved than Wajnryb's. Indeed, Wajnryb's arguments often involve cutting along certain curves in one or more cut-systems, but when we cut a spin surface along a separating union of $1$-curves, we may get some subsurfaces where there are no nonseparating $1$-curves: one-holed tori with an odd pull-back spin structure, two-holed odd tori whose boundary components have spin value $1$, or annuli whose belt curve has spin value $0$. This will require some extra care in our arguments. See Figure \ref{fig:odd} for an example.
\end{remark}

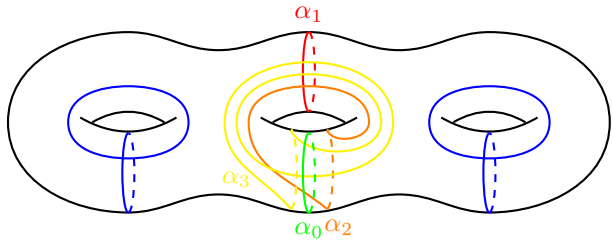
\begin{figure}
\centering
\begin{tikzpicture}[scale=.8]
\draw [thick] (0,0) to [out=90, in=-180] (2,1.5) to [out=0, in=180] (3.5,1.2) to [out=0, in=180] (5,1.5) to [out=0, in=180] (6.5,1.2) to [out=0, in=180] (8,1.5)  to [out=0, in=90] (10,0) to [out=-90, in=0] (8,-1.5) to [out=-180, in=0] (6.5,-1.2) to [out=-180, in=0] (5,-1.5) to [out=-180, in=0] (3.5,-1.2) to [out=-180, in=0] (2,-1.5) to [out=-180, in=-90] (0,0);
\draw [thick] (1.2,.08) to [out=-30, in=-150] (2.8,.08);
\draw [thick] (1.4,0) to [out=30, in=150] (2.6,0);
\draw [thick] (4.2,.08) to [out=-30, in=-150] (5.8,.08);
\draw [thick] (4.4,0) to [out=30, in=150] (5.6,0);
\draw [thick] (7.2,.08) to [out=-30, in=-150] (8.8,.08);
\draw [thick] (7.4,0) to [out=30, in=150] (8.6,0);
\draw [thick, red] (5,1.5) arc (90:270:.1 and .66);
\draw [thick, red, dashed] (5,1.5) arc (90:-90:.1 and .66);
\draw [thick, orange] (5.3,-1.44) to [out=140, in=-90] (4,0) to [out=90, in=180] (5,.6) to [out=0, in=90] (6,0) to [out=-90, in=-60] (5.3,-.14);
\draw [thick, orange, dashed] (5.3,-1.44) arc (-90:90:.1 and .66);
\draw [thick, yellow] (4.7,-1.44) to [out=130, in=-90] (3.6,0) to [out=90, in=180] (5,1) to [out=0, in=90] (6.4,0) to [out=-90, in=0] (5,-.9) to [out=180, in=-90] (3.8,0) to [out=90, in=180] (5,.8) to [out=0, in=90] (6.2,0) to [out=-90, in=-60] (4.7,-.14);
\draw [thick, yellow, dashed] (4.7,-1.44) arc (-90:90:.1 and .66);
\draw [thick, green] (5,-1.5) arc (270:90:.1 and .66);
\draw [thick, green, dashed] (5,-1.5) arc (-90:90:.1 and .66);
\draw [thick, blue] (2,-1.5) arc (270:90:.1 and .66);
\draw [thick, blue, dashed] (2,-1.5) arc (-90:90:.1 and .66);
\draw [thick, blue] (8,-1.5) arc (270:90:.1 and .66);
\draw [thick, blue, dashed] (8,-1.5) arc (-90:90:.1 and .66);
\draw [thick, blue] (1,0) to [out=90, in=180] (2,.6) to [out=0, in=90] (3,0) to [out=-90, in=0] (2,-.6) to [out=180, in=-90] (1,0);
\draw [thick, blue] (7,0) to [out=90, in=180] (8,.6) to [out=0, in=90] (9,0) to [out=-90, in=0] (8,-.6) to [out=180, in=-90] (7,0);
\node [red] at (5,1.8) {$\alpha_1$};
\node [orange] at (5.5,-1.75) {$\alpha_2$};
\node [yellow] at (3.8,-.95) {$\alpha_3$};
\node [green] at (5,-1.8) {$\alpha_0$};
\end{tikzpicture}
\caption{An impossible i-i-ii triangle: here the blue curves have all spin value $0$, hence the nonseparating $1$-curves $\alpha_1,\alpha_2,\alpha_3$ cannot be completed simultaneously to three spin cut-systems. Note, however, that after replacing $\alpha_1$ with the homologous curve $\alpha_0$ it is possible to form a well-defined triangle.}
\label{fig:odd}
\end{figure}

The proof that $X_g$ is connected will be by induction on the genus. The base case is \thref{prop:conn1}. For the inductive step, we will assume that the complex is connected when the genus is less than $g$, and use the following easy observation.

\begin{lemma}[{\cite[Lemma 12]{waj:elem}}]
\thlabel{lemma:conn}
If two vertices of $X_g$ have a curve $\alpha$ in common, they are connected via an $\alpha$-segment that contains only edges of type i.
\end{lemma}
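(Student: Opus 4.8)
\emph{Proof proposal.} The plan is to cut the surface open along the common curve $\alpha$ and argue by induction on the genus, following Wajnryb's strategy. Write the two vertices as $\Braket{\alpha,\alpha_2,\dots,\alpha_g}$ and $\Braket{\alpha,\beta_2,\dots,\beta_g}$. Every curve in a cut-system of $1$-curves has nonzero homology class, hence is nonseparating, so cutting $\Sigma_{g,b}$ along $\alpha$ produces a \emph{connected} surface $\Sigma'$ of genus $g-1$ with $b+2$ boundary components; since $\phi(\alpha)=1$, by \thref{cor:cut} the pull-back spin structure $\phi'$ on $\Sigma'$ is again even. In particular, for $g\ge2$ the spin cut-system complex $X_{g-1}$ associated to $\Sigma'$ is non-empty, and the inductive hypothesis (connectivity of $X_h$ for $h<g$, with base case \thref{prop:conn1}) applies to it.

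Next I would set up the dictionary between cut-systems of $\Sigma_{g,b}$ containing $\alpha$ and cut-systems of $\Sigma'$. A family of curves disjoint from $\alpha$ together with which $\alpha$ forms a cut-system of $1$-curves of $\Sigma_{g,b}$ is exactly a cut-system of $1$-curves of $\Sigma'$; this is checked by completing $\Braket{\alpha,\alpha_2,\dots,\alpha_g}$ to a geometric symplectic basis of $\overline{\Sigma}_g$ that restricts to one of the ``genus part'' of $\Sigma'$, and the spin values of the $\alpha_i$ are unchanged because $\phi'$ is by definition the restriction of $\phi$. Moreover, adding or deleting the curve $\alpha$ (which is disjoint from everything in $\Sigma'$) does not alter the geometric intersection numbers of the remaining curves, so an edge of $X_g$ both of whose endpoints contain $\alpha$ is of type i (resp. type ii) if and only if the corresponding edge of $X_{g-1}(\Sigma')$ is. Thus our two vertices of $X_g$ correspond to vertices $v',w'$ of $X_{g-1}(\Sigma')$.

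By the inductive hypothesis there is an edge-path $\mathbf{q}$ from $v'$ to $w'$ in $X_{g-1}(\Sigma')$. I would then upgrade $\mathbf{q}$ to a path using only edges of type i: every edge of type ii of $\mathbf{q}$ is contained in a triangle by the 1-1-2 trick of \thref{rem:112}, and replacing each such edge by the two edges of type i of its triangle (whose new vertex is again a cut-system of $1$-curves, since the trick produces a $1$-curve) yields a path $\mathbf{q}'$ from $v'$ to $w'$ with only edges of type i. Finally, restoring the curve $\alpha$ to every vertex of $\mathbf{q}'$ turns it into an edge-path of $X_g$ from $\Braket{\alpha,\alpha_2,\dots,\alpha_g}$ to $\Braket{\alpha,\beta_2,\dots,\beta_g}$; all its vertices contain $\alpha$, and by the intersection-number remark above all its edges are of type i. This is the desired $\alpha$-segment.

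The main obstacle is the bookkeeping of the second paragraph: making rigorous the correspondence between cut-systems through $\alpha$ and cut-systems of $\Sigma'$, and in particular verifying that the induced spin structure on $\Sigma'$ is even so that the inductive hypothesis genuinely applies. The potentially troublesome degenerate pieces flagged in \thref{rem:oddss} (odd one- or two-holed tori, annuli with $0$-valued belt curve) do not occur here precisely because we are cutting along a single nonseparating $1$-curve, leaving a connected genus-$(g-1)$ surface with even spin structure. Once this is in place, the homotopical part of the argument is immediate.
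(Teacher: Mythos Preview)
Your argument is correct and is exactly the intended one: the paper gives no proof of its own here but simply invokes \cite[Lemma 12]{waj:elem}, whose proof is precisely ``cut along $\alpha$, pass to the genus-$(g-1)$ surface, and apply the inductive hypothesis,'' which is what you do. One small simplification: since the inductive hypothesis you have available is actually \thref{prop:conn} in genus $g-1$ (connectivity \emph{via paths of type i only}), the detour through the 1-1-2 trick to eliminate type ii edges is unnecessary---the path $\mathbf{q}$ already consists solely of type i edges.
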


We start by recalling the following construction of Wajnryb. Let $\gamma_1,\gamma_2\subset \Sigma_g^b$ be two nonseparating $1$-curves such that $\abs{\gamma_1\cap\gamma_2}=n\ge2$. We want to find a third nonseparating $1$-curve $\gamma$ such that $\abs{\gamma\cap\gamma_i}<n$ for $i=1,2$. As in the proof of \cite[Lemma 15]{waj:elem}, choose orientations on $\gamma_1$ and $\gamma_2$, and let $P_1$ be an intersection point. Construct a curve $\delta_1$ as follows: following the orientations of $\gamma_1$ and $\gamma_2$, go from $P_1$ to the next intersection point $P_2$ along $\gamma_1$, then follow $\gamma_2$ until getting back to $P_1$. Then construct $\delta_2$ as follows: go from $P_2$ along $\gamma_1$ until the first intersection point that was not met by $\delta_1$, or until $P_1$ if there is no such point, and then follow $\gamma_2$ all the way back to $P_2$. Repeat this construction until every arc of $\gamma_1$ and $\gamma_2$ is covered by an arc of some $\delta_i$, $i=1,\dots,k$. Then choose the opposite orientation of $\gamma_2$ and start again, obtaining curves $\epsilon_1,\dots,\epsilon_r$. Notice that the following relations hold in $H_1(\overline{\Sigma_g};\Z)$: $[\delta_1]-[\varepsilon_1]=[\gamma_2]$, $[\delta_1]+\dots+[\delta_k]=[\gamma_1]+[\gamma_2]$ and $[\varepsilon_1]+\dots+[\varepsilon_r]=[\gamma_1]-[\gamma_2]$. This implies that $\delta_1$ and some $\delta_i$, $i\ge2$, or $\epsilon_1$ and some $\epsilon_j$, $j\ge 2$, are nonseparating. 

Now we study the spin values of the above curves. Observe first that if $P_1$ and $P_2$ have the same sign, then $\delta_1$ and $\varepsilon_1$ intersect $\gamma_2$ and each other exactly once, and exactly one of them is a $1$-curve; this is an instance of the 1-1-2 trick. A finer observation is the following.

\begin{lemma}
Let $\gamma_1,\gamma_2\subset \Sigma_g^b$ be two oriented $1$-curves such that $\abs{\gamma_1\cap\gamma_2}=n\ge 1$, and construct $\delta_1,\dots,\delta_k$ as above. Let $\ell$ be the number of intersection points between some $\delta_i$ and $\delta_j$, for $i,j=1,\dots,k$. Then $k+\ell = n$.
\end{lemma}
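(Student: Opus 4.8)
The plan is to read off the structure of the $\delta_i$ from the $4$-valent graph $G:=\gamma_1\cup\gamma_2$ on $\Sigma_g^b$, whose $n$ vertices are the points of $\gamma_1\cap\gamma_2$ and whose $2n$ edges are the arcs into which those points cut $\gamma_1$ and $\gamma_2$. Fix the orientations of $\gamma_1,\gamma_2$ used in the construction. The key preliminary observation is that, by construction, each $\delta_i$ has the form $\delta_i=A_i\cup B_i$, where $A_i$ is a sub-arc of $\gamma_1$ traversed positively from $P_i$ to $P_{i+1}$ and $B_i$ is a sub-arc of $\gamma_2$ traversed positively from $P_{i+1}$ back to $P_i$, with indices taken cyclically and $P_{k+1}=P_1$. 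Since the $\gamma_1$-part of $\delta_{i+1}$ starts where that of $\delta_i$ ends, the arcs $A_1,\dots,A_k$ tile $\gamma_1$ (they partition its edge set), and in particular the \emph{turning points} $P_1,\dots,P_k$ are $k$ pairwise distinct vertices of $G$. A count of the four arc-ends at each vertex shows, as a consistency check, that $B_1,\dots,B_k$ tile $\gamma_2$ as well, so the construction halts exactly when $\gamma_1$ has been traversed once.

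Next I would classify the vertices of $G$. Set $\Delta:=\delta_1\cup\dots\cup\delta_k$, an immersed curve carried by $G$, so that every self- and mutual intersection of the $\delta_i$ occurs at a vertex of $G$. At a vertex $Q$ the four incident arc-ends comprise an incoming and an outgoing end of $\gamma_1$ and likewise of $\gamma_2$, and since the $\delta_i$ respect the orientations of $\gamma_1,\gamma_2$, the curve $\Delta$ pairs these ends at $Q$ either \emph{straight} ($\gamma_1^{\mathrm{in}}$ with $\gamma_1^{\mathrm{out}}$, $\gamma_2^{\mathrm{in}}$ with $\gamma_2^{\mathrm{out}}$) or as a \emph{turn} ($\gamma_1^{\mathrm{in}}$ with $\gamma_2^{\mathrm{out}}$, $\gamma_2^{\mathrm{in}}$ with $\gamma_1^{\mathrm{out}}$). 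Inspecting the cyclic order of the four ends around $Q$ --- which is the same for both signs of the crossing, up to relabelling --- one checks that the straight pairing keeps the transverse crossing, while the turn pairing is the non-crossing (smoothing) resolution. Hence a vertex is a genuine double point of $\Delta$ exactly when it is straight, and $\Delta$ passes through it disjointly exactly when it is a turn.

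Finally I would count. Each $\delta_i$ switches between $\gamma_1$ and $\gamma_2$ exactly twice --- at $P_i$ and at $P_{i+1}$ --- so there are $2k$ such transitions in all; a turning vertex carries exactly two transitions (one $\gamma_1\to\gamma_2$ and one $\gamma_2\to\gamma_1$) and a straight vertex carries none, so there are exactly $k$ turning vertices. The remaining $n-k$ vertices of $G$ are then precisely the intersection points among the $\delta_i$ (self-intersections included), whence $\ell=n-k$, which is the claim.

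The step I expect to demand the most care is the preliminary observation in the first paragraph: one must check, by unwinding Wajnryb's recipe, that each $\delta_i$ really is a single $\gamma_1$-run followed by a single $\gamma_2$-run, that the points $P_1,\dots,P_k$ are distinct, and that the march along $\gamma_1$ producing $\delta_{i+1}$ never halts at a previously used turning point (so that the $A_i$ genuinely tile $\gamma_1$). These facts pin down the value of $k$ and are exactly what makes the transition count in the last paragraph valid; everything else is the elementary local analysis above.
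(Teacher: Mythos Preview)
Your proof is correct and takes a genuinely different route from the paper. The paper argues by induction on $n$: it removes one intersection point $p$ by a surgery on the surface and observes that either two $\delta_i,\delta_j$ meet at $p$ (so $\ell$ drops by one and $k$ is unchanged) or two of them both turn at $p$ and hence merge (so $k$ drops by one and $\ell$ is unchanged), whence $k+\ell$ drops by exactly one. Your approach is instead a direct classification of the $n$ vertices of the graph $G=\gamma_1\cup\gamma_2$ into exactly $k$ ``turning'' vertices (the $P_i$) and $n-k$ ``straight'' vertices, together with the local check that the turning pairing is the non-crossing resolution while the straight pairing retains the transverse crossing. Your version is more structural --- it pinpoints precisely which original intersection points survive as intersections among the $\delta_i$ --- whereas the paper's induction is shorter and needs no global bookkeeping about the $B_i$.

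One remark on the step you flag: the ``count of the four arc-ends'' is not by itself enough to conclude that the $B_i$ tile $\gamma_2$, since nothing in that count alone forbids two $B_i$'s from overlapping along a $\gamma_2$-arc. What does work is the short induction you are implicitly gesturing at: if the set $S_{i-1}$ of points met by $\delta_1,\dots,\delta_{i-1}$ is exactly the positive $\gamma_2$-arc from $P_i$ to $P_1$, then $P_{i+1}\notin S_{i-1}$ forces $P_{i+1}$ to lie on the complementary arc, so $B_i$ meets $S_{i-1}$ only at $P_i$ and $S_i$ is the positive $\gamma_2$-arc from $P_{i+1}$ to $P_1$. This gives both the disjointness of the $B_i$ and, at $i=k$, the fact that they cover $\gamma_2$. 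With that in hand, the rest of your argument goes through cleanly.
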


\begin{proof}
We do induction on $n$. If $n=1$, we obtain a single curve $\delta_1$, which is the oriented resolution of $\gamma_1\cup\gamma_2$. Assume now that $n>1$. Remove an intersection point $p$ between $\gamma_1$ and $\gamma_2$ via some surgery on $\Sigma_g^b$ (for example, gluing in a tube). We will show that $k+\ell$ decreases by one. 

Notice that since $n \ge 2$, our surgery only affects two curves $\delta_i, \delta_j$, that either meet at $p$ or both turn at $p$ following the orientations of $\gamma_1$ and $\gamma_2$. If they intersect at $p$, after the surgery $\ell$ decreases by one, and $k$ stays the same. If both turn at $p$, they merge after the surgery, so $k$ decreases by one and $\ell$ stays the same. \qedhere
\end{proof}

\begin{corollary}
\thlabel{cor:15}
Let $\gamma_1,\gamma_2\subset \Sigma_g^b$ be two oriented $1$-curves such that $\abs{\gamma_1\cap\gamma_2}=n\ge1$, and construct $\delta_1,\dots,\delta_k$ as above. Then $\phi(\delta_1)+\dots+\phi(\delta_k) \equiv 0 \ (\operatorname{mod}2)$.
\end{corollary}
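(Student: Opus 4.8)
The plan is to transfer the statement to the quadratic refinement $q_{\phi}=\phi+1$ of \thref{thm:xiq}(2), where it becomes a short computation in $H_1(\Sigma_g^b;\sfrac{\Z}{2\Z})$. I would first iterate the identity $q_{\phi}(a+b)=q_{\phi}(a)+q_{\phi}(b)+a\cdot b$ to record that, for any classes $a_1,\dots,a_k$,
\[
\phi(a_1+\dots+a_k)\equiv\sum_{i=1}^k\phi(a_i)+(k-1)+\sum_{i<j}a_i\cdot a_j\pmod 2,
\]
where $\cdot$ denotes the mod-$2$ intersection pairing, through which $\phi$ factors by \thref{thm:xiq}(1).

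The next step is to feed in the homological relation $[\delta_1]+\dots+[\delta_k]=[\gamma_1]+[\gamma_2]$. This is the mod-$2$ reduction of the identity recorded just before the preceding lemma; it in fact holds already on $\Sigma_g^b$, because the construction exhibits each $\delta_i$ as one sub-arc of $\gamma_1$ joined to one sub-arc of $\gamma_2$, so collectively the $\delta_i$ traverse $\gamma_1$ and $\gamma_2$ once each, and the $1$-chains $\sum_i\delta_i$ and $\gamma_1+\gamma_2$ differ only by boundary corrections supported near the points of $\gamma_1\cap\gamma_2$. Since the $\gamma_i$ are $1$-curves we have $\phi(\gamma_1)=\phi(\gamma_2)=1$, hence $\phi([\gamma_1]+[\gamma_2])\equiv1+[\gamma_1]\cdot[\gamma_2]$, and the displayed identity collapses to
\[
\sum_{i=1}^k\phi(\delta_i)\equiv k+[\gamma_1]\cdot[\gamma_2]+\sum_{i<j}[\delta_i]\cdot[\delta_j]\pmod 2.
\]

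Finally I would evaluate the two intersection sums geometrically: an algebraic intersection number has the parity of the corresponding geometric one, so $[\gamma_1]\cdot[\gamma_2]\equiv\abs{\gamma_1\cap\gamma_2}=n$ and $\sum_{i<j}[\delta_i]\cdot[\delta_j]\equiv\ell\pmod 2$, where $\ell$ is the quantity appearing in the preceding lemma. Substituting the relation $k+\ell=n$ from that lemma gives $\sum_i\phi(\delta_i)\equiv k+n+\ell=2n\equiv0\pmod 2$, which is the claim.

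There is no real obstacle here, since this is essentially a corollary of \thref{thm:xiq} together with the preceding lemma; the one point meriting a line of justification is the identification $\sum_{i<j}\abs{\delta_i\cap\delta_j}=\ell$. This requires the $\delta_i$ to be simple and in pairwise minimal position, so that $\ell$ counts exactly their mutual transversal crossings with no self-crossing slipping in — and that is precisely how Wajnryb's construction is arranged, each $\delta_i$ being an embedded sub-arc of $\gamma_1$ concatenated with an embedded sub-arc of $\gamma_2$ that it meets only at the two common endpoints.
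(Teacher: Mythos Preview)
Your argument is correct and is precisely the computation the paper has in mind: the corollary is stated without proof, and the preceding lemma ($k+\ell=n$) together with the homological identity $[\delta_1]+\dots+[\delta_k]=[\gamma_1]+[\gamma_2]$ are exactly the two ingredients needed to feed into the quadratic-form formula from \thref{thm:xiq}. Your bookkeeping with $q_\phi$ and the parity identifications $[\gamma_1]\cdot[\gamma_2]\equiv n$, $\sum_{i<j}[\delta_i]\cdot[\delta_j]\equiv\ell$ is accurate, and the final substitution $k+n+\ell=2n$ is the point.
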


\begin{proposition}
\thlabel{prop:conn}
The complex $X_g$ is connected via paths that contain only edges of type i.
\end{proposition}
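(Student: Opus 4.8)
The plan is to adapt Wajnryb's proof of connectivity of the classical cut-system complex \cite{waj:elem}, arguing by induction on the genus. The base case $g=1$ is \thref{prop:conn1}, so I fix $g\ge2$ and assume that $X_h$ is connected through type-i paths for every surface of genus $h<g$, with arbitrary boundary and arbitrary even spin structure. Given two vertices $v,w$ of $X_g$, by \thref{lemma:conn} it is enough to join each of them, along a type-i path, to a cut-system containing one fixed $1$-curve $\alpha$ of $\Sigma_g^b$: the two resulting cut-systems then share $\alpha$, so \thref{lemma:conn} connects them. After a preliminary change of coordinates I may also assume that $[\alpha]$ is in general position with respect to $v$, say not contained in the span of the homology classes of the curves of $v$. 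Thus the core claim to establish is that every vertex $v$ is joined by a type-i path to a cut-system containing $\alpha$, and I would prove it by induction on $d_\alpha(v)$.

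If $d_\alpha(v)=0$, some curve $\gamma\in v$ is disjoint from $\alpha$. Cutting $\Sigma_g^b$ along $\gamma$ gives, by \thref{cor:cut} and the convention of \thref{rem:sgk}, a surface of genus $g-1$ carrying an even spin structure that restricts $\phi$, inside which the remaining $g-1$ curves of $v$ form a cut-system and $\alpha$ is a nonseparating $1$-curve. By the genus induction hypothesis this cut-system is joined by a type-i path to a cut-system containing $\alpha$ --- such a vertex exists by the spin change-of-coordinates principle --- and re-gluing $\gamma$ turns that into the desired path in $X_g$.

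If $d_\alpha(v)\ge1$, then every curve of $v$ meets $\alpha$. I would first apply Wajnryb's surgery construction to a curve $a_1\in v$ of minimal intersection with $\alpha$, paired with $\alpha$, to bring $\abs{a_1\cap\alpha}$ down to $1$: here \thref{cor:15} --- together with an arc sum as in \thref{lem:operations}(2) whenever a surgered curve comes out as a $0$-curve --- keeps all the curves involved $1$-curves, while the other curves of $v$ are pushed out of the way, using the genus induction on a surface cut along $a_1$ exactly as in the previous case, so that each surgered curve can genuinely be exchanged into the cut-system by a type-i move. Once $\abs{a_1\cap\alpha}=1$, a neighborhood of $a_1\cup\alpha$ is a one-holed torus bounded by a $1$-curve; cutting $\Sigma_g^b$ along that boundary splits off this torus (containing $a_1$ and $\alpha$) together with a lower-genus even spin surface, the genus induction relocates the remaining $g-1$ curves of $v$ into the complement of the torus, and a final application of \thref{prop:conn1} inside the torus piece connects $\Braket{a_1}$ to $\Braket{\alpha}$. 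Combining everything and concluding with \thref{lemma:conn} proves the proposition.

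The hard part will be the case $d_\alpha(v)\ge1$, where the difficulties are precisely those absent from Wajnryb's argument. First, each surgered curve must simultaneously be a $1$-curve and be disjoint from the curves of $v$ other than $a_1$, so that the elementary modification is a genuine type-i edge; reconciling these requirements relies on the homological relations among the surgered families together with \thref{cor:15}, and forces one to interleave the surgeries with genus-inductive moves that relocate the other curves. Second, and more delicately, every cut-and-paste step must avoid the degenerate subsurfaces flagged in \thref{rem:oddss} --- one-holed odd tori, two-holed odd tori whose boundary curves have spin value $1$, and annuli with spin-$0$ core --- in which there is no nonseparating $1$-curve at all and the construction would stall. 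Guaranteeing that none of the intermediate surfaces is of this kind is the main technical burden of the proof.
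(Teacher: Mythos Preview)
Your outline follows the same broad strategy as the paper---reduce intersection numbers between $1$-curves and invoke \thref{lemma:conn} once a common curve is reached---but the organization differs and there is a genuine gap in your base case.

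The paper does not fix a single target curve $\alpha$. Instead it picks one $1$-curve $\alpha_i$ from each of the two vertices $v_1,v_2$ and inducts directly on $n=\abs{\alpha_1\cap\alpha_2}$. Your scheme fixes $\alpha$ first and inducts on $d_\alpha(v)$, and this is where the ``preliminary change of coordinates'' step breaks down: any spin mapping class carries $\alpha$ and $v$ together, so it cannot alter whether $[\alpha]$ lies in the Lagrangian spanned by the curves of $v$; and if instead you choose $\alpha$ after seeing $v$, you lose the right to use the same $\alpha$ for the second vertex $w$. Concretely, in your $d_\alpha(v)=0$ step the curve $\gamma\in v$ disjoint from $\alpha$ may well be homologous to $\alpha$, in which case $\alpha$ becomes separating after cutting along $\gamma$ and the genus induction cannot produce a cut-system containing $\alpha$---this is exactly the phenomenon of \thref{rem:hyp}. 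The paper handles this case ($n=0$, $[\alpha_1]=[\alpha_2]$) not by avoiding it but by passing through a common geometric dual $\beta$, made into a $1$-curve by twisting along $\alpha_1$.

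For $n\ge2$ the paper splits into two sub-cases. When two consecutive intersection points on one curve have the same sign, the generalized $1$-$1$-$2$ trick (your invocation of \thref{cor:15}) suffices. The real work is the alternating-sign case: the paper analyzes the boundary components of a neighborhood of $\alpha_1\cup\alpha_2$, argues that in the degenerate situations of \thref{rem:oddss} the complement must contain at least two annuli $A_1,A_2$ joining the two sides of $\alpha_1$, and constructs the intermediate $1$-curve as an arc sum of a component of $\partial A_1$ with one of $\partial A_2$ along a carefully chosen arc of $\alpha_1$. Your proposal correctly identifies this as the crux but does not supply the construction. Note also that the paper never needs to ``push the other curves of $v$ out of the way'' as you propose: once a suitable intermediate nonseparating $1$-curve $\alpha_3$ is found, \thref{lemma:conn} handles the remaining curves directly.
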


\begin{proof}
Let $\alpha_1,\alpha_2$ be two spin nonseparating curves on $\Sigma_g^b$, and let $v_1,v_2$ be two vertices of $X_g$ with $\alpha_i \in v_i$. We are going to prove that there is a path from $v_1$ to $v_2$ by induction on $n:=\abs{\alpha_1\cap \alpha_2}$.

If $\alpha_1=\alpha_2$, the statement is \thref{lemma:conn}. We will deal later with other cases where $n=0$.

If $n=1$, we can cut $\Sigma_g^b$ along $\alpha_1\cup\alpha_2$, obtaining a surface $\Sigma_{g-1}^{b+1}$ with an even pull-back spin structure, and find a spin cut system $u'$ on $\Sigma_{g-1}^{b+1}$. Setting $u_i:=u'\cup\set{\alpha_i}$ for $i=1,2$, we get an edge of type i $u_1-u_2$, and by \thref{lemma:conn} there exist paths from $v_1$ to $u_1$ and from $u_2$ to $v_2$ containing only edges of type i. 

If $n=0$ and $[\alpha_1],[\alpha_2]$ are linearly independent in $H_1(\overline{\Sigma}_g;\Z)$, there exists a spin cut-system $v$ containing both curves, and by \thref{lemma:conn} we can connect $v_1$ to $v$ and $v$ to $v_2$. 

If $n=0$ and $[\alpha_1]=[\alpha_2]$ in $H_1(\overline{\Sigma}_g;\Z)$, but the two curves are not isotopic, they have a common geometric dual $\beta$, and up to Dehn twisting along $\alpha_1$, we can assume that it is a $1$-curve. Now, as in the case $n=1$, there are edges of type i $\Braket{\alpha_1}-\Braket{\beta}$ and $\Braket{\beta}-\Braket{\alpha_2}$, and applying \thref{lemma:conn} repeatedly we get a path from $v_1$ to $v_2$ which interpolates between them. 

If $n\ge2$ and $\alpha_1,\alpha_2$ have two consecutive points of intersection with the same sign, we can apply the generalized 1-1-2 trick to find a $1$-curve $\alpha_3$ that intersects both $\alpha_1$ and $\alpha_2$ in less than $n$ points, and conclude by induction. 

Assume now that $n \ge 2$ and the signs of all intersection points between $\alpha_1$ and $\alpha_2$ are alternating. Fix an orientation on $\alpha_1$. Call $\gamma_1^r,\dots,\gamma_{k_r}^r$ the boundary components of a tubular neighborhood $\nu(\alpha_1\cup \alpha_2)$ that sit on the right with respect to $\alpha_1$, and $\gamma_1^{\ell},\dots,\gamma_{k_{\ell}}^{\ell}$ the remaining boundary components. We can orient these curves so that
\[
\left[\gamma_1^r\right]+\dots+\left[\gamma_{k_r}^r\right]=[\alpha_1]=\left[\gamma_1^{\ell}\right]+\dots+\left[\gamma_{k_{\ell}}^{\ell}\right],
\]
so at least one right and one left component are nonseparating. If some right or left component is nonseparating and has spin value $1$, or cobounds a subsurface of $\Sigma_{g,b} \setminus \nu(\alpha_1\cup\alpha_2)$ which contains a $1$-curve that does not separate $\Sigma_{g,b}$, then we conclude. If that is not the case, it is easy to see that each left and right component falls in one of the following cases (compare \thref{rem:oddss}):
\begin{itemize}
\item it bounds a disk;
\item it bounds a one-holed torus with an induced odd spin structure;
\item it is a $0$-curve and is one boundary component of an annulus. 
\end{itemize}
Notice that there are at least two annuli $A_1,A_2$ with one right and one left boundary component (in particular, $n$ is at least $4$). Indeed, if there is only one such annulus, up to renaming we can assume that its boundary components are $\gamma_1^r$ and $\gamma_1^{\ell}$; then $\left[\gamma_1^r\right]=[\alpha_1]=\left[\gamma_1^{\ell}\right]$, so they cannot be $0$-curves. Assume that $\partial A_i=\gamma_i^r\cup \gamma_i^{\ell}$ for $i=1,2$. 

We form a $1$-curve $\delta$ by arc-summing one component of $\partial A_1$ and one component of $\partial A_2$ along an arc that minimizes the intersections with $\alpha_1$ and $\alpha_2$. Such an arc can be constructed as follows. Isotop $\gamma_i^r$ and $\gamma_i^{\ell}$ so that they stay disjoint from $\alpha_2$, and exactly one point $p_i^r\in\gamma_i^r$ and $p_i^{\ell}\in\gamma_i^{\ell}$ lies on $\alpha_1$, for $i=1,2$. Consider the arcs of $\alpha_1$ between $p_1^r$ or $p_1^{\ell}$ and $p_2^r$ or $p_2^{\ell}$. If any of this arc intersects $\alpha_2$ in less than $n$ points, we are done. Otherwise, notice that the same arc of $\alpha_1$ cannot form two bigons with arcs of $\alpha_2$, hence some boundary component of $A_1$ or $A_2$ must come close to $\alpha_1$ also at some other point (see for example the dashed orange arc of $\gamma_1^r$ in Figure \ref{fig:arc}). Repeating the procedure using that point yields the desired arc. \qedhere
\end{proof}

\begin{figure}
\centering
\begin{tikzpicture}
\draw [->, thick, red] (-5,0) -- (8.5,0);
\draw [->, thick, blue] (-3.5,-2) -- (-3.5,2);
\draw [->, thick, blue] (-2,2) -- (-2,-2);
\draw [->, thick, blue] (-.5,-2) -- (-.5,2);
\draw [->, thick, blue] (1,2) -- (1,-2);
\draw [->, thick, blue] (2.5,-2) -- (2.5,2);
\draw [->, thick, blue] (4,2) -- (4,-2);
\draw [->, thick, blue] (5.5,-2) -- (5.5,2);
\draw [->, thick, blue] (7,2) -- (7,-2);
\draw [thick, orange] (-3.3,-2) -- (-3.3,-.6) to [out=90, in=180] (-3,-.2) -- (-2.5,-.2) to [out=0, in=90] (-2.2,-.6) -- (-2.2,-2);
\draw [thick, dashed, orange] (-.3,-2) -- (-.3,-.6) to [out=90, in=180] (0,-.2) -- (.5,-.2) to [out=0, in=90] (.8,-.6) -- (.8,-2);
\draw [thick, green] (-3.3,2) -- (-3.3,.6) to [out=-90, in=180] (-3,.2) -- (-2.5,.2) to [out=0, in=-90] (-2.2,.6) -- (-2.2,2);
\draw [thick, cyan] (2.7,-2) -- (2.7,-.6) to [out=90, in=180] (3,-.2) -- (3.5,-.2) to [out=0, in=90] (3.8,-.6) -- (3.8,-2);
\draw [thick, yellow] (2.7,2) -- (2.7,.6) to [out=-90, in=180] (3,.2) -- (3.5,.2) to [out=0, in=-90] (3.8,.6) -- (3.8,2);
\draw [dotted] (.5,-.2) -- (3,-.2);
\node at (1.75,-.5) {$c$};
\node [red] at (8,.35) {$\alpha_1$}; 
\node [blue] at (7.3,1.5) {$\alpha_2$}; 
\node [orange] at (-2.75,-.6) {$\gamma_1^r$}; 
\node [orange] at (.25,-.6) {$\gamma_1^r$}; 
\node [green] at (-2.75,.6) {$\gamma_1^{\ell}$}; 
\node [cyan] at (3.25,-.6) {$\gamma_2^r$}; 
\node [yellow] at (3.25,.6) {$\gamma_2^{\ell}$}; 
\end{tikzpicture}
\caption{Constructing the arc $c$ required in the proof of \thref{prop:conn}: choose segments of $\gamma_1^r,\gamma_1^{\ell},\gamma_2^r,\gamma_2^{\ell}$ which run parallel to $\alpha_1$, and connect them via arcs parallel to $\alpha_1$. If all these arcs intersect $\alpha_2$ in $n/2$ points, then we are in the situation depicted above, and it is possible to choose a different segment (such as the dashed orange one) of at least one curve. (!)}
\label{fig:arc}
\end{figure}
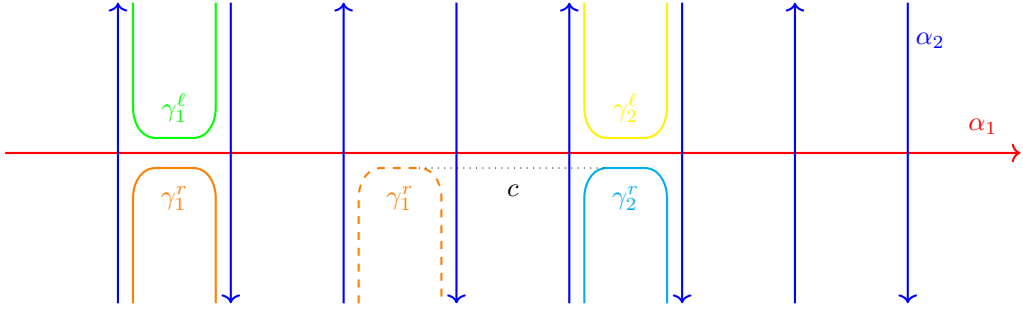

Call $X_g'$ the complex whose vertices are isotopy classes of spin simple closed curves on $\Sigma$ and whose edges connect two curves which intersect once. We have the following.

\begin{corollary}
\thlabel{cor:conn}
The complex $X_g'$ is connected.
\end{corollary}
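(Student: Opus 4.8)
The plan is to deduce \thref{cor:conn} directly from \thref{prop:conn}, reducing the connectivity of $X_g'$ to that of $X_g$. The two complexes have compatible combinatorial data: every vertex of $X_g'$ is a single nonseparating spin simple closed curve $\alpha$, and every edge of $X_g'$ is a pair of such curves intersecting once, which is exactly a type-i edge of $X_g$ after completing the pair to a cut-system. So the strategy is to exhibit, for any two nonseparating spin curves $\alpha$ and $\beta$, a path in $X_g'$ between them, by projecting a suitable path in $X_g$.

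First I would fix two vertices $\alpha,\beta$ of $X_g'$. Complete each to a spin cut-system, obtaining vertices $v_\alpha \ni \alpha$ and $v_\beta \ni \beta$ of $X_g$; such completions exist by the spin change of coordinates discussion (transitivity on partial cut-systems with fixed spin values). By \thref{prop:conn} there is an edge-path $v_\alpha = w_0 - w_1 - \dots - w_m = v_\beta$ in $X_g$ consisting only of type-i edges. Now I would project this to $X_g'$: recall that a type-i edge $\Braket{\gamma_1,\dots} - \Braket{\gamma_1',\dots}$ records a pair of spin curves $\gamma_1,\gamma_1'$ with $|\gamma_1\cap\gamma_1'|=1$, so it determines an edge of $X_g'$. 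Consecutive type-i edges $w_{j-1}-w_j$ and $w_j - w_{j+1}$ of the $X_g$-path share the cut-system $w_j$, but the "moving" curve of the first edge and the "moving" curve of the second edge need not agree — both are curves of $w_j$, hence disjoint spin curves. However, $w_j$ contains $g \ge 1$ disjoint linearly independent spin curves, and any two distinct curves in a cut-system have a common geometric dual that can be taken to be a spin curve (Dehn twist along one of them if necessary), exactly as in the $n=0$ case of \thref{prop:conn}. So I would insert, at each vertex $w_j$, a short detour through such a dual curve to bridge between the moving curve of the incoming edge and that of the outgoing edge, all inside $X_g'$. Concatenating these pieces yields a path in $X_g'$ from $\alpha$ to $\beta$.

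The one point requiring care is the bookkeeping at the shared vertices $w_j$: one must check that the "moving curves" of consecutive edges are genuinely disjoint spin curves lying in the common cut-system (immediate from the definition of type-i edge and the fact that distinct curves of a cut-system are disjoint), and that the bridging dual curve can always be chosen spin — this is where I invoke that Dehn twisting along one curve of the pair adjusts the spin value of a dual by $1$, so one of the two choices of dual works. An alternative, perhaps cleaner, formulation is to note that there is a simplicial map $X_g \to X_g'$ sending a cut-system to... well, it is not quite canonical because a cut-system has several curves, so the detour argument above is the honest way to do it. I expect the main (mild) obstacle to be precisely the absence of a canonical projection $X_g \to X_g'$, forcing the explicit interpolation at each vertex; everything else is a direct transfer of \thref{prop:conn}.

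\begin{proof}
Let $\alpha,\beta$ be two vertices of $X_g'$, i.e. nonseparating spin simple closed curves on $\Sigma$. By the spin change of coordinates principle, we may complete $\alpha$ and $\beta$ to spin cut-systems $v_\alpha$ and $v_\beta$, which are vertices of $X_g$. By \thref{prop:conn}, there is an edge-path $v_\alpha = w_0 - w_1 - \dots - w_m = v_\beta$ in $X_g$ all of whose edges are of type i. For each $j$, the edge $w_{j-1} - w_j$ is of the form $\Braket{\mu_{j-1}} - \Braket{\mu_j'}$ after suppressing the common curves, where $\mu_j', \mu_{j-1}$ are spin curves with $|\mu_{j-1}\cap \mu_j'| = 1$; in particular $\mu_j' \in w_j$ and $\mu_j \in w_j$ (the moving curve of the next edge), so $\mu_j'$ and $\mu_j$ are disjoint spin curves belonging to the cut-system $w_j$. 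If $\mu_j' = \mu_j$ there is nothing to do; otherwise $\mu_j'$ and $\mu_j$ have a common geometric dual, and Dehn twisting along $\mu_j'$ if necessary we may take this dual to be a spin curve $\nu_j$ with $|\nu_j \cap \mu_j'| = |\nu_j \cap \mu_j| = 1$. Then $\mu_j' - \nu_j - \mu_j$ is an edge-path in $X_g'$. Concatenating the edges $\mu_{j-1} - \mu_j'$ of $X_g'$ (one for each edge of the $X_g$-path) with the bridging paths $\mu_j' - \nu_j - \mu_j$ at each interior vertex, and recalling $\mu_0 = \alpha$ and the last moving curve is $\beta$, we obtain an edge-path in $X_g'$ from $\alpha$ to $\beta$. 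Hence $X_g'$ is connected.
\end{proof}
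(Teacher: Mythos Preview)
Your approach is sound, and since the paper gives no explicit proof (treating the statement as immediate from \thref{prop:conn}), your projection-and-bridge argument is a reasonable way to spell it out. There is one genuine, if minor, gap: you assert $\mu_0=\alpha$ and that the last moving curve is $\beta$, but the first edge $w_0-w_1$ of the $X_g$-path may move a curve of $v_\alpha$ other than $\alpha$, and similarly at the far end. The fix is to apply your own bridging step once more at each endpoint: if $\alpha\neq\mu_0$ then they are distinct curves of the cut-system $v_\alpha$, hence disjoint, and a common spin geometric dual gives a path $\alpha-\nu_0-\mu_0$ in $X_g'$; likewise for $\mu_m'$ and $\beta$ at $v_\beta$.

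A second, smaller imprecision: your appeal to ``the $n=0$ case of \thref{prop:conn}'' for the bridging dual is slightly off target --- there a common dual is produced only for disjoint \emph{homologous} curves, whereas two curves of a cut-system are linearly independent. The existence of a common dual in the independent case is still easy (change of coordinates to a standard model), it just is not what that passage of \thref{prop:conn} provides.
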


We will also need the following refined version of \thref{prop:conn}, where we take into account intersections with other curves.

\begin{lemma}
\thlabel{lem:17}
Let $\delta_1,\delta_2$ be two distinct nonseparating $1$-curves that are either disjoint and homologous or intersect more than once, and assume that there exist an integer $m \ge 1$ and nonseparating $1$-curves $\gamma,\gamma'$ such that:
\begin{enumerate}[label=\textit{(\alph*)}]
\item if $m=1$, then $\gamma,\gamma'$ are disjoint and homologous, and $\abs{\gamma\cap\delta_i}=\abs{\gamma'\cap\delta_i}=1$ for $i=1,2$;
\item if $m\ge2$, then $\abs{\gamma\cap\gamma'}=m$, $\abs{\gamma\cap\delta_1}<m$, $\abs{\gamma\cap\delta_2}\le m$ and $\abs{\gamma'\cap\delta_i}\le1$ for $i=1,2$.
\end{enumerate}
There exists a nonseparating $1$-curve $\delta$ that intersects $\gamma$ and $\gamma'$ once if $m=1$, and less than $m$ if $m \ge 2$, and moreover:
\begin{enumerate}
\item if $\delta_1,\delta_2$ are disjoint and homologous, then $\abs{\delta\cap\delta_i}=1$ for $i=1,2$;
\item otherwise, $\abs{\delta\cap\delta_i}<\abs{\delta_1\cap\delta_2}$ for $i=1,2$.
\end{enumerate}
\end{lemma}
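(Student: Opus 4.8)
The plan is to rerun the Wajnryb-style surgery from the proof of \thref{prop:conn}, now applied to the pair $(\gamma,\gamma')$ in place of $(\alpha_1,\alpha_2)$, while keeping track of the intersections with $\delta_1$ and $\delta_2$ at every step. First I would put $\gamma\cup\gamma'$ in minimal position and, when $m\ge2$, isotope $\delta_1,\delta_2$ so as to remove any bigons they form with $\gamma$ or $\gamma'$; by the bigon criterion \cite[Proposition 1.7]{primer} this can be arranged without increasing any of $\abs{\gamma\cap\gamma'}$, $\abs{\gamma\cap\delta_i}$, $\abs{\gamma'\cap\delta_i}$. Then I would split into the cases $m=1$ and $m\ge2$ and, orthogonally, into the two cases of the conclusion, according to whether $\delta_1,\delta_2$ are disjoint and homologous or meet at least twice.

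For $m\ge2$ I would follow the same dichotomy as in the final part of the proof of \thref{prop:conn}. If $\gamma$ and $\gamma'$ have two consecutive intersection points of the same sign, the generalized 1-1-2 trick of \thref{rem:112} produces a nonseparating $1$-curve $\delta$ with $\abs{\delta\cap\gamma},\abs{\delta\cap\gamma'}<m$; by construction $\delta$ concatenates a short subarc of one of $\gamma,\gamma'$ lying between two consecutive intersection points (hence meeting each $\delta_i$ at most once, by the bigon reduction above and $\abs{\gamma'\cap\delta_i}\le1$) with a subarc of the other curve, which meets $\delta_i$ in no more points than that curve does. The spin and homology bookkeeping of \thref{rem:112}, together with \thref{cor:15} in the event that one must switch to the opposite orientation of the short curve, shows that $\delta$ can indeed be chosen nonseparating and spin. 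If instead all intersection signs alternate, I would build $\delta$ as an arc sum of two belt curves of annular pieces of $\nu(\gamma\cup\gamma')$ — which are $0$-curves, so $\delta$ is a $1$-curve by \thref{lem:operations}(2) — along a connecting arc chosen, as in the last paragraph of \thref{prop:conn} and Figure \ref{fig:arc}, to be efficient with respect to $\gamma$ and $\gamma'$; the new ingredient is that this arc should be isotoped off $\delta_1$ and $\delta_2$ as well, which is possible precisely because $\gamma'$ meets the $\delta_i$ at most once. In either subcase $\abs{\delta\cap\delta_i}$ is controlled by summing the contributions of the $\gamma$- and $\gamma'$-arcs of $\delta$, and the hypotheses $\abs{\gamma\cap\delta_1}<m$, $\abs{\gamma\cap\delta_2}\le m$, $\abs{\gamma'\cap\delta_i}\le1$ are calibrated exactly so that this sum is $<\abs{\delta_1\cap\delta_2}$; when $\delta_1,\delta_2$ are disjoint and homologous the same count, combined with $[\delta]\cdot[\delta_i]=\pm1$ and a minimal-position argument, upgrades the bound to the equality $\abs{\delta\cap\delta_i}=1$.

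For $m=1$ the curves $\gamma,\gamma'$ are disjoint and homologous, so by \thref{cor:cut} cutting $\overline{\Sigma}_g$ along $\gamma\cup\gamma'$ yields two even $2$-spin subsurfaces, each carrying two boundary components coming from $\gamma$ and $\gamma'$, and each $\delta_i$ is itself a common geometric dual of $\gamma$ and $\gamma'$, meeting both pieces in a single spanning arc. Using the spin change of coordinates principle to normalize this configuration, I would then exhibit $\delta$ directly: a common dual of $\gamma$ and $\gamma'$ assembled from one spanning arc in each piece, chosen so as to cross the spanning arcs of $\delta_1$ and $\delta_2$ the prescribed number of times (once each in the homologous case), and corrected if necessary by a single Dehn twist along $\gamma$ to ensure $\phi(\delta)=1$ — which by twist linearity does not change the intersection counts beyond what is allowed, since $\gamma$ meets each $\delta_i$ only once.

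I expect the crux to be the simultaneous control of $\abs{\delta\cap\delta_1}$ and $\abs{\delta\cap\delta_2}$: insisting that $\delta$ be a nonseparating $1$-curve pins down both its mod-$2$ homology class and its spin value, and these constraints can be incompatible with a naive choice of surgery arc, so one typically has to iterate the reduction or choose carefully among the several candidates produced by the construction (the curve given by the 1-1-2 trick and its complementary and opposite-orientation analogues, or the two belt curves of each annular piece of $\nu(\gamma\cup\gamma')$) in order to satisfy all four constraints at once. The alternating-sign subcase, where the connecting arc must be made efficient for four curves simultaneously and the annular pieces may degenerate (compare \thref{rem:oddss}), is where the bookkeeping is most delicate.
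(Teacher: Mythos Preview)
Your surgery is applied to the wrong pair of curves. You build $\delta$ out of arcs of $\gamma$ and $\gamma'$; this naturally controls $\abs{\delta\cap\gamma}$ and $\abs{\delta\cap\gamma'}$, but gives no usable bound on $\abs{\delta\cap\delta_i}$. A curve assembled from one subarc of $\gamma$ and one subarc of $\gamma'$ meets $\delta_1$ in at most $\abs{\gamma\cap\delta_1}+\abs{\gamma'\cap\delta_1}<m+1$ points, whereas the conclusion demands $\abs{\delta\cap\delta_i}<\abs{\delta_1\cap\delta_2}$, and nothing in the hypotheses relates $m$ to $\abs{\delta_1\cap\delta_2}$. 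Take $\abs{\delta_1\cap\delta_2}=2$ and $m$ large to see that the sentence ``the hypotheses \dots\ are calibrated exactly so that this sum is $<\abs{\delta_1\cap\delta_2}$'' is false.

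The paper does the opposite: $\delta$ is built from arcs of $\delta_1$ and $\delta_2$ (or, in the disjoint--homologous case, from arcs lying in the two components of the complement of $\delta_1\cup\delta_2$), which automatically forces $\abs{\delta\cap\delta_i}<\abs{\delta_1\cap\delta_2}$; the work then goes into routing these arcs so that $\abs{\delta\cap\gamma},\abs{\delta\cap\gamma'}<m$ as well. This is why the hypotheses are asymmetric in $\gamma,\gamma'$ versus $\delta_1,\delta_2$: the bounds $\abs{\gamma\cap\delta_i}<m$ and $\abs{\gamma'\cap\delta_i}\le1$ are there so that a curve made of $\delta_i$-pieces inherits small intersection with $\gamma,\gamma'$, not the other way around. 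Concretely, in Case~2b the paper runs the $\delta_i/\varepsilon_j$ construction on $(\delta_1,\delta_2)$ and uses \thref{cor:15} and arc sums to select a nonseparating $1$-curve that also meets $\gamma,\gamma'$ in fewer than $m$ points; in Case~1b it builds $\delta$ from arcs $c_1,c_2$ in the two pieces of $\Sigma\setminus(\delta_1\cup\delta_2)$, with a case analysis on how $\gamma$- and $\gamma'$-arcs connect $\delta_1$ to $\delta_2$ in each piece. Your $m=1$ discussion likewise only treats the disjoint--homologous subcase; the paper's Case~2a ($m=1$, $\abs{\delta_1\cap\delta_2}\ge2$) again assembles $\delta$ from a subarc of $\delta_2$ and a subarc of $\delta_1$, closed up along $\gamma$. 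Finally, your appeal to \thref{cor:cut} for the parity of the two pieces of $\Sigma\setminus(\gamma\cup\gamma')$ is not valid: two disjoint homologous $1$-curves can cut off odd subsurfaces (this is precisely the phenomenon of \thref{rem:oddss} and \thref{rem:hyp}).
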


\begin{proof}
\textbf{Case 1a.} Choose a component $S$ of the complement of $\gamma \cup \gamma'$ of positive genus, and let $\beta \subset S$ be a curve that intersects both $\delta_1$ and $\delta_2$ once. If $\beta$ is a $0$-curve, we set $\delta:=\tau_{\delta_1}(\beta)$. Assume that $\beta$ is a $1$-curve. Construct two more curves as follows. Let $\eta_1$ be a boundary component of a tubular neighborhood of $\gamma \cup \delta_1 \cup \delta_2 \cup \beta$ in $S$ that is nontrivial in $\overline{\Sigma}_g$, and choose a curve $\eta_2$ in $S \setminus (\delta_1\cup\delta_2)$ that meets $\beta$ and $\eta_1$ once. If $\eta_1$ is a $0$-curve, call $\beta'$ the arc sum of $\beta$ and $\eta_1$ along an arc of $\eta_2$. If $\eta_1$ is a $1$-curve, up to replacing $\eta_2$ with $\tau_{\eta_1}(\eta_2)$ we may assume that $\eta_2$ is a $1$-curve, and we set $\beta':=\tau_{\eta_2}(\beta)$. In any case, $\beta'$ is a $0$-curve, and $\delta:=\tau_{\delta_1}(\beta)$ is the desired $1$-curve.

\textbf{Case 2a.} Choose an orientation on $\delta_2$, and call $p$ the first intersection point with $\delta_1$ that is found on $\delta_2$ after meeting $\gamma$. Construct an arc $c$ as follows: starting from the intersection point between $\gamma$ and $\delta_2$, move along $\delta_2$ towards $p$, then go along $\delta_1$ crossing $\gamma'$ and then going back to $\gamma$. Now, $c$ can be completed to two different curves $\xi_1$ and $\xi_2$ using arcs of $\gamma$, and one of the two is a $1$-curve that satisfies our requirements.

\textbf{Case 1b: $\abs{\gamma'\cap \delta_i}=0$.} Call $S_1$ and $S_2$ the two components of $\Sigma_{g,b} \setminus (\delta_1\cup\delta_2)$, and assume that $\gamma'$ lies in $S_2$. It is easy to see that there exists an arc $c_1\subset S_1 \setminus \gamma$ that connects $\delta_1$ and $\delta_2$. We claim that there exists an arc $c_2 \subset S_2$ that connects $\delta_1$ and $\delta_2$ and meets $\gamma$ at most once and $\gamma'$ less than $m$ times. 

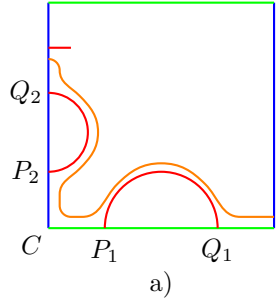
\begin{figure}
\centering
\begin{tikzpicture}[scale=.75]
\draw [blue, thick] (0,0) -- (0,4);
\draw [blue, thick] (4,0) -- (4,4);
\draw [green, thick] (0,0) -- (4,0);
\draw [green, thick] (0,4) -- (4,4);
\draw [red, thick] (1,0) arc (180:0:1 and 1);
\draw [red, thick] (0,1) arc (-90:90:.7 and .7);
\draw [red, thick] (0,3.2) -- (.4,3.2);
\draw [orange, thick] (0,3) to [out=0, in=90] (.2,2.8) to [out=-90, in=135] (.55,2.35) to [out=-45, in=90] (.88,1.7) to [out=-90, in=45] (.55,1.05) to [out=-135, in=90] (.2,.6) -- (.2,.4) to [out=-90, in=180] (.4,.2) -- (.6,.2) to [out=0, in=-135] (1.22,.78) to [out=45, in=180] (2,1.15) to [out=0, in=135] (2.78,.78) to [out=-45, in=180] (3.4,.2) -- (4,.2);
\node at (1,-.4) {$P_1$};
\node at (3,-.4) {$Q_1$};
\node at (-.4,1) {$P_2$};
\node at (-.4,2.4) {$Q_2$};
\node at (-.3,-.3) {$C$};
\node at (2,-1) {a)};
\end{tikzpicture}
\caption{Construction of the arc $c_2$ in the proof of \thref{lem:17}.}
\label{fig:c2}
\end{figure}

Assume first that some arc $d$ of $\gamma$ connects $\delta_1$ and $\delta_2$ in $S_2$. If $\abs{d \cap \gamma'}<m$, we set $c_2:=d$. If instead $\abs{d\cap\gamma'}=m$, define $c_2$ as follows: start from $\delta_1$ and go along $\delta$ until the first intersection point with $\gamma'$, then follow $\gamma'$ in either direction until the next intersection point with $d$, and finally go along $d$ until $\delta_2$ (see Figure \ref{fig:c2}a). 

If no such $d$ exists, but there is an arc $d'$ of $\gamma'$ that connects a $\gamma$-arc $a_1$ with endpoints on $\delta_1$ to a $\gamma$-arc $a_2$ with endpoints of $\delta_2$, we define $c_2$ as follows: go along $a_1$ until $d'$, then follow $d'$ until $a_2$ and go along $a_2$ until $\delta_2$ (see Figure \ref{fig:c2}b). 

Finally, if no such $d$ or $d'$ exist, we may assume that $\gamma'$ only intersects $\gamma$-arcs with endpoints on $\delta_2$. Construct $c_2$ by going along one such arc until the first intersection point with $\gamma'$, then following $\gamma'$ until entering a component of $S_2\setminus(\gamma\cup\gamma')$ that meets $\delta_1$, and going through such a component until $\delta_1$.

Now, join $c_1$ and $c_2$ via an arc of $\delta_1$ and an arc of $\delta_2$. There are four possible choices for such a couple of arcs: two of them produce a $0$-curve, and it is easy to see that at least one of the two curves meets $\gamma$ in less than $m$ points. 

\textbf{Case 1b: $\abs{\gamma'\cap \delta_i}=1$.} Again, let $S_1$ and $S_2$ be the two components of $\Sigma_{g,b}\setminus(\delta_1\cup\delta_2)$. Assume first that $\gamma\cap\delta_1=\emptyset$. Construct an arc $c_1 \subset S_1$ as follows. If $\gamma$ does not meet $\gamma'$ in $S_1$, simply set $c_1:=\gamma'\cap S_1$. Otherwise, go along $\gamma'$ starting from $\delta_1$, and turn left at the first intersection point with $\gamma$, following $\gamma$ until meeting $\delta_2$. Define similarly $c_2\subset S_2$, turning right at the first intersection point with $\gamma$. Close up $c_1\cup c_2$ with the arc of $\delta_2$ that intersects $\gamma$ in less points. Up to twisting the resulting curve along $\delta_1$, we are done.

Assume now that there is an arc $d$ of $\gamma$ that joins $\delta_1$ to $\delta_2$ on $S_1$. Construct an arc $d'\subset S_2$ as follows. If there is an arc of $\gamma$ that joins $\delta_1$ to $\delta_2$ on $S_2$, take it as $\delta'$. If that is not the case, and there are no intersection points between $\gamma$ and $\gamma'$ on $S_2$, set $d':=\gamma'\cap S_2$. Otherwise, let $p,q\in S_2$ be two intersection points between $\gamma'$ and $\gamma$, $\delta_1$ or $\delta_2$ that are consecutive on $\gamma'$, and such that if $p,q \in \gamma'\cap\gamma$, then $p$ is joined to $\delta_1$ by a $\gamma$-arc on $S_2$ and $q$ is joined to $\delta_2$ by a $\gamma$-arc on $S_2$. Call $d'$ the union of these $\gamma$-arc and of the $\gamma'$-arc from $p$ to $q$ on $S_2$. There are four possible choices of arcs of $\delta_1$ and $\delta_2$ to close up $d \cup d'$, and at least one of them results in a curve that satisfies our requirements.

\textbf{Case 2b.} Set $n:=\abs{\delta_1\cap\delta_2}\ge2$. Assume first that there are two consecutive intersection points $P_1,P_2$ with the same sign, say on $\delta_1$. We apply the construction of the proof of \cite[Lemma 15]{waj:elem} that we recalled earlier. As we already observed, in this case we can assume that $\delta_1$ is a nonseparating $1$-curve. Assume that it intersects $\gamma$ or $\gamma'$ in at least $m$ points By \thref{cor:15}, some other $\delta_i$ is a $1$-curve, and if it is nonseparating, it satisfies our requirements. If every $\delta_i$ with spin value $1$ is separating, we define $\delta$ as the arc sum of some $\epsilon_j$ with spin value $0$ and some curve lying on a subsurface cut out by some $\delta_i$, minimizing the intersections with $\gamma$ and $\gamma'$. 

Assume now that on both curves the intersection points have alternating signs. In this case, we adapt the last part of the proof of \thref{prop:conn}: to construct $\delta$ we have to choose a component of $\partial A_1$, a component of $\partial A_2$ and an arc that joins them, and we can perform these choices to ensure that $\delta$ meets both $\gamma$ and $\gamma'$ in less than $m$ points. \qedhere 
\end{proof}

\subsection{Simple connectivity: paths of radius 0}

Now we turn our attention to simple connectivity. The proof will be by induction on the genus and on the radius, and will be split among this and the following subsections. The base of the induction is \thref{prop:1conn1}. We are going to assume that $X_{g'}$ is simply connected for every $g'<g$, and prove that all closed paths in $X_g$ are null-homotopic. In this section, we consider paths of radius $0$. 

The following observation is analogous to \thref{lemma:conn}, and will be used in the inductive step.

\begin{lemma}[{\cite[Lemma 11]{waj:elem}}]
\thlabel{lemma:1conn}
Every closed segment of $X_g$ is null-homotopic.
\end{lemma}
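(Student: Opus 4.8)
The plan is to mimic Wajnryb's argument for \cite[Lemma 11]{waj:elem}, working relative to the common curve $\alpha$ shared by all vertices of the closed segment $\mathbf{p}$. First I would cut $\Sigma_g^b$ along $\alpha$ to obtain a surface $\Sigma_{g-1}^{b+1}$ (well-defined since $\alpha$ is a $1$-curve, so by \thref{cor:cut} the pull-back spin structure has the same parity; and since $\phi$ is even, the cut surface is again even). Each vertex $v$ of $\mathbf{p}$ is of the form $\Braket{\alpha}\cup w$, where $w$ is a spin cut-system of $1$-curves on $\Sigma_{g-1}^{b+1}$ (independence and disjointness survive the cut). Because all edges of a segment are of type i by definition, consecutive $w$'s differ in exactly one curve and that pair of curves intersects once on $\Sigma_{g-1}^{b+1}$; hence $\mathbf{p}$ descends to a closed edge-path $\overline{\mathbf{p}}$ in the complex $X_{g-1}$ associated to $\Sigma_{g-1}^{b+1}$, using only edges of type i. By the inductive hypothesis (simple connectivity of $X_{g'}$ for $g'<g$, which is being assumed in this subsection), $\overline{\mathbf{p}}$ is null-homotopic in $X_{g-1}$.

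The next step is to lift a null-homotopy of $\overline{\mathbf{p}}$ in $X_{g-1}$ back to a null-homotopy of $\mathbf{p}$ in $X_g$. Concretely, I would take a filling of $\overline{\mathbf{p}}$ by $2$-cells of $X_{g-1}$ — triangles, squares, pentagons, and hyperelliptic faces — and re-attach $\alpha$ to every vertex appearing in this filling. The point is that each $2$-cell of $X_{g-1}$, all of whose vertices are disjoint from $\alpha$ after the cut, pulls back to a genuine $2$-cell of $X_g$ of the same combinatorial type: a triangle of $X_{g-1}$ involves curves $\gamma_1,\gamma_1',\gamma_1''$ in a fixed complementary subsurface and adding the disjoint curve $\alpha$ to each vertex yields a triangle of $X_g$, and similarly for the other cell types (the curves witnessing a square, pentagon or hyperelliptic face all live away from $\alpha$, so the same configuration realizes the cell in $X_g$). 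An edge of type i in $X_{g-1}$ lifts to an edge of type i in $X_g$, and an edge of type ii lifts to one of type ii, so the combinatorial identifications on boundaries match up. Therefore the image under this "add $\alpha$" map of a disk diagram for $\overline{\mathbf{p}}$ is a disk diagram for $\mathbf{p}$ in $X_g$, proving $\mathbf{p}$ is null-homotopic. (This is exactly the analogue, one dimension up, of \thref{lemma:conn}.)

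The main obstacle I anticipate is the bookkeeping at the level of $2$-cells: one must check that each kind of $2$-cell of $X_{g-1}$ really does pull back to a valid $2$-cell of $X_g$, in particular that the hyperelliptic face — the one genuinely new and as-yet-undescribed cell type (see \thref{def:hyp}) — behaves well under capping and uncapping along a disjoint $1$-curve. If the definition of the hyperelliptic face is intrinsic to a genus-$3$ subsurface that is disjoint from $\alpha$ (as the introduction suggests, via a hyperelliptic relation in genus $3$), this is immediate; otherwise one would need to argue that no hyperelliptic face of $X_{g-1}$ in the chosen filling can be "entangled" with $\alpha$, which may require choosing the filling carefully, or a mild genus hypothesis. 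A secondary, more routine point is verifying that the $1$-curves of the cut surface remain nonseparating/independent exactly when needed, and that spin values are preserved under the bijection between cut-systems on $\Sigma_g^b$ containing $\alpha$ and cut-systems on $\Sigma_{g-1}^{b+1}$ — both of which follow from homological coherence and twist linearity as recorded in \thref{lem:operations} and \thref{cor:cut}.
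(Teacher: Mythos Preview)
The paper does not give its own proof of this lemma; it simply cites \cite[Lemma 11]{waj:elem}. Your plan is exactly Wajnryb's argument adapted to the spin setting: cut along the shared curve $\alpha$ to pass to the complex $X_{g-1}$ of the cut surface, invoke the inductive hypothesis (simple connectivity in lower genus), and lift a disk diagram back by re-adjoining $\alpha$ to every vertex. This is correct.

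One small correction: it is not true that ``all edges of a segment are of type i by definition.'' An $\alpha$-segment is defined merely as a path whose vertices all contain $\alpha$; the moving curve at each step may intersect its replacement once or twice, so edges of either type can occur. This does not damage your argument, since type-ii edges descend to type-ii edges of $X_{g-1}$ just as type-i edges descend to type-i edges, and you need both types anyway when lifting the $2$-cells of the null-homotopy.

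Your concern about hyperelliptic faces is resolved by \thref{def:hyp}: a hyperelliptic face is built from a $7$-chain on a genus-$3$ subsurface $S$, and the corresponding spin cut-system on $S$ is then \emph{completed} to a cut-system on all of $\Sigma_g^b$ by adding disjoint curves. Thus any hyperelliptic face of $X_{g-1}$ lifts to one of $X_g$ by adjoining $\alpha$ (disjoint from the $7$-chain) to every vertex, exactly as for triangles, squares and pentagons.
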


We will also need the following lemmas. The first concerns a sort of generalized square. The second is where pentagons make their appearance.

\begin{lemma}[Ladder lemma]
Let $\alpha_1,\alpha_2,\beta_1,\beta_2$ be nonseparating $1$-curves on $\Sigma_{g,b}$ such that the pairs $(\alpha_1,\alpha_2)$, $(\alpha_1,\beta_1)$, $(\beta_1,\beta_2)$ and $(\alpha_2,\beta_2)$ can be completed to spin cut-systems. Then there exists a null-homotopic path in $X_g$
\[
\begin{tikzcd}
\Braket{\alpha_1,\alpha_2} \arrow[r, dashed, "\alpha_2", no head] \arrow[d, dashed, "\alpha_1"', no head] & \Braket{\beta_2,\alpha_2} \arrow[d, dashed,"\beta_2", no head]\\
\Braket{\alpha_1,\beta_1} \arrow[r, dashed, "\beta_1"', no head] & \Braket{\beta_1,\beta_2}.
\end{tikzcd}
\]
\end{lemma}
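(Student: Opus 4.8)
The plan is to realize the $2\times 2$ square of vertices as the boundary of a null-homotopic loop by subdividing it along a diagonal "rung" path and invoking the already-established pieces (the Square lemma, the pentagon 2-cells, triangles, and the inductive simple connectivity of lower genus via $\alpha$-segments). First I would choose a curve $\gamma$ that serves as a common geometric dual to enough of the four curves $\alpha_1,\alpha_2,\beta_1,\beta_2$: using \thref{cor:cut} to cut along one of the curves and the spin change-of-coordinates principle for cut-systems, I can arrange that $(\alpha_1,\alpha_2)$ and $(\beta_1,\beta_2)$ both extend to spin cut-systems containing a fixed $1$-curve $\gamma$ disjoint from all four, after possibly replacing some curve by a homologous one and connecting via \thref{lemma:conn}. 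This produces a "ladder": each side $\langle\alpha_1,\alpha_2\rangle - \langle\beta_2,\alpha_2\rangle$ etc. gets interpolated by $\alpha_i$- or $\beta_i$-segments, and the whole square is cut into small quadrilaterals all of whose edges are of type i.

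The key steps, in order, would be: (1) reduce to the case where a single dual curve $\gamma$ (or a short $\alpha$-segment connecting two such duals, via \thref{lemma:conn}) lies in all four cut-systems simultaneously, so that the four corners are joined in pairs through a common vertex $\langle\gamma,\ast\rangle$; (2) observe that each of the four resulting "half-squares" $\langle\alpha_1,\alpha_2\rangle - \langle\gamma,\alpha_2\rangle - \langle\gamma,\beta_1\rangle - \langle\alpha_1,\beta_1\rangle$ (and its three analogues) has one coordinate slot occupied by curves that are disjoint from the fixed curve in the other slot, hence is a square of type-i edges with $|\delta_2\cap\delta_4|=0$, so the Square lemma applies and it is null-homotopic; (3) glue the four null-homotopies together around the common hub, using \thref{lemma:1conn} (closed segments null-homotopic) to contract the leftover segments running between the two chosen dual curves if more than one $\gamma$ was needed; (4) if at some stage a needed curve must be replaced by a homologous curve (the $\alpha_0$-versus-$\alpha_1$ phenomenon of \thref{rem:oddss}), absorb the replacement via a triangle or a pentagon 2-cell, exactly as pentagons are designed to handle the type-ii corrections.

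The main obstacle I expect is step (1): guaranteeing that the dual curve $\gamma$ can be chosen as a $1$-curve disjoint from all four given curves at once, rather than merely from the two curves in a single cut-system. When $\Sigma_{g,b}\setminus(\alpha_i\cup\beta_i)$ contains one of the "bad" subsurfaces of \thref{rem:oddss} — a one-holed torus with odd induced spin structure, or an annulus with a $0$-curve belt — there may be no admissible dual in the obvious place, and one is forced to route through a homologous substitute and pay for it with a pentagon. Controlling this carefully, so that the four half-square null-homotopies still match up along compatible boundary segments, is the delicate point; I would handle it by first applying \thref{prop:conn} and \thref{lem:17} to connect the relevant vertices through intermediate cut-systems of lower $\gamma$-distance, and only then running the Square-lemma argument on each face of the resulting subdivided disk. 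Once the subdivision exists with all 2-cells being squares (with a $0$-intersection diagonal), pentagons, or triangles, simple connectivity of the subdivided region follows and the original four-term loop bounds.
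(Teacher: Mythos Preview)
Your plan has a genuine circularity problem in step (2). The ``half-squares'' you write down, such as
\[
\Braket{\alpha_1,\alpha_2}-\Braket{\gamma,\alpha_2}-\Braket{\gamma,\beta_1}-\Braket{\alpha_1,\beta_1},
\]
are not $2$-cells of $X_g$. For this to be a square $2$-cell you would need $\abs{\alpha_2\cap\beta_1}=1$, but the hypotheses of the Ladder lemma say nothing about the intersection of the ``diagonal'' pair $\alpha_2,\beta_1$ (nor about $\alpha_1,\beta_2$); indeed the sides of the original square are drawn dashed precisely because they are arbitrary segments, not single edges. So your half-squares are themselves generalized squares with segment sides, and filling them is exactly another instance of the Ladder lemma you are trying to prove. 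Invoking the Square lemma does not help either: that lemma is stated and proved only for $X_1$, and in any case applies to genuine length-$4$ cycles of type-i edges, not to paths whose sides are long segments.

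The paper's proof sidesteps this by working with the \emph{diagonal} pair. It cuts along $\beta_1\cup\alpha_2$ and, using the connectivity established in \thref{prop:conn} (more precisely \thref{cor:conn}) on the complement, finds a chain of $1$-curves $\gamma_0=\alpha_1,\gamma_1,\dots,\gamma_n=\beta_2$ with $\abs{\gamma_i\cap\gamma_{i+1}}=1$, all disjoint from $\beta_1$ and $\alpha_2$. For each $i$ one then builds vertices $v_i\ni\{\gamma_i,\alpha_2\}$ and $w_i\ni\{\gamma_i,\beta_1\}$ and $\gamma_i$-segments $\mathbf{p}_i$ between them, chosen so that $\gamma_{i+1}$ meets only the curve $\gamma_i$ in each vertex of $\mathbf{p}_i$. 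Swapping $\gamma_i$ for $\gamma_{i+1}$ in $\mathbf{p}_i$ produces a parallel path $\mathbf{p}_i'$, and the rungs $\Braket{\gamma_i}-\Braket{\gamma_{i+1}}$ are honest type-i edges. The resulting ladder is tiled by genuine square $2$-cells together with closed $\gamma_i$-segments, and the latter are null-homotopic by \thref{lemma:1conn}. The key point you are missing is that the reduction must produce \emph{actual} $2$-cells at each step, and the only way to do that here is to manufacture single type-i edges via a connectivity argument in the complement of two of the four given curves.
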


\begin{proof}
Cut $\Sigma_{g,b}$ along $\beta_1\cup\alpha_2$. If the result is disconnected and $\alpha_1$ and $\beta_2$ lie on different components, then we can actually prove more: $\Braket{\alpha_1,\alpha_2,\beta_2}$ and $\Braket{\alpha_1,\beta_1,\beta_2}$ can be completed to spin cut-systems, and these can be connected by an $(\alpha_1,\beta_2)$-segment by \thref{lemma:1conn}.

Otherwise, by \thref{prop:conn} we can find nonseparating $1$-curves $\gamma_0:=\alpha_1,\gamma_1,\dots,\gamma_n:=\beta_2$ such that $\abs{\gamma_i\cap \gamma_{i+1}}=1$ and every $\gamma_i$ lies on the component of $\Sigma \setminus (\beta_1\cup \alpha_2)$ that contains $\alpha_1,\beta_2$. Complete $\Braket{\gamma_i,\alpha_2}$ to a vertex $v_i$ of $X_g$ and $\Braket{\gamma_i,\beta_1}$ to $w_i$, for all $i=0,1,\dots,n-1$, in such a way that the only curve of $v_i$ that intersects $\gamma_{i+1}$ is $\gamma_i$, and the same is true for $w_i$. This can be done by cutting along $\gamma_i,\gamma_{i+1}$ and $\alpha_2$ or $\beta_1$ and finding a spin cut-system on the resulting surface. Set $v_i':=v_{i}\setminus \set{\gamma_{i-1}}\cup \set{\gamma_i}$ and $w_i':=w_{i}\setminus \set{\gamma_{i-1}}\cup \set{\gamma_i}$. Clearly, there are edges $v_i-v_{i+1}'$ and $w_i-w_{i+1}'$ for every $i$. Construct a $\gamma_i$-segment $\mathbf{p}_i$ from $v_i$ to $w_i$ such that for each vertex of $\mathbf{p}_i$, the only curve which intersects $\gamma_{i+1}$ is $\gamma_i$. Replacing each occurrence of $\gamma_i$ in $\mathbf{p}_i$ with $\gamma_{i+1}$ gives a path $\mathbf{p}_i'$ from $v_i'$ to $w_i'$, and each vertex of $\mathbf{p}_i$ is connected to the corresponding vertex of $\mathbf{p_i}'$ by an edge $\Braket{\gamma_i}-\Braket{\gamma_{i+1}}$. Finally, construct $\gamma_i$-segments from $v_i'$ to $v_{i+1}$ and from $w_i$ to $w_{i+1}$. We get the following:
\[
\begin{tikzcd}[column sep=tiny, row sep=tiny]
v_0 \arrow[r, no head] \arrow[dddd, dashed, no head, "\mathbf{p}_0"'] & v_0' \arrow[rrrr, no head, dashed, "\gamma_1"] \arrow[dddd, dashed, no head, "\mathbf{p}_0'"] & & & & v_1 \arrow[r, no head] \arrow[dddd, dashed, no head, "\mathbf{p}_1"']  & v_1' \arrow[rrrr, no head, dashed, "\gamma_2"] \arrow[dddd, dashed, no head, "\mathbf{p}_1'"] & & & & \dots \arrow[rrrr, no head, dashed, "\gamma_{n-1}"] & & & & v_{n-1} \arrow[r, no head] \arrow[dddd, dashed, no head, "\mathbf{p}_{n-1}"'] & v_{n-1}' \arrow[dddd, dashed, no head, "\mathbf{p}_{n-1}'"]\\
{}\arrow[r, no head] & {} & & & & {}\arrow[r, no head] & {} & & & & & & & & {} \arrow[r, no head] & {} \\
{}\arrow[r, no head] & {} & & & & {}\arrow[r, no head] & {} & & & & & & & & {} \arrow[r, no head] & {} \\
{}\arrow[r, no head] & {} & & & & {}\arrow[r, no head] & {} & & & & & & & & {} \arrow[r, no head] & {} \\
w_0 \arrow[r, no head] & w_0' \arrow[rrrr, no head, dashed, "\gamma_1"] & & & & w_1 \arrow[r, no head]  & w_1' \arrow[rrrr, no head, dashed, "\gamma_2"] & & & & \dots \arrow[rrrr, no head, dashed, "\gamma_{n-1}"] & & & & w_{n-1} \arrow[r, no head] & w_{n-1}'. \\
\end{tikzcd}
\]
This is a sequence of ladders of squares and closed $\gamma_i$-segments, which are null-homotopic by \thref{lemma:1conn}, so we are done.\qedhere
\end{proof}

\begin{lemma}[Hexagon lemma]
Let $\alpha_1,\alpha_2,\alpha_3$ be three disjoint nonseparating $1$-curves on $\Sigma$ that are pairwise not homologous but whose union separates $\Sigma_{g,b}$. Then there exists a null-homotopic path
\[
\begin{tikzcd}[column sep=tiny]
\Braket{\alpha_1,\alpha_2} \arrow[rrrr, dashed, no head, "\alpha_2"] \arrow[drr, dashed, no head, "\alpha_1"'] & & & & \Braket{\alpha_2,\alpha_3} \arrow[dll, dashed, no head, "\alpha_3"] \\
& & \Braket{\alpha_1,\alpha_3}. & &
\end{tikzcd}
\]
\end{lemma}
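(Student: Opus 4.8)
The plan is to replace each of the three $\alpha_k$-segments by a short explicit path, turning the triangle of segments into a genuine hexagon, and then to fill that hexagon by two squares, a triangle and a pentagon; this last step is where pentagons appear, which explains the name.

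First I would record the topology. Since $\alpha_1,\alpha_2,\alpha_3$ are disjoint, nonseparating and pairwise non-homologous, cutting $\Sigma_{g,b}$ along any two of them leaves it connected, so $\Sigma_{g,b}\setminus(\alpha_1\cup\alpha_2\cup\alpha_3)$ has exactly two components, each carrying one copy of every $\alpha_i$ on its boundary; in particular, for $\{i,j,k\}=\{1,2,3\}$ the curves $\alpha_i,\alpha_j$ form a bounding pair inside $\Sigma_{g,b}\setminus\alpha_k$, hence admit a common geometric dual there. Twisting that dual along $\alpha_i$ if necessary (this toggles its spin value, since it meets $\alpha_i$ once) yields a nonseparating $1$-curve $\epsilon_{ij}$ with $|\epsilon_{ij}\cap\alpha_i|=|\epsilon_{ij}\cap\alpha_j|=1$ and $\epsilon_{ij}\cap\alpha_k=\emptyset$; moreover, by the spin change of coordinates the three curves $\epsilon_{ij}$ can be taken pairwise disjoint except that $\epsilon_{13}$ and $\epsilon_{23}$ meet exactly twice with equal sign. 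Because two $\alpha_k$-segments with the same endpoints differ by a closed $\alpha_k$-segment, which is null-homotopic by \thref{lemma:1conn}, I may replace the $\alpha_k$-side of the triangle by the two-edge path $\Braket{\alpha_i,\alpha_k}-\Braket{\epsilon_{ij},\alpha_k}-\Braket{\alpha_j,\alpha_k}$. Performing this on all three sides produces the hexagon with successive vertices $\Braket{\alpha_1,\alpha_2}$, $\Braket{\epsilon_{13},\alpha_2}$, $\Braket{\alpha_3,\alpha_2}$, $\Braket{\epsilon_{12},\alpha_3}$, $\Braket{\alpha_1,\alpha_3}$, $\Braket{\epsilon_{23},\alpha_1}$ and six type-i edges.

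Next I would fill the hexagon. One checks that $\Braket{\epsilon_{13},\alpha_2}-\Braket{\alpha_3,\alpha_2}-\Braket{\epsilon_{12},\alpha_3}-\Braket{\epsilon_{12},\epsilon_{13}}$ and $\Braket{\epsilon_{12},\alpha_3}-\Braket{\alpha_1,\alpha_3}-\Braket{\epsilon_{23},\alpha_1}-\Braket{\epsilon_{12},\epsilon_{23}}$ are squares of type (C2); homotoping across them reduces the hexagon to $\Braket{\alpha_1,\alpha_2}-\Braket{\epsilon_{13},\alpha_2}-\Braket{\epsilon_{12},\epsilon_{13}}-\Braket{\epsilon_{12},\epsilon_{23}}-\Braket{\epsilon_{23},\alpha_1}-\Braket{\alpha_1,\alpha_2}$. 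The diagonal $\Braket{\epsilon_{12},\epsilon_{13}}-\Braket{\epsilon_{12},\epsilon_{23}}$ is now an edge, and it must be of type ii because there are no i-i-i triangles (\thref{rem:112}) — this is exactly why $\epsilon_{13}$ and $\epsilon_{23}$ were arranged to meet twice. This diagonal cuts the remaining hexagon into a triangle of type (C1) inside the subcomplex of cut-systems containing $\epsilon_{12}$ (with the $1$-curve provided by the 1-1-2 trick being $\alpha_3$) and a pentagon of type (C3) on the curves $\epsilon_{12},\alpha_1,\epsilon_{13},\epsilon_{23},\alpha_2$; both are $2$-cells of $X_g$, so the loop is null-homotopic.

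The main obstacle is making all of this go through on small surfaces. One must check that $\epsilon_{12},\epsilon_{13},\epsilon_{23}$ with the prescribed intersection numbers and all spin value $1$ really exist, and — crucially — that the configuration can be chosen so that $\alpha_3$ is literally the resolution curve of the $\epsilon_{13},\epsilon_{23}$ double point (not merely homologous to it, cf. Figure \ref{fig:odd}) and so that the suppressed curves of the cut-systems do not interfere, i.e. that the four $2$-cells genuinely match (C1)–(C3). This is routine when some complementary component has enough room: in that case one can even bypass the hexagon, by choosing a nonseparating $1$-curve $\gamma$ there such that $\gamma,\alpha_i,\alpha_j$ are homologically independent for all $i,j$, keeping $\gamma$ in all corners and segments (using \thref{prop:conn} after cutting along $\alpha_i\cup\gamma$), so that the loop lies in the subcomplex of $X_g$ spanned by cut-systems containing $\gamma$, which is isomorphic to the spin cut-system complex of $\Sigma_{g,b}$ cut along $\gamma$ — of genus $g-1$, even by \thref{cor:cut}, hence simply connected by the inductive hypothesis of \thref{thm:x}. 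It is only for the finitely many rigid configurations with both complementary components as small as possible — both pairs of pants (forcing $g=2$) or both odd tori (forcing $g=4$), cf. \thref{rem:oddss} — that the explicit hexagon argument has to be carried out by hand.
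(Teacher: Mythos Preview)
Your approach differs from the paper's. The paper splits on the Arf invariant of the induced spin structure on the two components $S_1,S_2$ of $\Sigma_{g,b}\setminus(\alpha_1\cup\alpha_2\cup\alpha_3)$ (by additivity these parities agree). When both are even, the proof is literally Wajnryb's Lemma~13 with the single modification $\delta:=\tau_{\alpha_2}^2(\beta_3)$, the squared twist keeping spin values intact; this is where the pentagon enters. When both are odd, one does \emph{not} attempt the hexagon directly: instead one replaces $\alpha_1$ by a disjoint homologous $1$-curve $\alpha_1'$, built as the second boundary component of a tubular neighbourhood of a $3$-chain of $0$-curves abutting $\alpha_1$, so that $\alpha_1',\alpha_2,\alpha_3$ now cut out two \emph{even} pieces; one applies the even case to this triple and glues back to the original hexagon via the ladder lemma and \thref{lemma:conn}.

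Your direct two-squares/triangle/pentagon decomposition is close in spirit to Wajnryb's even-case argument, and your bypass via a $1$-curve $\gamma$ in a component is a valid reduction when such $\gamma$ exists. The genuine gap is precisely the case you defer: both $S_i$ odd, no bypass $\gamma$. There the obstacle is not producing the $\epsilon_{ij}$ with the right spin value (twisting along $\alpha_i$ does that), nor invoking change of coordinates for the intersection pattern (which in any case is not a proof as stated), but \emph{completing} vertices like $\Braket{\epsilon_{12},\epsilon_{13}}$ to full spin cut-systems by $g-2$ further disjoint $1$-curves. This is exactly the phenomenon of \thref{rem:oddss} and Figure~\ref{fig:odd}: in the odd configuration the expected completion can fail, so your triangle and pentagon may simply not exist as $2$-cells. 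The curve-replacement trick $\alpha_1\rightsquigarrow\alpha_1'$ is the missing idea that dissolves this obstruction. (Incidentally, ``both pairs of pants'' is not a separate rigid case: genus-$0$ pieces have Arf invariant $0$, so $g=2$ already falls under the even case.)
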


\begin{proof}
By the assumptions, $\Sigma_{g,b} \setminus (\alpha_1\cup\alpha_2\cup\alpha_3)$ has exactly two components, which we will call $S_1$ and $S_2$. Assume first that the restriction of the spin structure to both $S_1$ and $S_2$ is even. In this case, the proof is the same as that of \cite[Lemma 13]{waj:elem}, setting $\delta:=\tau_{\alpha_2}^2(\beta_3)$. 

If instead the induced spin structures on $S_1$ and $S_2$ are odd, we can replace $\alpha_1$ with a disjoint curve $\alpha_1'$ in the same homology class such that $\alpha_1',\alpha_2,\alpha_3$ still satisfy our hypotheses and $\alpha_1' \cup \alpha_2\cup \alpha_3$ cuts the surface into two even subsurfaces. Indeed, choose $0$-curves $\gamma_1,\gamma_2$ on $S_1$ that intersect once, and call $\gamma_3$ the arc sum of $\gamma_1$ and $\alpha_1$ along an arc that is disjoint from $\gamma_2$. Then, $\gamma_1,\gamma_2,\gamma_3$ is a $3$-chain of $0$-curves, and a tubular neighborhood of the union $\gamma_1\cup\gamma_2\cup\gamma_3$ has boundary given by $\alpha_1$ and the desired curve $\alpha_1'$.

Now, apply the first part of the proof to the triple $\alpha_1',\alpha_2,\alpha_3$, and construct the required path applying the ladder lemma to the edges of the hexagon involving $\alpha_1'$ and finding additional segments via \thref{lemma:conn}. \qedhere
\end{proof}

\begin{proposition}
\thlabel{prop:r0}
All paths of radius zero in $X_g$ are null-homotopic. 
\end{proposition}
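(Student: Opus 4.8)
The plan is to homotope $\mathbf{p}$ into the full subcomplex $X_g^{\alpha}\subset X_g$ spanned by the cut-systems that contain $\alpha$ — where $\alpha$ is the curve around which $\mathbf{p}$ has radius $0$, so that by definition every vertex of $\mathbf{p}$ contains some curve disjoint from $\alpha$ — and then to identify $X_g^{\alpha}$ with the spin cut-system complex of a surface of genus $g-1$, whose simple connectivity is the inductive hypothesis. Before doing this I would first use the 1-1-2 trick (\thref{rem:112}) to homotope $\mathbf{p}$, across triangles, to a closed path with edges of type i only. This preserves the radius: if the edge of type ii being replaced has a common curve disjoint from $\alpha$, so does the new triangle vertex; and if it does not, then the two curves meeting twice are both disjoint from $\alpha$, hence so is the curve produced by the trick, being carried by arcs of them.

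For each vertex $v$ of $\mathbf{p}$ I would then fix a curve $c_v\in v$ with $\abs{c_v\cap\alpha}=0$, a vertex $\widehat v\in X_g^{\alpha}$, and a segment from $v$ to $\widehat v$: put $\widehat v=v$ if $c_v=\alpha$; complete $\{\alpha,c_v\}$ to a spin cut-system and take a $c_v$-segment (\thref{lemma:conn}) if $c_v$ and $\alpha$ are disjoint but not homologous; and interpolate through a common geometric dual, as in the proof of \thref{prop:conn}, if they are disjoint and homologous. For each edge $v-v'$ of $\mathbf{p}$, the quadrilateral bounded by $v-v'$, the segments $v\cdots\widehat v$ and $v'\cdots\widehat{v'}$, and a suitable path $\widehat v\cdots\widehat{v'}$ inside $X_g^{\alpha}$ should be filled by a null-homotopic disc: the squares are supplied by the ladder lemma and the triangles by the hexagon lemma, while \thref{lem:17} is used to keep the auxiliary curves from meeting $\alpha$. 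Gluing these discs along $\mathbf{p}$ yields a homotopy from $\mathbf{p}$ to a closed path $\widehat{\mathbf{p}}$ lying in $X_g^{\alpha}$.

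Finally, cutting $\Sigma_{g,b}$ along $\alpha$ produces $\Sigma_{g-1}^{b+2}$, on which the pull-back spin structure is again even by \thref{cor:cut} and assigns the value $1$ to the two new boundary curves, consistently with \thref{rem:sgk}; deleting $\alpha$ identifies $X_g^{\alpha}$ with the spin cut-system complex $X_{g-1}$ of this cut surface. By the induction on genus, $X_{g-1}$ is simply connected, so $\widehat{\mathbf{p}}$, and hence $\mathbf{p}$, is null-homotopic.

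I expect the real work to be in the middle step. A generic vertex of $\mathbf{p}$ meets $\alpha$ in the curves other than $c_v$, so one cannot simply swap $\alpha$ in, and the quadrilaterals connecting $\mathbf{p}$ to its push-off into $X_g^{\alpha}$ have to be built cell by cell from squares, pentagons and triangles while controlling every intersection number with $\alpha$ along the way. This is also precisely where the degenerate configurations of \thref{rem:oddss} — one-holed tori with an odd induced spin structure, annuli with a $0$-curve belt — can block the naive choices of auxiliary curves and have to be avoided by an explicit modification.
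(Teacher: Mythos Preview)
Your outline is correct and matches the paper's approach: reduce to type~i edges via the 1-1-2 trick, then push $\mathbf{p}$ into the $\alpha$-subcomplex $X_g^{\alpha}\cong X_{g-1}$ by attaching, along each edge, a null-homotopic quadrilateral built from ladders and hexagons, and conclude by induction on the genus. The paper's own proof is a one-line pointer to \cite[Proposition~14]{waj:elem}, noting only that Wajnryb's squares (his Figures~6 and~8) must be replaced by applications of the ladder lemma; you have essentially reconstructed that argument, including the correct spin-specific tools (the ladder and hexagon lemmas are exactly what absorb the \thref{rem:oddss}-type degenerations at this stage). One small remark: \thref{lem:17} is not actually needed here --- it enters later, in the inductive step for radius $\ge 2$ --- so you can drop that reference from the radius-$0$ argument.
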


\begin{proof}
Let $\mathbf{p}$ be a path of radius zero with respect to some curve $\alpha$ contained in a vertex $v_0$ of $\mathbf{p}$. By the 1-1-2 trick, we can assume that all the edges of $p$ are of type i. The proof is then the same as that of \cite[Proposition 14]{waj:elem}, using the ladder lemma to construct the squares of \cite[Figures 6 and 8]{waj:elem}. \qedhere
\end{proof}

\subsection{Simple connectivity: paths of radius 1}

Our inductive step works only when the radius is at least 2. We now deal separately with paths of radius 1. Here we will need to use \emph{hyperelliptic face}. The reasons why a new $2$-cell is needed can be traced back to the following observation. 

\begin{remark}
\thlabel{rem:hyp}
Let $v_0,v_1$ be two spin cut-systems, and assume that there exist two disjoint curves $\alpha_0 \in v_0$, $\alpha_1 \in v_1$, i.e. $d_{\alpha_0}(v_1)=0$. Then, unlike in the standard cut-system complex, there is not always a path in $X_g$ from $v_0$ to $v_1$ with radius $0$ around $\alpha_0$. For example, choose $\alpha_0,\alpha_1$ as in Figure \ref{fig:odd} (in particular, take $g=3$). Recall that there are no $1$-curves in $\Sigma_3\setminus(\alpha_0\cup\alpha_1)$ by \thref{rem:oddss}. As a consequence, if $\mathbf{p}$ is a path from $v_0$ to $v_1$, all the curves of the last vertex before the final $\alpha_1$-segment must intersect $\alpha_0$. 

This problem does not arise in genus $2$. Indeed, in this case two disjoint, nonseparating curves are either independent in homology or bound an annulus with holes. As a consequence, the arguments of this section and the next one become much simpler in that case, and can be followed to prove that $X_2$ is simply connected. This fact will be used in the inductive steps. 
\end{remark}

We construct the hyperelliptic face by reverse engineering the genus 3 hyperelliptic relation. Before giving the detailed construction, we state the key fact that the hyperelliptic face allows us to prove.

\begin{proposition}
\thlabel{prop:hyp}
Let $\alpha,\alpha'$ and $\beta,\beta'$ be two couples of nonseparating $1$-curves on $\Sigma_g^b$ with the following properties:
\begin{enumerate}[label=\textit{(\roman*)}, ref=(\roman*)]
\item $\alpha,\alpha'$ (resp. $\beta,\beta'$) are disjoint and homologous, and separate $\Sigma_g^1$ into two odd subsurfaces; \label{hy1}
\item $\abs{\alpha \cap \beta}=\abs{\beta\cap \alpha'}=\abs{\alpha'\cap\beta'}=\abs{\beta'\cap \alpha}=1$. \label{hy3}
\end{enumerate}
Then there exists a null-homotopic path in $X_g$ of the form
\[
\begin{tikzcd}[column sep=tiny, row sep=small]
\Braket{\alpha} \arrow[r, no head] \arrow[dd, dashed, no head, "\alpha"'] & \Braket{\beta} \arrow[rr, dashed, no head, "\beta"] & & \Braket{\beta} \arrow[r, no head] & \Braket{\alpha'} \arrow[dd, dashed, no head, "\alpha'"] \\
& & & & \\
\Braket{\alpha} \arrow[r, no head]  & \Braket{\beta'} \arrow[rr, dashed, no head, "\beta'"'] & & \Braket{\beta'} \arrow[r, no head]  & \Braket{\alpha'}.
\end{tikzcd}
\]
\end{proposition}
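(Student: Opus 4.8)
The plan is to reduce, via the spin change of coordinates, to a single standard configuration of the four curves, and then to observe that for that configuration the octagonal loop in the statement is, after a bounded amount of manipulation with the $2$-cells already at our disposal, exactly the boundary of a hyperelliptic face.

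First I would argue that the hypotheses pin down $\alpha\cup\beta\cup\alpha'\cup\beta'$ up to the action of $\Modd(\Sigma_g^b)[\phi]$, once the homeomorphism type of the complement is fixed (there are only finitely many possibilities, indexed by how the genus is distributed among the complementary pieces). Indeed, \ref{hy1}--\ref{hy3} say that $(\alpha,\beta,\alpha')$ and $(\alpha,\beta',\alpha')$ are $3$-chains of $1$-curves, that $\alpha\cup\alpha'$ and $\beta\cup\beta'$ both separate, and that all the resulting pieces are odd. So I would first move $\{\alpha,\alpha'\}$ to a standard partial cut-system using the spin change of coordinates for cut-systems, then cut along $\alpha\cup\alpha'$ --- where $\beta$ and $\beta'$ become disjoint arcs --- and move them to standard position on the cut surface; the hypothesis that the complementary subsurfaces are odd is exactly what makes the relevant Arf invariants agree, so that this last move can be carried out inside $\Modd[\phi]$ (compare the treatment of chains in the spin change of coordinates, Section~2). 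Since $\Modd(\Sigma_g^b)[\phi]$ acts cellularly on $X_g$ and preserves null-homotopy, it then suffices to treat the standard configuration. This incidentally forces $g\ge3$; for $g\le2$ the hypotheses cannot be met and the proposition is vacuous.

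Next, for the standard configuration I would invoke \thref{def:hyp}: by construction the hyperelliptic face has as its boundary an edge-loop of precisely the shape displayed in the statement, its four dashed sides being definite edge-paths of type i (so that the $28$ edges of the face are the four type-i edges of the octagon together with these four segments). By \thref{lemma:1conn} any two $\gamma$-segments with the same endpoints are homotopic rel endpoints in $X_g$, so the homotopy class of the octagon does not depend on which segments are inserted; filling in this hyperelliptic face, together with the triangles, squares and pentagons needed to reconcile arbitrary segments with the specific ones occurring in the face --- supplied by \thref{lemma:conn}, the square lemma and the ladder lemma --- then exhibits the loop as null-homotopic.

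The hard part is the reduction step: one has to verify that \ref{hy1}--\ref{hy3} genuinely determine the configuration up to $\Modd[\phi]$, i.e.\ that the homeomorphism type of the complement and the spin structures carried by its pieces are forced by the hypotheses, and then to match, side by side, the octagon against the boundary of the hyperelliptic face. The construction of the hyperelliptic face itself, by reverse-engineering the genus-$3$ hyperelliptic relation, is carried out separately in \thref{def:hyp}; granting it, what remains is essentially bookkeeping with the cells of $X_g$.
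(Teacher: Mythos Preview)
Your strategy differs from the paper's, and the point where it diverges is also where I see a gap.

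You want to reduce to a single standard configuration via a spin change of coordinates on the whole $4$-tuple $(\alpha,\beta,\alpha',\beta')$, and then identify the octagon with the boundary of the hyperelliptic face for that standard tuple. The trouble is that the transitivity you need is not one of the change-of-coordinates statements established in the paper (bases, cut-systems, chains). After moving $\{\alpha,\alpha'\}$ to standard position, you must move the pair $(\beta,\beta')$ inside the \emph{stabiliser} of $\{\alpha,\alpha'\}$ in $\Modd[\phi]$; on the cut surface these are arcs, and you would have to argue carefully that the spin mapping class group of the two odd pieces acts transitively on such arc pairs with the correct endpoints and separation properties. This is plausible but is not a direct consequence of the statements you cite, and you do not supply the argument. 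You also need to explain, for general $g$, how the different distributions of genus among the complementary pieces are all brought to the same standard; this is where the paper's reduction to a genus-$3$ subsurface enters, and your sketch does not address it.

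The paper avoids the transitivity question altogether. It first passes to a genus-$3$ subsurface $S\subset\Sigma_g^b$ containing $\alpha,\beta,\alpha',\beta'$ by choosing (partial) spin cut-systems on the four complementary pieces, so that the extra curves sit in every vertex of the face. Then it \emph{constructs} a $7$-chain of admissible curves directly from $\alpha$ and $\beta$ (building $\gamma_1,\dots,\gamma_7$ out of symplectic bases of the odd tori together with arc-sums), arranged so that the hyperelliptic face has fixed curves $\alpha,\beta,\alpha_1',\eta_1'$. The point you are missing is that $\alpha_1'$ and $\eta_1'$ are in general \emph{not} equal to $\alpha'$ and $\beta'$: they only cobound annuli with them. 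The paper then bridges the $\alpha_1'$-segment to an $\alpha'$-segment and the $\eta_1'$-segment to a $\beta'$-segment by inserting short paths through curves $\xi_i$ that cross the respective annuli once; the resulting sub-loops have radius $0$ or radius $1$ with a single distance-$1$ segment, and are killed by \thref{prop:r0} and \thref{lem:r1s1}.

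So your identification ``hyperelliptic face $=$ octagon'' is only literally true for the face's own four curves, and to turn your outline into a proof you would either have to prove the transitivity claim in full (which is extra work not in the paper), or carry out the same annulus-bridging step the paper does --- at which point the change-of-coordinates reduction is no longer needed.
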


\subsubsection{Construction of the hyperelliptic face}

Consider curves $\alpha,\alpha',\beta,\beta'$ on $\Sigma_g^b$ as in the statement of \thref{prop:hyp}. The union $\alpha \cup \alpha'\cup\beta\cup\beta'$ splits the surface into four components $S_1$, $S_2$, $S_3$ and $S_4$, each with a single new boundary component. We may assume that $\alpha$ and $\alpha'$ split the surface into $S_1 \cup_{\partial} S_2$ and $S_3 \cup_{\partial} S_4$, while $\beta$ and $\beta'$ into $S_1 \cup_{\partial} S_3$ and $S_2 \cup_{\partial} S_4$. 

Observe that exactly two surfaces $S_i$ inherit an odd spin structure. Indeed, it is easy to see that $g(S_i \cup_{\partial} S_j)=g(S_i)+g(S_j)$ in all the above cases, hence the union of symplectic bases for $S_i$ and $S_j$ gives a symplectic basis for $S_i\cup_{\partial} S_j$, and the Arf invariant is additive (even if we are not gluing along a whole boundary component).

We can then assume that $S_1$ and $S_4$ inherit an odd spin structure. Choose spin cut-systems on $S_2$ and $S_3$ and (partial) spin cut-systems on $S_1$ and $S_4$ with $g(S_1)-1$ and $g(S_4)-1$ curves respectively, and cut the surface along all these $1$-curves. We get a surface $S$ of genus $3$ with an induced even spin structure.

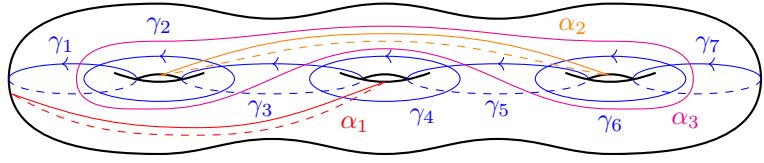
\begin{figure}
\centering
\begin{tikzpicture}
\draw [thick] (0,0) to [out=90, in=-180] (2,1) to [out=0, in=180] (3.5,.8) to [out=0, in=180] (5,1) to [out=0, in=180] (6.5,.8) to [out=0, in=180] (8,1) to [out=0, in=90] (10,0) to [out=-90, in=0] (8,-1) to [out=-180, in=0] (6.5,-.8) to [out=-180, in=0] (5,-1) to [out=-180, in=0] (3.5,-.8) to [out=-180, in=0] (2,-1) to [out=-180, in=-90] (0,0);
\draw [thick] (1.4,.08) to [out=-20, in=-160] (2.6,.08);
\draw [thick] (1.7,0) to [out=20, in=160] (2.3,0);
\draw [blue] (1,0) arc (180:90:1 and .3);
\draw [blue,->] (1,0) arc (-180:90:1 and .3);
\node [blue] at (2,.7) {$\gamma_2$};
\draw [blue] (0,0) arc (180:100:.85 and .2);
\draw [blue,->] (1.7,0) arc (0:100:.85 and .2);
\draw [blue, dashed] (0,0) arc (180:360:.85 and .2);
\node [blue] at (.7,.45) {$\gamma_1$};
\draw [blue] (2.3,0) arc (180:90:1.2 and .2);
\draw [blue,->] (4.7,0) arc (0:90:1.2 and .2);
\draw [blue, dashed] (2.3,0) arc (-180:0:1.2 and .2);
\node [blue] at (3.35,-.4) {$\gamma_3$};
\draw [thick] (4.4,.08) to [out=-20, in=-160] (5.6,.08);
\draw [thick] (4.7,0) to [out=20, in=160] (5.3,0);
\draw [blue] (4,0) arc (180:90:1 and .3);
\draw [blue,->] (4,0) arc (-180:90:1 and .3);
\node [blue] at (5.5,-.5) {$\gamma_4$};;
\node [red] at (4.6,-.6) {$\alpha_1$};
\draw [red] (5,-.04) to [out=-160, in=0] (2.5,-.65) to [out=180, in=-20] (.04,-.2);
\draw [red, dashed] (5,-.04) to [out=-150, in=0] (2.5,-.75) to [out=180, in=-30] (.04,-.2);
\draw [blue] (5.3,0) arc (180:90:1.2 and .2);
\draw [blue,->] (7.7,0) arc (0:90:1.2 and .2);
\draw [blue, dashed] (5.3,0) arc (-180:0:1.2 and .2);
\node [blue] at (6.5,-.4) {$\gamma_5$};
\draw [thick] (7.4,.08) to [out=-20, in=-160] (8.6,.08);
\draw [thick] (7.7,0) to [out=20, in=160] (8.3,0);
\draw [blue] (7,0) arc (180:90:1 and .3);
\draw [blue,->] (7,0) arc (-180:90:1 and .3);
\node [blue] at (8,-.6) {$\gamma_6$};
\draw [blue,->] (10,0) arc (0:80:.85 and .2);
\draw [blue] (8.3,0) arc (180:80:.85 and .2);
\draw [blue, dashed] (10,0) arc (360:180:.85 and .2);
\node [blue] at (9.3,.45) {$\gamma_7$};
\draw [orange] (2,.04) to [out=20, in=180] (5,.6) to [out=0, in=160] (8,.04);
\draw [orange, dashed] (2,.04) to [out=10, in=180] (5,.5) to [out=0, in=170] (8,.04);
\node [orange] at (7.5,.7) {$\alpha_2$};
\draw [magenta] (5,.7) to [out=180, in=0] (2,.5) to [out=180, in=90] (.9,0) to [out=-90, in=180] (2,-.4) to [out=0, in=180] (5,.4) to [out=0, in=180] (8,-.4) to [out=0, in=-90] (9.1,0) to [out=90, in=0] (8,.5) to [out=180, in=0] (5,.7);
\node [magenta] at (9,-.55) {$\alpha_3$};
\end{tikzpicture}
\caption{A $7$-chain of admissible curve on a surface of genus $3$ with an even spin structure, and the spin cut-system $v$ corresponding to the chosen orientations.}
\label{fig:hypv}
\end{figure}

Assume that $\gamma_1,\dots,\gamma_7$ is a $7$-chain of admissible curves on $S$ (see Figure \ref{fig:hypv}). Recall that if $\overline{S}$ is the surface obtained by capping all boundary components of $S$ with disks, we have the relation
\[
(t_{\gamma_1}\dots t_{\gamma_6}t_{\gamma_7}^2t_{\gamma_6}\dots t_{\gamma_1})^2=1
\]
in $\Modd(\overline{S})$. More generally, let $\delta$ be the boundary of $\gamma_1 \cup \dots \cup \gamma_6$ in $S$, and let $\delta_1,\delta_2$ be the two boundary components of $\gamma_1\cup\dots \cup \gamma_7$. Then the relation
\begin{equation}
\label{hyp3}
(t_{\gamma_1}\dots t_{\gamma_6}t_{\gamma_7}^2t_{\gamma_6}\dots t_{\gamma_1})^2=t_{\delta_1}^2t_{\delta_2}^2t_{\delta}^{-1}
\end{equation}
holds in $\Modd(S)$, as the result of combining two positive $7$-chain relations and a negative $6$-chain relation. 

Now we construct a spin cut-system on $S$, which will be the first vertex of the hyperelliptic face. Orient $\gamma_1,\dots,\gamma_7$ so that $\gamma_i\cdot \gamma_{i+1}=1$. Consider the following curves on $S$ (see Figure \ref{fig:hypv}): 
\begin{itemize}
\item $\alpha_1:=\gamma_1+_{a_1}\gamma_3$, where $a_1$ is the arc of $\gamma_2$ going from $\gamma_1\cap\gamma_2$ to $\gamma_2\cap\gamma_3$;
\item $\alpha_2:=\gamma_3+_{a_2}\gamma_5$, where $a_2$ is the arc of $\gamma_4$ from $\gamma_4\cap\gamma_5$ to $\gamma_3\cap\gamma_4$;
\item $\alpha_3:=\gamma_2+_{a_3}\gamma_6$, where $a_3$ is the arc that goes from $\gamma_5\cap\gamma_6$ to $\gamma_2\cap \gamma_3$ along $\gamma_5$, $\gamma_4$ and $\gamma_3$. 
\end{itemize}
It is easy to see that $\alpha_1$, $\alpha_2$, $\alpha_3$ form a spin cut-system $v$ on $S$. Note that they all lie on a tubular neighborhood of $\gamma_1\cup\dots\cup\gamma_6$.

\begin{definition}
\thlabel{def:hyp}
Let $c_i$ be the $i$-th Dehn twist involved in the left hand side of the hyperelliptic relation (\ref{hyp3}). Set $h_1:=c_1$ and $h_{i}=(c_1\dots c_{i-1})*c_{i}$ for $i=2,\dots,28$. Complete $v$ to a spin cut-system on the whole of $\Sigma_g^b$. A \emph{hyperelliptic face} is a $28$-gon of the form
\begin{equation}
\label{eq:hypf}
v-h_1(v)-(h_2h_1)(v)-\dots-(h_{28}\dots h_1)(v)=v,
\end{equation}
where all the edges are of type (i), and as usual we are only writing the curves that change.
\end{definition}

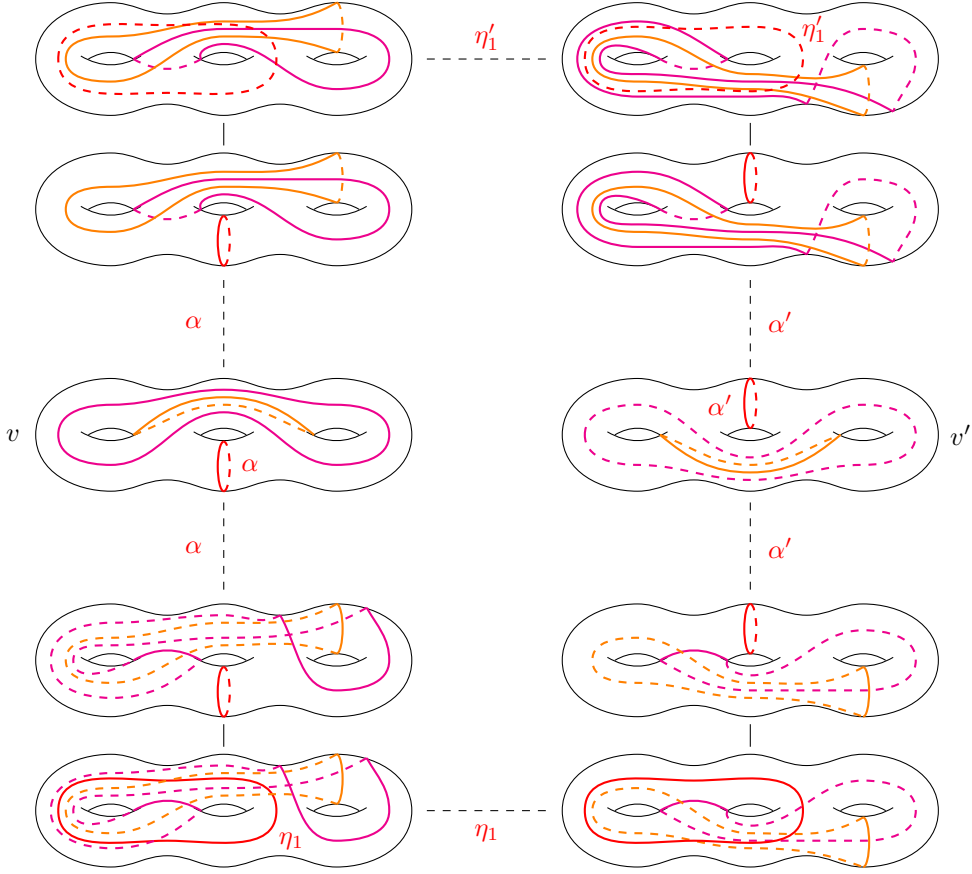
\begin{figure}
\centering
\begin{tikzpicture}
\draw (0,0) to [out=90, in=-180] (1,.75) to [out=0, in=180] (1.75,.6) to [out=0, in=180] (2.5,.75) to [out=0, in=180] (3.25,.6) to [out=0, in=180] (4,.75)  to [out=0, in=90] (5,0) to [out=-90, in=0] (4,-.75) to [out=-180, in=0] (3.25,-.6) to [out=-180, in=0] (2.5,-.75) to [out=-180, in=0] (1.75,-.6) to [out=-180, in=0] (1,-.75) to [out=-180, in=-90] (0,0);
\draw (.6,.04) to [out=-30, in=-150] (1.4,.04);
\draw (.7,0) to [out=30, in=150] (1.3,0);
\draw (2.1,.04) to [out=-30, in=-150] (2.9,.04);
\draw (2.2,0) to [out=30, in=150] (2.8,0);
\draw (3.6,.04) to [out=-30, in=-150] (4.4,.04);
\draw (3.7,0) to [out=30, in=150] (4.3,0);
\draw [thick, red] (2.5,-.75) arc (270:90:.08 and .33);
\draw [thick, red, dashed] (2.5,-.75) arc (-90:90:.08 and .33);
\draw [thick, magenta] (.3,0) to [out=90, in=-180] (1,.4) to [out=0, in=180] (2.5,.6) to [out=0, in=180] (4,.4)  to [out=0, in=90] (4.7,0) to [out=-90, in=0] (4,-.4) to [out=-180, in=0] (2.5,.3) to [out=-180, in=0] (1,-.4) to [out=-180, in=-90] (.3,0);
\draw [thick, orange] (1.3,0) to [out=40, in=180] (2.5,.5) to [out=0, in=140] (3.7,0);
\draw [thick, orange, dashed] (1.3,0) to [out=20, in=180] (2.5,.4) to [out=0, in=160] (3.7,0);
\node [red] at (2.85,-.4) {$\alpha$};
\draw (7,0) to [out=90, in=-180] (8.0,.75) to [out=0, in=180] (8.75,.6) to [out=0, in=180] (9.5,.75) to [out=0, in=180] (10.25,.6) to [out=0, in=180] (11,.75)  to [out=0, in=90] (12,0) to [out=-90, in=0] (11,-.75) to [out=-180, in=0] (10.25,-.6) to [out=-180, in=0] (9.5,-.75) to [out=-180, in=0] (8.75,-.6) to [out=-180, in=0] (8,-.75) to [out=-180, in=-90] (7,0);
\draw (7.6,.04) to [out=-30, in=-150] (8.4,.04);
\draw (7.7,0) to [out=30, in=150] (8.3,0);
\draw (9.1,.04) to [out=-30, in=-150] (9.9,.04);
\draw (9.2,0) to [out=30, in=150] (9.8,0);
\draw (10.6,.04) to [out=-30, in=-150] (11.4,.04);
\draw (10.7,0) to [out=30, in=150] (11.3,0);
\draw [thick, red] (9.5,.75) arc (90:270:.08 and .33);
\draw [thick, red, dashed] (9.5,.75) arc (90:-90:.08 and .33);
\draw [thick, magenta, dashed] (7.3,0) to [out=90, in=-180] (8,.4) to [out=0, in=180] (9.5,-.3) to [out=0, in=180] (11,.4)  to [out=0, in=90] (11.7,0) to [out=-90, in=0] (11,-.4) to [out=-180, in=0] (9.5,-.6) to [out=-180, in=0] (8,-.4) to [out=-180, in=-90] (7.3,0);
\draw [thick, orange] (8.3,0) to [out=-40, in=180] (9.5,-.5) to [out=0, in=-140] (10.7,0);
\draw [thick, orange, dashed] (8.3,0) to [out=-20, in=180] (9.5,-.4) to [out=0, in=-160] (10.7,0);
\node [red] at (9.1,.4) {$\alpha'$};
\draw (0,3) to [out=90, in=-180] (1,3.75) to [out=0, in=180] (1.75,3.6) to [out=0, in=180] (2.5,3.75) to [out=0, in=180] (3.25,3.6) to [out=0, in=180] (4,3.75)  to [out=0, in=90] (5,3) to [out=-90, in=0] (4,2.25) to [out=-180, in=0] (3.25,2.4) to [out=-180, in=0] (2.5,2.25) to [out=-180, in=0] (1.75,2.4) to [out=-180, in=0] (1,2.25) to [out=-180, in=-90] (0,3);
\draw (0.6,3.04) to [out=-30, in=-150] (1.4,3.04);
\draw (0.7,3) to [out=30, in=150] (1.3,3);
\draw (2.1,3.04) to [out=-30, in=-150] (2.9,3.04);
\draw (2.2,3) to [out=30, in=150] (2.8,3);
\draw (3.6,3.04) to [out=-30, in=-150] (4.4,3.04);
\draw (3.7,3) to [out=30, in=150] (4.3,3);
\draw [thick, red] (2.5,2.25) arc (270:90:.08 and .33);
\draw [thick, red, dashed] (2.5,2.25) arc (-90:90:.08 and .33);
\draw [thick, magenta, dashed] (1.3,3) to [out=-30, in=-150] (2.2,3);
\draw [thick, magenta] (1.3,3) to [out=30, in=180] (2.5,3.4) to [out=0, in=180] (4,3.4)  to [out=0, in=90] (4.7,3) to [out=-90, in=0] (4,2.6) to [out=-180, in=0] (2.5,3.2) to [out=180, in=120] (2.2,3);
\draw [thick, orange, dashed] (4,3.75) arc (90:-90:.08 and .33);
\draw [thick, orange] (4,3.75) to [out=-150, in=0] (2.5,3.5) to [out=180, in=0] (1,3.3) to [out=180, in=90] (0.4,3) to [out=-90, in=180] (1,2.7) to [out=0, in=180] (2.5,3.3) to [out=0, in=160] (4,3.09);
\draw (7,3) to [out=90, in=-180] (8.0,3.75) to [out=0, in=180] (8.75,3.6) to [out=0, in=180] (9.5,3.75) to [out=0, in=180] (10.25,3.6) to [out=0, in=180] (11,3.75)  to [out=0, in=90] (12,3) to [out=-90, in=0] (11,2.25) to [out=-180, in=0] (10.25,2.4) to [out=-180, in=0] (9.5,2.25) to [out=-180, in=0] (8.75,2.4) to [out=-180, in=0] (8,2.25) to [out=-180, in=-90] (7,3);
\draw (7.6,3.04) to [out=-30, in=-150] (8.4,3.04);
\draw (7.7,3) to [out=30, in=150] (8.3,3);
\draw (9.1,3.04) to [out=-30, in=-150] (9.9,3.04);
\draw (9.2,3) to [out=30, in=150] (9.8,3);
\draw (10.6,3.04) to [out=-30, in=-150] (11.4,3.04);
\draw (10.7,3) to [out=30, in=150] (11.3,3);
\draw [thick, red] (9.5,3.75) arc (90:270:.08 and .33);
\draw [thick, red, dashed] (9.5,3.75) arc (90:-90:.08 and .33);
\draw [thick, magenta, dashed] (8.3,3) to [out=-30, in=-150] (9.2,3);
\draw [thick, magenta] (9.2,3) to [out=150, in=0] (8,3.5) to [out=180, in=90] (7.2,3) to [out=-90, in=180] (8,2.5) to [out=0, in=180] (9.5,2.5) to [out=0, in=150] (10.25,2.4);
\draw [thick, magenta, dashed] (10.25,2.4) to [out=60, in=180] (11,3.4) to [out=0, in=90] (11.7,3) to [out=-90, in=70] (11.4,2.3);
\draw [thick, magenta] (8.3,3) to [out=150, in=90] (7.5,3) to [out=-90, in=180] (8,2.8) to [out=0, in=180] (9.5,2.7) to [out=0, in=150] (11.4,2.3);
\draw [thick, orange, dashed] (11,2.25) arc (-90:90:.08 and .33);
\draw [thick, orange] (11,2.91) to [out=-150, in=0] (9.5,2.8) to [out=180, in=0] (8,3.3) to [out=180, in=90] (7.4,3) to [out=-90, in=180] (8,2.7) to [out=0, in=180] (9.5,2.6) to [out=0, in=160] (11,2.25);
\draw (0,5) to [out=90, in=-180] (1,5.75) to [out=0, in=180] (1.75,5.6) to [out=0, in=180] (2.5,5.75) to [out=0, in=180] (3.25,5.6) to [out=0, in=180] (4,5.75)  to [out=0, in=90] (5,5) to [out=-90, in=0] (4,4.25) to [out=-180, in=0] (3.25,4.4) to [out=-180, in=0] (2.5,4.25) to [out=-180, in=0] (1.75,4.4) to [out=-180, in=0] (1,4.25) to [out=-180, in=-90] (0,5);
\draw (0.6,5.04) to [out=-30, in=-150] (1.4,5.04);
\draw (0.7,5) to [out=30, in=150] (1.3,5);
\draw (2.1,5.04) to [out=-30, in=-150] (2.9,5.04);
\draw (2.2,5) to [out=30, in=150] (2.8,5);
\draw (3.6,5.04) to [out=-30, in=-150] (4.4,5.04);
\draw (3.7,5) to [out=30, in=150] (4.3,5);
\draw [thick, magenta, dashed] (1.3,5) to [out=-30, in=-150] (2.2,5);
\draw [thick, magenta] (1.3,5) to [out=30, in=180] (2.5,5.4) to [out=0, in=180] (4,5.4)  to [out=0, in=90] (4.7,5) to [out=-90, in=0] (4,4.6) to [out=-180, in=0] (2.5,5.2) to [out=180, in=120] (2.2,5);
\draw [thick, orange, dashed] (4,5.75) arc (90:-90:.08 and .33);
\draw [thick, orange] (4,5.75) to [out=-150, in=0] (2.5,5.5) to [out=180, in=0] (1,5.3) to [out=180, in=90] (0.4,5) to [out=-90, in=180] (1,4.7) to [out=0, in=180] (2.5,5.3) to [out=0, in=160] (4,5.09);
\draw [red, thick, dashed] (.3,5) to [out=90, in=180] (1.75,5.45) to [out=0, in=90] (3.2,5) to [out=-90, in=0] (1.75,4.55) to [out=180, in=-90] (.3,5);
\draw (7,5) to [out=90, in=-180] (8.0,5.75) to [out=0, in=180] (8.75,5.6) to [out=0, in=180] (9.5,5.75) to [out=0, in=180] (10.25,5.6) to [out=0, in=180] (11,5.75)  to [out=0, in=90] (12,5) to [out=-90, in=0] (11,4.25) to [out=-180, in=0] (10.25,4.4) to [out=-180, in=0] (9.5,4.25) to [out=-180, in=0] (8.75,4.4) to [out=-180, in=0] (8,4.25) to [out=-180, in=-90] (7,5);
\draw (7.6,5.04) to [out=-30, in=-150] (8.4,5.04);
\draw (7.7,5) to [out=30, in=150] (8.3,5);
\draw (9.1,5.04) to [out=-30, in=-150] (9.9,5.04);
\draw (9.2,5) to [out=30, in=150] (9.8,5);
\draw (10.6,5.04) to [out=-30, in=-150] (11.4,5.04);
\draw (10.7,5) to [out=30, in=150] (11.3,5);
\draw [thick, magenta, dashed] (8.3,5) to [out=-30, in=-150] (9.2,5);
\draw [thick, magenta, dashed] (10.25,4.4) to [out=60, in=180] (11,5.4) to [out=0, in=90] (11.7,5) to [out=-90, in=70] (11.4,4.3);
\draw [thick, magenta] (8.3,5) to [out=150, in=90] (7.5,5) to [out=-90, in=180] (8,4.8) to [out=0, in=180] (9.5,4.7) to [out=0, in=150] (11.4,4.3);
\draw [thick, magenta] (9.2,5) to [out=150, in=0] (8,5.5) to [out=180, in=90] (7.2,5) to [out=-90, in=180] (8,4.5) to [out=0, in=180] (9.5,4.5) to [out=0, in=150] (10.25,4.4);
\draw [thick, orange, dashed] (11,4.25) arc (-90:90:.08 and .33);
\draw [thick, orange] (11,4.91) to [out=-150, in=0] (9.5,4.8) to [out=180, in=0] (8,5.3) to [out=180, in=90] (7.4,5) to [out=-90, in=180] (8,4.7) to [out=0, in=180] (9.5,4.6) to [out=0, in=160] (11,4.25);
\draw [red, thick, dashed] (7.3,5) to [out=90, in=180] (8.75,5.4) to [out=0, in=90] (10.2,5) to [out=-90, in=0] (8.75,4.6) to [out=180, in=-90] (7.3,5);
\node [red] at (10.35,5.4) {$\eta_1'$};
\draw (0,-3) to [out=-90, in=-180] (1,-3.75) to [out=0, in=180] (1.75,-3.6) to [out=0, in=180] (2.5,-3.75) to [out=0, in=180] (3.25,-3.6) to [out=0, in=180] (4,-3.75)  to [out=0, in=-90] (5,-3) to [out=90, in=0] (4,-2.25) to [out=-180, in=0] (3.25,-2.4) to [out=-180, in=0] (2.5,-2.25) to [out=-180, in=0] (1.75,-2.4) to [out=-180, in=0] (1,-2.25) to [out=-180, in=90] (0,-3);
\draw (0.6,-2.96) to [out=-30, in=-150] (1.4,-2.96);
\draw (0.7,-3) to [out=30, in=150] (1.3,-3);
\draw (2.1,-2.96) to [out=-30, in=-150] (2.9,-2.96);
\draw (2.2,-3) to [out=30, in=150] (2.8,-3);
\draw (3.6,-2.96) to [out=-30, in=-150] (4.4,-2.96);
\draw (3.7,-3) to [out=30, in=150] (4.3,-3);
\draw [thick, red] (2.5,-3.75) arc (270:90:.08 and .33);
\draw [thick, red, dashed] (2.5,-3.75) arc (-90:90:.08 and .33);
\draw [thick, magenta] (1.3,-3) to [out=30, in=150] (2.2,-3);
\draw [thick, magenta] (3.25,-2.4) to [out=-60, in=180] (4,-3.4) to [out=0, in=-90] (4.7,-3) to [out=90, in=-60] (4.4,-2.3);
\draw [thick, magenta, dashed] (3.25,-2.4) to [out=-150, in=0] (2.5,-2.4) to [out=180, in=0] (1,-2.5) to [out=180, in=90] (.2,-3) to [out=-90, in=180] (1,-3.5) to [out=0, in=-140] (2.2,-3);
\draw [thick, magenta, dashed] (4.4,-2.3) to [out=-150, in=0] (2.5,-2.7) to [out=180, in=0] (1,-2.8) to [out=180, in=90] (.5,-3) to [out=-90, in=-150] (1.3,-3);
\draw [thick, orange] (4,-2.25) arc (90:-90:.08 and .33);
\draw [thick, orange, dashed] (4,-2.25) to [out=-150, in=0] (2.5,-2.5) to [out=180, in=0] (1,-2.7) to [out=180, in=90] (0.4,-3) to [out=-90, in=180] (1,-3.3) to [out=0, in=180] (2.5,-2.8) to [out=0, in=160] (4,-2.91);
\draw (7,-3) to [out=-90, in=-180] (8.0,-3.75) to [out=0, in=180] (8.75,-3.6) to [out=0, in=180] (9.5,-3.75) to [out=0, in=180] (10.25,-3.6) to [out=0, in=180] (11,-3.75)  to [out=0, in=-90] (12,-3) to [out=90, in=0] (11,-2.25) to [out=-180, in=0] (10.25,-2.4) to [out=-180, in=0] (9.5,-2.25) to [out=-180, in=0] (8.75,-2.4) to [out=-180, in=0] (8,-2.25) to [out=-180, in=90] (7,-3);
\draw (7.6,-2.96) to [out=-30, in=-150] (8.4,-2.96);
\draw (7.7,-3) to [out=30, in=150] (8.3,-3);
\draw (9.1,-2.96) to [out=-30, in=-150] (9.9,-2.96);
\draw (9.2,-3) to [out=30, in=150] (9.8,-3);
\draw (10.6,-2.96) to [out=-30, in=-150] (11.4,-2.96);
\draw (10.7,-3) to [out=30, in=150] (11.3,-3);
\draw [thick, red] (9.5,-2.25) arc (90:270:.08 and .33);
\draw [thick, red, dashed] (9.5,-2.25) arc (90:-90:.08 and .33);
\draw [thick, magenta] (8.3,-3) to [out=30, in=150] (9.2,-3);
\draw [thick, magenta, dashed] (8.3,-3) to [out=-30, in=180] (9.5,-3.4) to [out=0, in=180] (11,-3.4)  to [out=0, in=-90] (11.7,-3) to [out=90, in=0] (11,-2.6) to [out=-180, in=0] (9.5,-3.2) to [out=180, in=-120] (9.2,-3);
\draw [thick, orange] (11,-3.75) arc (-90:90:.08 and .33);
\draw [thick, orange, dashed] (11,-3.09) to [out=-150, in=0] (9.5,-3.3) to [out=180, in=0] (8,-2.7) to [out=180, in=90] (7.4,-3) to [out=-90, in=180] (8,-3.3) to [out=0, in=180] (9.5,-3.5) to [out=0, in=160] (11,-3.75);
\draw (0,-5) to [out=-90, in=-180] (1,-5.75) to [out=0, in=180] (1.75,-5.6) to [out=0, in=180] (2.5,-5.75) to [out=0, in=180] (3.25,-5.6) to [out=0, in=180] (4,-5.75)  to [out=0, in=-90] (5,-5) to [out=90, in=0] (4,-4.25) to [out=-180, in=0] (3.25,-4.4) to [out=-180, in=0] (2.5,-4.25) to [out=-180, in=0] (1.75,-4.4) to [out=-180, in=0] (1,-4.25) to [out=-180, in=90] (0,-5);
\draw (0.6,-4.96) to [out=-30, in=-150] (1.4,-4.96);
\draw (0.7,-5) to [out=30, in=150] (1.3,-5);
\draw (2.1,-4.96) to [out=-30, in=-150] (2.9,-4.96);
\draw (2.2,-5) to [out=30, in=150] (2.8,-5);
\draw (3.6,-4.96) to [out=-30, in=-150] (4.4,-4.96);
\draw (3.7,-5) to [out=30, in=150] (4.3,-5);
\draw [red, thick] (.3,-5) to [out=90, in=180] (1.75,-4.6) to [out=0, in=90] (3.2,-5) to [out=-90, in=0] (1.75,-5.4) to [out=180, in=-90] (.3,-5);
\draw [thick, magenta] (1.3,-5) to [out=30, in=150] (2.2,-5);
\draw [thick, magenta] (3.25,-4.4) to [out=-60, in=180] (4,-5.4)  to [out=0, in=-90] (4.7,-5) to [out=90, in=-60] (4.4,-4.3);
\draw [thick, magenta, dashed] (3.25,-4.4) to [out=-150, in=0] (2.5,-4.4) to [out=180, in=0] (1,-4.5) to [out=180, in=90] (.2,-5) to [out=-90, in=180] (1,-5.5) to [out=0, in=-140] (2.2,-5);
\draw [thick, magenta, dashed] (4.4,-4.3) to [out=-150, in=0] (2.5,-4.7) to [out=180, in=0] (1,-4.8) to [out=180, in=90] (.5,-5) to [out=-90, in=-150] (1.3,-5);
\draw [thick, orange] (4,-4.25) arc (90:-90:.08 and .33);
\draw [thick, orange, dashed] (4,-4.25) to [out=-150, in=0] (2.5,-4.5) to [out=180, in=0] (1,-4.7) to [out=180, in=90] (0.4,-5) to [out=-90, in=180] (1,-5.3) to [out=0, in=180] (2.5,-4.8) to [out=0, in=160] (4,-4.91);
\node [red] at (3.4,-5.4) {$\eta_1$};
\draw (7,-5) to [out=-90, in=-180] (8.0,-5.75) to [out=0, in=180] (8.75,-5.6) to [out=0, in=180] (9.5,-5.75) to [out=0, in=180] (10.25,-5.6) to [out=0, in=180] (11,-5.75)  to [out=0, in=-90] (12,-5) to [out=90, in=0] (11,-4.25) to [out=-180, in=0] (10.25,-4.4) to [out=-180, in=0] (9.5,-4.25) to [out=-180, in=0] (8.75,-4.4) to [out=-180, in=0] (8,-4.25) to [out=-180, in=90] (7,-5);
\draw (7.6,-4.96) to [out=-30, in=-150] (8.4,-4.96);
\draw (7.7,-5) to [out=30, in=150] (8.3,-5);
\draw (9.1,-4.96) to [out=-30, in=-150] (9.9,-4.96);
\draw (9.2,-5) to [out=30, in=150] (9.8,-5);
\draw (10.6,-4.96) to [out=-30, in=-150] (11.4,-4.96);
\draw (10.7,-5) to [out=30, in=150] (11.3,-5);
\draw [thick, magenta] (8.3,-5) to [out=30, in=150] (9.2,-5);
\draw [thick, magenta, dashed] (8.3,-5) to [out=-30, in=180] (9.5,-5.4) to [out=0, in=180] (11,-5.4)  to [out=0, in=-90] (11.7,-5) to [out=90, in=0] (11,-4.6) to [out=-180, in=0] (9.5,-5.2) to [out=180, in=-120] (9.2,-5);
\draw [thick, orange] (11,-5.75) arc (-90:90:.08 and .33);
\draw [thick, orange, dashed] (11,-5.09) to [out=-150, in=0] (9.5,-5.3) to [out=180, in=0] (8,-4.7) to [out=180, in=90] (7.4,-5) to [out=-90, in=180] (8,-5.3) to [out=0, in=180] (9.5,-5.5) to [out=0, in=160] (11,-5.75);
\draw [red, thick] (7.3,-5) to [out=90, in=180] (8.75,-4.6) to [out=0, in=90] (10.2,-5) to [out=-90, in=0] (8.75,-5.4) to [out=180, in=-90] (7.3,-5);
\draw [dashed] (2.5,.9) -- (2.5,2.1);
\draw (2.5,3.85) -- (2.5,4.15);
\draw [dashed] (2.5,-.9) -- (2.5,-2.1);
\draw (2.5,-3.85) -- (2.5,-4.15);
\draw [dashed] (9.5,.9) -- (9.5,2.1);
\draw (9.5,3.85) -- (9.5,4.15);
\draw [dashed] (9.5,-.9) -- (9.5,-2.1);
\draw (9.5,-3.85) -- (9.5,-4.15);
\draw [dashed] (5.2,5) -- (6.8,5);
\draw [dashed] (5.2,-5) -- (6.8,-5);
\node at (-.3,0) {$v$};
\node at (12.3,0) {$v'$};
\node [red] at (2.1,1.5) {$\alpha$};
\node [red] at (9.9,1.5) {$\alpha'$};
\node [red] at (6,5.3) {$\eta_1'$};
\node [red] at (2.1,-1.5) {$\alpha$};
\node [red] at (9.9,-1.5) {$\alpha'$};
\node [red] at (6,-5.3) {$\eta_1$};
\end{tikzpicture}
\caption{Some of the vertices of the hyperelliptic face of Figure \ref{fig:hypv}.}
\label{fig:hypface}
\end{figure}

Some verifications are needed. First of all, notice that
\begin{equation}
\label{eq:hk}
h_{k}\dots h_1=(c_1\dots c_{k-1})c_{k}(c_{k-1}^{-1}\dots c_1^{-1}) (c_1\dots c_{k-2})c_{k-1}(c_{k-2}^{-1}\dots c_1^{-1})\dots c_1=c_1\dots c_{k},
\end{equation}
hence indeed $h_{28}(v)=v$ by (\ref{hyp3}), as $\alpha_1,\alpha_2,\alpha_3$ are contained in a neighborhood of $\gamma_1\cup \dots \cup \gamma_6$. Moreover, each curve $\gamma_i$ intersects once a curve of $v=\Braket{\alpha_1,\alpha_2,\alpha_3}$ and is disjoint from the other two. Hence, the same is true for $(h_k\dots h_1)(\gamma_i)$ and $(h_k\dots h_1)(v)$. Now, 
\[
h_{k+1}=(c_1\dots c_k)*c_{k+1}=(h_k\dots h_1)*c_{k+1}
\]
is the Dehn twist along some curve $(h_k\dots h_1)(\gamma_i)$, so there is an edge of type (i) $(h_k\dots h_1)(v)-(h_{k+1}h_k\dots h_1)(v)$.

Before proving \thref{prop:hyp}, we need the following lemmas.

\begin{lemma}
\thlabel{lem:hyp}
The hyperelliptic face is made up of four segments, whose fixed curves satisfy the properties \ref{hy1} and \ref{hy3} of \thref{prop:hyp}.
\end{lemma}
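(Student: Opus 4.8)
\emph{Plan.} By (\ref{eq:hk}) the $k$-th vertex of the face (\ref{eq:hypf}) is $v_k:=g_k(v)$ with $g_k:=c_1\cdots c_k$, where $c_1,\dots,c_{28}$ is the word $t_{\gamma_1}\cdots t_{\gamma_6}t_{\gamma_7}^2t_{\gamma_6}\cdots t_{\gamma_1}\,t_{\gamma_1}\cdots t_{\gamma_6}t_{\gamma_7}^2t_{\gamma_6}\cdots t_{\gamma_1}$; in particular $c_k=t_{\gamma_4}$ exactly for $k\in\{4,11,18,25\}$, and $c_{14+i}=c_i$. The first step is to record which curve of $v=\Braket{\alpha_1,\alpha_2,\alpha_3}$ each $\gamma_i$ meets: from $[\alpha_1]=[\gamma_1]+[\gamma_3]$, $[\alpha_2]=[\gamma_3]+[\gamma_5]$, $[\alpha_3]=[\gamma_2]+[\gamma_6]$ and $[\gamma_i]\cdot[\gamma_j]=\pm\delta_{|i-j|,1}$ one checks that $\gamma_i$ has nonzero algebraic intersection with $\alpha_1$ precisely when $i=4$; together with the fact (noted after \thref{def:hyp}) that each $\gamma_i$ meets exactly one curve of $v$, and once, this shows that $\alpha_1$ is met only by $\gamma_4$.

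Consequently the edge $v_{k-1}-v_k$ changes the curve occupying the ``$\alpha_1$-slot'' of the cut-system exactly when $k\in\{4,11,18,25\}$, and on each of the four complementary runs of six edges this curve is constant. Thus the face is four type-(i) edges alternating with four segments, whose fixed curves are, respectively, $g_{25}(\alpha_1)$, $g_4(\alpha_1)$, $g_{11}(\alpha_1)$ and $g_{18}(\alpha_1)$. Writing $\iota:=c_1\cdots c_{14}=t_{\gamma_1}\cdots t_{\gamma_6}t_{\gamma_7}^2t_{\gamma_6}\cdots t_{\gamma_1}$, which is the standard chain-relation expression for the hyperelliptic involution of $\overline S$, one has $g_{14+m}=\iota g_m$ up to boundary twists of $S$ (which fix $v$); since moreover $t_{\gamma_1},t_{\gamma_2},t_{\gamma_3}$ fix $\alpha_1$ and $g_{28}$ is a product of boundary twists by (\ref{hyp3}), we get $g_{25}(\alpha_1)=g_{28}(\alpha_1)=\alpha_1$, hence $g_{11}(\alpha_1)=\iota(\alpha_1)$ and $g_{18}(\alpha_1)=\iota(g_4(\alpha_1))$. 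So in cyclic order the four fixed curves are $\alpha:=\alpha_1$, $\beta:=g_4(\alpha_1)$, $\alpha':=\iota(\alpha_1)$, $\beta':=\iota(\beta)$. Since all $28$ edges, in particular the four connecting ones, are of type (i), consecutive curves in this cyclic order meet exactly once, which is property \ref{hy3}.

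It remains to check property \ref{hy1} for the pairs $\{\alpha,\alpha'\}$ and $\{\beta,\beta'\}$. All four curves are curves of vertices of $X_g$, hence $1$-curves. As $\iota$ acts as $-\operatorname{id}$ on $H_1$, the members of each pair are homologous; and since $\iota$ fixes every $\gamma_i$ while restricting to an orientation-reversing involution of $\gamma_2$, we have $\iota(\alpha_1)=\gamma_1+_{b_1}\gamma_3$, where $b_1$ is the arc of $\gamma_2$ complementary to $a_1$, a curve which is isotopic off $\alpha_1=\gamma_1+_{a_1}\gamma_3$; applying the spin-preserving map $g_4$ gives disjointness of $\beta,\beta'$ as well. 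To see that $\alpha\cup\alpha'$ cuts $\overline S$ into two odd subsurfaces, note first that it cuts $\overline S$ into two components of total genus $2$ and equal Arf invariant (additivity of the Arf invariant along $1$-curves). The admissible subchains $\gamma_1,\gamma_2,\gamma_3$ and $\gamma_5,\gamma_6,\gamma_7$ each lie, up to isotopy, in a single component (each is disjoint from a small neighbourhood of $\gamma_1\cup\gamma_2\cup\gamma_3$ containing $\alpha\cup\alpha'$, except for $\gamma_1,\gamma_3$ which lie in the interiors of those neighbourhoods), and in \emph{different} components: otherwise one component would carry a $6$-chain and hence have genus $\ge 3$. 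So both components have genus $1$, and each carries an admissible $2$-chain $\gamma_i,\gamma_{i+1}$ with $\gamma_i\cdot\gamma_{i+1}=\pm1$ and $\phi(\gamma_i)=\phi(\gamma_{i+1})=0$, forcing Arf invariant $(\phi(\gamma_i)+1)(\phi(\gamma_{i+1})+1)=1$. The pair $\{\beta,\beta'\}$ is handled in the same way using the admissible $2$-chain $g_4(\gamma_5),g_4(\gamma_6)$. Finally, since $S$ arose from $\Sigma_g^b$ by cutting along $1$-curves and $\alpha,\alpha',\beta,\beta'$ lie in the genus-$3$ part, re-attaching the cut-off pieces (each of even Arf invariant, by construction) changes only the genus of the two complementary components and not their Arf invariant, by additivity and \thref{cor:cut}; hence $\alpha\cup\alpha'$ and $\beta\cup\beta'$ still cut $\Sigma_g^1$ into two odd subsurfaces, giving \ref{hy1}.

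The combinatorial half---locating $t_{\gamma_4}$, splitting off the four segments, and reading off the intersection pattern---is routine once the $\gamma_4$-observation is made. The delicate part is the spin bookkeeping behind \ref{hy1}: verifying the various ``disjoint, up to isotopy'' and ``lies in a single complementary component'' claims for $\gamma_1,\gamma_2,\gamma_3$, $\gamma_5,\gamma_6,\gamma_7$ and $g_4(\gamma_5),g_4(\gamma_6)$---most transparently from the explicit curves in Figures \ref{fig:hypv} and \ref{fig:hypface}---and tracking how the two complementary components on $S$ meet the surfaces $S_1,\dots,S_4$ cut off at the start, so that additivity of the Arf invariant indeed yields ``odd'' on $\Sigma_g^1$ rather than merely ``equal Arf''. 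This is the step I expect to be the main obstacle.
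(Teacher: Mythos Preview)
Your combinatorial analysis is correct and matches the paper: the $\gamma_4$-observation isolates exactly the four edges where the $\alpha_1$-slot changes, and your use of the hyperelliptic involution $\iota$ to identify the four fixed curves as $\alpha_1,\ g_4(\alpha_1),\ \iota(\alpha_1),\ \iota(g_4(\alpha_1))$ is a nice conceptual packaging of what the paper does by direct computation.

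There is one genuine slip and one place where you make life harder than necessary.

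\textbf{The slip.} Your argument that the two $3$-chains $\gamma_1,\gamma_2,\gamma_3$ and $\gamma_5,\gamma_6,\gamma_7$ lie in different complementary components is wrong as written: their union is \emph{not} a $6$-chain (there is no edge between $\gamma_3$ and $\gamma_5$), so the ``genus $\ge 3$'' conclusion does not follow. Two disjoint $3$-chains only force genus $\ge 2$, which would leave a genus-$0$ annulus on the other side, i.e.\ $\alpha\simeq\alpha'$. You can rule this out because $\alpha=\gamma_1+_{a_1}\gamma_3$ and $\alpha'=\gamma_1+_{a_1'}\gamma_3$ cobound the genus-$1$ neighbourhood of $\gamma_1\cup\gamma_2\cup\gamma_3$, hence are not isotopic. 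But once you have said this, you have in fact identified one component explicitly, and the rest of your argument becomes unnecessary.

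\textbf{The detour.} This is exactly what the paper does, and it sidesteps the step you flag as the main obstacle. The paper computes directly that $\alpha_1'=\gamma_1+_{a_1'}\gamma_3$ (with $a_1'$ the complementary arc of $\gamma_2$) and $\eta_1,\eta_1'=\gamma_2+_{c}\gamma_4,\ \gamma_2+_{c'}\gamma_4$; hence one component of $\Sigma_g^b\setminus(\alpha_1\cup\alpha_1')$ is precisely the tubular neighbourhood of $\gamma_1\cup\gamma_2\cup\gamma_3$, a genus-$1$ subsurface lying entirely inside $S$, with symplectic basis $\gamma_1,\gamma_2$ of $0$-curves, so odd. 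The other component is then odd by additivity on $\Sigma_g^b$ itself. No passage to $\overline S$ and back is required, and the ``re-attaching cut-off pieces'' step (which is not quite well-posed as you phrase it---cutting along nonseparating curves does not cut off pieces) disappears.

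So: your route is essentially correct but circuitous. Replacing your genus-and-Arf bookkeeping on $\overline S$ with the single observation that $\alpha,\alpha'$ are the two boundary components of $\nu(\gamma_1\cup\gamma_2\cup\gamma_3)$ gives property~(i) on $\Sigma_g^b$ in one line.
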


\begin{proof}
Let $\mathbf{p}$ be a hyperelliptic face. We can assume that $g=3$. Let $\gamma_1,\dots,\gamma_7$ be the $7$-chain that defines $\mathbf{p}$. Choose orientations as before, and let $v=\Braket{\alpha_1,\alpha_2,\alpha_3}$ be the corresponding vertex. Then, $\mathbf{p}$ is of the form (\ref{eq:hypf}). 

By construction, $\alpha_1$ only intersects $\gamma_4$, hence by the above reasoning the curve $h_k\dots h_1(\alpha_1)$ is involved in an edge only at the four occurrences of $\gamma_4$ in the hyperelliptic relation, and by (\ref{eq:hk}) the corresponding curves are
\begin{gather*}
\eta_1:=(t_{\gamma_1}t_{\gamma_2}t_{\gamma_3}t_{\gamma_4})(\alpha_1), \qquad \alpha_1':=(t_{\gamma_1}\dots t_{\gamma_6}t_{\gamma_7}^2t_{\gamma_6}t_{\gamma_5}t_{\gamma_4})(\alpha_1), \\
\eta_1':=(t_{\gamma_1}\dots t_{\gamma_6}t_{\gamma_7}^2t_{\gamma_6}\dots t_{\gamma_1})(t_{\gamma_1}t_{\gamma_2}t_{\gamma_2}t_{\gamma_4})(\alpha_1)=(t_{\gamma_1}^{-1}t_{\gamma_2}^{-1}t_{\gamma_3}^{-1}t_{\gamma_4}^{-1})(\alpha_1), \\
(t_{\gamma_1}\dots t_{\gamma_6}t_{\gamma_7}^2t_{\gamma_6}\dots t_{\gamma_1})(t_{\gamma_1}\dots t_{\gamma_6}t_{\gamma_7}^2t_{\gamma_6}t_{\gamma_5}t_{\gamma_4})(\alpha_1)=\alpha_1
\end{gather*}
(see Figure \ref{fig:hypface}). All these are nonseparating $1$-curves, and satisfy \ref{hy3} by construction. 

It can be shown that $\eta_1$ and $\eta_1'$ are isotopic to the arc sums $\gamma_2+_c\gamma_4$ and $\gamma_2+_{c'}\gamma_4$ respectively, where $c$ is the arc of $\gamma_3$ from $\gamma_3\cap\gamma_4$ to $\gamma_2 \cap \gamma_3$ and $c'=\gamma_3\setminus c$ is the complement. Hence, that $\eta_1$ and $\eta_1'$ are homologous and cobound a two-holed torus which is a tubular neighborhood of the $0$-curves $\gamma_2,\gamma_3,\gamma_4$. Similarly, $\alpha_1'=\gamma_1+_{a_1'}\gamma_3$, where $a_1'$ is the complement of $a_1$ in $\gamma_2$, i.e. the arc of $\gamma_2$ that goes from $\gamma_2\cap \gamma_3$ to $\gamma_1 \cap \gamma_3$, and $\alpha_1,\alpha_1'$ cobound a tubular neighborhood of $\gamma_1\cup\gamma_2\cup\gamma_3$. Hence, \ref{hy1} is also verified. \qedhere
\end{proof}

\begin{remark}
\thlabel{rem:h1}
With similar arguments, it can be shown that choosing the opposite orientation on the $\gamma_i$ we get a spin cut-system $v'$ which is already included in (\ref{eq:hypf}), namely, $(h_{14}\dots h_1)(v)$, and the exact same path as (\ref{eq:hypf}) but starting at $v'$ (see Figure \ref{fig:hypface}). 
\end{remark}

\begin{lemma}
\thlabel{lem:r1s1}
Let $\mathbf{p}$ be a path in $X_g$ of radius $1$ with respect to some curve $\alpha$. If $\mathbf{p}$ contains just one segment with $d_{\alpha}=1$, then it is null-homotopic.
\end{lemma}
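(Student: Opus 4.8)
The strategy is to homotope the single distance-$1$ excursion down to distance $0$ and then quote \thref{prop:r0}. Write $\mathbf p=\mathbf q*e_0*\mathbf s*e_1$, where $\mathbf s$ is the unique maximal subpath all of whose vertices satisfy $d_\alpha=1$, the edges $e_0,e_1$ connect its endpoints to the vertices $u,u'$ immediately preceding and following it, and $\mathbf q$ is the complementary arc from $u'$ back to $u$; since $\mathbf s$ is the only distance-$1$ segment, every vertex of $\mathbf q$ satisfies $d_\alpha=0$, and $\mathbf q$ contains a vertex $v_0$ with $\alpha\in v_0$. It is enough to find a path $\mathbf s'$ from $u$ to $u'$, all of whose vertices satisfy $d_\alpha=0$, together with a homotopy rel endpoints in $X_g$ from $e_0*\mathbf s*e_1$ to $\mathbf s'$: then $\mathbf p\simeq\mathbf q*\mathbf s'$ is a closed path of radius $0$ around $\alpha$, hence null-homotopic by \thref{prop:r0}.

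I would first normalize $\mathbf s$. Using the $1$-$1$-$2$ trick of \thref{rem:112} one reduces to the case where $e_0*\mathbf s*e_1$ has only type-(i) edges, checking that the curve produced by the trick can be chosen so as not to raise the distance above $1$. Since a curve of $u$ disjoint from $\alpha$ would survive into the first vertex of $\mathbf s$ and violate $d_\alpha=1$ there, the edge $e_0$ must replace exactly such a disjoint witness by a curve meeting $\alpha$ once, and symmetrically for $e_1$; hence all vertices of $\mathbf s$ share the remaining $g-1$ curves of $u$ and of $u'$ and differ only in one curve meeting $\alpha$ once. Feeding these shared curves (and $\alpha$) into \thref{lem:17}, and filling the squares and pentagons this produces by the ladder lemma, the hexagon lemma and \thref{lemma:1conn}, I would reduce $\mathbf s$ to a single $\gamma$-segment for one curve $\gamma$ with $\abs{\gamma\cap\alpha}=1$ --- in effect, to $\mathbf s$ of length $0$.

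In this reduced situation $e_0*e_1$ is a path $u-w-u'$ with $d_\alpha(w)=1$, and the task is to replace it rel endpoints by a distance-$0$ path. The curve $\gamma\in w$ meeting $\alpha$ once, the disjoint witness $\beta\in u$ and its analogue $\beta'\in u'$ are to be connected --- together with a distance-$0$ completion of the remaining common curves --- via a squared-twist construction in the spirit of the square lemma and \thref{prop:1conn1}, again closing up the resulting squares, pentagons and hexagons by the ladder and hexagon lemmas. This succeeds \emph{except} when the subsurface cut off by $\alpha\cup\gamma$ (or by the analogous pair at $u'$) is one of the exceptional pieces of \thref{rem:oddss} carrying no nonseparating $1$-curve: this is precisely the obstruction exhibited in \thref{rem:hyp}, and it is here that the hyperelliptic face enters. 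In that case the curves $\alpha,\gamma$ and their counterparts at $u'$ --- after replacing one of them by a homologous curve as in the proof of the hexagon lemma --- fit the hypotheses of \thref{prop:hyp}, whose conclusion lets us trade the $\gamma$-excursion for one on the other side of $\alpha$, where the relevant subsurfaces are even and the previous argument applies.

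The main obstacle is exactly this last dichotomy: recognizing when the push-down fails because of an odd piece, and arranging the data so that \thref{prop:hyp} applies. The remainder is bookkeeping of a familiar kind --- verifying at each step (the normalization of $\mathbf s$, the applications of \thref{lem:17}, the homologous replacements) that no intermediate vertex ever reaches distance $\ge 2$ from $\alpha$, so that the whole homotopy stays within radius $1$ and the inductive hypotheses (\thref{prop:conn} and simple connectivity of $X_{g'}$ for $g'<g$) remain available on the cut subsurfaces.
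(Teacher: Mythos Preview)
Your proposal contains a circularity. You invoke \thref{prop:hyp} to resolve the odd-subsurface obstruction, but the proof of \thref{prop:hyp} in the paper itself appeals to \thref{lem:r1s1} (see its final sentence: ``By \thref{prop:r0} and \thref{lem:r1s1} we conclude''). The architecture of the section is to establish \thref{lem:r1s1} first, \emph{without} hyperelliptic faces, and only then build \thref{prop:hyp} on top of it; the hyperelliptic $2$-cell enters later, in the general radius-$1$ argument, not here.

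There is also a false structural claim. You write that ``all vertices of $\mathbf s$ share the remaining $g-1$ curves of $u$ and of $u'$''. But $\mathbf s$ is a $\beta$-segment: only $\beta$ is common to all its vertices, while the other $g-1$ curves can and typically do change along it. Your plan to shrink $\mathbf s$ to a single vertex via \thref{lem:17} rests on this premise and does not go through as stated.

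The paper's proof is far more direct and avoids both problems. It adapts Wajnryb's Proposition~19: with $\beta$ the fixed curve of the distance-$1$ segment and $\gamma_1,\gamma_2$ the curves disjoint from $\alpha$ in the neighbouring vertices $u_1,u_2$, one constructs for each $i$ a nonseparating $1$-curve $\delta_i$ disjoint from both $\alpha$ and $\beta$ --- taking $\delta_i=\gamma_i$ if it is already disjoint from $\beta$; otherwise a boundary component of a regular neighbourhood of $\alpha\cup\beta\cup\gamma_i$ when $[\gamma_i]\ne[\alpha]$, or a $1$-curve in one of the two complementary pieces of $\alpha\cup\beta\cup\gamma_i$ when $[\gamma_i]=[\alpha]$. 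One then connects $\langle\beta,\delta_1\rangle$ to $\langle\beta,\delta_2\rangle$ using \thref{prop:conn}, producing the required radius-$0$ shortcut. No hyperelliptic face is needed.
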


\begin{proof}
It suffices to adapt the proof of \cite[Proposition 19]{waj:elem}. Using its notations, we can construct $1$-curves $\delta_1,\delta_2$ as follows. If $\abs{\gamma_i \cap \beta}=0$ we set $\delta_i:=\gamma_i$. Otherwise, if $\gamma_i$ is homologous to $\alpha$, then $\alpha \cup \beta \cup \gamma_i$ splits $\Sigma_g^b$ into two subsurfaces, one of which must contain some $1$-curve $\delta_i$ (compare \thref{rem:hyp}). Finally, if $\gamma_i$ and $\alpha$ are not homologous, the boundary components of a regular neighborhood of $\alpha \cup \beta \cup \gamma_i$ are nonseparating $1$-curves, and we call $\delta_i$ one of them. Now, by \thref{prop:conn} we can construct a path $\Braket{\beta,\delta_1}--\Braket{\beta,\delta_2}$.\qedhere
\end{proof}

\begin{proof}[Proof of \thref{prop:hyp}]
As already observed, we may assume that the genus is $3$ and that the union $\alpha\cup\alpha'\cup\beta\cup\beta'$ splits the surface into two disks $S_2$ and $S_3$ and two one-holed odd tori $S_1$ and $S_4$ (with extra boundary components coming from those of $\Sigma_g^b$). We are going to construct a $7$-chain of $0$-curves $\gamma_1,\dots,\gamma_7$ such that $\alpha=\alpha_1$ and $\beta$ is equal to $\eta_1$ or $\eta_1'$, in the notation of \thref{lem:hyp}. 

Choose geometric symplectic bases $\gamma_2,\gamma_3$ for $S_1$ and $\gamma_7,\gamma_6$ for $S_4$. By construction, $\gamma_2,\gamma_3,\gamma_6$  and $\gamma_7$ are all admissible. Define new curves as follows:
\begin{itemize}
\item $\gamma_4:=\gamma_2+_{b_1}\beta$, where $b_1$ is some arc in $S_1\setminus (\gamma_2\cup\gamma_3)$;
\item $\gamma_1:=\gamma_3+_{b_2}\alpha$, where $b_2$ is some arc contained in the pair of pants bounded by $\beta \cup \gamma_2\cup \gamma_4$ and disjoint from $b_1$;
\item $\gamma_5:=\gamma_7+_{b_3}\alpha$, where $b_3$ is some arc in $S_4\setminus (\gamma_6\cup\gamma_7)$.
\end{itemize}
By construction, $\alpha$ is isotopic to the arc sum of $\gamma_1$ and $\gamma_3$ along some arc of $\gamma_2$, and we can choose the orientations of the $\gamma_i$ so that $\alpha=\alpha_1$. Moreover, $\beta$ is isotopic to the arc sum of $\gamma_2$ and $\gamma_4$ along some arc of $\gamma_3$, and up to renaming we can assume that it coincides with $\eta_1$. Note also that $\eta_1'$ intersects $\alpha$ and $\alpha'$ once, and $\beta',\eta_1'$ cobound an annulus (possibly with holes). Similarly, $\alpha_1'$ intersects $\beta$ and $\beta'$ once, and $\alpha',\alpha_1'$ cobound an annulus. 

As a consequence, given a vertex $w$ containing $\eta_1'$, we can connect it to some vertex containing $\beta'$ through a path with $d_{\eta_1'}=0$. Indeed, let $\xi$ be a curve that goes once through the annulus bounded by $\eta_1'\cup\beta'$ and is disjoint from the other curves of $w$. Up to Dehn twisting along $\eta_1'$, we may assume that $\xi$ is a $1$-curve, and we have an edge-path $\Braket{\eta_1'}-\Braket{\xi}-\Braket{\beta'}$. Similarly, we can connect a vertex containing $\alpha_1'$ to some vertex containing $\alpha'$ through a path with $d_{\alpha_1'}=0$.

We construct the required null-homotopic path as follows:
\[
\begin{tikzcd}[column sep=small, row sep=small]
& & & \Braket{\alpha} \arrow[r, no head] \arrow[dl, dashed, no head, "\alpha"'] & \Braket{\beta} \arrow[dr, dashed, no head, "\beta"] & & & \\
\Braket{\beta'} \arrow[r, no head] \arrow[d, dashed, no head, "\beta'"'] & \Braket{\alpha} \arrow [r, dashed, no head, "\alpha"] & \Braket{\alpha} \arrow[d, no head] & & & \Braket{\beta} \arrow[r, dashed, no head, "\beta"] \arrow[d, no head] & \Braket{\beta} \arrow[r, no head] & \Braket{\alpha'} \arrow[d, dashed, no head, "\alpha'"] \\
\Braket{\beta'} \arrow[r, no head] \arrow[dr, dashed, no head, "\beta'"'] & \Braket{\xi_1} \arrow [r, no head] & \Braket{\eta_1'} \arrow[dr, dashed, no head, "\eta_1'"] & & & \Braket{\alpha_1'} \arrow[r, no head] \arrow[dl, dashed, no head, "\alpha_1'"'] & \Braket{\xi_2} \arrow[r, no head] & \Braket{\alpha'} \arrow[dl, dashed, no head, "\alpha'"] \\
& \Braket{\beta'} \arrow[r, no head] \arrow[ddrr, dashed, no head, "\beta'"'] & \Braket{\xi_3} \arrow [r, no head] & \Braket{\eta_1'} \arrow[r, no head] & \Braket{\alpha_1'} \arrow[r, no head] \arrow [d, dashed, no head, "\alpha_1'"'] & \Braket{\xi_4} \arrow[r, no head] & \Braket{\alpha'} \arrow[ddll, dashed, no head, "\alpha'"] & \\
& & & \Braket{\beta'} \arrow[r, no head] \arrow[d, dashed, no head, "\beta'"] & \Braket{\alpha_1'} & & & \\
& & & \Braket{\beta'} \arrow[r, no head] & \Braket{\alpha'}. & & &
\end{tikzcd}
\]
Here, $\xi_1,\xi_2,\xi_3$ and $\xi_4$ are $1$-curves constructed as above. The central “octagon" is a hyperelliptic face, while the other paths have either radius $0$ or radius $1$ with a single segment of distance $1$ from the base curve. By \thref{prop:r0} and \thref{lem:r1s1} we conclude. \qedhere
\end{proof}

\begin{remark}
\thlabel{rem:h2}
A $7$-chain on a surface of genus $3$ is necessarily separating. However, notice that the $7$-chain that we constructed in the proof of \thref{prop:hyp} also separates $\Sigma_g^b$, and one of the two components is a sphere with $4$ holes, as it is a tubular neighborhood of $\gamma_3 \cup b_2 \cup \alpha \cup b_3 \cup \gamma_5$.
\end{remark}

\subsubsection{General paths of radius $1$}

Now we are ready to prove the main result of this subsection.

\begin{proposition}
All paths of radius 1 in $X_g$ are null-homotopic.
\end{proposition}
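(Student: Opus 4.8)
The plan is to adapt the proof of Wajnryb's \cite[Proposition 20]{waj:elem}, inducting on the number of ``peaks'' of the path, with the hyperelliptic face supplying the extra flexibility needed to handle the obstruction of \thref{rem:hyp}. Let $\mathbf{p}$ be a closed path of radius $1$ with respect to a curve $\alpha$ contained in its base vertex $v_0$. By the 1-1-2 trick we may assume every edge of $\mathbf{p}$ has type (i). Decompose $\mathbf{p}$ into maximal subpaths on which $d_\alpha\equiv 1$, the \emph{peaks} $\mathbf{s}_1,\dots,\mathbf{s}_m$, alternating with subpaths on which $d_\alpha\equiv 0$, the \emph{valleys}. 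If $m=0$ then $\mathbf{p}$ has radius $0$ and we are done by \thref{prop:r0}; if $m=1$ we are done by \thref{lem:r1s1}. So we argue by induction on $m\geq 2$.

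Since $m\geq 2$, there is at least one valley $\mathbf{q}$ lying strictly between two consecutive peaks, and we may assume its first vertex $u$ is not $v_0$. The heart of the argument is to produce a radius-$0$ edge-path $\mathbf{r}$ from $u$ to $v_0$. Granting this, split $\mathbf{p}$ at $u$ as $\mathbf{p}=\mathbf{a}\mathbf{b}$ and write $\mathbf{p}\simeq(\mathbf{a}\mathbf{r}^{-1})(\mathbf{r}\mathbf{b})$; each factor is a closed path of radius $\leq 1$ whose set of peaks is a nonempty proper subset of $\{\mathbf{s}_1,\dots,\mathbf{s}_m\}$, so both are null-homotopic by the inductive hypothesis, and hence so is $\mathbf{p}$. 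To build $\mathbf{r}$: since $d_\alpha(u)=0$, some curve $\beta\in u$ is disjoint from $\alpha$. If $\beta=\alpha$, take $\mathbf{r}$ to be an $\alpha$-segment (\thref{lemma:conn}). If $[\beta]\neq[\alpha]$ in $H_1(\overline{\Sigma}_g;\Z)$, the spin change-of-coordinates principle provides a vertex containing both $\alpha$ and $\beta$; joining $u$ to it by a $\beta$-segment and it to $v_0$ by an $\alpha$-segment gives a radius-$0$ path, as $\alpha$ and $\beta$ are both disjoint from $\alpha$. If $[\beta]=[\alpha]$ with $\beta\neq\alpha$, then $\alpha\cup\beta$ separates $\overline{\Sigma}_g$ and the additivity of the Arf invariant forces its two complementary pieces to have the same parity; if they are even, or more generally if one of them carries a nonseparating $1$-curve $\gamma$ with $[\gamma]$ independent of $[\alpha]$, we twist $u$ suitably so that it contains $\gamma$ and reduce to the previous case.

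The one configuration that resists this scheme — and the sole reason the hyperelliptic face is introduced — is the obstruction of \thref{rem:hyp}: the only curve of $u$ disjoint from $\alpha$ is a curve $\beta$ with $[\beta]=[\alpha]$, and $\alpha\cup\beta$ separates $\overline{\Sigma}_g$ into two \emph{odd} subsurfaces with no nonseparating $1$-curves, which by \thref{rem:oddss} forces $g=3$ with both pieces homeomorphic to $\Sigma_1^2$. Then no radius-$0$ path from $u$ to $v_0$ exists. Here the pair $\alpha,\beta$ already satisfies hypothesis \ref{hy1} of \thref{prop:hyp}; after producing a second pair $\gamma,\gamma'$ satisfying \ref{hy1} and completing the intersection pattern \ref{hy3} around the peak adjacent to $u$, the null-homotopic hyperelliptic path of \thref{prop:hyp} lets us trade the obstructed excursion of $\mathbf{p}$ through $\beta$ for an equivalent one that can be joined to $v_0$ at radius $0$, after which the splitting argument applies verbatim. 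I expect this final step — extracting the data of \thref{prop:hyp} from an obstructed peak, and keeping track of the peak count across the exchange — to be the main technical obstacle; everything else is a transcription of Wajnryb's argument together with spin change of coordinates. In particular, for $g=2$ the obstruction never arises (\thref{rem:hyp}), the last paragraph is vacuous, and the proof is correspondingly shorter — this being the case needed in the inductive steps for higher genus.
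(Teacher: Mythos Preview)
Your outline is close in spirit to the paper's argument, but two points need work before it becomes a proof.

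First, a \emph{peak} (maximal subpath with $d_\alpha\equiv 1$) is not in general a single $\eta$-segment: within a peak the curve realising the minimum $d_\alpha=1$ can change along an edge, so the peak may be a concatenation of several $\eta_i$-segments with $|\alpha\cap\eta_i|=1$. Consequently the base case $m=1$ does \emph{not} reduce to \thref{lem:r1s1}, which treats a single such segment. The transitions between consecutive segments inside a peak are exactly the configurations of the paper's Case~1 (segments sharing a vertex) and Case~2 (segments joined by one edge --- the triple $\{\alpha,\beta,\gamma\}$ of pairwise-once-intersecting $1$-curves and its good/bad dichotomy). This analysis is genuinely needed and is not subsumed by your induction on $m$; in genus~$3$ the bad-triple case already feeds into the hyperelliptic argument.

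Second, in the obstructed genus-$3$ case your plan to extract the data for \thref{prop:hyp} from a single peak adjacent to $u$ does not supply enough. In the paper's treatment (Case~3), the two curves playing the role of $\beta,\beta'$ in \thref{prop:hyp} are the fixed curves of the \emph{two} distance-$1$ segments flanking the obstructed valley; one must first argue that each of them meets $\alpha'$ (your $\beta$) exactly once, then invoke \thref{lem:17} to force their mutual intersection down to at most $1$, and only then split into subcases 3A and 3B before applying \thref{prop:hyp}. Moreover, the output of \thref{prop:hyp} is not a path with fewer peaks but a decomposition into pieces handled by \thref{lem:r1s1} and the good-triple case. So the difficulty you flag is structural rather than bookkeeping: the hyperelliptic substitution does not decrement your inductive parameter $m$, and resolving the obstructed case forces you through essentially the same finite case analysis the paper carries out.
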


\begin{proof}
Let $\mathbf{p}$ be a path of radius $1$ around some curve $\alpha$, and let $v_0$ be a vertex of $\mathbf{p}$ containing $\alpha$. Then $\mathbf{p}$ can be split into a finite number of $\eta_i$-segments, in such a way that each $\eta_i$ is either disjoint from $\alpha$ or it intersects $\alpha$ exactly once. If $\abs{\alpha \cap \eta_i}=1$, choose a vertex $w_i$ of the $\eta_i$-segment. Then we can construct a shortcut 
\[
\begin{tikzcd}
v_0 \arrow[rr, no head, dashed, "\alpha"] & & \Braket{\alpha} \arrow[r, no head] & \Braket{\eta_i} \arrow[rr, no head, dashed, "\eta_i"] & & w_i.
\end{tikzcd}
\]
These shortcuts split $\mathbf{p}$ into a finite number of closed paths of radius $1$ around $\alpha$, each containing up to two segments with $d_{\alpha}=1$. Hence, we can assume that $\mathbf{p}$ contains up to two segments with $d_{\alpha}=1$. We dealt with the case of a single segment in \thref{lem:r1s1}. Assume then that $\mathbf{p}$ has two segments with $d_{\alpha}=1$, and call $\beta$ and $\gamma$ the fixed curves of the two segments. 

\textbf{Case 1: the $\beta$-segment and the $\gamma$-segment share a vertex.} In this case, $\beta$ and $\gamma$ are disjoint and not homologous, so the two boundary components of $\beta \cup \alpha \cup \gamma$ are nonseparating $1$-curves. If we define $\delta_2$ to be one of the boundary components, we can proceed as in the proof of \thref{lem:r1s1} and split $\mathbf{p}$ into three paths of radius $0$ and a path with a single segment of distance $1$, which are null-homotopic by \thref{prop:r0} and \thref{lem:r1s1}.

\textbf{Case 2: there is an edge $\Braket{\beta}-\Braket{\gamma}$.} Note that $\set{\alpha,\beta,\gamma}$ is a triple of $1$-curves which intersect pairwise once. We briefly described such triples in \thref{rem:112}\ref{noiii}, where we proved that their complement contains at most $g-2$ disjoint linearly independent $1$-curves. On the other hand, observe that it contains at least $g-3$ disjoint linearly independent $1$-curves. Indeed, suppose that it is the union of two subsurfaces $\Sigma_{g_1}^{b_1+1}$ and $\Sigma_{g_2}^{b_2+2}$ with $g_1+g_2=g-2$ and $b_1+b_2=b$. Then $\Sigma_{g_1}^{b_1+1}$ contains either $g_1$ or $g_1-1$ disjoint linearly independent $1$-curves, depending on whether its Arf invariant is $0$ or $1$. On the other hand, $\Sigma_{g_2}^{b_2+2}$ always contains $g_2$ disjoint linearly independent $1$-curves. Indeed, if its Arf invariant is $0$, just take a spin cut-system. If instead its Arf invariant is $1$, there exists a cut-system with $g_2-1$ $1$-curves and one $0$-curve $\eta$. Taking the arc sum of $\eta$ and one boundary component, which is a $0$-curve as we already observed, via an arc that is disjoint from the other curves, we get the last $1$-curve. If the complement of our triple is a connected subsurface $\Sigma_{g-3}^{b+3}$, we can repeat the argument for $\Sigma_{g_2}^{b_2+2}$ and take also the boundary component with spin value $1$. We say that a triple $\set{\alpha,\beta,\gamma}$ is \emph{good} if its complement contains $g-2$ disjoint linearly independent $1$-curves, and \emph{bad} otherwise (i.e. if its complement is disconnected and $\Sigma_{g_1}^{b_1+1}$ inherits an odd spin structure). By induction, it suffices to deal with bad triples when $g=3$ and with good triples when $g=2$.

\textbf{Case 2A: $g=2$ and $\set{\alpha,\beta,\gamma}$ is a good triple.} The complement of $\alpha\cup\beta\cup\gamma$ is the union of a disk and a cylinder. Call $\xi_1$ and $\xi_2$ the boundary components of the cylinder; by \thref{rem:oddss}, they are $0$-curves. Construct a curve $\delta$ which runs from $\xi_1$ to $\xi_2$ crossing only $\gamma$ once, then goes back along the cylinder. Now, $\delta$ must be a $1$-curve as $\set{\alpha,\beta,\xi_1,\delta}$ is a geometric symplectic basis for $H_1(\Sigma_2^b;\Z)$. We can then construct the following shortcut:
\[
\begin{tikzcd}[column sep=small]
\dots \arrow[rr, dashed, no head, "\beta"] & & v_2 \arrow[rrr, no head] \arrow[d, dashed, no head, "\beta"'] & & & u_2 \arrow[rr, dashed, no head, "\gamma"] \arrow[d, dashed, no head, "\gamma"] & & \dots\\ 
& & \Braket{\beta,\delta} \arrow[rr, dashed, no head, "\delta"'] & & \Braket{\delta} \arrow[r, no head] & \Braket{\gamma} & & 
\end{tikzcd}
\]
Since $\delta$ and $\alpha$ are disjoint and linearly independent, we can connect some vertex containing $\delta$ to $v_0$ via a path of radius 0 around $\alpha$. Thus, we split $\mathbf{p}$ into three paths with a single segment of distance 1 each, and we conclude by \thref{lem:r1s1}.

\textbf{Case 2B: $g=3$ and $\set{\alpha,\beta,\gamma}$ is a bad triple.} The arc sum of $\alpha$ with the separating boundary component of $\alpha\cup\beta\cup\gamma$ is a $1$-curve $\alpha'$ that is disjoint from $\alpha$ and homologous to it, and such that $\set{\alpha',\beta,\gamma}$ is a good triple (see Figure \ref{fig:bad}). So we can construct a shortcut
\[
\begin{tikzcd}[column sep=small]
\dots \arrow[rr, dashed, no head, "\beta"] & & v_2 \arrow[rrrr, no head] \arrow[d, dashed, no head, "\beta"'] & & & & u_2 \arrow[rr, dashed, no head, "\gamma"] \arrow[d, dashed, no head, "\gamma"] & & \dots\\ 
& & \Braket{\beta} \arrow[r, no head] & \Braket{\alpha'} \arrow[rr, dashed, no head, "\alpha'"'] & & \Braket{\alpha'} \arrow[r, no head] & \Braket{\gamma} & & 
\end{tikzcd}
\]
that splits $\mathbf{p}$ into a path with a good triple and a path with two non-adjacent segments of distance $1$. 

\textbf{Case 3: the $\beta$-segment and the $\gamma$ segment are joined by a subpath that has radius $0$ around $\alpha$.} Let $w$ be a vertex of this subpath, and call $\alpha'$ a curve of $w$ that is disjoint from $\alpha$. If we can join $v_0$ to $w$ by a path of radius $0$ around $\alpha$, we are done by \thref{lem:r1s1}. If such a path does not exist, then $\alpha$ and $\alpha'$ are homologous, and we are in the situation described by \thref{rem:hyp}. In particular, the genus is at least $3$, and we can assume that $g=3$ by induction. Moreover, the two components of $\Sigma \setminus (\alpha \cup \alpha')$ are odd tori. Call $v_1,v_2$ and $v_3, v_4$ the endpoints of the $\beta$-segment and of the $\gamma$-segment respectively. We are going to reduce to a situation where we can apply \thref{prop:hyp}.

First of all, we claim that $\abs{\alpha'\cap\beta}=\abs{\alpha'\cap \gamma}=1$. As a consequence, applying \thref{lem:r1s1} to suitable shortcuts, we can assume that $\mathbf{p}$ is made up of four segments. To prove the claim for $\beta$, call $u_2$ the first vertex after $v_2$. Since $d_{\alpha}(u_2)=0$, there is a nonseparating $1$-curve $\eta \in u_2$ that is disjoint from $\alpha$. Hence, $\alpha$ and $\eta$ must be homologous, and they must cut $\Sigma_3$ into two odd tori, otherwise we would be able to connect $v_0$ to $w$ via a path of radius $0$ around $\alpha$, contradicting \thref{rem:hyp}. In particular, $\beta$ cannot be disjoint from $\eta$, so $\abs{\beta \cap \eta}=1$. Now, there is a path from $w$ to a vertex containing $\eta$ that has radius $0$ around $\alpha$, and since there are no $1$-curves in $\Sigma_{3,b}\setminus(\alpha\cup\alpha')$ and $\alpha',\eta$ are homologous, they must coincide. The same reasoning works for $\abs{\alpha'\cap\gamma}$. 

Set $m:=\abs{\beta\cap \gamma}$. Applying \thref{lem:17}(a), we find nonseparating $1$-curves $\beta_1:=\beta$, $\beta_2,\dots,\beta_k:=\gamma$ such that $\abs{\beta_i\cap\alpha}=\abs{\beta_i\cap \alpha'}=1$ and $\abs{\beta_i \cap \beta_{i+1}} \le 1$. Hence, we can assume that $\abs{\beta \cap \gamma} \le 1$. If $\beta$ and $\gamma$ coincide, or if they are disjoint and not homologous, or if $\set{\alpha,\beta,\gamma}$ and $\set{\alpha',\beta,\gamma}$ are both good triples, then we can connect $v_2$ to $v_3$ via a path that contains only two segments, with fixed curves $\beta$ and $\gamma$ respectively, and split $\mathbf{p}$ into two paths that are null-homotopic by Cases 1 and 2A. 

\textbf{Case 3A: $\beta$ and $\gamma$ are disjoint and homologous.} Notice that the union $\beta\cup\gamma$ splits the surface into two odd tori. Hence, by \thref{prop:hyp} there is a null-homotopic path with exactly 4 segments, with fixed curves $\alpha, \beta,\alpha'$ and $\gamma$. Constructing shortcuts as follows, we reduce to \thref{lem:r1s1}:
\[
\begin{tikzcd}[column sep=tiny, row sep=tiny]
u_1 \arrow[dddd, no head, dashed, "\alpha"'] \arrow[rr, no head] \arrow[dddr, dashed, no head, "\alpha"] & & v_1 \arrow[ddd, dashed, no head, "\beta"'] \arrow[rr, no head, dashed, "\beta"'] & & v_2 \arrow[rr, no head] \arrow[ddd, dashed, no head, "\beta"] & & u_2 \arrow[dddd, dashed, no head, "\alpha'"] \arrow[dddl, dashed, no head, "\alpha'"'] \\
& & & & & & \\
& & & & & & \\
& \Braket{\alpha} \arrow[dd, no head, dashed, "\alpha"'] \arrow[r, no head] & \Braket{\beta}  \arrow[rr, no head, dashed, "\beta"] & & \Braket{\beta} \arrow[r, no head] & \Braket{\alpha'} \arrow[dd, no head, dashed, "\alpha'"] & \\
v_0 \arrow[dddd, no head, dashed, "\alpha"'] & & & & & & w \arrow[dddd, dashed, no head, "\alpha'"] \\
& \Braket{\alpha} \arrow[r, no head] \arrow[dddl, dashed, no head, "\alpha"] & \Braket{\gamma} \arrow[rr, no head, dashed, "\gamma"] \arrow[ddd, dashed, no head, "\gamma"'] & & \Braket{\gamma} \arrow[ddd, dashed, no head, "\gamma"] \arrow[r, no head] & \Braket{\alpha'} \arrow[dddr, dashed, no head, "\alpha'"'] & \\
& & & & & & \\
& & & & & & \\
u_4 \arrow[rr, no head] & & v_4 \arrow[rr, no head, dashed, "\gamma"'] & & v_3 \arrow[rr, no head] & & u_3.
\end{tikzcd}
\]

\textbf{Case 3B: $\abs{\beta \cap \gamma}=1$.} Notice that $\set{\alpha,\beta,\gamma}$ and $\set{\alpha',\beta,\gamma}$ cannot be both bad triples. Assume that $\set{\alpha,\beta,\gamma}$ is a bad triple. Let $\beta'$ be the arc sum of $\beta$ and the separating boundary component of $\alpha\cup\beta\cup\gamma$ as in Figure \ref{fig:bad}. Then $\beta$ and $\beta'$ are disjoint and homologous, and they separate $\Sigma_3$ into two odd tori. Note that $\set{\alpha,\beta',\gamma}$ and $\set{\alpha',\beta',\gamma}$ are both good triples. Hence, we can apply \thref{prop:hyp} and find shortcuts as follows, reducing to Case 2A and \thref{lem:r1s1}:
\[
\begin{tikzcd}[column sep=tiny, row sep=tiny]
u_1 \arrow[ddd, no head, dashed, "\alpha"'] \arrow[rr, no head] \arrow[ddr, dashed, no head, "\alpha"] & & v_1 \arrow[dd, dashed, no head, "\beta"'] \arrow[rrrr, no head, dashed, "\beta"'] & & & & v_2 \arrow[rr, no head] \arrow[dd, dashed, no head, "\beta"] & & u_2 \arrow[ddd, dashed, no head, "\alpha'"] \arrow[ddl, dashed, no head, "\alpha'"'] \\
& & & & & & & & \\
& \Braket{\alpha} \arrow[dd, no head, dashed, "\alpha"'] \arrow[r, no head] & \Braket{\beta} \arrow[rrrr, no head, dashed, "\beta"] & & & & \Braket{\beta} \arrow[r, no head] & \Braket{\alpha'} \arrow[dd, no head, dashed, "\alpha'"] & \\
v_0 \arrow[ddddd, no head, dashed, "\alpha"'] & & & & & & & & w \arrow[ddddd, dashed, no head, "\alpha'"] \\
& \Braket{\alpha} \arrow[r, no head] \arrow[ddddl, dashed, no head, "\alpha"] & \Braket{\beta'} \arrow[rrrr, no head, dashed, "\beta'"] \arrow[ddr, dashed, no head, "\beta'"'] & & & & \Braket{\beta'} \arrow[ddl, dashed, no head, "\beta'"] \arrow[r, no head] & \Braket{\alpha'} \arrow[ddddr, dashed, no head, "\alpha'"'] & \\
& & & & & & & & \\
& & \Braket{\gamma} \arrow[ddr, dashed, no head, "\gamma"'] \arrow[r, no head] & \Braket{\beta'} & & \Braket{\beta'} \arrow[r, no head] & \Braket{\gamma} \arrow[ddl, dashed, no head, "\gamma"] & & \\
& & & & & & & & \\
u_4 \arrow[rrr, no head] & & & v_4 \arrow[rr, no head, dashed, "\gamma"'] & & v_3 \arrow[rrr, no head] & & & u_3.
\end{tikzcd}\qedhere
\]
\end{proof}

\begin{figure}
\centering
\begin{tikzpicture}[scale=.7]
\draw [red, thick] (-6,0) arc (180:-5:3 and 3);
\draw [red, thick] (-6,0) arc (180:330:3 and 3);
\draw [blue, thick] (-7,1.5) arc (180:-117:3 and 3);
\draw [blue, thick] (-7,1.5) arc (180:218:3 and 3);
\draw [green, thick] (1,1.5) arc (0:-238:3 and 3);
\draw [green, thick] (1,1.5) arc (0:97:3 and 3);
\draw [thick] (-3.8,-.5) to [out=30, in=-90] (-3.6,0) to [out=90, in=-90] (-4,1.2) to [out=90, in=180] (-3,2.5) to [out=0, in=90] (-2,1.2) to [out=-90, in=90] (-2.4,0)  to [out=-90, in=150] (-2.2,-.5);
\draw [thick] (-3.15,1.2) arc (180:90:.3 and .6);
\draw [thick] (-3.15,1.2) arc (180:270:.3 and .6);
\draw [thick] (-2.85,1.2) arc (0:60:.3 and .6);
\draw [thick] (-2.85,1.2) arc (0:-60:.3 and .6);
\draw [orange, thick] (-.2,0) arc (0:-8:2.8 and 2.8);
\draw [orange, thick] (-3,-2.8) arc (-90:-33:2.8 and 2.8);
\draw [orange, thick] (-3,-2.8) to [out=180, in=-90] (-5.8,0) to [out=90, in=180] (-3,-1) to [out=0, in=90] (-.2,0);
\draw [cyan, thick] (-6.8,1.5) arc (180:215:2.8 and 2.8);
\draw [cyan, thick] (-6.8,1.5) arc (180:60:2.8 and 2.8) to [out=-30, in=60] (-4.6,2.1) to [out=-120, in=-30] (-5.4,-.925);
\node at (-3,2.05) {\tiny $\Sigma_1^{b_1+1}$};
\node [green] at (.4,2.4) {$\gamma$};
\node [cyan] at (-6.3,2.2) {$\beta'$};
\node [orange] at (-3,-2.3) {$\alpha'$};
\node [red] at (-.5,-2.2) {$\alpha$};
\node [blue] at (-7,-.1) {$\beta$};
\end{tikzpicture}
\caption{Bad and good triples: if $\Sigma_1^{b_1+1}$ inherits an odd spin structure, the triple $\set{\alpha,\beta,\gamma}$ is bad. Notice that the triples $\set{\alpha',\beta,\gamma}$, $\set{\alpha',\beta',\gamma}$ and $\set{\alpha,\beta',\gamma}$ are good. Moreover, $\alpha, \alpha',\beta,\beta'$ satisfy the hypotheses of \thref{prop:hyp}.}
\label{fig:bad}
\end{figure}
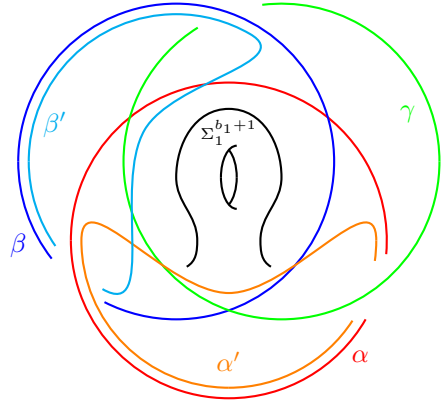

\subsection{Simple connectivity: the general case}

For paths of radius at least two, we are finally able to do a proper induction on the radius. The key lemma is the following.

\begin{lemma}
\thlabel{lem:abcd}
Let $\alpha,\beta,\gamma$ be nonseparating spin curves on $\Sigma_g$, and assume that $\abs{\alpha \cap \beta}=m \ge 2$, $\abs{\alpha \cap \gamma} \le m$ and $\abs{\beta \cap \gamma}=1$. Then there exists a nonseparating spin curve $\delta$ such that $\abs{\alpha \cap \delta}<m$, $\abs{\beta\cap\delta}=0,1$ and $\abs{\gamma \cap \delta}=0,1$.
\end{lemma}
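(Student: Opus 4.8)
The plan is to reduce the statement to an arc-finding problem by cutting along $\beta$, and then to reuse the surgery machinery behind \thref{prop:conn} and \thref{lem:17}. First, by the bigon criterion \cite[Proposition 1.7]{primer} we may isotope $\alpha,\beta,\gamma$ so that no two of them cobound a bigon, so all intersection points realise the geometric intersection numbers. Since $\abs{\beta\cap\gamma}=1$ and $\phi(\beta)=1$, cutting $\Sigma_g$ along $\beta$ produces a surface $\Sigma'\cong\Sigma_{g-1}^2$ carrying an even induced spin structure (\thref{cor:cut}), with boundary circles $\beta_+,\beta_-$; the curve $\gamma$ becomes a single arc $\bar\gamma$ joining $\beta_+$ to $\beta_-$, while $\alpha$ becomes $m$ disjoint arcs $\bar\alpha_1,\dots,\bar\alpha_m$ whose $2m$ endpoints sit on $\beta_+\sqcup\beta_-$ and which, together with the gluing, recover the single circle $\alpha$. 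A curve $\delta$ with $\abs{\beta\cap\delta}\le1$ is exactly (the image of) either a simple closed curve in the interior of $\Sigma'$ or a simple arc of $\Sigma'$ from $\beta_+$ to $\beta_-$; the remaining requirements then translate to $\abs{\bar\delta\cap\bar\gamma}\le1$ and $\abs{\bar\delta\cap(\bar\alpha_1\cup\dots\cup\bar\alpha_m)}\le m-1$, together with $\delta$ nonseparating and $\phi(\delta)=1$.

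Next I would build $\bar\delta$ according to the two usual cases for the signs of the points of $\alpha\cap\beta$. If some two of these points are consecutive along $\beta$ with equal sign — equivalently, some arc $\bar\alpha_i$ joins $\beta_+$ to $\beta_-$ — I run the generalized 1-1-2 trick of \thref{rem:112} on $\alpha$ and $\beta$ using that arc: the resulting closed curve is the union of one arc of $\alpha$ with a complementary arc of $\beta$ carrying no further $\alpha$-crossing, so it can be pushed off $\beta$ to meet it at most once, and by \thref{thm:xiq}(2) one of its two resolutions is a $1$-curve. Because $\bar\gamma$ is a single arc, I have enough freedom in the choice of the participating arc of $\alpha$ (and, if needed, of a preliminary reduction of $\abs{\alpha\cap\gamma}$ using twists along $\beta$, which preserves $\abs{\,\cdot\cap\beta}$) to also guarantee $\abs{\gamma\cap\delta}\le1$. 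In the remaining case all intersection points of $\alpha\cap\beta$ alternate in sign, and I would argue as in the last paragraph of the proof of \thref{prop:conn} and in Case 2b of \thref{lem:17}: a regular neighbourhood of $\alpha\cup\beta$ has at least two annular complementary regions with one side on each copy of $\beta$, and arc-summing a boundary curve of one to a boundary curve of the other along an arc chosen — with the ``different parallel segment'' device of Figure \ref{fig:arc} — to minimise intersections with both $\alpha$ and the single arc $\bar\gamma$ produces $\delta$. In either case, that $\delta$ may be taken nonseparating follows from the homological relations among these boundary curves exactly as in \thref{prop:conn}, and that $\phi(\delta)=1$ follows from \thref{cor:15}, \thref{lem:operations}, and the quadratic-form identity of \thref{thm:xiq}(2) as used in the 1-1-2 trick.

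The main obstacle is the \emph{simultaneous} control of all three intersection numbers. Reducing $\abs{\alpha\cap\delta}$ below $m$ is standard Wajnryb surgery, but demanding $\abs{\beta\cap\delta}\le1$ excludes generic reducing curves and essentially forces $\bar\delta$ to be a \emph{single} arc of $\Sigma'$, so one must verify that an arc crossing the $\bar\alpha_i$ at most $m-1$ times and $\bar\gamma$ at most once always exists — the bad case being when the endpoints of the $\bar\alpha_i$ and of $\bar\gamma$ on $\beta_\pm$ are interleaved so as to block the obvious choices, which is handled by the parallel-segment trick of Figure \ref{fig:arc}. The second delicate point, recurrent throughout the paper, is upgrading ``$\delta$ nonseparating'' to ``$\delta$ a $1$-curve'': this needs the Arf-invariant bookkeeping of \thref{cor:cut} together with \thref{cor:15} and \thref{lem:operations}, and the low-complexity configurations (small $m$, or $\alpha$ disjoint from or meeting $\gamma$ just once) may have to be treated separately. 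Finally one should check that in the alternating case the required pair of annuli still occurs in the presence of $\gamma$; this follows, as in \thref{prop:conn}, from the homological identity expressing $[\beta]$ as the sum of the right-hand (resp. left-hand) boundary classes of the neighbourhood of $\alpha\cup\beta$.
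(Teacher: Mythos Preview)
Your plan has a genuine gap: you never actually control $\abs{\delta\cap\gamma}$.  In the same-sign case you build $\delta$ from an arc of $\alpha$ together with an arc of $\beta$; the $\beta$-arc meets $\gamma$ at most once, but the $\alpha$-arc may meet $\gamma$ up to $\abs{\alpha\cap\gamma}\le m$ times, and the ``freedom in the choice of the participating arc'' you invoke is illusory --- the arc of $\alpha$ used in the 1-1-2 trick is essentially forced once you fix the two consecutive same-sign points on $\beta$.  Your proposed fix (``a preliminary reduction of $\abs{\alpha\cap\gamma}$ using twists along $\beta$'') does not make sense: twisting any of $\alpha,\gamma,\delta$ along $\beta$ either changes a curve you are not allowed to change, or adds $\pm m$ to the intersection with $\alpha$, defeating the purpose.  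The alternating case has the same defect: the annulus boundary curves from the \thref{prop:conn} construction are disjoint from $\alpha$ and $\beta$, but there is no reason their arc-sum should meet $\gamma$ at most once, and the ``parallel-segment trick'' of Figure~\ref{fig:arc} was designed to control two curves, not three.

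The paper avoids this problem by cutting along $\beta\cup\gamma$ rather than along $\beta$ alone.  Since $\abs{\beta\cap\gamma}=1$, this yields a square with handles whose four edges are two copies of $\beta$ and two of $\gamma$; any simple closed curve built in the interior, or any arc joining opposite edges, then meets each of $\beta$ and $\gamma$ at most once \emph{for free}, and the entire difficulty becomes a (long, delicate) case analysis of how the $\alpha$-arcs sit inside the square --- which is exactly what the paper carries out, including several ``bad configurations'' at the corners that require global arguments.  Your cut along $\beta$ alone throws away the structural control over $\gamma$ that makes the argument go through.
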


\begin{proof}[Proof {\cite[Lemma 18]{waj:elem}}]
Cutting $\Sigma_g$ along $\beta \cup \gamma$, we can think of it as a square with some handles attached on it. Opposite edges of the square correspond to the same curve, $\beta$ or $\gamma$. Observe first that if $\abs{\alpha \cap \gamma}=1$ then we can set $\delta:=\gamma$. Assume then that $\abs{\alpha \cap \gamma} \ge 2$.

\begin{figure}
\centering
\begin{tikzpicture}[scale=.75]
\draw [blue, thick] (0,0) -- (0,4);
\draw [blue, thick] (4,0) -- (4,4);
\draw [green, thick] (0,0) -- (4,0);
\draw [green, thick] (0,4) -- (4,4);
\draw [red, thick] (3,0) to [out=90, in=-90] (1,4);
\draw [red, thick] (1,0) -- (1,.2);
\draw [red, thick] (3,4) -- (3,3.8);
\draw [orange, thick] (3.3,0) to [out=90, in=-45] (2.2,2.2) to [out=135, in=0] (.4,3.7) -- (0,3.7);
\draw [orange, thick] (4,3.7) -- (3.6,3.7) arc(-90:-180:.3 and .3);
\draw [magenta, thick] (2.7,0) to [out=90, in=180] (1.8,3.7) -- (2.4,3.7) arc(-90:0:.3 and .3);
\node [blue] at (-.3,2) {$\beta$};
\node [blue] at (4.3,2) {$\beta$};
\node [green] at (2,-.3) {$\gamma$};
\node [green] at (2,4.3) {$\gamma$};
\node [red] at (3,-.3) {$\alpha$};
\node [magenta] at (2,1) {$\eta_1$};
\node [orange] at (3,2) {$\eta_2$};
\end{tikzpicture}
\caption{Construction of the curve $\delta$ of \thref{lem:abcd} when an $\alpha$-segment has its endpoints on opposite $\gamma$-edges. Note that $[\eta_1]+[\eta_2]\equiv\gamma \ (\operatorname{mod}2)$.}
\label{fig:square1}
\end{figure}
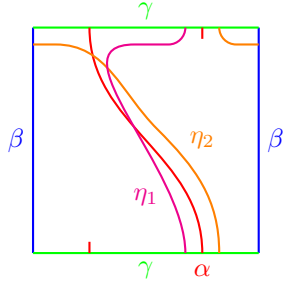

If an arc of $\alpha$ has its endpoints on two opposite edges of the square, say on the $\gamma$-edges, then we can perform a sort of 1-1-2 trick. Let $\eta_1,\eta_2$ be curves as in Figure \ref{fig:square1}. They are both nonseparating as they intersect $\gamma$ once, and both intersect $\alpha$ in at most $\abs{\alpha \cap \gamma}-1 \le m-1$ points. Moreover, we have $\phi(\eta_1)+\phi(\eta_2)=\phi(\gamma)$, so exactly one of the two is a $1$-curve, and we can take it as $\delta$. The same reasoning applies if an arc of $\alpha$ has its endpoints on the two opposite $\beta$-edges. Furthermore, we can start from any arc on the square that connects two opposite edges and does not intersect $\alpha$, provided that its endpoints are separated by some intersection points with $\alpha$ (otherwise, we cannot ensure that the resulting spin curve intersects $\alpha$ in less than $m$ points). We will refer to such arcs as \emph{nice arcs}. 

Given an arc $c$ of $\alpha$ with endpoints on the same edge or on adjacent edges, denote by $\widetilde{c}$ the curve obtained as the union of $c$ and the portion of the boundary of the square that connects the endpoints of $c$ and contains at most one corner point. We will define the spin value of $c$ as the spin value of $\widetilde{c}$. 

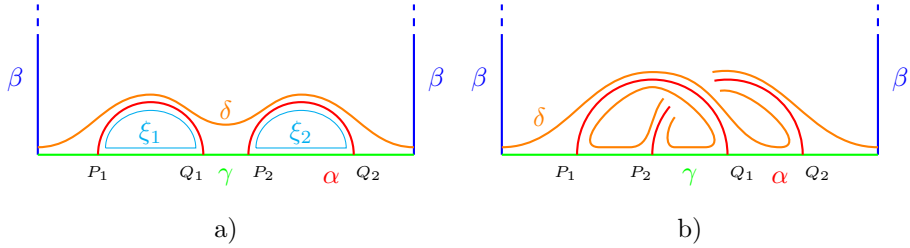
\begin{figure}
\centering
\begin{tikzpicture}
\draw [blue, thick] (0,0) -- (0,1.5);
\draw [blue, thick] (5,0) -- (5,1.5);
\draw [blue, thick, dashed] (0,1.5) -- (0,2);
\draw [blue, thick, dashed] (5,1.5) -- (5,2);
\draw [green, thick] (0,0) -- (5,0);
\draw [red, thick] (.8,0) arc (180:0:.7 and .7);
\draw [red, thick] (2.8,0) arc (180:0:.7 and .7);
\draw [cyan] (.9,.09) arc (180:0:.6 and .5) -- (.9,.09);
\draw [cyan] (2.9,.09) arc (180:0:.6 and .5) -- (2.9,.09);
\draw [orange, thick] (0,.1) to [out=0, in=180] (1.5,.8) to [out=0, in=180] (2.5,.4) to [out=0, in=180] (3.5,.8) to [out=0, in=180] (5,.1);
\node [blue] at (-.3,1) {$\beta$};
\node [blue] at (5.3,1) {$\beta$};
\node [green] at (2.5,-.3) {$\gamma$};
\node [red] at (3.9,-.3) {$\alpha$};
\node [cyan] at (1.5,.3) {$\xi_1$};
\node [cyan] at (3.5,.3) {$\xi_2$};
\node [orange] at (2.5,.6) {$\delta$};
\node at (.8,-.25) {\tiny $P_1$};
\node at (2.05,-.25) {\tiny $Q_1$};
\node at (3,-.25) {\tiny $P_2$};
\node at (4.4,-.25) {\tiny $Q_2$};
\node at (2.5,-1) {a)};
\end{tikzpicture}
\begin{tikzpicture}
\draw [blue, thick] (0,0) -- (0,1.5);
\draw [blue, thick] (5,0) -- (5,1.5);
\draw [blue, thick, dashed] (0,1.5) -- (0,2);
\draw [blue, thick, dashed] (5,1.5) -- (5,2);
\draw [green, thick] (0,0) -- (5,0);
\draw [red, thick] (1,0) arc (180:0:1 and 1);
\draw [red, thick] (2,0) arc (180:140:1 and 1);
\draw [red, thick] (4,0) arc (0:100:1 and 1);
\draw [orange, thick] (0,.1) to [out=0, in=180] (2,1.1) to [out=0, in=135] (2.78,.78) to [out=-45, in=180] (3.7,.1) to [out=0, in=10] (2.9,.85);
\draw [orange, thick] (2.3,.5) to [out=-130, in=180] (2.3,.1) -- (2.7,.1) to [out=0, in=0] (2,.9) to [out=180, in=180] (1.3,.1) -- (1.7,.1) to [out=0, in=-130] (2.15,.75);
\draw [orange, thick] (2.8,1.1) to [out=10, in=180] (5,.1);
\node [blue] at (-.3,1) {$\beta$};
\node [blue] at (5.3,1) {$\beta$};
\node [green] at (2.5,-.3) {$\gamma$};
\node [red] at (3.7,-.3) {$\alpha$};
\node [orange] at (.5,.5) {$\delta$};
\node at (4.2,-.25) {\tiny $Q_2$};
\node at (.85,-.25) {\tiny $P_1$};
\node at (1.85,-.25) {\tiny $P_2$};
\node at (3.2,-.25) {\tiny $Q_1$};
\node at (2.5,-1) {b)};
\end{tikzpicture}
\caption{Construction of the curve $\delta$ of \thref{lem:abcd} when two  $\alpha$-arcs have endpoints on the same $\gamma$-edge and are not nested. Here $\phi(\xi_1)=\phi(\xi_2)=0$.}
\label{fig:aa1}
\end{figure}

Assume now that there are two arcs $a_1,a_2$ of $\alpha$ with respective endpoints $P_1, Q_1$ and $P_2,Q_2$ all lying on the same edge $\ell$ of the square. Choose an orientation for the edge and enumerate its intersection with $\alpha$. If the endpoints appear in the order $P_1,Q_1,P_2,Q_2$ or $P_1,P_2,Q_1,Q_2$ (up to renaming), then we construct a $\delta$ as follows. Let $\overline{P_iQ_i}$ be the segment of $\ell$ with endpoints $P_i$, $Q_i$. Consider the curves $\xi_1:=\widetilde{a_1}$ and $\xi_2:=\widetilde{a_2}$. If one of them, say $\xi_1$, is a $1$-curve (not necessarily nonseparating), then we set $\delta:=(\ell\setminus \overline{P_1Q_1})\cup a_1$. If $\xi_1$ and $\xi_2$ are disjoint $0$-curves, we set $\delta:=(\ell\setminus (\overline{P_1Q_1}\cup\overline{P_2Q_2}))\cup a_1 \cup a_2$; see Figure \ref{fig:aa1}a). Finally, if $\abs{\xi_1 \cap \xi_2}=1$, we can take the boundary of a tubular neighborhood of $\ell \cup \xi_1\cup\xi_2$ as $\delta$; see Figure \ref{fig:aa1}b).

From now on, we will assume that the above cases do not occur, i.e. that there are no nice arcs, and if two $\alpha$-arcs have their endpoints on the same edge then they are nested. Moreover, if an $\alpha$-arc $c$ has its endpoints on the same edge, we will assume that $\widetilde{c}$ is a $0$-curve. 

Consider a corner $C$ between two edges $\ell_1$ and $\ell_2$ of the square. Let $P_1$ and $P_2$ be the first intersection points with $\alpha$ that are found on $\ell_1$ and $\ell_2$ respectively, starting at $C$. Let $c_i$ be the $\alpha$-arc starting at $P_i$, and let $Q_i$ be its other endpoints. There are various possibilities.

\begin{figure}
\centering
\begin{tikzpicture}[scale=.75]
\draw [blue, thick] (0,0) -- (0,4);
\draw [blue, thick] (4,0) -- (4,4);
\draw [green, thick] (0,0) -- (4,0);
\draw [green, thick] (0,4) -- (4,4);
\draw [red, thick] (1,0) arc (180:0:1 and 1);
\draw [red, thick] (0,1) arc (-90:90:.7 and .7);
\draw [red, thick] (0,3.2) -- (.4,3.2);
\draw [orange, thick] (0,3) to [out=0, in=90] (.2,2.8) to [out=-90, in=135] (.55,2.35) to [out=-45, in=90] (.88,1.7) to [out=-90, in=45] (.55,1.05) to [out=-135, in=90] (.2,.6) -- (.2,.4) to [out=-90, in=180] (.4,.2) -- (.6,.2) to [out=0, in=-135] (1.22,.78) to [out=45, in=180] (2,1.15) to [out=0, in=135] (2.78,.78) to [out=-45, in=180] (3.4,.2) -- (4,.2);
\node at (1,-.4) {$P_1$};
\node at (3,-.4) {$Q_1$};
\node at (-.4,1) {$P_2$};
\node at (-.4,2.4) {$Q_2$};
\node at (-.3,-.3) {$C$};
\node at (2,-1) {a)};
\end{tikzpicture}
\begin{tikzpicture}[scale=.75]
\draw [blue, thick] (0,0) -- (0,4);
\draw [blue, thick] (4,0) -- (4,4);
\draw [green, thick] (0,0) -- (4,0);
\draw [green, thick] (0,4) -- (4,4);
\draw [red, thick] (1,0) arc (180:0:1 and 1);
\draw [red, thick] (0,1) arc (-90:90:1 and 1);
\draw [red, thick] (4,1) arc (270:90:.7 and .7);
\draw [red, thick] (4,3.2) -- (3.6,3.2);
\draw [orange, thick] (4,3) to [out=180, in=90] (3.8,2.8) to [out=-90, in=45] (3.45,2.35) to [out=-135, in=90] (3.12,1.7) to [out=-90, in=135] (3.45,1.05) to [out=-45, in=90] (3.8,.6) -- (3.8,.4) to [out=-90, in=0] (3.6,.2) -- (3.4,.2) to [out=180, in=-45] (2.78,.78) to [out=135, in=0] (2,1.15) to [out=180, in=45] (1.22,.78) to [out=-135, in=0] (.6,.2) -- (0,.2);
\node at (1,-.4) {$P_1$};
\node at (3,-.4) {$Q_1$};
\node at (-.4,1) {$P_2$};
\node at (4.4,1) {$P_2$};
\node at (-.4,3) {$Q_2$};
\node at (-.3,-.3) {$C$};
\node at (2,-1) {b)};
\end{tikzpicture}
\begin{tikzpicture}[scale=.75]
\draw [blue, thick] (0,0) -- (0,4);
\draw [blue, thick] (4,0) -- (4,4);
\draw [green, thick] (0,0) -- (4,0);
\draw [green, thick] (0,4) -- (4,4);
\draw [red, thick] (1,0) arc (180:0:1 and 1);
\draw [red, thick] (0,1) arc (-90:90:1 and 1);
\draw [red, thick] (4,1) arc (270:180:2 and 3);
\draw [red, thick] (1,4) -- (1,3.6);
\draw [orange, thick] (3.8,0) to [out=90, in=-60] (2.5,1.7) to [out=120, in=-90] (1.8,4);
\node at (1,-.4) {$P_1$};
\node at (3,-.4) {$Q_1$};
\node at (-.4,1) {$P_2$};
\node at (-.4,3) {$Q_2$};
\node at (1,4.4) {$P_1$};
\node at (4.4,1) {$P_2$};
\node at (-.3,-.3) {$C$};
\node at (2,-1) {c)};
\end{tikzpicture}
\begin{tikzpicture}[scale=.75]
\draw [blue, thick] (0,0) -- (0,4);
\draw [blue, thick] (4,0) -- (4,4);
\draw [green, thick] (0,0) -- (4,0);
\draw [green, thick] (0,4) -- (4,4);
\draw [red, thick] (1,0) arc (180:0:.7 and .7);
\draw [red, thick] (0,1) arc (-90:90:.5 and .5);
\draw [red, thick] (3,0) arc (180:90:1 and 1.8);
\draw [orange, thick] (4,2) to [out=180, in=60] (3.1,1.2) to [out=-120, in=0] (2.7,.4) to [out=180, in=-45] (2.35,.55) to [out=135, in=0] (1.7,.85) to [out=180, in=45] (1.05,.55) to [out=-135, in=0] (.6,.2) -- (0,.2);
\draw [purple, thick] (3.8,4) -- (3.8,2.5) to [out=-90, in=60] (3.2,1.6) to [out=-120, in=90] (2.7,0);
\node at (1,-.4) {$P_1$};
\node at (2.4,-.4) {$Q_1$};
\node at (3.1,-.4) {$R_1$};
\node at (-.4,1) {$P_2$};
\node at (-.4,2) {$Q_2$};
\node at (4.4,1.8) {$S_1$};
\node at (-.3,-.3) {$C$};
\node at (2,-1) {d)};
\end{tikzpicture}
\begin{tikzpicture}[scale=.75]
\draw [blue, thick] (0,0) -- (0,4);
\draw [blue, thick] (4,0) -- (4,4);
\draw [green, thick] (0,0) -- (4,0);
\draw [green, thick] (0,4) -- (4,4);
\draw [red, thick] (1,0) arc (180:0:.7 and .7);
\draw [red, thick] (0,1) arc (-90:90:.5 and .5);
\draw [red, thick] (3,0) arc (0:75:3 and 1.5);
\draw [red, thick] (0,1.5) arc (90:85:3 and 1.5);
\draw [cyan, thick] (.3,1.4) to [out=180, in=90] (.2,1) -- (.2,.4) to [out=-90, in=180] (.4,.2) -- (.6,.2) to [out=0, in=-135] (1.05,.55) to [out=45, in=180] (1.7,.8) to [out=0, in=180] (2.7,.4) to [out=0, in=-30] (1.7,1.1) to [out=150, in=-20] (.8,1.3);
\draw [orange, thick] (4,.2) -- (3.4,.2) to [out=180, in=-45] (2.7,.8) to [out=135, in=-20] (.8,1.6);
\draw [orange, thick] (.3,1.6) to [out=180, in=90] (.1,1) -- (.1,.4) to [out=-90, in=0] (0,.2);
\draw [purple, thick] (.2,4) -- (.2,2.1) to [out=-90, in=180] (.3,1.7);
\draw [purple, thick] (.8,1.7) to [out=-20, in=135] (2.7,.9) to [out=-45, in=0] (2.9,.1) -- (.4,.1) to [out=180, in=90] (.2,0);
\node at (1,-.4) {$P_1$};
\node at (2.4,-.4) {$Q_1$};
\node at (3.1,-.4) {$R_1$};
\node at (-.4,.9) {$P_2$};
\node at (-.4,2.1) {$Q_2$};
\node at (-.4,1.5) {$S_1$};
\node at (-.3,-.3) {$C$};
\node [cyan] at (.5,.6) {$\xi$};
\node [purple] at (.6,3.6) {$\eta_1$};
\node [orange] at (3.6,.5) {$\eta_2$};
\node at (2,-1) {e)};
\end{tikzpicture}
\begin{tikzpicture}[scale=.75]
\draw [blue, thick] (0,0) -- (0,4);
\draw [blue, thick] (4,0) -- (4,4);
\draw [green, thick] (0,0) -- (4,0);
\draw [green, thick] (0,4) -- (4,4);
\draw [red, thick] (1,0) arc (180:0:.7 and .7);
\draw [red, thick] (0,1) arc (-90:90:.5 and .5);
\draw [red, thick] (3.1,0) arc (0:90:3.1 and 2.6);
\draw [cyan, thick] (.6,1.5) to [out=-90, in=90] (.2,.6) -- (.2,.4) to [out=-90, in=180] (.4,.2) -- (.6,.2) to [out=0, in=-135] (1.05,.55) to [out=45, in=180] (1.7,.8) to [out=0, in=180] (2.7,.4) to [out=0, in=-30] (1.7,1.9) to [out=150, in=90] (.3,2.2) to [out=-90, in=90] (.6,1.5);
\draw [orange, thick] (.1,4) -- (.1,3.6) to [out=-90, in=140] (2.35,1.85) to [out=-40, in=180] (4,.1);
\draw [orange, thick] (.1,0) to [out=90, in=0] (0,.1);
\draw [thick] (.5,1.5) -- (.6,1.5);
\draw [thick] (1.7,.7) -- (1.7,.8);
\node at (1,-.4) {$P_1$};
\node at (2.4,-.4) {$Q_1$};
\node at (3.2,-.4) {$R_1$};
\node at (-.4,.9) {$P_2$};
\node at (-.4,1.9) {$Q_2$};
\node at (-.4,2.6) {$S_1$};
\node at (-.3,-.3) {$C$};
\node [cyan] at (.5,.6) {$\xi$};
\node [orange] at (.4,3.6) {$\eta$};
\node at (2,-1) {f)};
\end{tikzpicture}
\caption{Construction of nice arcs and spin curves in Case A of \thref{lem:abcd}.}
\label{fig:6.24A0}
\end{figure}
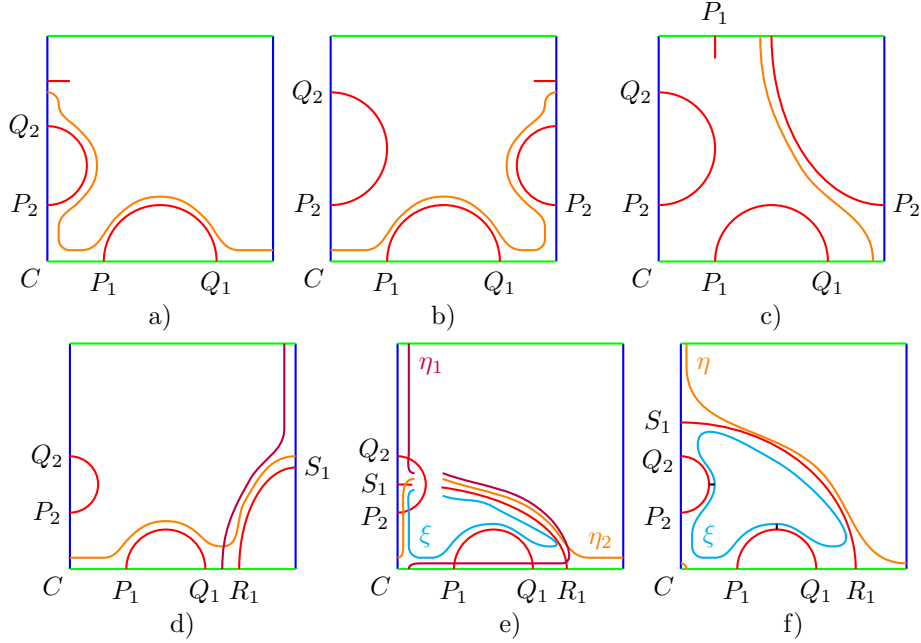

\paragraph{Case A: $Q_1\in \ell_1$ and $Q_2\in\ell_2$.} If $Q_1$ is the last intersection point with $\alpha$ on $\ell_1$, then also $Q_2$ is the last intersection point on $\ell_2$, otherwise we would find a nice arc; see Figure \ref{fig:6.24A0}a). Moreover, by the same reason (see Figure \ref{fig:6.24A0}b) and c)), the points corresponding to $P_1$ and $P_2$ on the edges opposite to $\ell_1$ and $\ell_2$ are joined by an $\alpha$-arc $d$. We may assume that $\widetilde{d}$ is a $1$-curve, otherwise the orange arc of Figure \ref{fig:6.24A0}c) closes up in the obvious way to a curve $\delta$ as in the statement. This is a “bad configuration" (see Figure \ref{fig:bad24}a)) and we will deal with it later on.

\begin{figure}
\centering
\begin{tikzpicture}[scale=.75]
\draw [blue, thick] (0,0) -- (0,4);
\draw [blue, thick] (4,0) -- (4,4);
\draw [green, thick] (0,0) -- (4,0);
\draw [green, thick] (0,4) -- (4,4);
\draw [red, thick] (1,0) arc (180:0:1 and 1);
\draw [red, thick] (0,1) arc (-90:90:1 and 1);
\draw [red, thick] (4,1) arc (270:180:3 and 3);
\node at (1,-.4) {$P_1$};
\node at (3,-.4) {$Q_1$};
\node at (-.4,1) {$P_2$};
\node at (-.4,3) {$Q_2$};
\node at (1,4.4) {$P_1$};
\node at (4.4,1) {$P_2$};
\node at (-.3,-.3) {$C$};
\node at (2,-1) {a)};
\end{tikzpicture}
\begin{tikzpicture}[scale=.75]
\draw [blue, thick] (0,0) -- (0,4);
\draw [blue, thick] (4,0) -- (4,4);
\draw [green, thick] (0,0) -- (4,0);
\draw [green, thick] (0,4) -- (4,4);
\draw [red, thick] (1,0) arc (180:0:.7 and .7);
\draw [red, thick] (0,1) arc (-90:90:.7 and .7);
\draw [red, thick] (3.2,0) arc (0:90:3.2 and 3.2);
\node at (1,-.4) {$P_1$};
\node at (2.4,-.4) {$Q_1$};
\node at (3.2,-.4) {$R_1$};
\node at (-.4,1) {$P_2$};
\node at (-.4,2.4) {$Q_2$};
\node at (-.4,3.2) {$S_1$};
\node at (-.3,-.3) {$C$};
\node at (2,-1) {b)};
\end{tikzpicture}
\begin{tikzpicture}[scale=.75]
\draw [blue, thick] (0,0) -- (0,4);
\draw [blue, thick] (4,0) -- (4,4);
\draw [green, thick] (0,0) -- (4,0);
\draw [green, thick] (0,4) -- (4,4);
\draw [red, thick] (1,0) arc (180:0:.7 and .5);
\draw [red, thick] (3,0) arc (0:90:3 and 1);
\node at (1,-.4) {$P_1$};
\node at (2.4,-.4) {$Q_1$};
\node at (3.1,-.4) {$Q_2$};
\node at (-.4,1) {$P_2$};
\node at (-.3,-.3) {$C$};
\node at (2,-1) {c)};
\end{tikzpicture}
\begin{tikzpicture}[scale=.75]
\draw [blue, thick] (0,0) -- (0,4);
\draw [blue, thick] (4,0) -- (4,4);
\draw [green, thick] (0,0) -- (4,0);
\draw [green, thick] (0,4) -- (4,4);
\draw [red, thick] (1,0) arc (0:25:1 and 2);
\draw [red, thick] (0,2) arc (90:35:1 and 2);
\draw [red, thick] (3,0) arc (0:90:3 and 1);
\node at (1,-.4) {$P_1$};
\node at (-.4,2) {$Q_1$};
\node at (3.1,-.4) {$Q_2$};
\node at (-.4,1) {$P_2$};
\node at (-.3,-.3) {$C$};
\node at (2,-1) {d)};
\end{tikzpicture}
\begin{tikzpicture}[scale=.75]
\draw [blue, thick] (0,0) -- (0,4);
\draw [blue, thick] (4,0) -- (4,4);
\draw [green, thick] (0,0) -- (4,0);
\draw [green, thick] (0,4) -- (4,4);
\draw [red, thick] (1,0) to [out=90, in=-135] (2.6,2.4) to [out=45, in=180] (4,3);
\draw [red, thick] (0,1) to [out=0, in=-135] (2.4,2.6) to [out=45, in=-90] (3,4);
\node at (1,-.4) {$P_1$};
\node at (4.4,3) {$Q_1$};
\node at (3,4.4) {$Q_2$};
\node at (-.4,1) {$P_2$};
\node at (-.3,-.3) {$C$};
\node at (2,-1) {e)};
\end{tikzpicture}
\begin{tikzpicture}[scale=.75]
\draw [blue, thick] (0,0) -- (0,4);
\draw [blue, thick] (4,0) -- (4,4);
\draw [green, thick] (0,0) -- (4,0);
\draw [green, thick] (0,4) -- (4,4);
\draw [red, thick] (1,0) arc (0:90:1 and 1);
\node at (1,-.4) {$P_1$};
\node at (-.4,1) {$P_2$};
\node at (-.3,-.3) {$C$};
\node at (2,-1) {f)};
\end{tikzpicture}
\caption{Bad configurations in the proof of \thref{lem:abcd}.}
\label{fig:bad24}
\end{figure}

Suppose then that $Q_1$ is not the last intersection point, and call $R_1$ the next one, going further from $C$. The $\alpha$-arc $d_1$ starting at $R_1$ must have its other endpoint $S_1$ on $\ell_2$. Indeed, $S_1$ cannot lie on $\ell_1$ by assumption, and if $S_1 \notin \ell_2$ then either the orange arc or the purple arc in Figure \ref{fig:6.24A0}d) is nice. 

Assume that $S_1$ is between $P_2$ and $Q_2$. Observe that $\widetilde{d_1}$ is nonseparating as it intersects $\widetilde{c_2}$ once. If it is a $0$-curve, then the curve $\xi$ in Figure \ref{fig:6.24A0}e) is a nonseparating $1$-curve. If instead $\widetilde{d_1}$ is a $1$-curve, then also curves $\eta_1$ and $\eta_2$ of Figure \ref{fig:6.24A0}e) are nonseparating $1$-curves. Let $m_1$ be the number of intersection points of $\ell_2$ with $\alpha$ that are further than $S_1$ from $C$, and let $m_2$ be the number of those that are closer. Define similarly $n_1$ and $n_2$ for $\ell_1$. Then $m_1+m_2+1 \le m$ and $n_1+n_2+1 \le m$. Observe that 
\[
\abs{\eta_1\cap \alpha}=m_1+1+n_2, \quad  \abs{\eta_2\cap \alpha}=m_2+1+n_1, \quad  \big|\widetilde{d_1}\cap\alpha\big|=m_2+n_2.
\]
If all these three quantities were at least equal to $m$, we would get $2n_2 \ge m$ and $2m_2 \ge m$, hence $m_2+n_2 \ge m$ and $m_1+n_1+2 \le m$. Now, $t_{\widetilde{c_2}}(\eta_2)$ is a nonseparating $1$-curve, and it intersects $\alpha$ in at most $m_1+n_1<m$ points. 

Suppose on the other hand that $S_1$ is further than $Q_2$ from $C$. If $\widetilde{d_1}$ is a $0$-curve, then the curve $\xi$ of Figure \ref{fig:6.24A0}f) must be a $0$-curve. The arc-sums of $\xi$ with $\widetilde{c_1}$ and $\widetilde{c_2}$ along the black arcs of Figure \ref{fig:6.24A0}f) are $1$-curves, and cannot be both separating. It is clear that $\xi+\widetilde{c_2}$ intersects $\alpha$ in less than $m$ points. On the other hand, this need not be true for $\xi+\widetilde{c_1}$, but if $\abs{(\xi+\widetilde{c_1})\cap\alpha} \ge m$ then clearly the curve $\eta$ of Figure \ref{fig:6.24A0} is a nonseparating $1$-curve that satisfies $\abs{\eta \cap \alpha}<m$.

Assume that $\widetilde{d}$ is a $1$-curve. Defining $m_1,m_2,n_1$ and $n_2$ as before, we can take one of $t_{\gamma}^{\pm}(\eta)$, $t_{\beta}^{\pm}(\eta)$ and $\widetilde{d}$ as $\delta$ unless $m_2=n_2 \ge \lfloor m/2\rfloor+1$ and $m_1=n_1$. Moreover, in this case the curve $\xi$ of Figure \ref{fig:6.24A0}f) is a $1$-curve and always intersects $\alpha$ in less than $m$ points, so we can take it as $\delta$ unless it is separating. It is clear that if $\xi$ is separating then there are no points of $\alpha \cap \ell_2$ between $Q_2$ and $S_1$. 
This is the bad configuration in Figure \ref{fig:bad24}b). Notice that by similar arguments we may assume that if the $\alpha$-arc starting at the intersection point right after $R_1$ lands on $\ell_2$, then it lands precisely on the intersection point right after $S_1$, and so on. 

\begin{figure}
\centering
\begin{tikzpicture}[scale=.8]
\draw [blue, thick] (1,0) -- (1,1.5);
\draw [blue, thick] (6,0) -- (6,1.5);
\draw [blue, thick, dashed] (1,1.5) -- (1,2);
\draw [blue, thick, dashed] (6,1.5) -- (6,2);
\draw [green, thick] (1,0) -- (6,0);
\draw [red, thick] (1,1) -- (2,1) arc (90:0:1 and 1);
\draw [red, thick] (2,0) arc (180:140:1 and 1);
\draw [red, thick] (4,0) arc (0:100:1 and 1);
\draw [orange, thick] (1,.1) to [out=0, in=-90] (1.1,.2) -- (1.1,1) to [out=90, in=180] (1.2,1.1) -- (2,1.1) to [out=0, in=135] (2.78,.78) to [out=-45, in=180] (3.7,.1) to [out=0, in=10] (2.9,.85);
\draw [orange, thick] (2.3,.5) to [out=-130, in=180] (2.3,.1) -- (2.7,.1) to [out=0, in=0] (2,.9) -- (1.4,.9) to [out=180, in=90] (1.2,.8) -- (1.2,.2) to [out=-90, in=180] (1.4,.1) -- (1.7,.1) to [out=0, in=-130] (2.15,.75);
\draw [orange, thick] (2.8,1.1) to [out=10, in=180] (5,.1) -- (6,.1);
\node [orange] at (.5,.5) {$\delta$};
\node at (4,-.32) {$Q_2$};
\node at (.68,1) {$P_1$};
\node at (2,-.32) {$P_2$};
\node at (3,-.32) {$Q_1$};
\node at (.7,-.3) {$C$};
\node at (3.5,-1) {a)};
\end{tikzpicture}
\begin{tikzpicture}
\draw [blue, thick] (0,0) -- (0,1.2);
\draw [blue, thick] (4,0) -- (4,1.2);
\draw [blue, thick, dashed] (0,1.2) -- (0,1.6);
\draw [blue, thick, dashed] (4,1.2) -- (4,1.6);
\draw [green, thick] (0,0) -- (4,0);
\draw [red, thick] (1,0) arc (180:0:.7 and .4);
\draw [red, thick] (3,0) arc (0:90:3 and 1);
\draw [cyan, thick] (.3,.9) to [out=180, in=90] (.1,.8) -- (.1,.3) to [out=-90, in=180] (.3,.1) -- (.6,.1) to [out=0, in=-135] (1.05,.35) to [out=45, in=180] (1.7,.5) to [out=0, in=180] (2.7,.2) to [out=0, in=-20] (1.8,.7) to [out=160, in=0] (.3,.9);
\node at (1,-.3) {$P_1$};
\node at (2.4,-.3) {$Q_1$};
\node at (3.1,-.3) {$Q_2$};
\node at (-.3,1) {$P_2$};
\node [cyan] at (.4,.5) {$\xi$};
\node at (-.25,-.25) {$C$};
\node at (2,-.8) {b)};
\end{tikzpicture}
\caption{Construction of the curve $\delta$ of \thref{lem:abcd} in Case B.}
\label{fig:624B}
\end{figure}
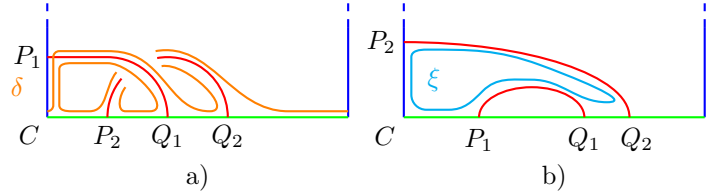

\paragraph{Case B: $Q_1,Q_2 \in \ell_1$.} If $Q_2$ lies between $P_1$ and $Q_1$ we take $\delta$ as in Figure \ref{fig:624B}a). If it lies further away from $C$, then we claim that it must be the next intersection point. Indeed, by a similar reasoning as in Case A, one of the curves $\widetilde{c_2}$ and $\xi$ of Figure \ref{fig:624B}b) is a $1$-curve, and we can take it as $\delta$ unless it is separating. If it is separating, then there are no points of $\ell_1 \cap \alpha$ between $Q_1$ and $Q_2$. Moreover, if $\widetilde{c_2}$ is separating, then we can take its obvious arc sum with $\ell_1$ as $\delta$, so we may assume that $\widetilde{c_2}$ is a $0$-curve. We get the bad configuration in Figure \ref{fig:bad24}c).

\paragraph{Case C: $Q_1 \in \ell_2$ and $Q_2 \in \ell_1$.} In this case, both $\widetilde{c_1}$ and $\widetilde{c_2}$ are nonseparating. If one of them is a $1$-curve, then we can take it as $\delta$. If both are $0$-curves, we get the bad configuration of Figure \ref{fig:bad24}d). 

Notice that in this situation we can assume that there are no arcs with both endpoints on $\ell_1$ (or on $\ell_2$). Indeed, let $d$ be such an arc, and call $R$ and $S$ its endpoints. Recall that $\widetilde{d}$ is a $0$-curve by assumption. If both $R$ and $S$ lie between $P_1$ and $Q_2$, then we may construct a curve $\xi$ as in Figure \ref{fig:624B}b); in this case, $\xi$ is nonseparating, and it is clearly spin. If $Q_2$ lies between $R$ and $S$, we can construct $\delta$ as in Figure \ref{fig:624B}a). Finally, if both $R$ and $S$ are further from $C$ than $Q_2$, then the arc sum of $\ell_1$ with $\widetilde{d}$ and $\widetilde{c_2}$ can be taken as $\delta$.

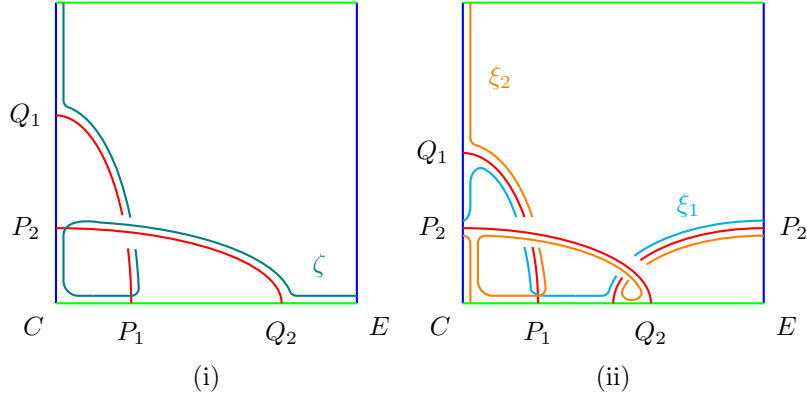
\begin{figure}
\centering
\begin{tikzpicture}
\draw [blue, thick] (0,0) -- (0,4);
\draw [blue, thick] (4,0) -- (4,4);
\draw [green, thick] (0,0) -- (4,0);
\draw [green, thick] (0,4) -- (4,4);
\draw [red, thick] (1,0) arc (0:17:1 and 2.5);
\draw [red, thick] (0,2.5) arc (90:28:1 and 2.5);
\draw [red, thick] (3,0) arc (0:90:3 and 1);
\draw [teal, thick] (.1,4) -- (.1,2.7) arc (180:260:.1 and .1) arc (80:25:1.1 and 2.6);
\draw [teal, thick] (.5,.1) -- (1,.1) arc (-90:5:.1 and .1) arc (5:17:1.1 and 2.5);
\draw [teal, thick] (4,.1) -- (3.2,.1) arc (270:190:.1 and .1) arc (10:80:3.1 and 1.1) to [out=170, in=90] (.1,.9) -- (.1,.3) arc (180:270:.2 and .2) -- (.5,.1);
\node at (1,-.4) {$P_1$};
\node at (-.4,2.5) {$Q_1$};
\node at (3,-.4) {$Q_2$};
\node at (-.4,1) {$P_2$};
\node at (-.3,-.3) {$C$};
\node at (4.3,-.3) {$E$};
\node [teal] at (3.5,.5) {$\zeta$};
\node at (2,-1) {(i)};
\end{tikzpicture}
\begin{tikzpicture}
\draw [blue, thick] (0,0) -- (0,4);
\draw [blue, thick] (4,0) -- (4,4);
\draw [green, thick] (0,0) -- (4,0);
\draw [green, thick] (0,4) -- (4,4);
\draw [red, thick] (1,0) arc (0:22:1 and 2);
\draw [red, thick] (0,2) arc (90:35:1 and 2);
\draw [red, thick] (2.5,0) arc (0:90:2.5 and 1);
\draw [red, thick] (4,1) arc (90:145:2 and 1);
\draw [red, thick] (2,0) arc (180:160:2 and 1);
\draw [cyan, thick] (0,1.1) arc(-90:0:.1 and .2) -- (.1,1.6) arc(180:80:.14 and .2) arc(80:40:.75 and 1.9);
\draw [cyan, thick] (1.5,.1) -- (1.1,.1) arc(270:170:.2 and .1) arc(10:27:.9 and 1.9);
\draw [cyan, thick] (1.5,.1) -- (1.85,.1) arc(-90:-10:.1 and .1) arc(170:160:2.1 and 1.1);
\draw [cyan, thick] (4,1.1) arc (90:145:2.1 and 1.1);
\draw [orange, thick] (.1,4) -- (.1,2.2) arc(180:260:.1 and .1) arc (80:32:1.1 and 2.1);
\draw [orange, thick] (.1,0) -- (.1,.8) arc (0:90:.1 and .1);
\draw [orange, thick] (4,.9) arc (90:145:1.9 and .9);
\draw [orange, thick] (.4,.1) -- (1,.1) arc (-90:10:.1 and .1) arc (10:25:1.1 and 2.1);
\draw [orange, thick] (.4,.1) arc (270:180:.2 and .2) -- (.2,.8) arc (180:85:.1 and .1) arc (85:10:2.3 and .9) arc (10:-190:.13 and .1) arc (170:160:1.9 and .9);
\node at (1,-.4) {$P_1$};
\node at (-.4,2) {$Q_1$};
\node at (2.5,-.4) {$Q_2$};
\node at (-.4,1) {$P_2$};
\node at (4.4,1) {$P_2$};
\node [cyan] at (3,1.3) {$\xi_1$};
\node [orange] at (.5,3) {$\xi_2$};
\node at (-.3,-.3) {$C$};
\node at (4.3,-.3) {$E$};
\node at (2,-1) {(ii)};
\end{tikzpicture}
\caption{Spin curves in the bad configuration of Figure \ref{fig:bad24}d).}
\label{fig:bad24d}
\end{figure}

Moreover, notice that the curve $\zeta$ of Figure \ref{fig:bad24d}(i) is a $1$-curve, and so are $t_{\widetilde{c_1}}^{\pm1}(\zeta)$ and $t_{\widetilde{c_2}}^{\pm1}(\zeta)$. Hence, we can assume that the sum of intersection points further from $C$ than $Q_2$ and $Q_1$ is at least $m-2$, with at least one intersection point on each edge. 

We can also assume that the arc coming out of $P_2$ on the edge opposite to $\ell_2$ does not land on $\ell_1$. Indeed, if it lands between $Q_2$ and $E$ it is straightforward to construct a $\delta$ that goes along $c_2$ and then $d$. If it lands between $P_1$ and $Q_2$, notice that the curves $\xi_1$ and $\xi_2$ of Figure \ref{fig:bad24d}(ii) are both spin, and at least one of them intersects $\alpha$ in less than $m$ points.

\begin{figure}
\centering
\begin{tikzpicture}[scale=.7]
\draw [blue, thick] (0,0) -- (0,4);
\draw [blue, thick] (4,0) -- (4,4);
\draw [green, thick] (0,0) -- (4,0);
\draw [green, thick] (0,4) -- (4,4);
\draw [red, thick] (1,0) arc (180:90:3 and 2);
\draw [orange, thick] (0,.2) to [out=0, in=-150] (1.7,1.5) to [out=30, in=180] (4,2.2);
\node at (1,-.4) {$P_1$};
\node at (4.4,2) {$Q_1$};
\node at (-.3,-.3) {$C$};
\node at (2,-1) {a)};
\end{tikzpicture}
\begin{tikzpicture}[scale=.7]
\draw [blue, thick] (0,0) -- (0,4);
\draw [blue, thick] (4,0) -- (4,4);
\draw [green, thick] (0,0) -- (4,0);
\draw [green, thick] (0,4) -- (4,4);
\draw [red, thick] (1,0) to [out=90, in=-135] (2.6,2.4) to [out=45, in=180] (4,3);
\draw [red, thick] (0,1) arc(-90:90:.7 and .7);
\draw [red, thick] (0,3) -- (.4,3);
\draw [orange, thick] (0,2.5) arc (90:-60:.8 and .8) to [out=-150, in=90] (.2,.7) -- (.2,.4) to [out=-90, in=180] (.4,.2) to [out=0, in=-100] (.8,.4) to [out=80, in=-135] (2.5,2.5) to [out=45, in=180] (4,3.1);
\node at (1,-.4) {$P_1$};
\node at (4.4,3) {$Q_1$};
\node at (-.4,2.4) {$Q_2$};
\node at (-.4,1) {$P_2$};
\node at (-.3,-.3) {$C$};
\node at (2,-1) {b)};
\end{tikzpicture}
\begin{tikzpicture}[scale=.7]
\draw [blue, thick] (0,0) -- (0,4);
\draw [blue, thick] (4,0) -- (4,4);
\draw [green, thick] (0,0) -- (4,0);
\draw [green, thick] (0,4) -- (4,4);
\draw [red, thick] (1,0) to [out=90, in=-135] (2.6,2.4) to [out=45, in=180] (4,3);
\draw [red, thick] (0,1) arc(-90:0:2 and 3);
\draw [red, thick] (3,4) -- (3,3.6);
\draw [orange, thick] (.2,0) to [out=90, in=-120] (1.5,1.7) to [out=60, in=-90] (2.2,4);
\node at (1,-.4) {$P_1$};
\node at (4.4,3) {$Q_1$};
\node at (3,4.4) {$Q_2$};
\node at (-.4,1) {$P_2$};
\node at (-.3,-.3) {$C$};
\node at (2,-1) {c)};
\end{tikzpicture}
\caption{Nice arcs in Case D of \thref{lem:abcd}.}
\label{fig:624C}
\end{figure}

\paragraph{Case D: $Q_1\notin \ell_1,\ell_2$.} In this case, $Q_1$ must lie in the edge opposite to $\ell_2$. We get a nice arc as in Figure \ref{fig:624C}a) unless $Q_1$ is the closest point to the corner opposite to $C$. Moreover, if that is the case, either the corner opposite to $C$ is in Case A or $Q_2$ does not lie on $\ell_1$ nor on $\ell_2$, otherwise we would again find nice arcs as in Figure \ref{fig:624C}b) or c). We get the bad configuration of Figure \ref{fig:bad24}e), where both $\widetilde{c_1}$ and $\widetilde{c_2}$ are $0$-curves. 

\paragraph{Case E: $Q_1=P_2$} We assume that $\widetilde{c_1}$ is either a $0$-curve or a separating curve, otherwise it can be taken as $\delta$. This is the bad configuration of Figure \ref{fig:bad24}f).

\vspace{12pt}

In order to deal with bad configurations at the corners, it is necessary to look at the global configuration. 

\begin{figure}
\centering
\begin{tikzpicture}[scale=.7]
\draw [blue, thick] (0,0) -- (0,4);
\draw [blue, thick] (4,0) -- (4,4);
\draw [green, thick] (0,0) -- (4,0);
\draw [green, thick] (0,4) -- (4,4);
\draw [red, thick] (1.1,0) arc (180:0:.9 and .4);
\draw [red, thick] (0,1.1) arc (-90:90:.4 and .9);
\draw [red, thick] (4,1.1) arc (270:180:2.9 and 2.9);
\draw [red, thick] (2.9,4) arc (180:270:1.1 and 1.1);
\draw [orange, thick] (.2,.1) -- (.5,.1) to [out=0, in=180] (2,.6) to [out=0, in=180] (3.5,.1) -- (3.8,.1) arc(-90:0:.1 and .1) -- (3.9,.9) arc(0:90:.1 and .1) to [out=180, in=-45] (1.8,1.8);
\draw [orange, thick] (.2,.1) arc(270:180:.1 and .1) -- (.1,.5) to [out=90, in=-90] (.6,2) to [out=90, in=-90] (.1,3.5) -- (.1,3.8) arc(180:90:.1 and .1) -- (.9,3.9) arc(90:0:.1 and .1) to [out=-90, in=135] (1.8,1.8);
\draw [cyan, thick] (3.9,3.8) -- (3.9,3.2) arc (0:-90:.1 and .1) to [out=180, in=-45] (3.3,3.3);
\draw [cyan, thick] (3.9,3.8) arc (0:90:.1 and .1) -- (3.2,3.9) arc (90:180:.1 and .1) to [out=-90, in=135] (3.3,3.3);
\node at (1.1,-.4) {$P_1$};
\node at (2.9,-.4) {$Q_1$};
\node at (-.4,1.1) {$P_2$};
\node at (-.4,2.9) {$Q_2$};
\node at (2.9,4.4) {$Q_1$};
\node at (4.4,2.9) {$Q_2$};
\node at (1.1,4.4) {$P_1$};
\node at (4.4,1.1) {$P_2$};
\node at (-.3,-.3) {$C$};
\node at (4.3,4.3) {$D$};
\node [cyan] at (3.55,3.55) {$\xi_2$};
\node [orange] at (.5,.5) {$\xi_1$};
\node at (2,-1) {(i)};
\end{tikzpicture}
\begin{tikzpicture}[scale=.7]
\draw [blue, thick] (0,0) -- (0,4);
\draw [blue, thick] (4,0) -- (4,4);
\draw [green, thick] (0,0) -- (4,0);
\draw [green, thick] (0,4) -- (4,4);
\draw [red, thick] (1.1,0) arc (180:0:.9 and .4);
\draw [red, thick] (0,1.1) arc (-90:90:.4 and .9);
\draw [red, thick] (4,1.1) arc (270:180:2.9 and 2.9);
\draw [red, thick] (2.9,4) -- (2.9,3.6);
\draw [red, thick] (4,2.9) arc (90:270:.4 and .5);
\draw [purple, thick] (.4,.2) to [out=0, in=180] (2,.6) to [out=0, in=90] (3.1,0);
\draw [purple, thick] (.4,.2) arc(270:180:.2 and .2) to [out=90, in=-90] (.6,2) to [out=90, in=0] (0,3.1);
\draw [purple, thick] (3.1,4) arc (180:270:.2 and .2) -- (3.6,3.8) arc (90:0:.2 and .2) -- (3.8,3.3) arc (0:-90:.1 and .1) to [out=180, in=90] (3.4,2.4);
\draw [purple, thick] (4,3.1) arc(90:180:.1 and .1) -- (3.9,1.9) arc(0:-90:.1 and .1) to [out=180, in=-90] (3.4,2.4);
\node at (1.1,-.4) {$P_1$};
\node at (2.9,-.4) {$Q_1$};
\node at (-.4,1.1) {$P_2$};
\node at (-.4,2.9) {$Q_2$};
\node at (2.9,4.4) {$Q_1$};
\node at (4.4,2.9) {$Q_2$};
\node at (4.4,1.9) {$R_2$};
\node at (1.1,4.4) {$P_1$};
\node at (4.4,1.1) {$P_2$};
\node at (-.3,-.3) {$C$};
\node at (4.3,4.3) {$D$};
\node [purple] at (1.7,.85) {$\eta_1$};
\node at (2,-1) {(ii)};
\end{tikzpicture}
\begin{tikzpicture}[scale=.7]
\draw [blue, thick] (0,0) -- (0,4);
\draw [blue, thick] (4,0) -- (4,4);
\draw [green, thick] (0,0) -- (4,0);
\draw [green, thick] (0,4) -- (4,4);
\draw [red, thick] (1.1,0) arc (180:0:.9 and .4);
\draw [red, thick] (0,1.1) arc (-90:90:.4 and .9);
\draw [red, thick] (4,1.1) arc (270:180:2.9 and 2.9);
\draw [red, thick] (2.9,4) arc (180:270:1.1 and 2);
\draw [red, thick] (4,2.9) -- (3.6,2.9);
\draw [magenta, thick] (.4,.2) to [out=0, in=180] (2,.6) to [out=0, in=90] (3.1,0);
\draw [magenta, thick] (.4,.2) arc(270:180:.2 and .2) to [out=90, in=-90] (.6,2) to [out=90, in=0] (0,3.1);
\draw [magenta, thick] (3.1,4) arc (180:265:.8 and 1.8) to [out=-5,in=-90] (3.9,2.3) -- (3.9,3) arc (180:90:.1 and .1);
\node at (1.1,-.4) {$P_1$};
\node at (2.9,-.4) {$Q_1$};
\node at (-.4,1.1) {$P_2$};
\node at (-.4,2.9) {$Q_2$};
\node at (2.9,4.4) {$Q_1$};
\node at (4.4,2) {$R_2$};
\node at (4.4,2.9) {$Q_2$};
\node at (1.1,4.4) {$P_1$};
\node at (4.4,1.1) {$P_2$};
\node at (-.3,-.3) {$C$};
\node at (4.3,4.3) {$D$};
\node [magenta] at (1.7,.85) {$\eta_2$};
\node at (2,-1) {(iii)};
\end{tikzpicture}
\caption{Dealing with the bad configuration of Figure \ref{fig:bad24}a).}
\label{fig:bad24a}
\end{figure}

Fix again a corner $C$, and assume that at each corner there is one of the bad configurations of Figure \ref{fig:bad24}. If the situation is that of Figure \ref{fig:bad24}a), the curve $\xi_1$ of Figure \ref{fig:bad24a} is a $1$-curve by assumption. We suppose that it is separating, otherwise it can be taken as $\delta$. Consider the corner $D$. If at $D$ we have the bad configuration of Figure \ref{fig:bad24}f), i.e. if the points corresponding to $Q_1$ and $Q_2$ are joined by an arc $\ell$, then $\widetilde{\ell}$ must be spin and nonseparating as $[\alpha]=[\xi_1]+[\widetilde{\ell}]$ in homology with $\sfrac{\Z}{2\Z}$ coefficients. 

If at $D$ we have a different bad configuration (i.e. that of Figure \ref{fig:bad24}b), c) or d)), then we take as $\delta$ the curve $\eta_1$ or $\eta_2$ of Figure \ref{fig:bad24a}(ii) and (iii). Here, we can assume that the $\alpha$-arcs from $Q_2$ to $R_2$ and from $Q_1$ to $R_1$ have spin value $0$ by the above discussion. This concludes the proof in the presence of the bad configuration of Figure \ref{fig:bad24}a).

\begin{figure}
\centering
\begin{tikzpicture}[scale=.75]
\draw [blue, thick] (0,0) -- (0,4);
\draw [blue, thick] (4,0) -- (4,4);
\draw [green, thick] (0,0) -- (4,0);
\draw [green, thick] (0,4) -- (4,4);
\draw [red, thick] (0,2.9) to [out=0, in=135] (1.8,1.8) to [out=-45, in=90] (2.9,0);
\draw [red, thick] (4,1.1) to [out=180, in=-45] (2.2,2.2) to [out=135, in=-90] (1.1,4);
\draw [red, thick] (2.9,4) -- (2.9,3.6);
\draw [red, thick] (4,2.9) arc (90:270:.4 and .5);
\draw [purple, thick] (0,3.8) -- (.8,3.8) arc(90:30:.2 and .2) to [out=-90, in=135] (2.1,2.1);
\draw [purple, thick] (3.8,0) -- (3.8,.8) arc(0:60:.2 and .2) to [out=180, in=-45] (2.1,2.1);
\draw [purple, thick] (3.8,4) -- (3.8,3.3) arc (0:-90:.1 and .1) to [out=180, in=90] (3.4,2.4);
\draw [purple, thick] (4,3.8) arc(90:180:.1 and .1) -- (3.9,1.9) arc(0:-90:.1 and .1) to [out=180, in=-90] (3.4,2.4);
\node at (2.9,-.4) {$Q_1$};
\node at (-.4,2.9) {$P_1$};
\node at (2.9,4.4) {$Q_1$};
\node at (4.4,2.9) {$P_1$};
\node at (4.4,1.9) {$R_1$};
\node at (1.1,4.4) {$P_2$};
\node at (4.4,1.1) {$Q_2$};
\node at (-.3,4.3) {$C$};
\node at (-.3,-.3) {$F$};
\node at (4.3,4.3) {$E$};
\node [purple] at (3.4,.5) {$\eta_1$};
\node at (2,-1) {(i)};
\end{tikzpicture}
\begin{tikzpicture}[scale=.75]
\draw [blue, thick] (0,0) -- (0,4);
\draw [blue, thick] (4,0) -- (4,4);
\draw [green, thick] (0,0) -- (4,0);
\draw [green, thick] (0,4) -- (4,4);
\draw [red, thick] (0,2.9) to [out=0, in=135] (1.8,1.8) to [out=-45, in=90] (2.9,0);
\draw [red, thick] (4,1.1) to [out=180, in=-45] (2.2,2.2) to [out=135, in=-90] (1.1,4);
\draw [red, thick] (2.9,4) arc (180:270:1.1 and 2);
\draw [red, thick] (4,2.9) -- (3.6,2.9);
\draw [magenta, thick] (0,3.8) -- (.8,3.8) arc(90:30:.2 and .2) to [out=-90, in=135] (2.1,2.1);
\draw [magenta, thick] (3.8,0) -- (3.8,.8) arc(0:60:.2 and .2) to [out=180, in=-45] (2.1,2.1);
\draw [magenta, thick] (3.8,4) arc(0:-90:.1 and .1) -- (3.2,3.9) arc(90:180:.1 and .1) arc (180:260:.8 and 1.6) to [out=-10,in=-90] (3.9,2.3) -- (3.9,3.7) arc (180:90:.1 and .1);
\node at (2.9,-.4) {$Q_1$};
\node at (-.4,2.9) {$P_1$};
\node at (2.9,4.4) {$Q_1$};
\node at (4.4,2.9) {$P_1$};
\node at (4.4,2) {$R_1$};
\node at (1.1,4.4) {$P_2$};
\node at (4.4,1.1) {$Q_2$};
\node at (-.3,4.3) {$C$};
\node at (-.3,-.3) {$F$};
\node at (4.3,4.3) {$E$};
\node [magenta] at (3.4,.5) {$\eta_2$};
\node at (2,-1) {(ii)};
\end{tikzpicture}
\caption{Dealing with the bad configuration of Figure \ref{fig:bad24}e).}
\label{fig:bad24e}
\end{figure}

Consider now the bad configuration of Figure \ref{fig:bad24}e). As already observed, we may assume that the curves $\widetilde{c_1}$ and $\widetilde{c_2}$ are $0$-curves. Notice that the configuration at the corners $E$ and $F$ cannot be that of Figure \ref{fig:bad24}f). Hence, there is an arc with spin value $0$ from $P_1$ to $R_1$ or from $Q_1$ to $R_1$ as in Figure \ref{fig:bad24e}(i) and (ii), and we can take as $\delta$ the corresponding curve $\eta_1$ or $\eta_2$. This settles the case of Figure \ref{fig:bad24}e). 

\begin{figure}
\centering
\begin{tikzpicture}
\draw [blue, thick] (0,0) -- (0,4);
\draw [blue, thick] (4,0) -- (4,4);
\draw [green, thick] (0,0) -- (4,0);
\draw [green, thick] (0,4) -- (4,4);
\draw [red, thick] (.5,0) arc (180:0:.5 and .5);
\draw [red, thick] (0,.5) arc (-90:90:.5 and .5);
\draw [red, thick] (2.5,0) arc (0:90:2.5 and 2.5);
\draw [red, thick] (1.9,0) arc (0:90:1.9 and 1.9);
\draw [red, thick] (3,0) arc (180:90:1 and 2.5);
\draw [red, thick] (0,3) arc (-90:0:2.5 and 1);
\draw [red, thick] (3,4) -- (3,3.6);
\draw [red, thick] (4,3) arc (90:195:1 and 1);
\draw [red, thick] (4,1) arc (270:232:1 and 1);
\draw [orange, thick] (0,2.6) arc (90:6:2.6 and 2.6) arc(-174:-6:.17 and .2) arc (174:100:1.1 and 2.6) arc (-80:100:.1 and .16) arc (100:192:.86 and .9);
\draw [orange, thick] (4,1.1) arc(270:230:.9 and .9);
\node at (2.5,-.4) {$R_n$};
\node at (-.4,2.5) {$S_n$};
\node at (2.5,4.4) {$R_n$};
\node at (4.4,2.5) {$S_n$};
\node at (3.3,-.4) {$R_{n+1}$};
\node at (-.5,3) {$S_{n+1}$};
\node at (3.3,4.4) {$R_{n+1}$};
\node at (4.5,3) {$S_{n+1}$};
\node [red] at (2.2,.1) {$\dots$};
\node [red] at (.1,2.25) {$\vdots$};
\node at (-.3,-.3) {$C$};
\node at (2,-1) {(i)};
\end{tikzpicture}
\begin{tikzpicture}
\draw [blue, thick] (0,0) -- (0,4);
\draw [blue, thick] (4,0) -- (4,4);
\draw [green, thick] (0,0) -- (4,0);
\draw [green, thick] (0,4) -- (4,4);
\draw [red, thick] (.5,0) arc (180:0:.5 and .5);
\draw [red, thick] (0,.5) arc (-90:90:.5 and .5);
\draw [red, thick] (2.5,0) arc (0:90:2.5 and 2.5);
\draw [red, thick] (1.9,0) arc (0:90:1.9 and 1.9);
\draw [red, thick] (3,0) arc (180:90:1 and 2.5);
\draw [red, thick] (0,3) arc (-90:0:2.5 and 1);
\draw [red, thick] (3,4) -- (3,3.6);
\draw [red, thick] (4,3) arc (270:225:2.5 and 1);
\draw [red, thick] (1.5,4) arc (180:205:2.5 and 1);
\draw [teal, thick] (2.9,0) arc(180:98:1.1 and 2.6) arc(-82:82:.1 and .16) arc(262:223:2.8 and 1.1);
\draw [teal, thick] (1.4,4) arc (180:207:2.6 and 1.1);
\node at (2.5,-.4) {$R_n$};
\node at (-.4,2.5) {$S_n$};
\node at (2.5,4.4) {$R_n$};
\node at (4.4,2.5) {$S_n$};
\node at (3.3,-.4) {$R_{n+1}$};
\node at (-.5,3) {$S_{n+1}$};
\node at (3.3,4.4) {$R_{n+1}$};
\node at (4.5,3) {$S_{n+1}$};
\node [red] at (2.2,.1) {$\dots$};
\node [red] at (.1,2.25) {$\vdots$};
\node at (-.3,-.3) {$C$};
\node at (2,-1) {(ii)};
\end{tikzpicture}
\begin{tikzpicture}
\draw [blue, thick] (0,0) -- (0,4);
\draw [blue, thick] (4,0) -- (4,4);
\draw [green, thick] (0,0) -- (4,0);
\draw [green, thick] (0,4) -- (4,4);
\draw [red, thick] (0,.5) arc (-90:90:.5 and .5);
\draw [red, thick] (.5,0) arc (180:100:.5 and .5);
\draw [red, thick] (1.5,0) arc (0:60:.5 and .5);
\draw [red, thick] (2.5,0) arc (0:40:2.5 and 2.5);
\draw [red, thick] (0,2.5) arc (90:55:2.5 and 2.5);
\draw [red, thick] (1.9,0) arc (0:35:1.9 and 1.9);
\draw [red, thick] (0,1.9) arc (90:55:1.9 and 1.9);
\draw [red, thick] (3,0) arc (180:90:1 and 2.5);
\draw [red, thick] (0,3) arc (-90:0:2.5 and 1);
\draw [red, thick] (3,4) -- (3,3.6);
\draw [red, thick] (4,3) arc (90:180:3 and 3);
\draw [cyan, thick] (2.6,4) arc (0:-85:2.6 and 1.1) arc (95:270:.1 and .17) arc (90:57:2.3 and 2.6);
\draw [cyan, thick] (1.1,0) arc (180:95:2.9 and 2.9) to [out=5, in=90] (3.9,2.75) arc (0:-86:.1 and .2) arc (94:175:1 and 2.6) to [out=-95, in=0] (2.75,.1) arc (270:196:.18 and .1) arc(4:40:2.6 and 2.6);
\node at (2.5,-.4) {$R_n$};
\node at (-.4,2.5) {$S_n$};
\node at (2.5,4.4) {$R_n$};
\node at (4.4,2.5) {$S_n$};
\node at (3.3,-.4) {$R_{n+1}$};
\node at (-.5,3) {$S_{n+1}$};
\node at (3.3,4.4) {$R_{n+1}$};
\node at (4.5,3) {$S_{n+1}$};
\node [red] at (2.2,.1) {$\dots$};
\node [red] at (.1,2.25) {$\vdots$};
\node at (-.3,-.3) {$C$};
\node at (2,-1) {(iii)};
\end{tikzpicture}
\begin{tikzpicture}[scale=.66]
\draw [blue, thick] (0,0) -- (0,6);
\draw [blue, thick] (6,0) -- (6,6);
\draw [green, thick] (0,0) -- (6,0);
\draw [green, thick] (0,6) -- (6,6);
\draw [red, thick] (2,0) arc(0:90:2 and 2);
\draw [red, thick] (4,0) arc(180:90:2 and 2);
\draw [red, thick] (2,6) arc(0:-90:2 and 2);
\draw [red, thick] (4,6) arc(-180:-90:2 and 2);
\draw [gray] (4.4,4.4) arc (-135:-160:2.1 and 2.1) to [out=110, in=0] (3,5.8);
\draw [gray] (4.4,4.4) arc (-135:-110:2.1 and 2.1) to [out=-20, in=90] (5.8,3);
\draw [gray] (1.6,4.4) arc (-45:-20:2.1 and 2.1) to [out=70, in=180] (3,5.8);
\draw [gray] (1.6,4.4) arc (-45:-70:2.1 and 2.1) to [out=-160, in=90] (.2,3);
\draw [gray] (4.4,1.6) arc (135:160:2.1 and 2.1) to [out=-110, in=0] (3,.2);
\draw [gray] (4.4,1.6) arc (135:110:2.1 and 2.1) to [out=20, in=-90] (5.8,3);
\draw [gray] (1.6,1.6) arc (45:20:2.1 and 2.1) to [out=-70, in=180] (3,.2);
\draw [gray] (1.6,1.6) arc (45:70:2.1 and 2.1) to [out=160, in=-90] (.2,3);
\node [gray] at (1.9,4.1) {$\zeta$};
\node at (-.45,-.45) {$C$};
\node at (3,-1.5) {(iv)};
\end{tikzpicture}
\caption{Dealing with the bad configuration of Figure \ref{fig:bad24}b).}
\label{fig:bad24b}
\end{figure}

Assume now that the bad configuration at $C$ is that of Figure \ref{fig:bad24}b). Recall that the intersection points with $\alpha$ are placed symmetrically on edges $\ell_1$ and $\ell_2$ with respect to $R_1$ and $S_1$. Call $R_2,\dots,R_k$ the intersection point on $\ell_1$ further from $C$ than $R_1$, and $S_2,\dots,S_k$ the symmetric points on $\ell_2$. As already observed, there may be arcs going from $R_2$ to $S_2$ and so on, but not all arcs starting at the $R_i$ are of this form as we have excluded the configurations of Figure \ref{fig:bad24}a) and e). Let $c_n$ be the last arc of this form, going from $R_n$ to $S_n$. We can assume that $\widetilde{c_n}$ is a $1$-curve as before. Then the $\alpha$-arc from $R_{n+1}$ does not land on $\ell_2$ by the arguments of Case A. We may assume that it lands on the point corresponding to $S_n$, as otherwise there would be a nice arc. Similarly, we assume that the arc from $S_{n+1}$ lands on $R_n$. 

We claim that $R_{n+1}$ and $S_{n+1}$ are joined by an $\alpha$-arc. Indeed, if the arc from $S_{n+1}$ lands on a different point, we can find nice arcs as in Figure \ref{fig:bad24b}(i), (ii) and (iii). Therefore, under the assumption that there are no nice arcs the configuration degenerates to that of Figure \ref{fig:bad24}f) on each corner. In this case, the homology class mod $2$ of $\alpha$ is the same as that of the curve $\zeta$ of Figure \ref{fig:bad24b}(iv), so $\zeta$ is a nonseparating $1$-curve. 

We are left to deal with the case where there are only the bad configurations of Figure \ref{fig:bad24}c), d) and f). It is easy to see that in this situation, on every edge there are a couple of adjacent intersection points that are the endpoints of $\alpha$-arcs landing on opposite sides. We can then repeat the analysis of Figure \ref{fig:bad24b} to show that it is always possible to find a nice arc and/or a curve $\delta$ as in the statement. \qedhere
\end{proof}

\begin{remark}
Note that \thref{lem:abcd} fails for $m=1$: just consider a bad triple on a genus $3$ surface.
\end{remark}

\begin{theorem}
The complex $X_g$ is simply connected.
\end{theorem}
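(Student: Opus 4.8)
The plan is to argue by a double induction, with an outer induction on the genus $g$ and, for each fixed $g$, an inner induction on the \emph{radius} of a closed path around a fixed curve. The outer base case is $g\le 2$: the complex $X_1$ is simply connected by \thref{prop:1conn1}, and $X_2$ is simply connected by the simplified version of the arguments of this section described in \thref{rem:hyp}. So fix $g\ge 3$ and assume that $X_{g'}$ is simply connected for every $g'<g$. Let $\mathbf{p}$ be a closed edge-path in $X_g$; choose a vertex $v_0$ of $\mathbf{p}$ and a curve $\alpha\in v_0$, and let $r$ be the radius of $\mathbf{p}$ around $\alpha$. Paths of radius $0$ are null-homotopic by \thref{prop:r0}, and paths of radius $1$ by the proposition of the previous subsection, so we may assume $r\ge 2$ and proceed by induction on $r$, the goal being to replace $\mathbf{p}$ by a homotopic closed path of radius at most $r-1$ around $\alpha$.

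First I would normalize $\mathbf{p}$ using the 1-1-2 trick of \thref{rem:112}: since every edge of type ii lies in a triangle, up to homotopy all edges of $\mathbf{p}$ may be taken to be of type i. Next I would carry out the radius reduction exactly as in Wajnryb's treatment \cite{waj:elem}, feeding in \thref{lem:abcd} in place of his auxiliary lemma. Concretely, isolate the maximal subpaths of $\mathbf{p}$ along which the distance from $\alpha$ attains the value $r$; at the end of such a hump one finds a type-i edge $\Braket{\beta}-\Braket{\gamma}$ (common curves suppressed) where $\beta$ realizes $d_\alpha=r$ and $|\alpha\cap\gamma|\le r$. Applying \thref{lem:abcd} with $m=r\ge 2$ produces a nonseparating spin curve $\delta$, disjoint from the suppressed curves, with $|\alpha\cap\delta|<r$ and $|\beta\cap\delta|,|\gamma\cap\delta|\le 1$. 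Completing $\delta$ (together with the suppressed curves) to a spin cut-system, I would use $\delta$ to route a parallel subpath of distance $<r$ around the hump, connecting the two subpaths by a ladder of squares — and, where $\delta$ is disjoint from a curve of $\mathbf{p}$, by the ladder lemma or by auxiliary $(\,\cdot\,)$-segments from \thref{lemma:conn}. The small closed paths created between consecutive humps are closed segments, null-homotopic by \thref{lemma:1conn}, while the triangles and squares closing the ladders are genuine $2$-cells; note that the degenerate case $|\beta\cap\delta|=|\gamma\cap\delta|=1$ cannot occur, since there are no i-i-i triangles (\thref{rem:112}\ref{noiii}). Iterating over all humps yields a homotopic closed path of radius $\le r-1$, which is null-homotopic by the inner inductive hypothesis, and the induction closes.

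The main obstacle — and the reason the earlier parts of the section are so involved — is exactly ensuring at each surgery that the auxiliary curve $\delta$, and every vertex built around it, can genuinely be completed to a \emph{spin} cut-system of $1$-curves with the prescribed intersection pattern, and that the resulting elementary loops are fillable by the four kinds of $2$-cells. This is precisely what \thref{lem:abcd} guarantees (a spin, nonseparating curve with strictly smaller intersection with $\alpha$), and what forces the hyperelliptic face to appear already in the radius-$1$ case via \thref{rem:hyp} and \thref{prop:hyp}; once those inputs are in place, the general-radius step is formally identical to the one for the classical cut-system complex.
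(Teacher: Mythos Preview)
Your overall architecture---double induction on genus and radius, with \thref{lem:abcd} as the engine for radius reduction---is correct and matches the paper. But the execution of the inductive step has real gaps.

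First, you assert that the curve $\delta$ produced by \thref{lem:abcd} is ``disjoint from the suppressed curves''. The lemma gives no such guarantee: it controls only the intersections of $\delta$ with $\alpha$, $\beta$, and $\gamma$. The paper does not assume this; instead it applies \thref{lem:abcd} at \emph{both} ends of the maximal $\beta$-segment to get curves $\delta_1,\delta_2$, and then builds fresh vertices $z_i,w_i$ containing $\delta_i$ from scratch (not by swapping $\delta$ into the existing cut-systems).

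Second, and more seriously, you omit \thref{lem:17}. After producing the $\delta_i$ and the vertices $w_1,w_2$, one still has to connect $w_1$ to $w_2$ by a path whose vertices all have distance $<m$ from $\alpha$; this is exactly what \thref{lem:17}(b) supplies. Your ``ladder of squares'' picture would require $\delta$ to be compatible with every vertex along the $\beta$-segment, which you have no control over; and the ladder lemma and \thref{lemma:conn} do not by themselves bound the radius of the segments they produce.

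Third, the paper explicitly treats the degenerate case where $\delta_i$ is disjoint from and \emph{homologous} to $\gamma_i$ or to $\beta$: then neither an edge nor a common cut-system exists, and auxiliary $1$-curves $\eta_i,\xi_i$ must be inserted to bridge the gap while keeping the radius under control. This is a spin-specific complication (compare \thref{rem:hyp}), so your closing claim that the general-radius step is ``formally identical to the one for the classical cut-system complex'' is too optimistic.
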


\begin{proof}[Proof {\cite[Proposition 19]{waj:elem}}]
We just have to prove that paths of radius at least $2$ are null-homotopic. Let $\mathbf{p}$ be a path of radius $m \ge 2$ around some curve $\alpha$ contained in a vertex $v_0$ of $\mathbf{p}$. Let $v_1$ be the first vertex of $\mathbf{p}$ such that $d_{\alpha}(v_1)=m$; then $v_1$ contains a curve $\beta$ such that $\abs{\alpha \cap \beta}=m$. Consider the maximal $\beta$-segment starting from $v_1$ such that all its vertices have distance $m$ from $\alpha$, and call $v_2$ the last vertex of such segment. Moreover, call $u_1$ the last vertex before $v_1$, and $u_2$ the first vertex after $v_2$. Then, there are curves $\gamma_1 \in u_1$ and $\gamma_2 \in u_2$ such that $\abs{\gamma_i \cap \alpha}=d_{\alpha}(u_i)$; in particular, $\abs{\gamma_i \cap \alpha} \le m$. If $\gamma_i$ is disjoint from $\beta$, then set $\delta_i:=\gamma_i$; otherwise, call $\delta_i$ the curve given by \thref{lem:abcd}. We want to construct a shortcut as follows:
\[
\begin{tikzcd}
\dots \arrow[r, dashed, no head] & v_0 \arrow[r, no head, dashed] & u_1 \arrow[r, no head] \arrow[d, no head, dashed] & v_1 \arrow[r, no head, dashed, "\beta"] \arrow[d, no head, dashed] & v_2 \arrow[r, no head] \arrow[d, no head, dashed] & u_2 \arrow[d, no head, dashed] \arrow[r, dashed, no head] & \dots\\
& & z_1 \arrow[r, no head, dashed] & w_1 \arrow[r, no head, dashed, "\mathbf{q}"'] & w_2 \arrow[r, no head, dashed] & z_2 & 
\end{tikzcd}
\]

If $\delta_i=\gamma_i$, simply put $z_i=u_i$. If $\delta_i \ne \gamma_i$, assume for now that $\delta_i$ is neither homologous to $\beta$ nor to $\gamma_i$. If $\delta_i$ is disjoint and independent from $\gamma_i$, let $z_i$ be a vertex containing both $\gamma_i$ and $\delta_i$, and join it to $u_i$ via a $\gamma_i$-segment. Similarly, if $\delta_i$ is disjoint and independent from $\beta$, let $w_i$ be a vertex containing both $\beta$ and $\delta_i$, and join it to $v_i$ via a $\beta$-segment. If we have $\abs{\gamma_i \cap \delta_i}=1$, let $z_i$ be a vertex containing $\delta_i$, and join it to $u_i$ via a path of the following form:
\[
\begin{tikzcd}
u_i \arrow[r, no head, dashed, "\gamma_i"] & \Braket{\gamma_i} \arrow[r, no head] & \Braket{\delta_i} \arrow[r, no head, dashed, "\delta_i"] & z_i.
\end{tikzcd}
\]
Similarly, if $\abs{\beta \cap \delta_i}=1$, let $w_i$ be a vertex containing $\delta_i$, and join it to $v_i$ via a path of the following form:
\[
\begin{tikzcd}
v_i \arrow[r, no head, dashed, "\beta"] & \Braket{\beta} \arrow[r, no head] & \Braket{\delta_i} \arrow[r, no head, dashed, "\delta_i"] & w_i.
\end{tikzcd}
\]
Now join the vertex $z_i$ to $w_i$ via a $\delta_i$-path. We thus obtain a closed path $u_i-v_i\text{- -}w_i\text{- -}z_i\text{- -}u_i$ of radius $1$ around $\delta_i$; moreover, $u_1\text{- -}z_1\text{- -}w_1$ has radius strictly less than $m$ around $\alpha$.

Observe that $\delta_i$ cannot be homologous to both $\gamma_i$ and $\beta$. Assume that it is homologous to $\gamma_i$ (and disjoint from it). Then on each component $S_1,S_2$ of $\Sigma_g \setminus (\gamma_i\cup\delta_i)$ we can find an arc $a_i$ that connects the two boundary components and is disjoint from $\alpha$. Call $c_i,c_i'$ and $d_i,d_i'$ the two arcs in which the endpoints of $a_1,a_2$ divide $\gamma_i$ and $\delta_i$ respectively. Then we have $\abs{c_i\cap \alpha}+\abs{c_i'\cap\alpha} \le m$ and $\abs{d_i\cap \alpha}+\abs{d_i'\cap\alpha} < m$. Up to renaming, we can assume that the curves obtained by smoothing the unions ${a_1 \cup c_i \cup a_2 \cup d_i}$ and $a_1 \cup c_i' \cup a_2 \cup d_i'$ are $1$-curves, since their homology classes mod $2$ sum to $[\gamma_i]+[\delta_i]$ and their algebraic intersection is $0$. Moreover, one of them intersects $\alpha$ in less than $m$ points; call it $\eta_i$. Note that $\abs{\gamma_i \cap \eta_i}=\abs{\delta_i\cap \eta_i}=1$. Now let $z_i$ be a vertex containing $\delta_i$, and join it to $u_i$ via a path of the following form:
\[
\begin{tikzcd}
u_i \arrow[r, no head, dashed, "\gamma_i"] & \Braket{\gamma_i} \arrow[r, no head] & \Braket{\eta_i} \arrow[r, no head, dashed, "\eta_i"] & \Braket{\eta_i} \arrow[r, no head] & \Braket{\delta_i} \arrow[r, no head, dashed, "\delta_i"] & z_i.
\end{tikzcd}
\]
If $\delta_i$ is homologous to $\beta$ (and disjoint from it), we simply choose a curve $\xi_i$ which intersects both $\delta_i$ and $\beta$ once; up to Dehn twisting along $\beta$, we can assume that it is a $1$-curve. Now let $w_i$ be a vertex containing $\delta_i$, and join it to $v_i$ via a path of the following form:
\[
\begin{tikzcd}
u_i \arrow[r, no head, dashed, "\gamma_i"] & \Braket{\gamma_i} \arrow[r, no head] & \Braket{\xi_i} \arrow[r, no head, dashed, "\xi_i"] & \Braket{\xi_i} \arrow[r, no head] & \Braket{\delta_i} \arrow[r, no head, dashed, "\delta_i"] & w_i.
\end{tikzcd}
\]
Now we join $z_i$ to $w_i$ via a $\delta_i$-path and we get the same properties as before. 

Finally, applying \thref{lem:17}(b) we can join $w_1$ to $w_2$ via a path $\mathbf{q}$ such that all its vertices have distance and less than $m$ from $\alpha$ and from $\beta$, with the only possible exception of the last $\delta_2$-segment. This concludes the proof.  \qedhere
\end{proof}

\section{A finite presentation}
\label{fp}

Consider the even spin structure $\phi$ on a surface $\Sigma_g^1$ of genus $g$ with one boundary component $C$ defined by $\phi(C)=1$, $\phi(\alpha_i)=1$ and $\phi(\beta_i)=0$ for all $i=1,\dots,g$, in the notation of Figure \ref{fig:q}. In this section, we will find a finite presentation for $\Modd(\Sigma_g^1)[\phi]$ and $\Modd(H_g)[\phi]$, where $H_g$ is the handlebody in which the $\alpha_i$ bound disks. 

Given group elements $a,b$, we will denote by $a*b$ the conjugate $aba^{-1}$.

\subsection{The strategy} We start by recalling Hatcher and Thurston's strategy (see also Laudenbach's survey article \cite{lau:pres}). 

Fix a vertex $v_0 \in X_g$. By the spin change of coordinates, $\Modd(\Sigma_g^1)[\phi]$ acts transitively on the vertices of $X_g$, and we will see that there is a finite number of orbits of edges and faces with a vertex at $v_0$. For every orbit $O$ of edges with a vertex at $v_0$, let $r_O\in \Modd(\Sigma_g^1)[\phi]$ be such that $v_0-r_O(v_0)$ is a representative of $O$. Call $S$ the union of a generating set for $H[\phi]:=\operatorname{Stab}(v_0)$ and the elements $r_O$. 

There is a correspondence between paths in $X_g$ and words in $S$. Given $\varphi \in \Modd(\Sigma_g^1)[\phi]$, by \thref{prop:conn} there is an edge-path $v_0-v_1-\dots-v_k=\varphi(v_0)$. We can associate to such a path a word in $S$ as follows. Let $O_1$ be the edge orbit of $v_0-v_1$; then, there exists $h_1 \in H[\phi]$ such that $h_1^{-1}(v_1)=r_{O_1}(v_0)$, i.e. $h_1r_{O_1}(v_0)=v_1$. Now, let $O_2$ be the edge orbit of $v_0-(h_1r_{O_1})^{-1}(v_2)$, and find $h_2 \in H[\phi]$ such that $h_1r_{O_1}h_2r_{O_2}(v_0)=v_2$, and so on. Every $h_i$ can be expressed as a word in the generators of $H[\phi]$, so the resulting \emph{$h$-product} $h_1r_{O_1}\dots h_kr_{O_k}$ is indeed a word in $S$. Moreover, we have $(h_1r_1\dots h_kr_k)(v_0)=\varphi(v_0)$, so $\varphi^{-1}h_1r_1\dots h_kr_k$ is equal to some $h_{k+1}^{-1}\in H[\phi]$ and we can express $\varphi$ as a word in $S$.

In the other direction, given an $h$-product $h_1r_{1}\dots h_kr_{k}$ we can construct an edge path by setting $v_i:=h_1r_{1}\dots h_ir_{i}(v_0)$ for $i=0,1,\dots,k$. If an $h$-product $h_1r_1\dots h_kr_k$ corresponds to a closed edge-path, then $h_1r_1\dots h_kr_kh_{k+1}$ is a relation in $\Modd(\Sigma_g^1)[\phi]$, for some $h_{k+1}\in H[\phi]$. 

We use this correspondence to prove the following theorem, which is the main result of this section.

\begin{theorem}
\thlabel{thm:mcgs}
The group $\Modd(\Sigma_g^1)[\phi]$ admits a finite presentation with generating set $S$ and the following relations:
\begin{enumerate}
\item[\ref{aiaj}-\ref{a8}] relations in the presentation of the stabilizer $H[\phi]$ of $v_0$;
\item[\ref{backtracking}] an $h$-product representing each path $v_0-r(v_0)-v_0$, where $r \in S \setminus H[\phi]$;
\item[\ref{diffwr}] all relations of the form $r^{-1}*h=h_0$, where $r\in S \setminus H[\phi]$, $h$ is a generator of the stabilizer of the edge $v_0-r(v_0)$ and $h_0 \in H[\phi]$;
\item[\ref{triangles}-\ref{hyp}] an $h$-product representing each $\Modd(\Sigma_g^1)[\phi]$-orbit of faces in $X_g$ with a vertex at $v_0$.
\end{enumerate}
\end{theorem}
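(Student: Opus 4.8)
The plan is to apply the standard Hatcher--Thurston machinery, as formalized combinatorially by Wajnryb \cite{waj:elem} and recalled in the preceding discussion, to the complex $X_g$, whose connectivity and simple connectivity are guaranteed by \thref{thm:x}. The group $\Modd(\Sigma_g^1)[\phi]$ acts on $X_g$, and by the spin change of coordinates this action is transitive on vertices. First I would establish the two finiteness statements that make the theorem meaningful: that the stabilizer $H[\phi]=\operatorname{Stab}(v_0)$ is finitely presented, and that there are finitely many orbits of edges and of faces incident to $v_0$. The former will follow because $H[\phi]$ is commensurable with a handlebody-type subgroup and, more concretely, because cutting along the curves of $v_0$ reduces it to mapping class groups of surfaces of lower complexity together with framing data; this is where the explicit generators and relations \ref{aiaj}--\ref{a8} promised in the statement come from. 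The latter is a finite-type check: edges of type i and type ii and each kind of $2$-cell are classified, up to the $\Modd(\Sigma_g^1)[\phi]$-action, by a bounded amount of combinatorial-plus-spin data (intersection pattern, spin values, Arf invariants of complementary pieces), which by the change-of-coordinates principle of Section 2 leaves only finitely many orbits.

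Next I would run the abstract presentation argument. Choose, for each orbit $O$ of edges through $v_0$, an element $r_O$ with $v_0-r_O(v_0)$ representing $O$, and let $S$ be the union of a finite generating set of $H[\phi]$ with the $r_O$'s. Using the $h$-product correspondence between edge-paths in $X_g$ and words in $S$ described above, connectivity of $X_g$ shows $S$ generates $\Modd(\Sigma_g^1)[\phi]$. For the relations, one invokes the general principle (going back to \cite{ht}, see also \cite{lau:pres}, \cite{waj:elem}): when a group acts on a connected and simply connected $2$-complex, a presentation is obtained from a presentation of a vertex stabilizer, one generator for each edge orbit, relations recording edge-stabilizers (the ``$r^{-1}*h=h_0$'' relations \ref{diffwr}) and ``backtracking'' relations $v_0-r(v_0)-v_0$ (relations \ref{backtracking}), together with one relation per face orbit obtained by writing the boundary of a representative $2$-cell as an $h$-product (relations \ref{triangles}--\ref{hyp}). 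I would verify the hypotheses of this principle: every closed edge-path based at $v_0$ is, by simple connectivity, a product of conjugates of boundaries of $2$-cells and of ``trivial'' digon/backtracking loops, and translating this identity of paths into $h$-products yields exactly that the listed relations suffice. The four flavours of $2$-cell (triangles, squares, pentagons, hyperelliptic faces) account for the four families \ref{triangles}--\ref{hyp}; in particular the hyperelliptic face of \thref{def:hyp} contributes the single relation indexed \ref{hyp}.

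The main obstacle, and the part requiring genuine care rather than formal bookkeeping, is twofold. First, one must actually exhibit a finite presentation of the stabilizer $H[\phi]$: this is not quite a mapping class group of a simpler surface, because of the spin constraint, and one has to identify its generators and relations \ref{aiaj}--\ref{a8} explicitly enough that the subsequent Tietze manipulations (in Section \ref{dt}) go through. Second, one must check that finitely many face orbits truly suffice, i.e. that no ``hidden'' $2$-cells are needed; this rests on the fact, proved in Section \ref{scc}, that $X_g$ with its four cell types is simply connected, so the argument here is to cite \thref{thm:x} and then carefully transcribe each 2-cell boundary into an $h$-product, being attentive that the hyperelliptic face — unlike the triangles, squares, and pentagons — is genuinely needed and genuinely finite in number of orbits. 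I would organize the write-up so that the abstract group-action lemma is stated once, its hypotheses are discharged using \thref{thm:x} and the orbit-finiteness computations, and the explicit identification of $H[\phi]$ and of orbit representatives is carried out in separate subsections, culminating in the relation list \ref{aiaj}--\ref{hyp} of the statement.
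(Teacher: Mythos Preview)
Your proposal is correct and follows essentially the same route as the paper: invoke \thref{thm:x}, use transitivity on vertices via spin change of coordinates, and run the standard $h$-product argument to show that the relations \ref{aiaj}--\ref{hyp} suffice.

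One point where the paper is more direct than your sketch: for the finite presentation of $H[\phi]$, you propose to argue via ``commensurability with a handlebody-type subgroup'' and cutting along $v_0$ to reduce complexity. The paper instead observes simply that $H[\phi]$ has finite index in $H$, the stabilizer of $v_0$ under the \emph{full} mapping class group action, and $H$ is already known to be finitely presented by \cite[Proposition 27]{waj:elem}; finite-index subgroups of finitely presented groups are finitely presented. The explicit relations \ref{aiaj}--\ref{a8} are then extracted in a separate proposition by Reidemeister--Schreier applied to this inclusion, rather than by a direct cut-and-framing analysis. Your route would also work in principle, but the finite-index observation is cleaner and avoids having to reassemble the stabilizer from pieces.
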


\begin{proof}
Call $G$ the group given by the presentation in the statement. Observe first that $H[\phi]$ is finitely presented, as it is a finite index subgroup of the stabilizer $H$ of $v_0$ under the action of the full mapping class group, which is finitely presented by \cite[Proposition 27]{waj:elem}.

The above discussion shows that $\Modd(\Sigma_g^1)[\phi]$ is a quotient of $G$. A relation in $\Modd(\Sigma_g^1)[\phi]$ can be written as an $h$-product $h_1r_1\dots h_kr_kh_{k+1}$ in $G$, which represents a closed edge path $\mathbf{p}$ in $X_g$. We want to show that $h_1r_1\dots h_kr_kh_{k+1}$ is equal to the identity in $G$. 

First, any other $h$-product $h_1'r_1'\dots h_k'r_k'h_{k+1}'$ representing $\mathbf{p}$ is equal to $h_1r_1\dots h_kr_kh_{k+1}$ in $G$. Indeed, we have $r_1(v_0)=h_1^{-1}h_1'r_1'(v_0)$, hence $r_1$ and $r_1'$ represent the same edge orbit and $r_1=r_1'$. Moreover, $h_1^{-1}h_1'$ fixes the edge $v_0-r_1(v_0)$, hence \ref{diffwr} gives $h_1^{-1}h_1'r_1=r_1h_1''$ for some $h_1'' \in H[\phi]$. As a consequence, 
\[
h_1'r_1'h_2'r_2'\dots h_k'r_k'h_{k+1}'=h_1r_1h_1''h_2'r_2'\dots h_k'r_k'h_{k+1}',
\]
so we get two shorter $h$-products representing the same edge-path and we conclude by induction on $k$. 

Moreover, we can assume that $\mathbf{p}$ does not contain \emph{backtrackings}, i.e. subpaths $v_i-v_{i+1}-v_{i+2}$ where $v_i=v_{i+2}$. Indeed, if there is such a subpath, we may assume that it is represented by a conjugate of an $h$-product representing the path $v_0-r(v_0)-v_0$ for some generator $r$, and these are trivial in $G$ by \ref{backtracking}. 

Finally, by \thref{thm:x} $\mathbf{p}$ is null-homotopic, hence it can be written as a composition of paths that go from $v_0$ to some vertex $v$, then go around a face of $X_g$ and finally go back from $v$ to $v_0$ along the same path as before. By the above discussion, we can assume that these paths are represented by conjugates of $h$-product representing faces with a vertex at $v_0$, which are trivial in $G$ by \ref{triangles}-\ref{hyp}. 

As the number of orbits of edges and faces of $X_g$ touching $v_0$ is finite, the resulting presentation is finite. \qedhere
\end{proof}

\begin{figure}
\centering
\begin{tikzpicture}[scale=0.75]
\draw (.5,0) to [out=0, in=-90] (1,2) to [out=90, in=180] (2,3.5) to [out=0, in=90] (3,2) to [out=-90, in=180] (3.5,1);
\draw (2,2.4) to [out=-60, in=60] (2,1.2);
\draw (2.12,2.7) to [out=-120, in=120] (2.12,.9);
\draw (3.5,0) to [out=0, in=-90] (4,2) to [out=90, in=180] (5,3.5) to [out=0, in=90] (6,2) to [out=-90, in=180] (6.5,1);
\draw (5,2.4) to [out=-60, in=60] (5,1.2);
\draw (5.12,2.7) to [out=-120, in=120] (5.12,.9);
\draw (7,0) to [out=0, in=-90] (7.5,2) to [out=90, in=180] (8.5,3.5) to [out=0, in=90] (9.5,2) to [out=-90, in=180] (10,1);
\draw (8.5,2.4) to [out=-60, in=60] (8.5,1.2);
\draw (8.62,2.7) to [out=-120, in=120] (8.62,.9);
\node at (6.75,1.7) {$\dots$};
\draw (.9,2) to [out=180, in=90] (-.5,.5) to [out=-90, in=180] (.9,-1) to [out=0, in=180] (9.6,-1) to [out=0, in=-90] (11,.5) to [out=90, in=0] (9.6,2);
\draw (1.95,2) -- (2.05,2);
\draw (3.1,2) -- (3.9,2);
\draw (4.95,2) -- (5.05,2);
\draw (6.1,2) -- (6.5,2);
\draw (7,2) -- (7.4,2);
\draw (8.45,2) -- (8.55,2);
\draw [red] (2,3.5) arc (90:-90:.2 and .55);
\draw [red, dashed] (2,3.5) arc (90:270:.2 and .55);
\draw [red] (5,3.5) arc (90:-90:.2 and .55);
\draw [red, dashed] (5,3.5) arc (90:270:.2 and .55);
\draw [red] (8.5,3.5) arc (90:-90:.2 and .55);
\draw [red, dashed] (8.5,3.5) arc (90:270:.2 and .55);
\draw [orange, dashed] (1.03,1.3) to [out=20, in=160] (1.93,1.3);
\draw [orange, dashed] (4.03,1.3) to [out=20, in=160] (4.93,1.3);
\draw [orange] (4.03,1.3) to [out=-60, in=0] (3.4,-.2) to [out=180, in=-100] (1.93,1.3);
\draw [orange] (1.03,1.3) to [out=-80, in=180] (3.4,-.6) to [out=0,  in=-100] (4.93,1.3);
\draw [blue] (2,2.9) to [out=0, in=90] (2.5,1.8) to [out=-90, in=0] (2,.7) to [out=180, in=-90] (1.5,1.8) to [out=90, in=180] (2,2.9);
\draw [green] (4.9,1.7) -- (5.1,1.7);
\draw [green] (3.9,1.7) to [out=180, in=0] (3.5,.8) to [out=180, in=-90] (2.7,1.8) to [out=90, in=0] (2,3.1) to [out=180, in=90] (1.3,1.8) to [out=-90, in=180] (3.5,-.4) to [out=0, in=-85] (4.3,1.8) to [out=95, in=180] (5,3.1) to [out=0, in=90] (5.7,1.8) to [out=-90, in=180] (6.5,.8) to [out=0, in=0] (6.1,1.7);
\draw [blue] (5,2.9) to [out=0, in=90] (5.5,1.8) to [out=-90, in=0] (5,.7) to [out=180, in=-90] (4.5,1.8) to [out=90, in=180] (5,2.9);
\draw [blue] (8.5,2.9) to [out=0, in=90] (9,1.8) to [out=-90, in=0] (8.5,.7) to [out=180, in=-90] (8,1.8) to [out=90, in=180] (8.5,2.9);
\node [red] at (2,3.8) {$\alpha_1$};
\node [red] at (5,3.8) {$\alpha_2$};
\node [red] at (8.5,3.8) {$\alpha_g$};
\node [blue] at (2.6,.6) {$\beta_1$};
\node [blue] at (5.4,.4) {$\beta_2$};
\node [blue] at (8.2,.4) {$\beta_g$};
\node [orange] at (1.6,-.4) {$\delta_{1,2}$};
\node [green] at (3.5,.5) {$\varepsilon_1$};
\draw (10,-.2) arc (0:360:.3 and .3);
\node at (9.1,-.2) {$C$};
\end{tikzpicture}
\caption{Some of the curves involved in the definition of the generators for $\Modd(\Sigma_g^1)[\phi]$.}
\label{fig:q}
\end{figure}
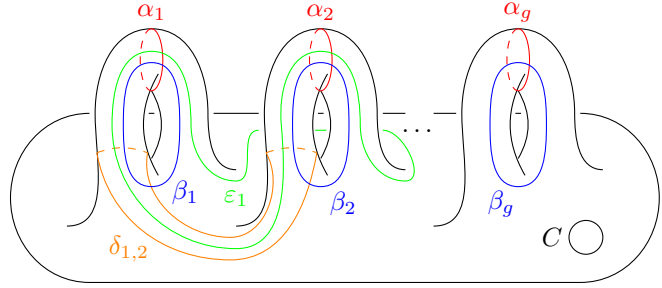

\subsection{Stabilizer of a vertex}
\label{stab}

Consider the spin cut-system $v_0=\Braket{\alpha_1,\dots,\alpha_g}$ of Figure \ref{fig:q}. We are going to give a presentation of its stabilizer $H[\phi]$ under the action of $\Modd(\Sigma_g^1)[\phi]$, which is a finite-index subgroup of the stabilizer $H$ of $v_0$ under the action of $\Modd(\Sigma_g^1)$. We will apply the Nielsen-Schreier method to Wajnryb's presentation of $H$ \cite[Proposition 27]{waj:elem}. 

First of all, we introduce Wajnryb's generators for $H$. In the notation of Figure \ref{fig:q}, set $a_i:=t_{\alpha_i}$ for $i=1,\dots,g$, $s:=t_{\beta_1}t_{\alpha_1}^2t_{\beta_1}$ and $t_i:=t_{\varepsilon_i}t_{\alpha_i}t_{\alpha_{i+1}}t_{\varepsilon_i}$ for $i=1,\dots,g-1$. Moreover, for all $i,j \in \set{\pm1,\dots,\pm g}$ with $i<j$, let $\delta_{i,j}$ be the curve in Figure \ref{fig:dij}, and set $d_{i,j}:=t_{\delta_{i,j}}$ and 
\begin{equation}
\label{eq:dij}
\overline{d}_{i,j}:=d_{i,j}a_{\abs{i}}^{-1}a_{\abs{j}}^{-1}.
\end{equation}

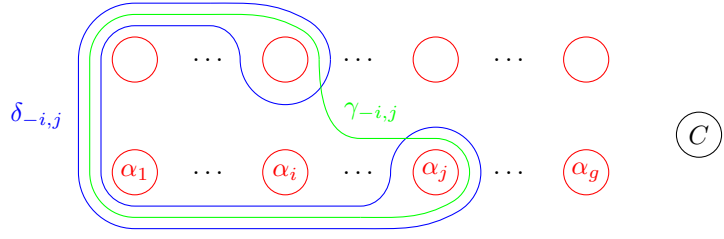
\begin{figure}
\centering
\begin{tikzpicture}
\draw [red] (1.5,.5) circle (3mm);
\draw [red] (1.5,2) circle (3mm);
\node at (2.5,.5) {$\dots$};
\node at (2.5,2) {$\dots$};
\draw [red] (3.5,.5) circle (3mm);
\draw [red] (3.5,2) circle (3mm);
\node at (4.5,.5) {$\dots$};
\node at (4.5,2) {$\dots$};
\draw [red] (5.5,.5) circle (3mm);
\draw [red] (5.5,2) circle (3mm);
\node at (6.5,.5) {$\dots$};
\node at (6.5,2) {$\dots$};
\draw [red] (7.5,.5) circle (3mm);
\draw [red] (7.5,2) circle (3mm);
\node [red] at (1.5,.5) {$\alpha_1$};
\node [red] at (3.5,.5) {$\alpha_i$};
\node [red] at (5.5,.5) {$\alpha_j$};
\node [red] at (7.5,.5) {$\alpha_g$};
\draw [blue] (4.1,2) arc(0:-180:.6 and .6) to [out=90, in=0] (2.5,2.45) -- (1.5,2.45) arc (90:180:.45 and .45) -- (1.05,.5) arc (180:270:.45 and .45) -- (4.5,.05) to [out=0, in=-90] (4.9,.5) arc (180:-60:.6 and .6) to [out=-150, in=0] (4.5,-.25) -- (1.5,-.25) arc (270:180:.75 and .75) -- (.75,2) arc (180:90:.75 and .75) -- (2.5,2.75);
\draw [blue] (4.1,2) arc(0:60:.6 and .6) to [out=150,in=0] (2.5,2.75);
\draw [green] (3.95,2) arc(0:60:.45 and .45) to [out=150,in=0] (2.5,2.6) -- (1.5,2.6) arc (90:180:.6 and .6) -- (.9,.5) arc (180:270:.6 and .6) -- (4.5,-.1);
\draw [green] (3.95,2) to [out=-90, in=180] (4.5,.95) -- (5.5,.95) arc (90:-60:.45 and .45) to [out=-150, in=0] (4.5,-.1);
\node [blue] at (.2,1.25) {$\delta_{-i,j}$};
\node [green] at (4.65,1.3) {$\gamma_{-i,j}$};
\draw (9,1) circle (3mm);
\node at (9,1) {$C$};
\end{tikzpicture}
\caption{Curves $\delta_{-i,j}$ and $\gamma_{-i,j}$. Here, we have cut $\Sigma_g^1$ along $\alpha_1,\dots,\alpha_g$, obtaining a planar surface.}
\label{fig:dij}
\end{figure}

\begin{proposition}
\thlabel{prop:stab}
The group $H[\phi]$ admits a presentation with generators $a_1^2,\dots,a_g^2$, $s$, $t_1,\dots$, $t_{g-1}$ and $\overline{d}_{i,j}$ for all $i,j \in \set{\pm1,\dots,\pm g}$ with $i<j$, and the following relations:
\begin{enumerate}[label=\textit{(A\arabic*)}, ref=(A\arabic*), series=arel]
\item $[a_i^2,a_j^2]=1$ and $[a_i^2,\overline{d}_{j,k}]=1$ for all $i,j,k$;  \label{aiaj}
\item pure braid relations: \label{pb}
\begin{enumerate}
\item $\overline{d}_{r,s}^{-1}*\overline{d}_{i,j}=\overline{d}_{i,j}$ if $r<s<i<j$ or $i<r<s<j$; \label{pba}
\item $\overline{d}_{r,s}^{-1}*\overline{d}_{s,j}=\overline{d}_{r,j}*\overline{d}_{s,j}$ if $r<s<j$; \label{pbb}
\item $\overline{d}_{r,j}^{-1}*\overline{d}_{r,s}=\overline{d}_{s,j}*\overline{d}_{r,s}$ if $r<s<j$;\label{pbc}
\item $[\overline{d}_{i,j},\overline{d}_{r,j}^{-1}*\overline{d}_{r,s}]=1$ if $r<i<s<j$; \label{pbd}
\end{enumerate}
\item $t_it_{i+1}t_i=t_{i+1}t_it_{i+1}$ for all $i$ and $[t_i,t_j]=1$ if $i<j-1$; \label{atiti}
\item $s^2=\overline{d}_{-1,1}a_1^{-2}$ and $t_i^2=\overline{d}_{i,i+1}\overline{d}_{-i-1,-i}$ for all $i$; \label{s2}
\item $[t_i,s]=1$ for all $i \ge 2$; \label{atisi}
\item $st_1st_1=t_1st_1s$; \label{asiti}
\item $[s,a_i^2]=1$ for all $i$, $t_i*a_i^2=a_{i+1}^2$ for all $i$ and $[a_i^2,t_j]=1$ if $j \ne i,i-1$; \label{sai}
\item other relations involving the generators $\overline{d}_{i,j}$: \label{a8}
\begin{enumerate}
\item $s*\overline{d}_{i,j}=\overline{d}_{i,j}$ if $\abs{i},\abs{j}\ge2$ or if $i=-1$ and $j=1$, $s*\overline{d}_{-1,j}=\overline{d}_{1,j}$ if $j \ge 2$, $s*\overline{d}_{i,-1}=\overline{d}_{i,1}$ if $i \le -2$; \label{a8a}
\item $t_k*\overline{d}_{i,j}=\overline{d}_{i,j}$ if $j-1=i=k$ or $j=i+1=-k$ or $\abs{i},\abs{j} \ne k,k+1$; \label{a8b}
\item $t_k*\overline{d}_{k,j}=\overline{d}_{k+1,j}$ if $j \ge k+2$ and $t_k*\overline{d}_{i,-k-1}=\overline{d}_{i,-k}$ if $i \le -k-2$; \label{a8c}
\item $t_k*\overline{d}_{-k-1,k}=\overline{d}_{-k,k+1}$;\label{a8d}
\item $t_k*\overline{d}_{-k-1,k+1}=\overline{d}_{k,k+1}*\overline{d}_{-k,k}$;\label{a8e}
\item $t_k*\overline{d}_{-k-1,j}=\overline{d}_{-k,j}$ if $j>-k$ and $j \ne k,k+1$ and $t_k*\overline{d}_{i,k}=\overline{d}_{i,k+1}$ if $i<k$ and $i \ne -k,-k-1$.\label{a8f}
\end{enumerate}
\end{enumerate}
\end{proposition}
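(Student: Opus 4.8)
The plan is to apply the Reidemeister--Schreier (Nielsen--Schreier) rewriting process to Wajnryb's finite presentation of $H$ from \cite[Proposition 27]{waj:elem} and then simplify the output by Tietze transformations until only the generators $a_1^2,\dots,a_g^2,s,t_1,\dots,t_{g-1},\overline d_{i,j}$ and the relations (A1)--(A8) remain. The first step is to realize $H[\phi]$ as a subgroup of $H$ of index $2^g$. Let $\Phi$ be the set of $2$-spin structures $\psi$ on $\Sigma_g^1$ with $\psi(\alpha_i)=1$ for every $i$. Then $\Phi$ is preserved by $H$, since $H$ fixes the cut-system $\langle\alpha_1,\dots,\alpha_g\rangle$; every $\psi\in\Phi$ is even, because $\operatorname{Arf}(\psi)=\sum_i(\psi(\alpha_i)+1)(\psi(\beta_i)+1)=0$; and $\psi\in\Phi$ is determined by the $g$ values $\psi(\beta_1),\dots,\psi(\beta_g)$, so $|\Phi|=2^g$. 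By twist linearity (\thref{thm:xiq}), $a_i=t_{\alpha_i}$ sends $\psi\in\Phi$ to the structure with $\psi(\beta_i)$ changed by $\psi(\alpha_i)=1$ and all other $\psi(\beta_k)$ unchanged, so $\langle a_1,\dots,a_g\rangle$, and a fortiori $H$, acts transitively on $\Phi$. Hence the $H$-orbit of $\phi$ is all of $\Phi$, $[H:H[\phi]]=2^g$, and the $2^g$ mapping classes $a^{\mathbf e}:=a_1^{e_1}\cdots a_g^{e_g}$ ($\mathbf e\in\{0,1\}^g$) lie in pairwise distinct cosets; since $T:=\{a^{\mathbf e}\}$ is closed under prefixes in these generators, it is a Schreier transversal for $H[\phi]$ in $H$.

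Next I would run Reidemeister--Schreier with transversal $T$: for each of Wajnryb's generators $x$ and each $w\in T$ one records a generator $\gamma(w,x)=wx\,\overline{wx}^{\,-1}$, and for each defining relator $r$ of $H$ and each $w\in T$ a relator rewriting $wrw^{-1}$. The first round of Tietze moves eliminates almost all of these generators. The transversal steps give $\gamma(a^{\mathbf e},a_i)=1$ if $e_i=0$ and $\gamma(a^{\mathbf e},a_i)=a_i^2$ in $H$ if $e_i=1$. Since $[s,a_i]=1$ in $\Modd(\Sigma_g^1)$ — a genus-one computation reducing to the braid relation $t_{\alpha_1}t_{\beta_1}t_{\alpha_1}=t_{\beta_1}t_{\alpha_1}t_{\beta_1}$ — and since $s$ fixes every $\psi\in\Phi$, we get $\gamma(a^{\mathbf e},s)=s$. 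For the chain generators one checks that $t_i$ acts on the orbit $\Phi$ by the transposition of the coordinates $i,i+1$ (it fixes $\phi$), so $\gamma(a^{\mathbf e},t_i)$ equals $t_i$ times a product of $a_i^2$ and $a_{i+1}^2$, a word in $t_i$ and the $a_k^2$, once the relations $t_i*a_i^2=a_{i+1}^2$ and $[a_k^2,t_i]=1$ are in place. Finally, because $\delta_{i,j}$ is disjoint from all the curves $\alpha_k$ (it lies on the planar surface of Figure \ref{fig:dij}) while $\phi(\delta_{i,j})=1$ and $[\delta_{i,j}]=[\alpha_{|i|}]+[\alpha_{|j|}]$ in $H_1(\Sigma_g^1;\sfrac{\Z}{2\Z})$, the twist $d_{i,j}$ commutes with every $a_k$ and acts on every $\psi\in\Phi$ exactly as $a_{|i|}a_{|j|}$ does; hence each $\gamma(a^{\mathbf e},d_{i,j})$ equals $\overline d_{i,j}:=d_{i,j}a_{|i|}^{-1}a_{|j|}^{-1}$ up to a product of the $a_k^2$, and so can be eliminated in favour of $\overline d_{i,j}$. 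This is precisely the reason the corrected twists $\overline d_{i,j}$, rather than the $d_{i,j}$ themselves, appear in the statement. After these moves the generating set is exactly $\{a_1^2,\dots,a_g^2,s,t_1,\dots,t_{g-1},\overline d_{i,j}\}$.

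It then remains to rewrite Wajnryb's relators along $T$ and to check that the resulting finite set of relations is equivalent to (A1)--(A8). The braid-type relations among the $t_i$ and $s$ descend essentially verbatim, since these generators lie in $H[\phi]$ and are fixed by the transversal; this yields (A3), (A5) and (A6). The relations (A4), $s^2=\overline d_{-1,1}a_1^{-2}$ and $t_i^2=\overline d_{i,i+1}\overline d_{-i-1,-i}$, come from rewriting Wajnryb's lantern-type relations that express $s^2=(t_{\beta_1}t_{\alpha_1}^2t_{\beta_1})^2$ and $t_i^2$ as products of the relevant Dehn twists, together with \thref{lem:operations}. The relations (A1) and (A7) encode the commutation and conjugation behaviour of the $a_k^2$ with the other generators, and follow from disjointness of the curves in Figures \ref{fig:q} and \ref{fig:dij} together with the chain relations. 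The bulk of the work — and the step I expect to be the main obstacle — is the case analysis for the Dehn twist relations (A2) (the pure braid relations) and (A8): here one must track how the corrective factors $a_{|i|}^{\pm1}$ slide past conjugations as the indices range over the configurations $r<s<i<j$, $i<r<s<j$, $r<s<j$ and $r<i<s<j$, and verify that every rewritten instance of a Wajnryb relation among the $d_{i,j}$ is a consequence of (A2) and (A8), and conversely that all of (A1)--(A8) arise this way. Finiteness of the resulting presentation is automatic: $H$ is finitely presented, the index is $2^g$, and only finitely many index pairs $(i,j)$ occur.
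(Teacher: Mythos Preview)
Your proposal is correct and follows essentially the same route as the paper: Reidemeister--Schreier applied to Wajnryb's presentation of $H$ with the transversal $\{a_1^{e_1}\cdots a_g^{e_g}:e_i\in\{0,1\}\}$, exploiting that $s,t_i,\overline d_{i,j}$ all preserve $\phi$ and that the $a_k$ can be moved past the other generators via $[s,a_k]=[d_{i,j},a_k]=1$ and $t_i*a_i=a_{i+1}$, $t_i*a_{i+1}=a_i$. The only cosmetic difference is the order of operations: the paper first performs a Tietze move on Wajnryb's presentation of $H$, replacing each $d_{i,j}$ by $\overline d_{i,j}$ \emph{before} running Nielsen--Schreier, which makes the rewriting step cleaner since then every generator of $H$ except the $a_k$ already lies in $H[\phi]$ and commutes with all $a_k$; you instead run Reidemeister--Schreier first and introduce $\overline d_{i,j}$ during the subsequent simplification. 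Both orderings yield the same case analysis for (A2) and (A8), which the paper also treats tersely (``since all $a_i$ commute, we just replace $d_{i,j}$ with $\overline d_{i,j}$'').
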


Notice that relations (A4) and (A8) allow us to eliminate all the generators $\overline{d}_{i,j}$ apart from one, for example $\overline{d}_{1,2}$.

\begin{proof}
First of all, we apply the following Tietze moves to the presentation of $H$ given by \cite[Proposition 27]{waj:elem}. We add generators $\overline{d}_{i,j}$ for all $i,j$ and relations (\ref{eq:dij}). The $d_{i,j}$ only appear in relations (P1), (P2), (P4) and (P8), and can be replaced by the $\overline{d}_{i,j}$ using (\ref{eq:dij}).
\begin{enumerate}
\item[\textit{(P1)}] Since all the generators $a_i$ commute, we obtain $[\alpha_i,\overline{d}_{j,k}]=1$.
\item[\textit{(P2)}] The pure braid relations only involve the generators $d_{i,j}$; again, since all $a_i$ commute, we just replace $d_{i,j}$ with $\overline{d}_{i,j}$ for all $i,j$, obtaining (A2).
\item[\textit{(P4)}] The relations become $s^2=\overline{d}_{-1,1}a_1^{-2}$ and $t_i^2=\overline{d}_{i,i+1}\overline{d}_{-i-1,-i}$ for all $i$.
\item[\textit{(P8)}] By relations (P7), $s$ commutes with all the $a_i$, while $t_i*a_i=a_{i+1}$ and $[a_i,t_j]=1$ if $j \ne i,i-1$. Notice that by (P1) and (P4) we also have
\begin{equation}
\label{eq:tai+1}
t_i*a_{i+1}=t_i^2*a_i=(\overline{d}_{i,i+1}\overline{d}_{-i-1,-i})*a_i=a_i.
\end{equation}
Therefore, we just have to replace each $d_{i,j}$ with $\overline{d}_{i,j}$, obtaining (A8).
\end{enumerate}

Now, we can remove generators $d_{i,j}$ and relations \ref{eq:dij} from the presentation of $H$. Notice that all the new generators of $H$ preserve $\phi$, apart from $a_1,\dots,a_g$.

We claim that the subgroup $H[\phi]$ of $H$ is generated by the elements $a_i^2$, $s$, $t_i$ and $\overline{d}_{i,j}$. Indeed, let $w$ be a word in the generators of $H$. By relations (P1), (P7) and (\ref{eq:tai+1}), we can write it as $w=w'a_1^{\epsilon_1}\dots a_g^{\epsilon_g}$, where $w'$ is a word in the generators $s$, $t_i$ and $\overline{d}_{i,j}$. Therefore, $w'$ represents an element of $H[\phi]$, and by \thref{lem:elements}(2) $w$ represents an element of $H[\phi]$ if and only if each $\varepsilon_i$ is even. As a consequence, a Schreier transversal for $H^s$ in $H$ is 
\begin{equation}
\label{eq:u}
U:=\bigg\{u_J:=\prod_{j \in J} a_j \bigg| J \subseteq \set{1,\dots,g}\bigg\},
\end{equation}
ordered lexicographically. 

Now we determine the Schreier generators for $H^s$. Recall that they are of the form $ux\overline{ux}^{-1}$, where $u$ is an element of $U$, $x$ or $x^{-1}$ is a generator of $H$, and $g \mapsto \overline{g}$ is the function $H \rightarrow U$ that sends every element to the unique representative in $U$ of its $H[\phi]$-coset. 

Observe that if $x$ or $x^{-1}$ is equal to $s$ or to $\overline{d}_{i,j}$ for some $i,j$, then it commutes with all the elements of $U$; hence, in this case, we have $\overline{ux}=u$ for every $u \in U$. If $x=a_i^{\pm1}$, we have
\[
\overline{u_Ja_i^{\pm1}}=\begin{cases}
u_{J \cup \set{i}} & \text{if} \ i \notin J,\\
u_{J \setminus \set{i}} & \text{if} \ i \in J.
\end{cases}
\]
If $x=t_i^{\pm1}$ we have:
\[
\overline{u_Jt_i^{\pm1}}=\begin{cases}
u_J & \text{if} \ i,i+1 \in J  \ \ \text{or} \ i,i+1 \notin J;\\
u_{(J\setminus\set{i})\cup\set{i+1}} & \text{if} \ i \in J \ \ \text{and} \ i+1 \notin J;\\
u_{(J\setminus\set{i+1})\cup\set{i}} & \text{if} \ i \notin J \ \ \text{and} \ i+1 \in J.
\end{cases}
\]

In order to streamline the process, we can use directly the relations of $H$ to get rid of redundant generators. For example, by (P7) $s$ commutes with all the elements of $U$ in $H$, hence all the generators $us\overline{us}^{-1}$ coincide with $s$ in $H[\phi]$. More generally, using relations (P1) and (P7), we see that the Schreier generators boil down exactly to those in the statement. Indeed, each generator $g$ of $H$ that preserves $\phi$ gives a family of Schreier generators which are all equal to $g$ itself or to a product of $g$ and some $a_i^2$, and for all $i$ we have
\[
u_Ja_i\overline{u_Ja_i}^{-1}=\begin{cases}
1 & \text{if} \ i \in J, \\
a_i^2 & \text{if} \ i \notin J.
\end{cases}
\]

Finally, the relations for $H[\phi]$ are of the form $uru^{-1}$, where $u$ is an element of $U$ and $r$ is a relation for $H$. Clearly, the only relations that change are the ones involving some $a_i$, that is, (P1), (P4) and (P7).
\begin{enumerate}
\item[\textit{(P1)}] We obtain $[\alpha_i^2,\alpha_j^2]=1$ and $[\alpha_i^2,\overline{d}_{j,k}]=1$ for all $i,j,k$.
\item[\textit{(P4)}] The first relation becomes $s^2=\overline{d}_{-1,1}(a_1^2)^{-1}$.
\item[\textit{(P7)}] We get $[s,a_i^2]=1$ for all $i$, $t_i*a_i^2=a_{i+1}^2$ for all $i$ and $[a_i^2,t_j]=1$ if $j \ne i,i-1$. \qedhere
\end{enumerate}
\end{proof}

\subsection{Orbits of edges}
\label{act}

We can now derive a complete set of generators for $\Modd(\Sigma_g^1)[\phi]$. Consider the action of $\Modd(\Sigma_g^1)[\phi]$ on the edges of $X_g$ starting at $v_0$. Clearly, the orbits of edges of type (i) are disjoint from those of edges of type (ii). By the spin change of coordinates, that there is a unique orbit of edges of type (i). A representative for this orbit is the edge $v_0-b_1(v_0)$, where $b_1=\tau_{\beta_1}$ in the notations of Figure \ref{fig:q}.  

For the edges of type (ii), the situation is akin to the one considered by Wajnryb in his paper on the handlebody group \cite{wajh}. Wajnryb studies the action of $\Modd(H_g)$ on a cell complex $X_g^H$ whose vertices are cut-systems of meridians for $H_g$, and where two vertices $\Braket{\alpha_1,\dots,\alpha_g}$ and $\Braket{\alpha_1',\dots,\alpha_g'}$ are connected by an edge if $\abs{\alpha_1\cap\alpha_1'}=0$ and $\alpha_k=\alpha_k'$ for $k=2,\dots,g$.

Now, consider an edge of type (ii) $v_0-v_1$ in $X_g$. Up to renaming the curves, $v_1$ is of the form $\Braket{\gamma_1,\alpha_2,\dots,\alpha_g}$, where the curve $\gamma_1$ intersects $\alpha_1$ twice (algebraically and geometrically) and is disjoint from $\alpha_2,\dots,\alpha_g$. Cutting $\Sigma_g^1$ along $\alpha_2,\dots,\alpha_g$, we get a torus $T$ with a number of boundary components, that inherits an even spin structure. Capping each boundary component with a disk, we get a closed torus $\overline{T}$, and we can complete $\alpha_1$ to a geometric symplectic basis $\set{\alpha_1,\eta_1}$. Note that there are just two possible choices for $\eta_1$ up to squared Dehn twists along $\alpha_1$. Now, $\gamma_1$ corresponds to a curve $\overline{\gamma_1}$ on $\overline{T}$ whose homology class is $(2k+1)\alpha_1\pm2\eta_1$. Again, up to squared twists along $\alpha_1$, we may suppose that $k$ is either $0$ or $-1$. Hence, we get exactly two possible isotopy classes for $\overline{\gamma_1}$ up to the action of $\Modd(\overline{T})[\phi]$.

If $H_g$ is the handlebody with meridians $\alpha_1,\dots,\alpha_g$, the above argument shows that $\gamma_1$ is given by $\tau_{\eta_1}^{\pm2}(\xi_1)$, where $\eta_1$ is a curve that intersects $\alpha_1$ once and is disjoint from $\alpha_2,\dots,\alpha_g$, and there is an edge
\[
\Braket{\alpha_1,\dots,\alpha_g}-\Braket{\xi_1,\alpha_2,\dots,\alpha_g} 
\]
in Wajnryb's complex $X_g^H$. Up to the action of $\Modd(\Sigma_g^1)[\phi]$, we may suppose that $\eta_1=\beta_1$ (note that we did not require $\eta_1$ to be spin). Wajnryb classifies the possible choices for $\xi_1$ up to the action of $\Modd(H_g)$ (see \cite[page 220]{wajh}), but the classification up to the action of $\Modd(\Sigma_g^1)[\phi]$ is exactly the same: a mapping class that fixes $\alpha_2,\dots,\alpha_g$ and sends $\xi_1$ to another possible choice $\xi_1'$ necessarily extends to $H_g$, and can be made spin by composing it with suitable twists along $\alpha_1,\dots,\alpha_g$. 

\begin{remark}
Cutting $\Sigma_g^1$ along all the curves involved in an edge of type (ii) yields a planar surface with two connected components, one of which contains the hole coming from the boundary component $C$ of $\Sigma_g^1$. Hence, we actually have more edge orbits, according to the component on which $C$ sits. We will see that the faces of $X_g$ containing these extra orbits are superfluous for the simple connectivity, as a consequence of \thref{lem:hared} and \thref{lem:penta}. Hence, we are going to ignore them.
\end{remark}

We can now give a system of representatives for the orbits of edges of type (ii). The system of representatives found by Wajnryb is given by the edges $\Braket{\alpha_j}-\Braket{\gamma_{i,j}}$ for $i,j \in \set{\pm1,\dots,\pm g}$, $i \le 1$, $j+i \ge 1$, $j-i \le g$, where $\gamma_{i,j}$ is the curve in Figure \ref{fig:dij}. Define mapping classes $r_{i,j}=b_ja_jc_{i,j}b_j$, where $c_{i,j}:=t_{\gamma_{i,j}}$. Note that $r_{i,j}$ swaps $\alpha_j$ and $\gamma_{i,j}$, and fixes all other curves $\alpha_k$. Moreover, in homology mod $2$ we have
\[
\big[r_{i,j}(\beta_k)\big]\equiv_2\begin{cases}
[\beta_k] & \text{if} \ k \le \abs{i} \ \text{and} \ i\le -1, \ \ \text{or if} \ k \ge j+1,\\
[\beta_k+\gamma_{i,j}+\beta_j] & \text{if} \ k=i=1, \ \ \text{or if} \ \abs{i}<k<j,\\
[\alpha_j+\beta_j+\gamma_{i,j}] & \text{if} \ k=j.
\end{cases}
\]
In particular, $r_{i,j}$ only changes the spin value in the second case. Set 
\begin{equation}
\label{eq:rij}
\overline{r}_{i,j}:=\begin{cases} a_1^{-1}\dots a_{j-1}^{-1}r_{1,j} & if \text{i=1},\\ a_1^{-2}\dots a_{-i}^{-2}a_{-i+1}^{-1}\dots a_{j-1}^{-1}r_{i,j} & \text{if $i\le-1$}.
\end{cases}
\end{equation}
Clearly, $\overline{r}_{i,j}$ preserves the spin structure, swaps $\alpha_j$ and $\gamma_{i,j}$, and fixes all the other curves $\alpha_k$. Thus, a system of representatives for the orbits of edges of type (ii) under the action of $\Modd(\Sigma_g^1)[\phi]$ is given by $v_0-v^{\pm}$ and $v_0-v_{i,j}^{\pm}$, where 
\[
v^{\pm}:=b_1^{\pm2}(v_0), \quad v_{i,j}^{\pm}:=b_j^{\pm2}\overline{r}_{i,j}(v_0).
\]
Here, the indices $i,j$ are elements of $\set{\pm1,\dots,\pm g}$ such that $i \le 1$, $j+i \ge 1$ and $j-i \le g$.

Our generating set $S$ for $\Modd(\Sigma_g^1)[\phi]$ is then given by the generators of $H[\phi]$ from \thref{prop:stab}, $b$, $b_1^{\pm2}$ and $b_j^{\pm2}\overline{r}_{i,j}$ for $i,j$ as above. 

\begin{remark}
\thlabel{rmk:ivsii}
We only used the connectivity of the spin cut-system complex to determine a generating set for $\Modd(\Sigma_g^1)[\phi]$. By \thref{prop:conn}, edges of type (ii) are not necessary for the connectivity, hence generators $b_1^{\pm2}$ and $b_j^{\pm}\overline{r}_{i,j}$ are superfluous. We will keep them for now as they are needed to write the relations. Notice that for now we should also treat $b_1$, $b_1^2$ and $b_1^{-2}$ as independent generators. From \ref{backtracking} and \ref{triangles} we will obtain the obvious relations between them.
\end{remark}

\subsection{Backtracking}
\label{bt}

We now write explicitly relations \ref{backtracking}. Recall that these are $h$-products representing \emph{backtrackings}, i.e. loops of the form $v_0-r(v_0)-v_0$, for every generator $r\notin H[\phi]$. 

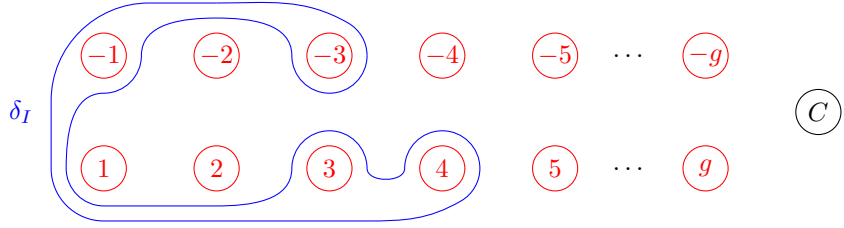
\begin{figure}
\centering
\begin{tikzpicture}
\draw [red] (1.5,.5) circle (3mm);
\draw [red] (1.5,2) circle (3mm);
\draw [red] (3,.5) circle (3mm);
\draw [red] (3,2) circle (3mm);
\draw [red] (4.5,.5) circle (3mm);
\draw [red] (4.5,2) circle (3mm);
\draw [red] (6,.5) circle (3mm);
\draw [red] (6,2) circle (3mm);
\draw [red] (7.5,.5) circle (3mm);
\draw [red] (7.5,2) circle (3mm);
\node at (8.5,.5) {$\dots$};
\node at (8.5,2) {$\dots$};
\draw [red] (9.5,.5) circle (3mm);
\draw [red] (9.5,2) circle (3mm);
\node [red] at (1.5,.5) {$1$};
\node [red] at (3,.5) {$2$};
\node [red] at (4.5,.5) {$3$};
\node [red] at (6,.5) {$4$};
\node [red] at (7.5,.5) {$5$};
\node [red] at (9.5,.5) {$g$};
\node [red] at (1.5,2) {$-1$};
\node [red] at (3,2) {$-2$};
\node [red] at (4.5,2) {$-3$};
\node [red] at (6,2) {$-4$};
\node [red] at (7.5,2) {$-5$};
\node [red] at (9.5,2) {$-g$};
\draw [blue] (5,2) arc(0:-180:.5 and .5) to [out=90, in=0] (3,2.5) to [out=180, in=90] (2,2) arc(0:-90:.5 and .5) to [out=180, in=90] (1,.5) arc (180:270:.5 and .5) -- (3,0) to [out=0, in=-90] (4,.5) arc (180:0:.5 and .5) to [out=-90, in=-90] (5.5,.5) arc (180:-60:.5 and .5) to [out=-150, in=0] (5,-.2) -- (1.5,-.2) arc (270:180:.7 and .7) -- (.8,1.4) to [out=90, in=180] (2.1,2.7) -- (3,2.7);
\draw [blue] (5,2) arc(0:60:.5 and .5) to [out=150,in=0] (3,2.7);
\node [blue] at (.4,1.25) {$\delta_{I}$};
\draw (11,1.25) circle (3mm);
\node at (11,1.25) {$C$};
\end{tikzpicture}
\caption{The curve $\delta_{I}$, for $I=\set{-3,-1,3,4}$.}
\label{fig:dijlant}
\end{figure}

We first introduce some additional notations. Cut $\Sigma_g^1$ along the curves $\alpha_0,\dots,\alpha_g$, obtaining a planar surface as in Figure \ref{fig:dij}. Given a subset $I$ of $\set{\pm1,\dots,\pm g}$, let $\delta_I$ be the curve that encircles the red holes corresponding to the elements of $I$, where the upper holes are indexed by negative integers (see Figure \ref{fig:diijj}). In particular, $\delta_{i,j}=\delta_{\set{i,j}}$, and $\gamma_{i,j}=\delta_{\set{i,i+1,\dots,j}}$. Finally, set $d_I:=t_{\delta_I}$. We define
\begin{equation}
\label{eq:dI}
\overline{d}_{\set{i_1,\dots,i_n}}:=d_{\set{i_1,\dots,i_n}}(a_{i_1}\dots a_{i_n})^{-1}.
\end{equation}
It can be shown that
\begin{equation}
\label{dset}
\overline{d}_{\set{i_1,\dots,i_n}}=(\overline{d}_{i_1,i_2}\overline{d}_{i_1,i_3}\dots \overline{d}_{i_1,i_n}\overline{d}_{i_2,i_3}\dots \overline{d}_{i_2,i_n}\dots \overline{d}_{i_{n-1},i_n}).
\end{equation}

\begin{remark}
\thlabel{rem:dI}
The expansion of (\ref{dset}) involves a number of lantern relations. The idea is the following. Consider the lantern specified by the curves $\delta_{i_{n-1},i_n}$ and $\delta_{I\setminus \set{i_n}}$, i.e.
\[
d_{i_{n-1},i_n}d_{I\setminus \set{i_n}}d_{I\setminus \set{i_{n-1}}}=a_{\abs{i_{n-1}}}a_{\abs{i_n}}d_{I}d_{I\setminus \set{i_{n-1},i_n}}.
\]
This allows us to write $d_I$ in terms of mapping classes $d_{I'}$, where $I'$ has one or two elements less than $I$ (see Figure \ref{fig:dijlant}). Notice that if we denote by $\ell(n)$ the number of lanterns needed to write $d_I$ as a product of elements $d_{i,j}$, we can write a recurrence relation and see that $\ell(n)=(n-1)(n-2)/2$. 
\end{remark}

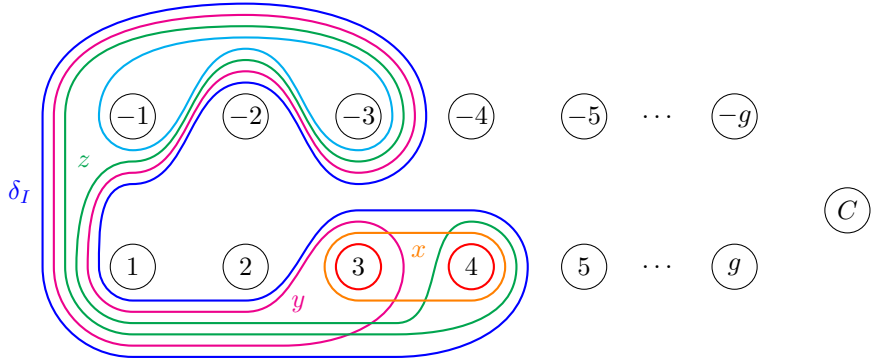
\begin{figure}
\centering
\begin{tikzpicture}
\draw (1.5,.5) circle (3mm);
\draw (1.5,2.5) circle (3mm);
\draw (3,.5) circle (3mm);
\draw (3,2.5) circle (3mm);
\draw [red, thick] (4.5,.5) circle (3mm);
\draw (4.5,2.5) circle (3mm);
\draw [red, thick] (6,.5) circle (3mm);
\draw (6,2.5) circle (3mm);
\draw (7.5,.5) circle (3mm);
\draw (7.5,2.5) circle (3mm);
\node at (8.5,.5) {$\dots$};
\node at (8.5,2.5) {$\dots$};
\draw (9.5,.5) circle (3mm);
\draw (9.5,2.5) circle (3mm);
\node at (1.5,.5) {$1$};
\node at (3,.5) {$2$};
\node at (4.5,.5) {$3$};
\node at (6,.5) {$4$};
\node at (7.5,.5) {$5$};
\node at (9.5,.5) {$g$};
\node at (1.5,2.5) {$-1$};
\node at (3,2.5) {$-2$};
\node at (4.5,2.5) {$-3$};
\node at (6,2.5) {$-4$};
\node at (7.5,2.5) {$-5$};
\node at (9.5,2.5) {$-g$};
\draw [blue, thick] (5.4,2.5) arc(0:-90:.9 and .9) to [out=180, in=0] (3,2.95) to [out=180, in=0] (1.5,1.6) to [out=180, in=90] (1.05,.5) arc (180:270:.45 and .45) -- (3,.05) to [out=0, in=180] (4.5,1.25) -- (6,1.25) arc(90:0:.75 and .75) to [out=-90, in=0] (5,-.7) -- (1.5,-.7) arc (270:180:1.2 and 1.2) -- (.3,2.5) to [out=90, in=180] (3,4) to [out=0, in=90] (5.4,2.5);
\draw [magenta, thick] (5.25,2.5) arc(0:-90:.75 and .75) to [out=180, in=0] (3,3.1) to [out=180, in=0] (1.5,1.75) to [out=180, in=90] (.9,.5) arc (180:270:.6 and .6) -- (3,-.1) to [out=0, in=180] (4.5,1.1) arc(90:0:.6 and .6) to [out=-90, in=0] (3.5,-.55) -- (1.5,-.55) arc (270:180:1.05 and 1.05) -- (.45,2.5) to [out=90, in=180] (3,3.85) to [out=0, in=90] (5.25,2.5);
\draw [Green, thick] (5.1,2.5) arc(0:-90:.6 and .6) to [out=180, in=0] (3,3.25) to [out=180, in=0] (1.5,1.9) to [out=180, in=90] (.75,.5) arc (180:270:.75 and .75) -- (5,-.25) to [out=0, in=180] (6,1.1) arc(90:0:.6 and .6) to [out=-90, in=0] (5,-.4) -- (1.5,-.4) arc (270:180:.9 and .9) -- (.6,2.5) to [out=90, in=180] (3,3.7) to [out=0, in=90] (5.1,2.5);
\draw [orange, thick] (4.5,.95) arc (90:270:.45 and .45) -- (6,.05) arc (-90:90:.45 and .45) -- (4.5,.95);
\draw [cyan, thick] (1.05,2.5) arc(180:270:.45 and .45) to [out=0, in=180] (3,3.4) to [out=0, in=180] (4.5,2.05) arc(-90:0:.45 and .45) to [out=90, in=0] (3,3.55) to [out=180, in=90] (1.05,2.5);
\node [orange] at (5.3,.7) {$x$};
\node [magenta] at (3.7,0) {$y$};
\node [Green] at (.85,1.9) {$z$};
\node [blue] at (0,1.5) {$\delta_I$};
\draw (11,1.25) circle (3mm);
\node at (11,1.25) {$C$};
\end{tikzpicture}
\caption{The lantern $xyz=a_3a_4d_{-3,-1}d_{I}$ can be used to define $d_{I}$ for $I:=\set{-3,-1,3,4}$. Here $x=d_{3,4}$, $y=d_{\set{-3,-1,3}}$ and $z=d_{\set{-3,-1,4}}$.}
\label{fig:diijj}
\end{figure}

Moreover, set $k_j:=t_j\overline{d}_{j,j+1}^{-1}$ for all $j$, and define $s_1:=s$ and
\[
s_j:=(k_{j-1}k_{j-2}\dots k_1)*s_1.
\]
It can be shown that $s_j=b_ja_j^2b_j$ (see (\ref{bi})). 

\begin{remark}
\thlabel{rmk:bt}
Backtracking on an edge in the orbit of $v_0-v_{i,j}^+$ results in an edge in the orbit of $v_0-v_{i,j}^-$. This can be seen by assigning orientations to the curves intersecting twice. Hence, it suffices to consider the $h$-products relative to backtrackings where the first edge is $v_0-v_{i,j}^+$.
\end{remark}

\begin{enumerate}[resume*=arel]
\item \textit{We have $b_1a_1^2b_1=s_1$, $b_1^{+2}b_1^{-2}=1$,}
\[
b_j^{+2}\overline{r}_{1,j}\overline{d}_{\set{1,\dots,j}}a_j^2 b_j^{-2}\overline{r}_{1,j}=a_1^{-2}\dots a_{j-1}^{-2}a_j^2 s_j\overline{d}_{\set{1,\dots,j}}s_j
\]
\textit{for every} $j \ge 2$ \textit{and}
\[
b_j^{+2}\overline{r}_{i,j}\overline{d}_{\set{i,\dots,j}}a_j^2 b_j^{-2}\overline{r}_{i,j}=a_1^{-4}\dots a_{-i}^{-4}a_{-i+1}^{-2}\dots a_{j-1}^{-2}a_j^2 s_j\overline{d}_{\set{i,\dots,j}}s_j
\]
\textit{for every} $i,j\in \set{\pm1,\dots,\pm g}$ \textit{with $i \le -1$, $j+i \ge 1$ and $j-i \le g$.}\label{backtracking}
\end{enumerate}

\begin{proof}[Proof of \ref{backtracking}.]
The first two relations are clear. The other relations follow from braid relations $T_1$ (in the whole $\Modd(\Sigma_g^1)$). We do the case $i=1$; the other is similar. We underline the places where a relation $T_1$ is applied:
\begin{align*}
&b_j^{\pm2}\overline{r}_{1,j}\overline{d}_{\set{1,\dots,j}}a_j^2 b_j^{\mp2}\overline{r}_{1,j}=(a_1\dots a_{j-1})^{-3}b_j^{\pm2}r_{1,j} c_{1,j}a_{j}b_j^{\mp2} r_{i,j}=\\
&\quad=(a_1\dots a_{j-1})^{-3}b_jb_j^{\pm2}a_j\underline{c_{1,j}b_j c_{1,j}}a_{j}b_j^{\mp2}b_ja_jc_{1,j}b_j=\\
&\quad=(a_1\dots a_{j-1})^{-3}b_j\underline{b_j^{\pm2}a_jb_j}c_{1,j}\underline{b_j a_{j}b_j^{\mp2}}b_ja_jc_{1,j}b_j=\\
&\quad=(a_1\dots a_{j-1})^{-3}\underline{b_ja_jb_j}a_j^{\pm2}c_{1,j}a_j^{\mp2} \underline{b_ja_jb_j}a_jc_{1,j}b_j=\\
&\quad=(a_1\dots a_{j-1})^{-3}a_jb_ja_ja_j\underline{c_{1,j}b_jc_{1,j}}a_ja_jb_j=a_1^{-2}\dots a_{j-1}^{-2}a_j^2s_j\overline{d}_{\set{1,\dots,j}}s_j.\qedhere
\end{align*}
\end{proof}

With the relations of \thref{prop:conn} and \ref{backtracking}, we can already obtain the following small set of generators, which may be of interest.

\begin{corollary}
The even spin mapping class group $\Modd(\Sigma_g)[\phi]$ is generated by $a_1^2,b_1,t_1,\overline{d}_{1,2}$ and $u:=t_1\dots t_{g-1}$.
\end{corollary}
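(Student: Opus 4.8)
The plan is to prune the generating set $S$ of $\Modd(\Sigma_g^1)[\phi]$ assembled in Subsections~\ref{act}--\ref{bt} down to the five listed elements. This suffices: capping the boundary component $C$ with a disk induces a surjection $\Modd(\Sigma_g^1)[\phi]\twoheadrightarrow\Modd(\Sigma_g)[\phi]$ (given $f$ fixing $\phi$ on $\Sigma_g$, any lift to $\Sigma_g^1$ agrees with $\phi$ on a basis of $H_1(\Sigma_g^1;\Z)\cong H_1(\Sigma_g;\Z)$, hence fixes $\phi$ by \thref{thm:phipsi}(1)), and the elements in the statement are the images of their namesakes in $\Modd(\Sigma_g^1)[\phi]$. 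The first reduction is immediate from \thref{rmk:ivsii}: since $X_g$ is connected using only edges of type (i) (\thref{prop:conn}), the generators $b_1^{\pm2}$ and $b_j^{\pm2}\overline{r}_{i,j}$ can be dropped, so $\Modd(\Sigma_g^1)[\phi]$ is generated by $b_1$ together with the generators of $H[\phi]$ from \thref{prop:stab}, namely the $a_i^2$, the element $s$, the $t_i$ and the $\overline{d}_{i,j}$.

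Next I would discard everything except $a_1^2$, $b_1$, $t_1,\dots,t_{g-1}$ and $\overline{d}_{1,2}$, using only relations already available in \thref{prop:stab} and \ref{backtracking}. As noted right after \thref{prop:stab}, relations \ref{s2} and \ref{a8} write every $\overline{d}_{i,j}$ as a word in $\overline{d}_{1,2}$, the $t_k$ and $s$ (the lantern bookkeeping of \thref{rem:dI}), so all $\overline{d}_{i,j}$ with $(i,j)\ne(1,2)$ are redundant. Relation \ref{sai} gives $a_{k+1}^2=t_k\,a_k^2\,t_k^{-1}$, so inductively each $a_k^2$ with $k\ge 2$ lies in $\langle a_1^2,t_1,\dots,t_{k-1}\rangle$ and is redundant. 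Finally \ref{backtracking} gives $s=b_1a_1^2b_1$, removing $s$. The surviving generating set is $\{a_1^2,\ b_1,\ t_1,\dots,t_{g-1},\ \overline{d}_{1,2}\}$.

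The last step is to trade $t_2,\dots,t_{g-1}$ for $u=t_1\cdots t_{g-1}$. This is the familiar observation that a braid group on $g$ strands is generated by $\sigma_1$ and $\sigma_1\cdots\sigma_{g-1}$: working only with the braid relations \ref{atiti} (slide $t_i$ past the $t_j$ with $|i-j|\ge 2$, then apply one braid move) one verifies $u\,t_i\,u^{-1}=t_{i+1}$ for $1\le i\le g-2$, whence $t_k=u^{k-1}t_1u^{-(k-1)}$ for all $k$. Thus $t_2,\dots,t_{g-1}\in\langle t_1,u\rangle$, and $\Modd(\Sigma_g^1)[\phi]$, hence $\Modd(\Sigma_g)[\phi]$, is generated by $a_1^2$, $b_1$, $t_1$, $\overline{d}_{1,2}$ and $u$.

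No step is genuinely hard — each is a finite sequence of substitutions supplied by \thref{prop:stab} and \ref{backtracking}. The only points needing a careful write-up are the claim that \ref{s2} and \ref{a8} really do reduce every $\overline{d}_{i,j}$ to $\overline{d}_{1,2}$ (the inductive lantern argument of \thref{rem:dI}) and the verification that $u\,t_i\,u^{-1}=t_{i+1}$ is a consequence of \ref{atiti} alone.
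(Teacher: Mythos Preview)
Your argument is correct and follows essentially the same approach as the paper: drop the type-(ii) edge generators via \thref{rmk:ivsii}, recover $s=b_1a_1^2b_1$ from \ref{backtracking}, then use \ref{atiti}, \ref{s2}, \ref{sai} and \ref{a8} to obtain all $t_i$, $a_i^2$ and $\overline{d}_{i,j}$ from $t_1$, $u$, $a_1^2$ and $\overline{d}_{1,2}$. The only extra ingredient you supply is the explicit capping surjection $\Modd(\Sigma_g^1)[\phi]\twoheadrightarrow\Modd(\Sigma_g)[\phi]$, which the paper leaves implicit; your parenthetical remark about \thref{rem:dI} in the final paragraph is a slight misattribution (that remark concerns $\overline{d}_I$, not the reduction of $\overline{d}_{i,j}$ to $\overline{d}_{1,2}$, which is handled by \ref{a8} as you correctly stated earlier), but this does not affect the argument.
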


\begin{proof}
Call $G$ the subgroup of $\Modd(\Sigma_g)[\phi]$ generated by the elements in the statement. By \ref{backtracking}, $s_1=b_1a_1^2b_1$ is contained in $G$. Thus, by \ref{atiti}, \ref{s2}, \ref{sai} and \ref{a8}, all the $a_i^2$, the $\overline{d}_{i,j}$ and the $t_i$ are also contained in $G$. As noted in \thref{rmk:ivsii}, the generators $b_1^{\pm2}$ and $b_j^{\pm2}\overline{r}_{i,j}$ are superfluous, so $G=\Modd(\Sigma_g)[\phi]$. \qedhere
\end{proof}

\subsection{Different writings of the same edge}
\label{dw}

Relations \ref{diffwr} come from different ways of associating an $h$-product to the same edge. In order to write down explicitly such relations, we must find a generating set for the stabilizer of each class of edges.

\begin{lemma}[{\cite[Lemma 29]{waj:elem}}]
\thlabel{lem:stab1}
The stabilizers of the edges $v_0-b_1(v_0)$ and $v_0-b_1^{\pm2}(v_0)$ are both generated by $a_1^2s$, $t_1st_1$, $a_2^2$, $\overline{d}_{2,3}$, $\overline{d}_{-2,2}$, $\overline{d}_{-1,1}\overline{d}_{-1,2}\overline{d}_{1,2}a_1^2$ and $t_2,\dots,t_{g-1}$.
\end{lemma}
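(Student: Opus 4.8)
The goal is to identify the stabilizer in $\Modd(\Sigma_g^1)[\phi]$ of the type-(i) edge $v_0-b_1(v_0)$ (and of the type-(ii) edges $v_0-b_1^{\pm2}(v_0)$, whose stabilizers coincide with it), starting from the known stabilizer of the analogous edge in the full mapping class group. The approach is the Nielsen–Schreier method, exactly as was used in the proof of \thref{prop:stab}: I would take Wajnryb's generating set for the stabilizer of the edge in $\Modd(\Sigma_g^1)$ from \cite[Lemma 29]{waj:elem}, intersect the relevant subgroup with $\Modd(\Sigma_g^1)[\phi]$, and push the Schreier machinery through. The first step is to observe that the stabilizer $E$ of the edge $v_0-b_1(v_0)$ in the full group $\Modd(\Sigma_g^1)$ stabilizes the vertex $v_0$ (it fixes the cut-system $\langle\alpha_1,\dots,\alpha_g\rangle$ setwise, and in fact pointwise since the edge distinguishes $\alpha_1$), hence $E$ is a subgroup of the stabilizer $H$ of $v_0$; likewise $E\cap\Modd(\Sigma_g^1)[\phi]$ is a subgroup of $H[\phi]$. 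So $E\cap\Modd(\Sigma_g^1)[\phi] = E[\phi]$ sits inside $H[\phi]$, and \cite[Lemma 29]{waj:elem} gives generators of $E$ among the generators $a_i$, $s$, $t_i$, $\overline{d}_{i,j}$ of $H$.

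The second step is to run the same Schreier transversal argument. The subgroup $E[\phi]$ of $E$ consists of those elements of $E$ that preserve $\phi$; writing any word in the generators of $E$ in the form $w'a_1^{\epsilon_1}\cdots a_g^{\epsilon_g}$ with $w'$ in the $\phi$-preserving generators (using relations (P1), (P7) and the pure-braid/square relations exactly as in the proof of \thref{prop:stab}), the element lies in $E[\phi]$ iff all $\epsilon_i$ are even; but since $E$ already fixes $\alpha_1$ and the $\beta_1$-side of the edge, the only parities that can occur are those already constrained, and the transversal is $U\cap E$ with $U$ as in \eqref{eq:u}. The Schreier generators then collapse, just as before, to: the squared twist $a_1^2$ gets absorbed into $a_1^2 s$ (since $s$ already appears, and $s^2$ relates to $\overline{d}_{-1,1}a_1^{-2}$ by (A4)); $t_1$ is not $\phi$-preserving as a type-(i) edge generator but $t_1st_1$ is; and the higher $t_2,\dots,t_{g-1}$, which commute with the curves defining the edge, survive unchanged. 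The generators $\overline{d}_{i,j}$ that fix the edge are $a_2^2$ (which I'd list as $a_2^2$), $\overline{d}_{2,3}$, $\overline{d}_{-2,2}$, and the combination $\overline{d}_{-1,1}\overline{d}_{-1,2}\overline{d}_{1,2}a_1^2 = \overline{d}_{\{-1,1,2\}}a_1^2$ arising from \eqref{dset}, which is exactly $d_{\{-1,1,2\}}a_2^{-1}$ and is the natural $\phi$-preserving $d$-type generator that centralizes $\delta_1$ and $\beta_1$.

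The third step is the geometric verification that these listed elements actually generate, and no more: one checks that a mapping class fixes the edge $v_0-b_1(v_0)$ iff it fixes $\langle\alpha_1,\dots,\alpha_g\rangle$ and fixes $\beta_1$ (up to isotopy rel the rest), which after cutting along $\alpha_2,\dots,\alpha_g$ reduces to a statement about the one-holed torus $\overline T$ containing $\alpha_1$ and $\beta_1$ — and there the relevant stabilizer is generated by twists disjoint from $\beta_1$ together with $t_{\alpha_1}^2$-type elements and the lantern combinations encircling the two copies of $\alpha_1$'s dual, all of which are among the listed generators. For the type-(ii) edges $v_0-b_1^{\pm2}(v_0)$, the key observation (already essentially in Subsection \ref{act}) is that cutting along all curves of such an edge yields the same planar surface as cutting along the type-(i) edge's curves plus a squared twist, so an element fixes $v_0-b_1^{+2}(v_0)$ iff it fixes $v_0$ and the curve $\tau_{\beta_1}^2(\alpha_1)$, which happens iff it fixes $\beta_1$ — giving the same stabilizer.

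\textbf{Main obstacle.} The genuinely delicate point is the third step: verifying that the given finite list of $\phi$-preserving elements exhausts the stabilizer, rather than just being contained in it. In \cite[Lemma 29]{waj:elem} this is done for the full mapping class group by an explicit Schreier computation on Wajnryb's presentation of $H$; here one must check that intersecting with $\Modd(\Sigma_g^1)[\phi]$ does not lose generators — i.e. that every Schreier generator of the $\phi$-preserving subgroup is a word in the seven listed elements. I expect this to follow mechanically from the same parity bookkeeping as in the proof of \thref{prop:stab} (each non-$\phi$-preserving generator of $E$ contributes Schreier generators that are products of a $\phi$-preserving generator and some $a_i^2$, and the $a_i^2$ that can appear are controlled by relations (A1), (A4), (A7)), but writing it out carefully is where the real work lies; the rest is bookkeeping that mirrors \cite{waj:elem} line for line.
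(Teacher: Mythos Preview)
Your approach is correct and is essentially what the paper does---though the paper is maximally terse here: it gives no proof at all, simply citing \cite[Lemma 29]{waj:elem} and leaving the spin adaptation (via the same Nielsen--Schreier bookkeeping as in \thref{prop:stab}, replacing $a_i$ by $a_i^2$ and $d_{i,j}$ by $\overline{d}_{i,j}$) implicit. Your three-step plan would reproduce the listed generators, and your observation that the type-(ii) edge stabilizer coincides with the type-(i) one (since fixing both $\alpha_1$ and $\tau_{\beta_1}^{\pm2}(\alpha_1)$ forces fixing $\beta_1$) is exactly the point.

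One minor slip: you write that ``$t_1$ is not $\phi$-preserving'', but $t_1$ \emph{is} $\phi$-preserving (it is among the generators of $H[\phi]$); what you mean is that $t_1$ does not stabilize the edge (it permutes $\alpha_1$ and $\alpha_2$), whereas $t_1st_1$ does. Also, your ``third step'' is more than is needed: once you accept Wajnryb's generating set for the full edge stabilizer $E\subset H$, the only work is to observe that $a_1\notin E$ while $a_2,\dots,a_g\in E$, so the Schreier transversal for $E[\phi]$ in $E$ is $\{\prod_{j\in J}a_j : J\subseteq\{2,\dots,g\}\}$, and the Schreier generators collapse mechanically to the listed seven.
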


\begin{lemma}
The stabilizer $H_{i,j}^{\pm}$ of the edge $v_0-v_{i,j}^{\pm}$ is generated by the following elements:
\begin{itemize}
\item $a_1^2,\dots,a_{j-1}^2,a_{j+1}^2,\dots,a_g^2$;
\item $t_k$ for $k >j$ or $1 \le k <j-1$ with $k \ne -i$;
\item $s_k$ for $k>j$ or $k \le -i$;
\item $\overline{d}_{k,m}$ for $k,m \in \set{i,i+1,\dots,j-1}$ or $k,m \notin \set{-j,i,i+1,\dots,j}$;
\item $a_j^2s_j \overline{d}_{\set{i,\dots,j}}$ for $(i,j)^+$, $a_j^2\overline{d}_{\set{i,\dots,j}}s_j$ for $(i,j)^-$.
\end{itemize}
\end{lemma}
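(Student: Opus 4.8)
The claim is a generators-for-the-edge-stabilizer computation, entirely analogous to \thref{lem:stab1} but for the type-(ii) edge $v_0-v_{i,j}^{\pm}$. The edge is determined by the unordered pair of vertices $v_0=\Braket{\alpha_1,\dots,\alpha_g}$ and $v_{i,j}^{\pm}=b_j^{\pm2}\overline{r}_{i,j}(v_0)=\Braket{\alpha_1,\dots,\widehat{\alpha_j},\dots,\alpha_g,\gamma_{i,j}'}$, where $\gamma_{i,j}'=b_j^{\pm2}\overline{r}_{i,j}(\alpha_j)$ is a curve meeting $\alpha_j$ twice and disjoint from the other $\alpha_k$. The stabilizer $H_{i,j}^{\pm}$ of this edge is the subgroup of $\Modd(\Sigma_g^1)[\phi]$ fixing both vertices, i.e. $H_{i,j}^{\pm}=\operatorname{Stab}(v_0)\cap\operatorname{Stab}(v_{i,j}^{\pm})=H[\phi]\cap\big(b_j^{\pm2}\overline{r}_{i,j}\big)H[\phi]\big(b_j^{\pm2}\overline{r}_{i,j}\big)^{-1}$. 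So the cleanest route is: conjugate everything by $b_j^{\pm2}\overline{r}_{i,j}$ to move $v_{i,j}^{\pm}$ back to $v_0$, whereupon $H_{i,j}^{\pm}$ becomes $H[\phi]\cap g H[\phi] g^{-1}$ with $g=(b_j^{\pm2}\overline{r}_{i,j})^{-1}$; but it is more transparent geometrically to argue directly.

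First I would describe the common subsurface. Cutting $\Sigma_g^1$ along the $g+1$ curves $\alpha_1,\dots,\widehat{\alpha_j},\dots,\alpha_g$ and $\alpha_j$ (equivalently along all curves appearing in either vertex plus $\alpha_j$, which the two vertices share up to the twist) separates the surface into pieces, and an element of $H_{i,j}^{\pm}$ must preserve the decomposition and, on the torus-with-holes piece $T$ containing $\alpha_j$ and $\gamma_{i,j}'$, fix both the meridian $\alpha_j$ and its image under $b_j^{\pm2}\overline{r}_{i,j}$. The key structural input, already set up in Subsection \ref{act}, is that on the closed torus $\overline{T}$ the pair $(\alpha_j,\gamma_{i,j}')$ is rigid up to $\Modd(\overline{T})[\phi]$, so the stabilizer of the edge, restricted to $T$, is essentially the handlebody-edge-stabilizer that Wajnryb computed in \cite{wajh}. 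Thus the local generators on $T$ are $a_j^2$ (twist along the meridian, preserved), a "Dehn twist pushed across", and the appropriate analogue of $s_j$; this is exactly where $a_j^2 s_j\overline{d}_{\set{i,\dots,j}}$ (resp. $a_j^2\overline{d}_{\set{i,\dots,j}}s_j$) comes from — it is the image under $b_j^{\pm2}\overline{r}_{i,j}$ of $s_j$ (equivalently of $b_ja_j^2b_j$, cf. \ref{backtracking}), the mapping class that realizes the $\Z$ worth of flexibility in $\gamma_{i,j}'$ while fixing $\alpha_j$. The remaining generators split into two families: those supported away from the "moved" region — the $a_k^2$ for $k\ne j$, the $\overline{d}_{k,m}$ with $k,m\notin\{-j,i,\dots,j\}$, and the $s_k$, $t_k$ with indices outside the affected block $\{i,\dots,j\}$ — which clearly fix both vertices since they fix every curve involved; and those supported inside the block $\{i,i+1,\dots,j-1\}$ — the $\overline{d}_{k,m}$ and $t_k$ with $k,m\in\{i,\dots,j-1\}$ — which fix $\alpha_j$ (their support misses it) and also fix $\gamma_{i,j}'$ because $\gamma_{i,j}'$ only wraps once through the $\alpha_j$-handle and is, in the block, homologous to $\alpha_{i}+\dots$ in a way preserved by these twists; one checks this by looking at Figure \ref{fig:dij}.

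The proof structure would then be: (1) \emph{These elements lie in $H_{i,j}^{\pm}$}: for each listed generator, verify it fixes $\alpha_k$ for $k\ne j$ and fixes the pair $\{\alpha_j,\gamma_{i,j}'\}$ — routine curve-chasing using the pictures, with the one genuinely nontrivial check being that $a_j^2 s_j\overline{d}_{\set{i,\dots,j}}$ (resp. its mirror) fixes $v_{i,j}^{\pm}$, which one reduces via \ref{backtracking} and the identity $s_j=b_ja_j^2b_j$ to the statement that $b_ja_j^2b_j$ fixes $b_j^{\pm2}\overline{r}_{i,j}(v_0)$ after conjugation, an algebraic manipulation with the $T_1$ braid relations entirely parallel to the proof of \ref{backtracking}. (2) \emph{They generate}: apply the Reidemeister–Schreier / Nielsen–Schreier method to Wajnryb's presentation, exactly as in the proof of \thref{prop:stab}, but now to the overgroup "$\operatorname{Stab}$ of the edge in the full $\Modd$" — or, more efficiently, transport Wajnryb's generating set for the handlebody-edge-stabilizer from \cite[Lemma ...]{wajh} through the identification of Subsection \ref{act} and then intersect with $\Modd[\phi]$, picking up squared twists exactly as the $a_i\rightsquigarrow a_i^2$ passage in \thref{prop:stab}. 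The main obstacle is step (2)'s bookkeeping: one must be careful that the index ranges ($k>j$ or $1\le k<j-1$, $k\ne -i$ for the $t_k$; $k>j$ or $k\le -i$ for the $s_k$; the two alternative index conditions for $\overline{d}_{k,m}$) really exhaust the Schreier generators that survive — in particular that no generator straddling the boundary of the block $\{-j,i,\dots,j\}$ is needed, because such a would-be generator either fails to stabilize $v_{i,j}^{\pm}$ or is already expressible via \ref{pb}, \ref{a8} and the one exotic generator $a_j^2 s_j\overline{d}_{\set{i,\dots,j}}$. I would handle this the way Wajnryb does: first establish the list stabilizes the edge, then show the subgroup it generates has the right index by exhibiting the coset action on the $\alpha_k$'s (the "which curves get permuted" combinatorics), matching it against the index of $H_{i,j}^{\pm}$ in $H[\phi]$ computed from the orbit-counting in Subsection \ref{act}.
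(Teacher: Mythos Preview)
Your approach is essentially the same as the paper's: both reduce to Wajnryb's edge-stabilizer computation in the handlebody setting \cite[Lemma 24]{wajh}, transported through the identification of Subsection~\ref{act}, followed by the $a_i\rightsquigarrow a_i^2$ passage to land in $H[\phi]$. The paper's proof is much terser (a one-line citation), but it flags one point your plan does not address: in Wajnryb's handlebody version the stabilizer sometimes contains an extra generator $z_j$ that swaps the two connected components $S_1$ and $S_2$ of the cut surface; here the boundary component $C$ of $\Sigma_g^1$ lies in one of the two pieces and so distinguishes them, ruling $z_j$ out. You should make this explicit --- it is exactly the kind of ``no generator straddling the boundary of the block'' issue you anticipated, and it is the only genuine adjustment needed in carrying over Wajnryb's list.
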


\begin{proof}
The proof is exactly the same as that of \cite[Lemma 24]{wajh}. Note that some of Wajnryb's stabilizers have an additional generator $z_j$, which swaps the two connected components $S_1$ and $S_2$. In our case the presence of the boundary component $C$ prevents that from happening.  \qedhere
\end{proof}

We get the following set of relations. Here and elsewhere, $\overline{d}_I$ will be equal to $1$ if $I$ contains a single element. 
\begin{enumerate}[resume*=arel]
\item \label{diffwr} \begin{enumerate}[ref=\alph*)]
\item \textit{$b_1$ commutes with $a_1^2s$, $t_1st_1$, $a_2^2$, $\overline{d}_{2,3}$, $\overline{d}_{-1,1}\overline{d}_{-1,2}\overline{d}_{1,2}a_1^2$, $\overline{d}_{-2,2}$, $t_2,\dots,t_{g-1}$;} \label{dwb1}
\item \textit{$b_1^{\pm2}$ commutes with $a_1^2s$, $t_1st_1$, $a_2^2$, $\overline{d}_{2,3}$, $\overline{d}_{-1,1}\overline{d}_{-1,2}\overline{d}_{1,2}a_1^2$, $\overline{d}_{-2,2}$, $t_2,\dots,t_{g-1}$;}\label{dwb2}
\item \textit{$b_j^{\pm2}\overline{r}_{i,j}$ commutes with:} \label{dwbj} \begin{itemize}
\item \textit{$a_k^2$ for $k \ne j$;}
\item \textit{$t_k$ for $k >j$ or $1 \le k <j-1$ with $k \ne -i$;}
\item \textit{$s_k$ for $k>j$ or $k \le -i$;}
\item \textit{$\overline{d}_{k,m}$ for $k,m \in \set{i,\dots,j-1}$ or $k,m \notin \set{-j,i,i+1,\dots,j}$;}
\end{itemize}
\item \textit{$[b_j^{-2}\overline{r}_{i,j},a_j^2\overline{d}_{\set{i,\dots,j}}s_j]=1$, while} \label{dwb*}
\begin{align*}
\big(b_j^{2}\overline{r}_{1,j}\big)^{-1}*\big(a_j^2s_j\overline{d}_{\set{1,\dots,j}}\big)&=a_j^2 \overline{d}_{\set{1,\dots,j}}s_j^{-1} a_j^{-2}\overline{d}_{\set{1,\dots,j}}^{-2}\overline{d}_{\set{1,\dots,j-1}}\cdot\\
&\qquad\quad \cdot\big((t_{j-1}\dots t_{1})*\overline{d}_{\set{-1,\dots,j}}\big),
\end{align*}
\textit{and if $i<0$}
\begin{align*}
\big(b_j^{2}\overline{r}_{i,j}\big)^{-1}*\big(a_j^2s_j\overline{d}_{\set{i,\dots,j}}\big)&=a_j^2 \overline{d}_{\set{i,\dots,j}}s_j^{-1} a_j^{-2}\overline{d}_{\set{i,\dots,j}}^{-2}\overline{d}_{\set{i,\dots,j-1}}\cdot\\
&\quad \cdot\big((t_{j-1}\dots t_{-i+1})*\overline{d}_{\set{i-1,\dots,j}}\big).
\end{align*}
\end{enumerate}
\end{enumerate}

\begin{proof}[Proof of \ref{diffwr}.]
Most of these relations follow easily from the definitions. For the last point, observe that in the negative case we have
\[
s_j^{-1}a_j^{-1}c_{i,j}^{-1}r_{i,j}^{-1}b_j^{2} a_jc_{i,j}s_jb_j^{-2}r_{i,j}=1 
\]
by braid relations $T_1$, while in the positive case a 3-chain is involved:
\begin{align*}
&a_j^2c_{i,j}^2s_ja_j^{-1}c_{i,j}^{-1}r_{i,j}^{-1}b_j^{-2}s_ja_jc_{i,j}b_j^{2}r_{i,j}=(a_jb_jc_{i,j})^4=\\
&\qquad\qquad =c_{i,j-1}\big((t_{\abs{i}+1}\dots t_{j-1})*c_{i-1,j}\big).\qedhere
\end{align*}
\end{proof}

\subsection{Faces}
\label{f}

The last set of relations comes from the 2-cells in our complex. We are going to establish a list of closed edge paths $\mathbf{p}_i$, such that every closed edge path is a sum of paths conjugate to some $\mathbf{p}_i$, i.e. of the form $\mathbf{q}_1\mathbf{q}_2\mathbf{q}_1^{-1}$, where $\mathbf{q}_1$ starts at $v_0$ and $\mathbf{q}_2$ is the image of some $\mathbf{p}_i$ under the action of $\Modd(\Sigma_g^1)[\phi]$. The relations will be the $h$-products associated to the paths $\mathbf{p}_i$. For the proof that these relations hold in $\Modd(\Sigma_g^1)[\phi]$, see Subsection \ref{reldij}.

\emph{Triangles.} We apply Harer's reduction process \cite{har0}. We explain this method in detail for triangles involving an edge of type $(i,j)^+$; the negative case is symmetric. Let $\mathbf{p}$ be a triangle $v_0-v_1-v_2-v_0$. Cut $\Sigma_g^1$ along the $g-1$ curves in common, obtaining a $2g-1$-holed torus $T$ with three curves $\alpha,\beta,\gamma$, where $\alpha \in v_0$, $\gamma \in v_1$ and $\beta \in v_2$, and the edge of type (ii) is $\Braket{\alpha}-\Braket{\gamma}$. Call $\widehat{T}$ the closed torus obtained by capping all boundary components with disks. Then the universal cover of $\widehat{T}$ has a fundamental region which is a square with edges along $\alpha$ and $\beta$, cut into $4$ parts by $\gamma$. Orient the three curves in such a way that $(\alpha,\beta)=1$, $(\beta,\gamma)=-1$ and $(\alpha,\gamma)=2$, and name the $4$ regions as in Figure \ref{fig:fundreg}(a). Note that changing the orientations of all three curves switches the roles of $F_0$ and $F_3$ and of $F_1$ and $F_2$.

\begin{figure}
\centering
\begin{tikzpicture}[scale=0.75]
\draw [red] (0,0) -- (4.8,0);
\draw [red] (0,4.8) -- (4.8,4.8);
\draw [blue] (0,0) -- (0,4.8);
\draw [blue] (4.8,0) -- (4.8,4.8);
\draw [Green] (0,2.4) -- (1.2,4.8);
\draw [Green] (1.2,0) -- (3.6,4.8);
\draw [Green] (3.6,0) -- (4.8,2.4);
\node [red] at (2.4,-.36) {$\alpha$};
\node [blue] at (-.36,2.4) {$\beta$};
\node [Green] at (1.8,1.8) {$\gamma$};
\draw [->, red] (1.8,0) -- (2.4,0);
\draw [->, red] (1.8,4.8) -- (2.4,4.8);
\draw [->, blue] (0,1.2) -- (0,1.8);
\draw [->, blue] (4.8,2.4) -- (4.8,3);
\draw [->, Green] (0,2.4) -- (.6,3.6);
\draw [->, Green] (1.2,0) -- (2.4,2.4);
\draw [->, Green] (3.6,0) -- (4.2,1.2);
\node at (.36,4.2) {$F_0$};
\node at (1.56,3) {$F_1$};
\node at (3.24,1.8) {$F_2$};
\node at (4.44,.6) {$F_3$};
\node at (2.4,-1) {(a)};
\end{tikzpicture}
\begin{tikzpicture}[scale=0.75]
\draw (0,0) -- (7.2,0) -- (3.6,5.4) -- (0,0);
\draw (0,0) -- (3.6,.9) -- (2.7,2.25) -- (0,0);
\draw (7.2,0) -- (4.5,2.25) -- (3.6,.9) -- (7.2,0);
\draw (3.6,5.4) -- (4.5,2.25) -- (2.7,2.25) -- (3.6,5.4);
\node [red] at (-.2,-.2) {$\alpha$};
\node [orange] at (4.8,2.45) {$\alpha'$};
\node [blue] at (7.4,-.2) {$\beta$};
\node [cyan] at (2.4,2.45) {$\beta'$};
\node [Green] at (3.6,5.7) {$\gamma$};
\node [green] at (3.6,.6) {$\gamma'$};
\node [red] at (0,0) {$\bullet$};
\node [orange] at (4.5,2.25) {$\bullet$};
\node [blue] at (7.2,0) {$\bullet$};
\node [cyan] at (2.7,2.25) {$\bullet$};
\node [Green] at (3.6,5.4) {$\bullet$};
\node [green] at (3.6,.9) {$\bullet$};
\node at (3.6,-1) {(b)};
\end{tikzpicture}
\caption{(a) A fundamental region for the universal cover of $\widehat{T}$, with the curves $\alpha$, $\beta$ and $\gamma$. \\ (b) Octahedron associated to a push-off of curves $\alpha$, $\beta$ and $\gamma$.}
\label{fig:fundreg}
\end{figure}

Lifting the $2g-1$ boundary components of $T$ to the universal cover of $\widehat{T}$, we get a certain number $\ell_i$ of holes in each region $F_i$. We are going to push off slightly each curve $\alpha$, $\beta$ and $\gamma$, so that the triangles formed by the original curves and their push-offs have new values of $\ell_i$. Note that all push-offs are still spin, since homologically they differ from the original curves only by some spin boundary components. 

\begin{lemma}
\thlabel{lem:hared}
Every triangle is a sum of paths conjugated to triangles with $\ell_0=0$ and $\ell_3\le 1$, where the hole corresponding to the boundary component of $\Sigma_g^1$ lies in $F_1$. 
\end{lemma}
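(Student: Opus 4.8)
The plan is to adapt Harer's reduction argument from \cite{har0} (see also \cite[proof of Lemma 13]{wajh}) to the spin setting, showing that each push-off move either reduces a suitable complexity or produces a path decomposed into conjugates of triangles of the desired shape. I would first set up the complexity function $n(\mathbf{p}):=\ell_0+\ell_3$, together with the secondary data of where the distinguished hole (the one coming from $\partial\Sigma_g^1=C$) sits, and argue that every push-off produces a closed path in $X_g$ which is a sum of triangles whose edge paths are conjugate to the original one, together with two extra triangles coming from the interpolating $\alpha$-, $\beta$- or $\gamma$-segments; the key point, already flagged in the statement just before the lemma, is that the push-offs remain spin because homologically a push-off differs from the original curve only by a union of boundary components, each of which is a $1$-curve (by \thref{rem:sgk} and the observation that the extra holes created by cutting have spin value $1$, cf.\ the argument after \thref{cor:cut}). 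Then I would enumerate the push-off moves: pushing $\gamma$ across a hole in $F_0$ moves that hole into $F_1$ (decreasing $\ell_0$), pushing across a hole in $F_3$ moves it into $F_2$ (decreasing $\ell_3$), and similar moves using $\alpha$ or $\beta$ let one redistribute holes between $F_0\leftrightarrow F_3$ and $F_1\leftrightarrow F_2$; one checks that finitely many such moves bring any triangle to one with $\ell_0=0$ and $\ell_3\le1$.

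The second half of the argument is to place the distinguished hole in $F_1$. Here I would use the symmetry noted in the paragraph preceding the lemma: reversing the orientations of all three curves $\alpha,\beta,\gamma$ swaps $F_0\leftrightarrow F_3$ and $F_1\leftrightarrow F_2$, and this reversal corresponds to passing from the edge of type $(i,j)^+$ to the one of type $(i,j)^-$ (cf.\ \thref{rmk:bt}). So after the reduction of $n(\mathbf{p})$ to the minimal value, if the distinguished hole is in $F_2$ we apply the orientation reversal; the remaining cases ($F_0$ or $F_3$ nonempty, or the distinguished hole there) are excluded or further reduced by one more push-off, again at the cost of introducing conjugates of triangles and of $\alpha/\beta/\gamma$-segments, the latter being null-homotopic by \thref{lemma:1conn} and \thref{lemma:conn}. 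Throughout, I would phrase the bookkeeping exactly as in \cite{har0}: each push-off move rewrites $\mathbf{p}$ as $\mathbf{p}'$ plus two triangles plus three segments, so an induction on $n(\mathbf{p})$ (with the distinguished-hole position as a tie-breaker) closes the argument.

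The main obstacle I anticipate is verifying that \emph{spinness is preserved at every push-off}, and more subtly, that the two auxiliary triangles produced at each step are themselves legitimate $2$-cells of $X_g$ — i.e.\ that the three curves involved really can be completed simultaneously to spin cut-systems, which is precisely the kind of obstruction that \thref{rem:oddss} and Figure \ref{fig:odd} warn about. In Harer's non-spin setting this is automatic, but here one must check that the relevant boundary components have spin value $1$ so that the relevant subsurface still carries enough nonseparating $1$-curves (using the argument in \thref{rem:112}\ref{noiii} about complements of $1$-curve triples, and \thref{cor:cut} for Arf-invariant bookkeeping). A related minor point is that when $\ell_3=1$ the single hole in $F_3$ may or may not be the distinguished one, and one has to confirm that in the former case no further reduction is needed while in the latter one extra move (or an orientation flip) suffices; I would handle this by a short case analysis at the end, exactly mirroring the one in the proof of the pentagon reduction that follows (\thref{lem:penta}).
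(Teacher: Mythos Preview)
Your proposal has the right flavor (Harer-style push-offs) but contains a genuine structural gap: you describe moves that push a \emph{single} curve, say $\gamma$, across one hole to obtain $\gamma'$, and then decompose the original triangle as ``$\mathbf{p}'$ plus two triangles plus three segments''. In $X_g$ this does not work, because $\gamma$ and $\gamma'$ are disjoint and homologous $1$-curves, and there is \emph{no edge} in $X_g$ between $\Braket{\gamma}$ and $\Braket{\gamma'}$ (edges require intersection number $1$ or $2$). So the interpolating $\gamma$-segments you invoke simply do not exist, and the decomposition you sketch is not a decomposition into cells of $X_g$.

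The paper's proof avoids this by pushing off \emph{all three} curves simultaneously: one chooses push-offs $\alpha',\beta',\gamma'$ so that the six curves $\alpha,\alpha',\beta,\beta',\gamma,\gamma'$ form the six vertices of an octahedron, with $\alpha,\alpha'$ (and likewise $\beta,\beta'$ and $\gamma,\gamma'$) as the three pairs of antipodal vertices. In the octahedron antipodal vertices are not joined by an edge, so no ``disjoint curves'' edge is ever needed; every edge is between a curve and a push-off of a \emph{different} curve, and these intersect once or twice as required. The original triangle is one of the eight faces, and the other seven have strictly smaller $\ell_3$ (respectively $\ell_0$), so one iterates. A separate configuration of push-offs handles the residual case $\ell_0=\ell_3=1$. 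This octahedron trick is the missing idea.

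Incidentally, the obstacle you anticipate---whether the auxiliary triangles can be completed to spin cut-systems---evaporates in the octahedron approach: all six curves live in the same holed torus $T$ obtained by cutting along the $g-1$ common curves, so the \emph{same} $g-1$ background curves complete every vertex of the octahedron. The warning of \thref{rem:oddss} is irrelevant here.
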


\begin{proof}
First of all, we prove that every triangle is a sum of paths conjugated to triangles with $\ell_3 \le 1$. Indeed, if a triangle $\Braket{\alpha}-\Braket{\gamma}-\Braket{\beta}-\Braket{\alpha}$ has $\ell_3 \ge 2$, we consider push-offs $\alpha'$, $\beta'$ and $\gamma'$ as in Figure \ref{fig:hared}a). Now, the curves fit into 8 triangles, which form an octahedron as in Figure \ref{fig:fundreg}(b). Here, all triangles have up to $\ell_3-1$ holes in region $F_3$, apart from the original one, and we can iterate the process until every triangle has up to 1 hole in region $F_3$. Note that $\ell_0$ stays the same throughout, so we can switch $F_0$ and $F_3$ and repeat the process until every triangle has up to 1 hole in both regions. 

We only have to deal with the case $\ell_0=\ell_3=1$. Consider push-offs $\alpha'$, $\beta'$ and $\gamma'$ as in Figure \ref{fig:hared}b). All the faces of the octahedron except the original triangle have either $F_0$ or $F_3$ without holes, and if one of them contains two holes we can do the same process as before. 

Up to changing the orientations of the curves, we are done. Notice that if the hole corresponding to the boundary components of $\Sigma_g^1$ lies in $F_3$ (or $F_2$), we can slide $\gamma$ on the whole $F_0$, so that in the new configuration the hole has moved to $F_2$ ($F_1$), and then repeat the above process, which does not remove any hole from $F_2$ ($F_1$). \qedhere
\end{proof}

\begin{figure}
\centering
\begin{tikzpicture}
\draw [red] (-.36,0) -- (4.8,0);
\draw [red] (-.36,4.8) -- (4.8,4.8);
\draw [orange] (-.36,.36) -- (4.8,.36);
\draw [orange] (-.36,5.16) -- (4.8,5.16);
\draw [blue] (0,0) -- (0,5.16);
\draw [blue] (4.8,0) -- (4.8,5.16);
\draw [cyan] (-.36,0) -- (-.36,5.16);
\draw [cyan] (4.44,0) -- (4.44,5.16);
\draw [Green] (-.36,1.68) -- (1.38,5.16);
\draw [Green] (1.2,0) -- (3.78,5.16);
\draw [Green] (3.6,0) -- (4.8,2.4);
\draw [green] (-.36,.96) -- (1.74,5.16);
\draw [green] (1.56,0) -- (4.14,5.16);
\draw [green] (3.96,0) -- (4.8,1.68);
\node [red] at (2.4,-.36) {$\alpha$};
\node [orange] at (2.4,5.52) {$\alpha'$};
\node [blue] at (5.1,2.4) {$\beta$};
\node [cyan] at (-.72,2.4) {$\beta'$};
\node [Green] at (1.8,1.8) {$\gamma$};
\node [green] at (2.52,1.08) {$\gamma'$};
\draw [->, red] (1.8,0) -- (2.4,0);
\draw [->, red] (1.8,4.8) -- (2.4,4.8);
\draw [->, orange] (1.8,.36) -- (2.4,.36);
\draw [->, orange] (1.8,5.16) -- (2.4,5.16);
\draw [->, blue] (0,2.4) -- (0,3);
\draw [->, blue] (4.8,2.4) -- (4.8,3);
\draw [->, cyan] (-.36,2.4) -- (-.36,3);
\draw [->, cyan] (4.44,2.4) -- (4.44,3);
\draw [->, Green] (0,2.4) -- (.66,3.72);
\draw [->, Green] (1.2,0) -- (2.46,2.52);
\draw [->, Green] (3.6,0) -- (4.02,.84);
\draw [->, green] (.36,2.4) -- (.96,3.6);
\draw [->, green] (1.56,0) -- (2.76,2.4);
\draw [->, green] (3.96,0) -- (4.32,.72);
\node at (.36,4.2) {$\bullet$};
\node at (1.56,3) {$\bullet$};
\node at (3.24,1.8) {$\bullet$};
\node at (4.32,.456) {$\bullet$};
\node at (4.62,.7) {$\circ$};
\node at (3.864,.156) {$\circ$};
\node at (-.18,.7) {$\circ$};
\node at (3.864,4.956) {$\circ$};
\node at (2.22,-1) {a)};
\end{tikzpicture} \qquad 
\begin{tikzpicture}
\draw [red] (-.36,0) -- (4.8,0);
\draw [red] (-.36,4.8) -- (4.8,4.8);
\draw [orange] (-.36,-.36) -- (4.8,-.36);
\draw [orange] (-.36,4.44) -- (4.8,4.44);
\draw [blue] (0,-.36) -- (0,4.8);
\draw [blue] (4.8,-.36) -- (4.8,4.8);
\draw [cyan] (-.36,-.36) -- (-.36,4.8);
\draw [cyan] (4.44,-.36) -- (4.44,4.8);
\draw [Green] (-.36,1.68) -- (1.2,4.8);
\draw [Green] (1.02,-.36) -- (3.6,4.8);
\draw [Green] (3.42,-.36) -- (4.8,2.4);
\draw [green] (-.36,.96) -- (1.56,4.8);
\draw [green] (1.38,-.36) -- (3.96,4.8);
\draw [green] (3.78,-.36) -- (4.8,1.68);
\node [red] at (2.4,5.1) {$\alpha$};
\node [orange] at (2.4,-.72) {$\alpha'$};
\node [blue] at (5.1,2.4) {$\beta$};
\node [cyan] at (-.72,2.4) {$\beta'$};
\node [Green] at (1.8,1.8) {$\gamma$};
\node [green] at (2.52,1.08) {$\gamma'$};
\draw [->, red] (1.8,0) -- (2.4,0);
\draw [->, red] (1.8,4.8) -- (2.4,4.8);
\draw [->, orange] (1.8,-.36) -- (2.4,-.36);
\draw [->, orange] (1.8,4.44) -- (2.4,4.44);
\draw [->, blue] (0,2.4) -- (0,3);
\draw [->, blue] (4.8,2.4) -- (4.8,3);
\draw [->, cyan] (-.36,2.4) -- (-.36,3);
\draw [->, cyan] (4.44,2.4) -- (4.44,3);
\draw [->, Green] (0,2.4) -- (.66,3.72);
\draw [->, Green] (1.2,0) -- (2.46,2.52);
\draw [->, Green] (3.6,0) -- (3.92,.64);
\draw [->, green] (.36,2.4) -- (.96,3.6);
\draw [->, green] (1.56,0) -- (2.76,2.4);
\draw [->, green] (3.96,0) -- (4.22,.52);
\node at (1.56,3) {$\bullet$};
\node at (3.24,1.8) {$\bullet$};
\node at (4.62,1.68) {$\circ$};
\node at (.5,-.18) {$\circ$};
\node at (-.18,1.68) {$\circ$};
\node at (.5,4.62) {$\circ$};
\node at (2.22,-1.36) {b)};
\end{tikzpicture}
\caption{Configurations of push-offs of curves $\alpha$, $\beta$ and $\gamma$ in the proof of \thref{lem:hared}. Circles indicate single holes, while dots indicate all the remaining holes in a certain region.}
\label{fig:hared}
\end{figure}
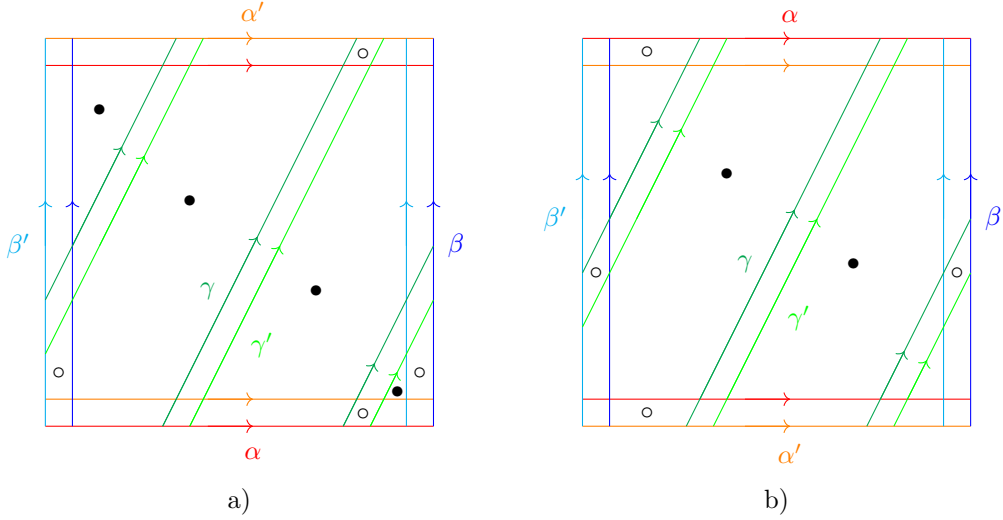

Finally, by \thref{rmk:bt}, every triangle with an edge of type $(i,j)^{\pm}$ is equivalent modulo backtracking and different writings to a triangle with an edge of type $(i,j)^{\mp}$. Hence, the relations corresponding to triangles with an edge of type $(i,j)^-$ are sufficient.

We now list all possible such triangles $\Braket{\alpha}-\Braket{\gamma}-\Braket{\beta}-\Braket{\alpha}$ starting from $v_0$ with $\ell_0=0$ and $\ell_3\le1$ up to the action of $\Modd(\Sigma_g^1)[\phi]$, and write the associated relations. Up to a suitable element of $H[\phi]$, we may assume that $\alpha=\alpha_j$ and $\gamma=b_j^{-2}\overline{r}_{i,j}(\alpha_j)$, where if $j=1$ we set $\overline{r}_{i,j}:=1$. Cutting along the curves of $v_0$, we get a disk with $2g$ holes, cut into two connected components by $\gamma$. These contain “single" holes and “paired" holes. Up to the action of the stabilizer of $\Braket{\alpha}-\Braket{\gamma}$, $\beta$ can be chosen as the curve which runs across the $j$-th handle, twisting along $\alpha_j$ once, and then may encircle one hole if $\ell_3=1$. According to whether this is a “single" or a “paired" hole, and to the connected component where it belongs, we get up to $5$ possibilities. These are listed in Figure \ref{fig:basictri-}. 

\begin{figure}
\centering
\begin{tikzpicture}[scale=0.95]
\node at (0,.75) {e)};
\draw (1.5,0) circle (2.5mm);
\draw (1.5,2) circle (2.5mm);
\node at (1.5,-1.5) {$1$};
\node at (2.5,0) {$\dots$};
\node at (2.5,2) {$\dots$};
\draw (3.5,0) circle (2.5mm);
\draw (3.5,2) circle (2.5mm);
\node at (3.5,-1.5) {$-i$};
\draw (5,0) circle (2.5mm);
\draw (5,2) circle (2.5mm);
\node at (5,-1.5) {$-i+1$};
\node at (6,0) {$\dots$};
\node at (6,2) {$\dots$};
\draw (7,0) circle (2.5mm);
\draw (7,2) circle (2.5mm);
\node at (7,-1.5) {$j-1$};
\draw (8.5,0) circle (2.5mm);
\draw (8.5,2) circle (2.5mm);
\node at (8.5,-1.5) {$j$};
\draw (10,0) circle (2.5mm);
\draw (10,2) circle (2.5mm);
\node at (10,-1.5) {$j+1$};
\node at (11,0) {$\dots$};
\node at (11,2) {$\dots$};
\draw (12,0) circle (2.5mm);
\draw (12,2) circle (2.5mm);
\node at (12,-1.5) {$g$};
\draw [thick] (13.5,1) circle (2mm);
\draw [blue, thick] (8.5,-.25) to [out=-90, in=0] (8,-.55) -- (4,-.55) to [out=180, in=-90] (3.1,0) arc(180:0:.4 and .4) to [out=-90, in=180] (5,-.4) -- (7.5,-.4) to [out=0, in=-90] (8,0) to [out=90, in=-90] (9.2,2) to [out=90, in=0] (8.75,2.5) to [out=180, in=90] (8.5,2.25);
\draw [orange, thick] (2,.85) to [out=180, in=90] (.95,0) to [out=-90, in=180] (1.5,-.7) -- (3.5,-.7) to [out=0, in=-90] (4.3,0) arc (180:0:.7 and .7) to [out=-90, in=180] (6.5,-.7) -- (8,-.7) to [out=0, in=-90] (8.9,0) to [out=90, in=0] (8,.85) -- (2,.85);
\draw [olive, thick] (2.2,1) to [out=180, in=-90] (.95,2) arc(180:0:.55 and .55) to [out=-90, in=180] (3,1.45) -- (5,1.45) to [out=0, in=-90] (5.5,2) to [out=90, in=0] (4.5,2.7) -- (1.5,2.7) arc(90:180:.7 and .7) -- (.8,0) to [out=-90, in=180] (1.5, -.85) -- (3.5,-.85) to [out=0, in=-90] (4.45,0) arc (180:0:.55 and .55) to [out=-90, in=180] (6.5,-.85) -- (8,-.85) to [out=0, in=-90] (9.05,0) to [out=90, in=0] (8,1) -- (2.2,1);
\draw [cyan, thick] (5.4,0) arc(0:180:.4 and .4) to [out=-90, in=180] (6.5,-1.15) -- (8,-1.15) to [out=0, in=-90] (9.35,0) to [out=90, in=0] (8,1.3) -- (3,1.3) to [out=180, in=-90] (1.9,2) arc(0:180:.4 and .4) to [out=-90, in=180] (2.2,1.15) -- (8,1.15) to [out=0, in=90] (9.2,0) to [out=-90, in=0] (8,-1) -- (6.5,-1) to [out=180, in=-90] (5.4,0);
\node at (0,4.25) {d)};
\draw (1.5,3.5) circle (2.5mm);
\draw (1.5,5) circle (2.5mm);
\node at (2.5,3.5) {$\dots$};
\node at (2.5,5) {$\dots$};
\draw (3.5,3.5) circle (2.5mm);
\draw (3.5,5) circle (2.5mm);
\draw (5,3.5) circle (2.5mm);
\draw (5,5) circle (2.5mm);
\node at (6,3.5) {$\dots$};
\node at (6,5) {$\dots$};
\draw (7,3.5) circle (2.5mm);
\draw (7,5) circle (2.5mm);
\draw (8.5,3.5) circle (2.5mm);
\draw (8.5,5) circle (2.5mm);
\draw (10,3.5) circle (2.5mm);
\draw (10,5) circle (2.5mm);
\node at (11,3.5) {$\dots$};
\node at (11,5) {$\dots$};
\draw (12,3.5) circle (2.5mm);
\draw (12,5) circle (2.5mm);
\draw [thick] (13.5,4.25) circle (2mm);
\draw [blue, thick] (8.5,3.25) to [out=-90, in=-90] (6.6,3.5) to [out=90, in=-90] (9.2,5) to [out=90, in=0] (8.75,5.5) to [out=180, in=90] (8.5,5.25);
\draw [orange, thick] (1.5,3.9) arc(90:270:.4 and .4) -- (7,3.1) arc(-90:90:.4 and .4) -- (1.5,3.9);
\draw [olive, thick] (2.2,4.05) to [out=180, in=-90] (.95,5) arc(180:0:.55 and .55) to [out=-90, in=180] (3,4.5) -- (5,4.5) to [out=0, in=-90] (5.5,5) to [out=90, in=0] (4.5,5.7) -- (1.5,5.7) arc(90:180:.7 and .7) -- (.8,3.8) to [out=-90, in=180] (1.5, 2.95) -- (7,2.95) to [out=0, in=-90] (7.55,3.5) arc(0:90:.55 and .55) -- (2.2,4.05);
\draw [cyan, thick] (8.9,3.5) arc(0:-180:.4 and .4) to [out=90, in=0] (7,4.2) -- (2.2,4.2) to [out=180, in=-90] (1.1,5) arc(180:0:.4 and .4) to [out=-90, in=180] (3,4.35) -- (8,4.35) to [out=0, in=90] (8.9,3.5);
\node at (0,7.75) {c)};
\draw (1.5,7) circle (2.5mm);
\draw (1.5,8.5) circle (2.5mm);
\node at (2.5,7) {$\dots$};
\node at (2.5,8.5) {$\dots$};
\draw (3.5,7) circle (2.5mm);
\draw (3.5,8.5) circle (2.5mm);
\draw (5,7) circle (2.5mm);
\draw (5,8.5) circle (2.5mm);
\node at (6,7) {$\dots$};
\node at (6,8.5) {$\dots$};
\draw (7,7) circle (2.5mm);
\draw (7,8.5) circle (2.5mm);
\draw (8.5,7) circle (2.5mm);
\draw (8.5,8.5) circle (2.5mm);
\draw (10,7) circle (2.5mm);
\draw (10,8.5) circle (2.5mm);
\node at (11,7) {$\dots$};
\node at (11,8.5) {$\dots$};
\draw (12,7) circle (2.5mm);
\draw (12,8.5) circle (2.5mm);
\draw [thick] (13.5,7.75) circle (2mm);
\draw [blue, thick] (8.5,6.75) to [out=-90, in=0] (8.35,6.6) to [out=180, in=-90] (8.05,7) arc(180:90:.45 and .45) to [out=0, in=180] (10,6.6) arc(270:360:.4 and .4) to [out=90,in=-90] (9.2,8.5) to [out=90, in=0] (8.75,9) to [out=180, in=90] (8.5,8.75);
\draw [orange, thick] (1.5,7.55) arc(90:180:.55 and .55) to [out=-90, in=180] (2,6.2) -- (10,6.2) to [out=0, in=-90] (10.55, 7) arc(0:90:.55 and .55) -- (1.5,7.55);
\draw [cyan, thick] (1.9,7) arc(0:180:.4 and .4) to [out=-90, in=180] (2,6.35) -- (10, 6.35) to [out=0, in=-90] (10.4,7) arc (0:90:.4 and .4) to [out=180, in=0] (8.5,6.5) -- (2.5,6.5) to [out=180, in=-90] (1.9,7);
\draw [olive, thick] (2.2,7.7) to [out=180, in=-90] (1,8.5) arc(180:0:.5 and .5) to [out=-90, in=180] (3,7.85) -- (5,7.85) to [out=0, in=-90] (5.7,8.5) arc(0:90:.7 and .7) -- (1.5,9.2) arc(90:180:.7 and .7) -- (.8,7) to [out=-90, in=180] (2, 6.05) -- (10,6.05) to [out=0, in=-90] (10.7,7) arc(0:90:.7 and .7) -- (2.2,7.7);
\node at (0,11.25) {b)};
\draw (1.5,10.5) circle (2.5mm);
\draw (1.5,12) circle (2.5mm);
\node at (2.5,10.5) {$\dots$};
\node at (2.5,12) {$\dots$};
\draw (3.5,10.5) circle (2.5mm);
\draw (3.5,12) circle (2.5mm);
\draw (5,10.5) circle (2.5mm);
\draw (5,12) circle (2.5mm);
\node at (6,10.5) {$\dots$};
\node at (6,12) {$\dots$};
\draw (7,10.5) circle (2.5mm);
\draw (7,12) circle (2.5mm);
\draw (8.5,10.5) circle (2.5mm);
\draw (8.5,12) circle (2.5mm);
\draw (10,10.5) circle (2.5mm);
\draw (10,12) circle (2.5mm);
\node at (11,10.5) {$\dots$};
\node at (11,12) {$\dots$};
\draw (12,10.5) circle (2.5mm);
\draw (12,12) circle (2.5mm);
\draw [thick] (13.5,11.25) circle (2mm);
\draw [blue, thick] (8.5,10.25) to [out=-90, in=0] (8.35,10.15) to [out=180, in=-90] (8,10.6) to [out=90, in=180] (8.5,11.15) arc (-90:90:.85 and .85) to [out=180, in=0] (7,12.5) arc(90:270:.5 and .5) to [out=0, in=180] (8.25,12.5) to [out=0, in=90] (8.5,12.25);
\draw [orange, thick] (1.5,11.05) arc(90:180:.55 and .55) to [out=-90, in=180] (2,9.8) -- (8.5,9.8) to [out=0, in=-90] (9.25,10.8) -- (9.25,11.7) to [out=90, in=0] (8.5, 12.55) arc(90:150:.55 and .55) to [out=-120, in=0] (6.8,11.05) -- (1.5,11.05);
\draw [cyan, thick] (1.9,10.5) arc(0:180:.4 and .4) to [out=-90, in=180] (2,9.95) -- (8.5, 9.95) to [out=0, in=-90] (9.1,10.8) -- (9.1,11.7) to [out=90, in=0] (8.5, 12.4) arc(90:180:.4 and .4) to [out=-90, in=90] (8.9,10.5) arc(0:-90:.4 and .4) -- (2.5,10.1) to [out=180, in=-90] (1.9,10.5);
\draw [olive, thick] (2.2,11.2) to [out=180, in=-90] (1,12) arc(180:0:.5 and .5) to [out=-90, in=180] (3,11.35) -- (5,11.35) to [out=0, in=-90] (5.7,12) arc(0:90:.7 and .7) -- (1.5,12.7) arc(90:180:.7 and .7) -- (.8,10.5) to [out=-90, in=180] (2, 9.65) -- (8.5,9.65) to [out=0, in=-90] (9.4,10.8) -- (9.4,11.7) to [out=90, in=0] (8.5, 12.7) arc(90:150:.7 and .7) to [out=-120, in=0] (6.8,11.2) -- (2.2,11.2);
\node [cyan] at (4.25,10.4) {$\varphi_2$};
\node at (0,14.75) {a)};
\draw (1.5,14) circle (2.5mm);
\draw (1.5,15.5) circle (2.5mm);
\node at (2.5,14) {$\dots$};
\node at (2.5,15.5) {$\dots$};
\draw (3.5,14) circle (2.5mm);
\draw (3.5,15.5) circle (2.5mm);
\draw (5,14) circle (2.5mm);
\draw (5,15.5) circle (2.5mm);
\node at (6,14) {$\dots$};
\node at (6,15.5) {$\dots$};
\draw (7,14) circle (2.5mm);
\draw (7,15.5) circle (2.5mm);
\draw (8.5,14) circle (2.5mm);
\draw (8.5,15.5) circle (2.5mm);
\draw (10,14) circle (2.5mm);
\draw (10,15.5) circle (2.5mm);
\node at (11,14) {$\dots$};
\node at (11,15.5) {$\dots$};
\draw (12,14) circle (2.5mm);
\draw (12,15.5) circle (2.5mm);
\draw [thick] (13.5,14.75) circle (2mm);
\node at (14,14.75) {$C$};
\draw [red, thick] (8.5,15.5) circle (5mm);
\draw [Green, thick] (8.25,15.5) to [out=180, in=60] (7.7,15.1) to [out=-120, in=0] (7,14.75) -- (5,14.75) to [out=180, in=-90] (4.25,15.5) to [out=90, in=0] (3.5,16.3) -- (1.5,16.3) to [out=180, in=90] (.7,15.5) -- (.7,14) to [out=-90, in=180] (1.5,13.2) -- (8.5,13.2) to [out=0, in=-90] (9.1,13.7) to [out=90, in=0] (8.75,14);
\draw [Green, thick] (8.75,15.5) to [out=0, in=90] (9.1,15.15) to [out=-90, in=0] (8.5,14.75) to [out=180, in=90] (7.9,14.35) to [out=-90, in=180] (8.25,14);
\draw [blue, thick] (8.5,13.75) to [out=-90, in=0] (8.3,13.6) to [out=180, in=-90] (7.9,14) arc(180:90:.6 and .6) to [out=0, in=-90] (9.4,15.3) to [out=90, in=0] (8.75,16) to [out=180, in=90] (8.5,15.75);
\draw [orange, thick] (1.5,14.5) arc(90:270:.5 and .5) -- (8.5,13.5) arc(-90:90:.5 and .5) -- (1.5,14.5);
\draw [olive, thick] (2.2,14.65) to [out=180, in=-90] (1,15.5) arc(180:0:.5 and .5) to [out=-90, in=180] (3,14.85) -- (5,14.85) arc(-90:90:.65 and .65) -- (1.5,16.15) arc(90:180:.65 and .65) -- (.85,14) arc(180:270:.65 and .65) -- (2.2, 13.35) -- (8.5,13.35) arc (-90:90:.65 and .65) -- (2.2,14.65);
\node [red] at (8,16.1) {$\alpha$};
\node [blue] at (9.3,16.1) {$\beta$};
\node [Green] at (6.4,15.05) {$\gamma$};
\node [orange] at (4.25,14.2) {$\zeta$};
\node [olive] at (4.6,15.85) {$\varphi_1$};
\end{tikzpicture}
\caption{Triangles starting from $v_0$ with an edge of type $(i,j)^{-}$.}
\label{fig:basictri-}
\end{figure}
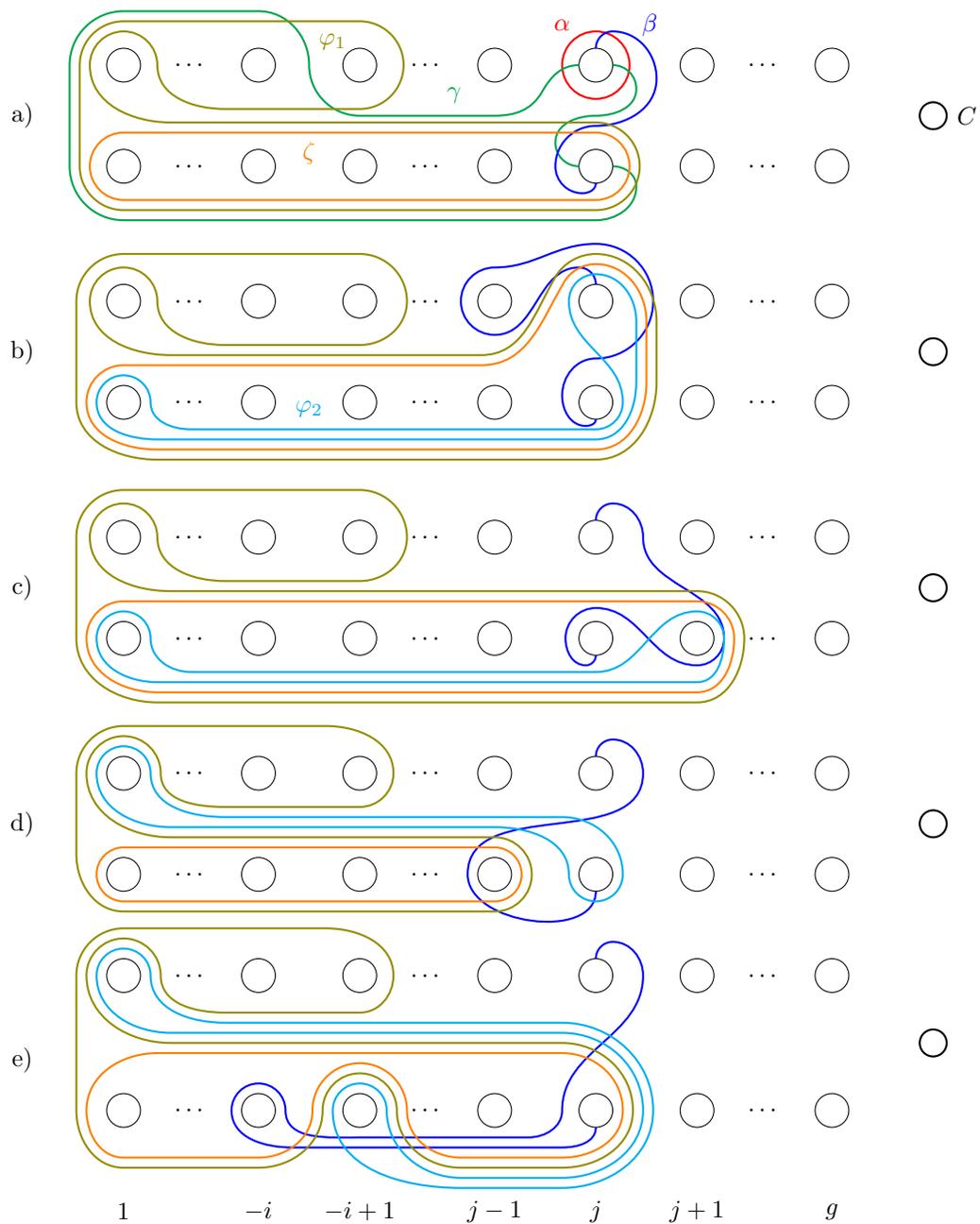

The corresponding relations are the following. Notice that we adopt the convention that an increasing sequence of consecutive indices is empty if the last is less than the first: hence, for example, $t_1,\dots,t_{-i}=1$ if $i=1$. 
\begin{enumerate}[resume*=arel]
\item \textit{The following relations hold:}
\begin{gather*}
(b_1^{-2})^{-1}=b_1b_1, \qquad b_1^{-2}s_1\overline{d}_{1,2}^{-1}a_1^2b_1s_1^{-1}\overline{d}_{1,2}^{-2}a_1^{-2}b_1=\overline{d}_{1,2}^{-1}a_2^2.
\end{gather*}

\textit{Now, assume that $j \ge 2$. If $i=1$, we have}
\[
b_j^{-2}\overline{r}_{1,j}\,\cdot \big((t_{j-1}\dots t_1\,\zeta^{-1})*(a_1^2b_1 \, s_1^{-1} \varphi_2^{-1}\zeta^{-1} s_1 \,b_1\, s_1^{-1}a_1^{-2})\big)=1,
\]
\textit{while if $i<1$}
\[
b_j^{-2}\overline{r}_{i,j}\,\cdot \big((t_{j-1}\dots t_1\,\zeta^{-1})*(a_1^2b_1 \, s_1^{-1} \varphi_2^{-1}\varphi_1^{-1} s_1 \,b_1\, s_1^{-1}a_1^{-2})\big)=1,
\]
\textit{where $\zeta$, $\varphi_1$ and $\varphi_2$ are shorthands for the following mapping classes, which correspond to the Dehn twists along the curves of Figure \ref{fig:basictri-} up to powers of the $a_i^2$:} \label{triangles}
\begin{enumerate}[label=\alph*)]
\item \label{tria} $\zeta:=\overline{d}_{\set{1,\dots,j}}$, \hspace{5pt} $\varphi_1:=\overline{d}_{\set{i,\dots,\widehat{-1},\dots,j}}$, \hspace{5pt} $\varphi_2:=1$; 
\item \label{trib} $\zeta:=(t_{j-1}\dots t_1)*\overline{d}_{\set{-1,\dots,j}}$, \newline \hspace{5pt} $\varphi_1:=a_j^2\cdot \big((t_{j-1}\dots t_1)*\overline{d}_{\set{i,\dots,\widehat{-2},\dots,j}}\big)$, \hspace{5pt} $\varphi_2:=s_j*\overline{d}_{1,j}$;
\item \label{tric} $\zeta:=\overline{d}_{\set{1,\dots,j+1}}$, \hspace{5pt} $\varphi_1:=\overline{d}_{\set{i,\dots,\widehat{-1},\dots,j+1}}\, a_{j+1}^2$, \hspace{5pt} $\varphi_2:=\overline{d}_{1,j+1}$;
\item \label{trid} $\zeta:=\overline{d}_{\set{1,\dots,j-1}}$, \hspace{5pt} $\varphi_1:=\overline{d}_{\set{i,\dots,\widehat{-1},\dots,j-1}}$,\newline 
\hspace{5pt} $\varphi_2:=(t_{j-1}^{-1}\dots t_2^{-1}s_1)*\overline{d}_{1,2}$;
\item $\zeta:=(t_{-i+1}^{-1}\dots t_{j-1}^{-1})*\overline{d}_{\set{1,\dots,j-1}}$, \newline \hspace{5pt} \label{trie} $\varphi_1:=a_{-i+1}^2\cdot\big((t_{-i+1}^{-1}\dots t_{j-1}^{-1})*\overline{d}_{\set{i,\dots,\widehat{-1},\dots,j-1}}\big)$, \newline \hspace{5pt} \hspace{5pt} $\varphi_2:=(t_{-i+1}^{-1}\dots t_{j-1}^{-1}\, t_{j-1}^{-1}\dots t_2^{-1}s_1)* \overline{d}_{1,2}$.
\end{enumerate}
\end{enumerate}

\vspace{10pt}

\emph{Squares.} By the spin change of coordinates principle, the spin mapping class group acts transitively on the set of squares with a vertex at $v_0$, so it is enough to take the relation corresponding to the square 
\[
\begin{tikzcd}
\Braket{\alpha_1,\alpha_2} \arrow[r, no head] \arrow[d, no head] & \Braket{\alpha_1,\beta_2} \arrow[d, no head] \\
\Braket{\beta_1,\alpha_2} \arrow[r, no head] & \Braket{\beta_1,\beta_2}, 
\end{tikzcd}
\]
in the notations of Figure \ref{fig:square}(a).

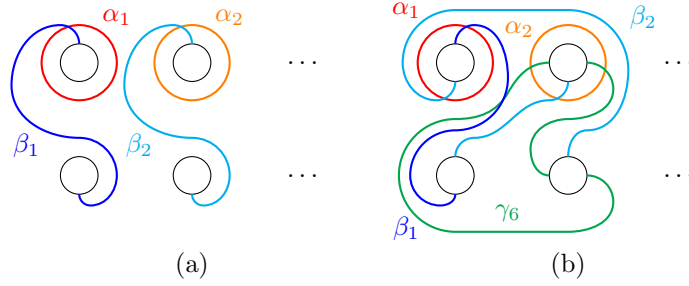
\begin{figure}
\centering
\begin{tikzpicture}
\draw (1.5,4) circle (2.5mm);
\draw (1.5,5.5) circle (2.5mm);
\draw (3,4) circle (2.5mm);
\draw (3,5.5) circle (2.5mm);
\node at (4.5,4) {$\dots$};
\node at (4.5,5.5) {$\dots$};
\draw [red, thick] (1.5,5.5) circle (5mm);
\draw [blue, thick] (1.5,3.75) to [out=-90, in=180] (1.65,3.6) to [out=0, in=-90] (2,4) arc(0:90:.5 and .5) to [out=180, in=-90] (.6,5.2) to [out=90, in=180] (1.25,6) to [out=0, in=90] (1.5,5.75);
\node [red, thick] at (2,6.1) {$\alpha_1$};
\node [blue, thick] at (.8,4.4) {$\beta_1$};
\draw [orange, thick] (3,5.5) circle (5mm);
\draw [cyan, thick] (3,3.75) to [out=-90, in=180] (3.15,3.6) to [out=0, in=-90] (3.5,4) arc(0:90:.5 and .5) to [out=180, in=-90] (2.1,5.2) to [out=90, in=180] (2.75,6) to [out=0, in=90] (3,5.75);
\node [orange, thick] at (3.5,6.1) {$\alpha_2$};
\node [cyan, thick] at (2.3,4.4) {$\beta_2$};
\node at (3,2.8) {(a)};
\draw (6.5,4) circle (2.5mm);
\draw (6.5,5.5) circle (2.5mm);
\draw (8,4) circle (2.5mm);
\draw (8,5.5) circle (2.5mm);
\node at (9.5,4) {$\dots$};
\node at (9.5,5.5) {$\dots$};
\draw [red, thick] (6.5,5.5) circle (5mm);
\draw [orange, thick] (8,5.5) circle (5mm);
\draw [Green, thick] (7.75,5.5) to [out=180, in=60] (7.2,5.1) to [out=-120, in=0] (6.5,4.75) arc(90:270:.75 and .75) -- (8,3.25) to [out=0, in=-90] (8.6,3.65) to [out=90, in=0] (8.25,4);
\draw [Green, thick] (8.25,5.5) to [out=0, in=90] (8.6,5.15) to [out=-90, in=0] (8,4.75) to [out=180, in=90] (7.5,4.35) to [out=-90, in=180] (7.75,4);
\draw [blue, thick] (6.5,3.75) to [out=-90, in=0] (6.3,3.6) to [out=180, in=-90] (5.9,4) arc(180:90:.6 and .6) to [out=0, in=-90] (7.2,5.3) to [out=90, in=0] (6.75,6) to [out=180, in=90] (6.5,5.75);
\draw [cyan, thick] (6.5,5.25) to [out=-90, in=0] (6.25,5) to [out=180, in=-90] (5.8,5.5) to [out=90, in=180] (6.5,6.2) -- (8,6.2) to [out=0, in=90] (8.8,5.4) to [out=-90, in=0] (8.25,4.6) to [out=180, in=90] (8,4.25);
\draw [cyan, thick] (8,5.25) to [out=-90, in=0] (7.75,5) to [out=180, in=0] (6.75,4.5) to [out=180, in=90] (6.5,4.25);
\node [red] at (5.85,6.2) {$\alpha_1$};
\node [blue] at (5.85,3.3) {$\beta_1$};
\node [orange] at (7.35,5.95) {$\alpha_2$};
\node [cyan] at (9,6.1) {$\beta_2$};
\node [Green] at (7.2,3.5) {$\gamma_6$};
\node at (8,2.8) {(b)};
\end{tikzpicture}
\caption{(a) Curves in the square corresponding to relation \ref{square}.\\ (b) Curves in the pentagon corresponding to relation \ref{pentagon}.}
\label{fig:square}
\end{figure}

\begin{samepage}
We get the following relation: 
\begin{enumerate}[resume*=arel]
    \item $(a_1^2b_1t_1\overline{d}_{1,2}^{-1}a_1^2b_1t_1\overline{d}_{-2,-1}^{-1})^2=a_1^2s_1a_2^2s_2$.  \label{square}
\end{enumerate}
\end{samepage}

\vspace{10pt}

\emph{Pentagons.} A single relation is sufficient also in this case. Our model pentagon will be that of Figure \ref{square}(b), i.e.
\[
\begin{tikzcd}[column sep=tiny]
\Braket{\alpha_1,\alpha_2}\arrow[r, no head, "(ii)"] & \Braket{\alpha_1,\gamma_6}\arrow[r, no head] & \Braket{\beta_2,\gamma_6}\arrow[r, no head] & \Braket{\beta_2,\beta_1}\arrow[r, no head] & \Braket{\alpha_2,\beta_1}\arrow[r, no head] & \Braket{\alpha_1,\alpha_2}.
\end{tikzcd}
\]

\begin{lemma}
\thlabel{lem:penta}
All pentagons are homotopic in $X_g$.
\end{lemma}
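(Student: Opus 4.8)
The plan is to reduce the analysis to a genus-$2$ subsurface, enumerate the finitely many curve configurations that can arise, and show that each of them is carried to a single model pentagon by an element of $\Modd(\Sigma_g^1)[\phi]$ together with a homotopy through triangles and squares. Concretely, given a pentagon $\mathbf{p}$ I would first cut $\Sigma_g^1$ along the $g-2$ simple closed curves common to all five of its vertices, obtaining a surface of genus $2$ with boundary carrying an induced spin structure, on which $\mathbf{p}$ becomes a pentagon whose vertices are cut-systems of two $1$-curves (four edges of type (i), one of type (ii)). Since all five curves occurring in $\mathbf{p}$ have spin value $1$ and their intersection pattern is prescribed by the definition of a pentagon, the only data distinguishing two such configurations are the homeomorphism type of the cut-open surface together with the parities of the spin structures induced on the subsurfaces cut out by the five curves, and the position of the boundary components — in particular the one coming from $C$. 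By the spin change of coordinates principle (cut-systems and chains) there are finitely many such configurations up to the action of the spin mapping class group of the genus-$2$ piece, hence finitely many pentagons up to the action of $\Modd(\Sigma_g^1)[\phi]$ after regluing.

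Next I would fix as model the pentagon of Figure \ref{fig:square}(b), which realizes the generic configuration in which every subsurface in sight is even and $C$ lies in the expected region, and then connect each remaining configuration to it. For a configuration in which some complementary subsurface inherits an \emph{odd} spin structure — so that, as in \thref{rem:oddss} and \thref{rem:hyp}, it contains no nonseparating $1$-curves, or one of its boundary components is a $0$-curve — I would replace one of the five curves of $\mathbf{p}$ by a disjoint homologous $1$-curve obtained as an arc sum with a separating boundary component, exactly as in the proof of the Hexagon lemma and in Case 2B of the proof that radius-$1$ paths are null-homotopic (see Figure \ref{fig:bad}); the replacement can be arranged so that afterwards all the relevant subsurfaces are even and $C$ has been moved into the standard region. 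The new pentagon is joined to the old one by a band of squares (the ladder lemma) together with the triangle furnished by the $1$-$1$-$2$ trick (\thref{rem:112}) applied to the type-(ii) edge, so $\mathbf{p}$ is homotopic through triangles and squares to the modified pentagon. Iterating over all the odd pieces, $\mathbf{p}$ becomes homotopic to a translate of the model pentagon; finally, transitivity of $\Modd(\Sigma_g^1)[\phi]$ on the generic configuration (again by spin change of coordinates) identifies $\mathbf{p}$ with a conjugate of the model modulo triangles and squares, which is the assertion. Note that no pentagon $2$-cell is used in this reduction, so there is no circularity.

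The step I expect to be the main obstacle is the bookkeeping in the case analysis: one must verify that the curve replacements can be performed simultaneously for all the odd complementary pieces without altering either the prescribed intersection pattern among the five curves or their spin values, and that in every case the interpolating ladder of squares actually closes up. This is precisely where the phenomena flagged in \thref{rem:oddss} and \thref{rem:hyp} — absence of nonseparating spin curves, $0$-curves among the boundary components — have to be handled one configuration at a time, much as in Cases $2$ and $3$ of the radius-$1$ argument, and where the position of the hole coming from $C$ forces the extra orbits of type-(ii) edges into play.
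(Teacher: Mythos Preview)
Your plan is viable in principle but far more elaborate than necessary, and the case analysis you anticipate as ``the main obstacle'' can be avoided entirely. The key observation you miss is that four of the five curves in any pentagon form a $4$-chain of $1$-curves: in the notation of Figure~\ref{fig:cells}(C3), these are $\gamma_1,\gamma_2',\gamma_1',\gamma_2$ (each consecutive pair intersects once, non-consecutive pairs are disjoint). Since $4$ is even, the complement of a $4$-chain is connected, so the spin change of coordinates for chains applies with \emph{no} extra Arf-invariant hypothesis. Thus up to the action of $\Modd(\Sigma_g^1)[\phi]$ every pentagon may be assumed to have its four type-(i) curves equal to the model curves $\alpha_1,\beta_1,\alpha_2,\beta_2$; only the fifth curve $\gamma$ (lying on the type-(ii) edge) remains free.

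The paper then disposes of $\gamma$ in one stroke: take $\gamma_5$ to be the arc sum of $\beta_1$ and $\beta_2$ along an arc of $\alpha_2$, which is automatically a nonseparating $1$-curve, and invoke the homotopy of \cite[Figure~17]{waj:elem} to replace $\gamma$ by $\gamma_6$. No odd subsurfaces, no ladder lemma, no curve replacements are needed. Your worry about odd complementary pieces (\thref{rem:oddss}, \thref{rem:hyp}) does not materialize precisely because the $4$-chain normalization requires no parity condition; the phenomena you flag are specific to odd-length chains and to pairs of disjoint homologous curves, neither of which appears here.

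A separate concern with your approach: the curve-replacement step you borrow from the Hexagon lemma replaces a curve by a disjoint homologous one, but in a pentagon each curve intersects two others once (and one of them twice, for the curve on the type-(ii) edge). A disjoint homologous replacement will not in general have the same intersection pattern with the remaining four curves, so the resulting configuration need not be a pentagon, and the ``band of squares'' joining old to new is not available as stated.
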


\begin{proof}
Consider another pentagon in $X_g$. As four of its five curves form a $4$-chain on $\Sigma_g^1$, by the spin change of coordinates, up to the action of $\Modd(\Sigma_g^1)[\phi]$ we may assume that it is of the form
\[
\begin{tikzcd}[column sep=tiny]
\Braket{\alpha_1,\alpha_2}\arrow[r, no head, "(ii)"] & \Braket{\alpha_1,\gamma}\arrow[r, no head] & \Braket{\beta_2,\gamma}\arrow[r, no head] & \Braket{\beta_2,\beta_1}\arrow[r, no head] & \Braket{\alpha_2,\beta_1}\arrow[r, no head] & \Braket{\alpha_1,\alpha_2},
\end{tikzcd}
\]
for some curve $\gamma$. Let $\gamma_5$ be the arc sum of $\beta_1$ and $\beta_2$ along an arc of $\alpha_2$. Then $\gamma_5$ is a nonseparating $1$-curve, and we have the homotopy of \cite[Figure 17]{waj:elem}. \qedhere
\end{proof}

We get the following relation:

\begin{enumerate}[resume*=arel]
\item $b_2^{-2}\overline{r}_{1,2} \overline{d}_{-2,-1}\overline{d}_{1,2}^{-1} b_1 t_1a_1^2 b_1 \overline{d}_{-2,-1}^{-3}t_1 b_1 t_1 b_1 s_2=\overline{d}_{\set{-2,-1,1,2}}$. \label{pentagon}
\end{enumerate}

\vspace{10pt}

\emph{Hyperelliptic faces.} Recall that a hyperelliptic face is uniquely determined by a $7$-chain of admissible curves by \thref{rem:h1}. Moreover, from the proof of \thref{prop:hyp} and \thref{rem:h2} we see that we need only the hyperelliptic faces corresponding to $7$-chains that split the surface into two components, one of which has genus $0$ and does not intersect $\partial\Sigma_g^1$. By the spin change of coordinates principle, we just need a single relation. 

Instead of writing a long $h$-product, we can directly state the relation as a product of admissible twist as follows. Fix a $7$-chain of admissible curves $\gamma_1,\dots,\gamma_7\subset \Sigma_g^1$ with the above properties, and let $\delta$ be the nontrivial boundary component of a tubular neighborhood of $\gamma_1\cup\dots\cup\gamma_7$. Then we have the following restatement of (\ref{hyp3}): 

\begin{enumerate}[resume*=arel]
    \item $(t_{\gamma_1}\dots t_{\gamma_6}t_{\gamma_7}^2t_{\gamma_6}\dots t_{\gamma_1})^2=t_{\delta}$. \label{hyp}
\end{enumerate}

\subsection{A finite presentation of the spin handlebody group}
\label{hmgqfp}

We conclude this section with a finite presentation for the spin handlebody mapping class group $\Modd(H_g)[\phi]$, where $H_g$ is the handlebody in which the curves $\alpha_1,\dots,\alpha_g$ of Figure \ref{fig:q} bound disks. This will be done using Nielsen-Schreier's method, and will not be needed in the following, but is relevant for \cite{fb}.

\begin{theorem}
\thlabel{thm:mhgq}
The spin handlebody mapping class group $\Modd(H_g)[\phi]$ has a presentation with generators $a_1^2,\dots,a_g^2$, $s$, $t_1,\dots,t_{g-1}$, $\overline{d}_{i,j}$ for all $i,j\in \set{\pm1,\dots,\pm g}$ with $i<j$ and $\overline{r}_{i,j}$ for all $i,j\in \set{\pm1,\dots,\pm g}$ with $i=1<j$ or $i\le -1$ and $-i+1 \le j \le g+i$, and the following relations:
\begin{enumerate}[label=\textit{(H\arabic*)}, ref=(H\arabic*)]
\item relations \ref{aiaj}-\ref{a8} of \thref{prop:stab};
\item $\overline{d}_{\set{\pm1,\dots,\pm g}}(a_1\dots a_g)^2=1$;
\item $\overline{d}_{\set{\pm1,\dots,\pm g}\setminus \set{k}}(a_1\dots \widehat{a_{\abs{k}}} \dots a_g)^2=1$ for all $k \in \set{\pm1,\dots,\pm g}$;\label{hatk}
\item $r_{1,j}^2=(a_1\dots a_{j-1})^{-2}s_j\overline{d}_{\set{1,\dots,j}}s_j\overline{d}_{\set{1,\dots,j}}^{-1}$, and if $i \le -1$ then \label{rij2}
\[
\overline{r}_{i,j}^2=\big(a_1^2\dots a_{-i}^2 a_{-i+1}\dots a_{j-1}\big)^{-2} s_j\overline{d}_{\set{i,\dots,j}}s_j\overline{d}_{\set{i,\dots,j}}^{-1};
\]
\item conjugates involving the generators $\overline{r}_{i,j}$:\label{conjr}
\begin{enumerate}
\item $[\overline{r}_{i,j},a_k^2]=1$ if $k \ne j$ and 
\[
\overline{r}_{i,j}*a_j^2=c_{i,j}^2=\begin{cases}
    \overline{d}_{\set{1,\dots,j}}^2(a_1^2\dots a_j^2) & \text{if $i=1$},\\
    \overline{d}_{\set{i,\dots,j}}^2(a_1^4\dots a_{-i}^4a_{-i+1}^2\dots a_j^2) & \text{if $i\le -1$};
\end{cases}
\]
\item $[\overline{r}_{i,j},t_k]=1$ if $k \ne j,j-1$ and $k \ne -i$;
\item $[\overline{r}_{i,j},s_k]=1$ if $k\le -i$ or $k>j$;
\item $[\overline{r}_{i,j},\overline{d}_{k,m}]=1$ if $k,m \in \set{i,\dots,j-1}$ or $k,m \notin\set{-j,i,i+1,\dots,j}$;
\item $[\overline{r}_{1,g},z_g]=1$, and $z_j*\overline{r}_{i,j}=a_1^2\dots a_{-i}^2\overline{r}_{i,j}a_{j+1}^{-2}\dots a_g^{-2}$ if $i\le -1$ and $j-i=g$;
\item $\overline{r}_{i,j}*\overline{d}_{i,j}=a_i^{-2}\overline{d}_{\set{i,\dots,j}}^{-1}\overline{d}_{\set{i,\dots,j}\setminus\set{i}}$;
\item $\overline{r}_{1,j}*\overline{d}_{-j,-j+1}=\overline{d}_{\set{1,\dots,j}}^{-1}(t_{j-2}t_{j-3}\dots t_1)*\overline{d}_{\set{-1,\dots,j}}$;
\item $\overline{r}_{i,j}*\overline{d}_{-j,-j+1}=\overline{d}_{\set{i,\dots,j}}^{-1}(t_{j-2}t_{j-3}\dots t_{-i+1})*\overline{d}_{\set{i-1,\dots,j}}$ if $i\le-1$ and $j+i>1$;
\item $\overline{r}_{i,j}^{-1}*\overline{d}_{-j-1,-j}=\overline{d}_{\set{i,\dots,j}}\,\big(s_{j+1}^{-1}*\overline{d}_{\set{i,\dots,j+1}}\big)$;
\end{enumerate}
\item $\overline{r}_{i,j}*t_{j-1}=\big(t_{j-1}^{-1}*\overline{r}_{i,j}\big)\,\overline{d}_{\set{i,\dots,j}}^{-1}$ if $i>-j+1$, and \label{riti}
\[
\overline{r}_{-j+1,j}*t_{j-1}=\big(t_{j-1}^{-1}*\overline{r}_{-j+1,j}\big)\,
(a_1^2\dots a_{j-1}^2)\overline{d}_{\set{-j+1,\dots,j}}^{-2};
\]\label{tr}
\item triangle relations:
\begin{align*}
\overline{r}_{1,j}&=s_j\overline{d}_{\set{1,\dots,j}}s_j\overline{d}_{\set{1,\dots,j}}^{-1}k_{j-1}\overline{d}_{\set{1,\dots,j-2}}t_{j-1}\overline{d}_{\set{1,\dots,j-1}}^{-1}t_{j-1}^{-1}\cdot\\
&\qquad\qquad \cdot\overline{r}_{1,j-1}^{-1}
s_{j-1}(a_1\dots a_{j-2})^{-2}Aa_1^{-2}\overline{r}_{1,2}^{-1}A^{-1}k_{j-1}^{-1} \quad \text{for $j \ge 3$},\\
\overline{r}_{-1,j}&=Ba_1^{-2}\overline{r}_{1,2}^{-1}B^{-1}s_j\overline{r}_{1,j}^{-1}(a_1a_2\dots a_{j-1})^{-2}\overline{d}_{\set{-1,\dots,j-1}}^{-1}\cdot\\
&\qquad\qquad \cdot\overline{d}_{\set{1,\dots,j-1}} s_j\overline{d}_{\set{-1,\dots,j}}s_j\overline{d}_{\set{-1,\dots,j}}^{-1},\\
\overline{r}_{i,j}&=Ca_{1}^{-1}\overline{r}_{1,2}^{-1}C^{-1}s_j(a_1^2\dots a_{-i-1}^2a_{-i}\dots a_{j-1})^{-2}\overline{r}_{i+1,j}^{-1}\overline{d}_{\set{i,\dots,j-1}}^{-1}\cdot\\
&\qquad\qquad \cdot\overline{d}_{\set{i+1,\dots,j-1}}s_j\overline{d}_{\set{i,\dots,j}}s_j\overline{d}_{\set{i,\dots,j}}^{-1} \quad \text{for $i \le -2$},
\end{align*}
where
\begin{gather*}
A=k_{j-1}^{-1}t_{j-2}^{-1}t_{j-3}^{-1}\dots t_1^{-1}k_{j-1}k_{j-2}\dots k_2,\\
B=s_1k_{j-1}k_{j-2}\dots k_2,\\
C=s_{-i}t_{-i-1}^{-1}t_{-i-2}^{-1}\dots t_1^{-1}k_{j-1}k_{j-2}\dots k_2.
\end{gather*}\label{trirel}
\end{enumerate}
\end{theorem}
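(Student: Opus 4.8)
The plan is to apply the Reidemeister--Schreier rewriting process to the finite presentation of $\Modd(\Sigma_g^1)[\phi]$ obtained in \thref{thm:mcgs}, using the inclusion $\Modd(H_g)[\phi] \hookrightarrow \Modd(\Sigma_g^1)[\phi]$. The first step is to identify the index and a Schreier transversal. Recall from Subsection \ref{act} that $\Modd(\Sigma_g^1)[\phi]$ is generated by the generators of $H[\phi]$ together with the classes $b$, $b_1^{\pm 2}$, and $b_j^{\pm2}\overline{r}_{i,j}$. All the generators of $H[\phi]$ and the $\overline{r}_{i,j}$ lie in $\Modd(H_g)[\phi]$, since they are products of twists along curves that are either meridians of $H_g$ or bound subsurfaces disjoint from a spine; the only generators that leave $\Modd(H_g)[\phi]$ are $b$ (a single twist along $\beta_1$, which does not extend over the handlebody) and the $b_j^{\pm2}$. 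However, a squared twist $b_j^{\pm 2}=t_{\beta_j}^{\pm 2}$ along a curve dual to a meridian \emph{does} extend over $H_g$ (it is isotopic to a handle slide squared), so in fact $b_j^{\pm2}\in\Modd(H_g)[\phi]$ as well. Thus $\Modd(\Sigma_g^1)[\phi]$ is generated by $\Modd(H_g)[\phi]$ together with the single element $b=t_{\beta_1}$, and since $b^2\in\Modd(H_g)[\phi]$ the subgroup has index $2$, with Schreier transversal $\{1,b\}$.

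Next I would run Reidemeister--Schreier with this transversal. For each generator $x$ of $\Modd(\Sigma_g^1)[\phi]$ and each coset representative $u\in\{1,b\}$ we get a Schreier generator $u x \overline{ux}^{-1}$. For $x\in\Modd(H_g)[\phi]$ (i.e.\ all generators except $b$), the representative $1$ gives back $x$ itself, while the representative $b$ gives $b x b^{-1}$, which equals $b*x$; using the relations \ref{dwb1} (that $b_1=b$ commutes with many generators of $H[\phi]$) and the backtracking/edge relations of \thref{thm:mcgs}, these conjugates $b*x$ can be rewritten as words in the Schreier generators coming from $u=1$, so they become redundant and can be Tietze-eliminated. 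For $x=b$ itself: $1\cdot b\cdot\overline{b}^{-1}=b\cdot b^{-1}=1$ is trivial, and $b\cdot b\cdot \overline{b^2}^{-1}=b^2$, which is a genuine new generator — but $b^2=b_1^{+2}$ is already among the generators of $\Modd(H_g)[\phi]$, so after the elimination step no new generators survive. The surviving generators are therefore exactly $a_1^2,\dots,a_g^2$, $s$, $t_1,\dots,t_{g-1}$, the $\overline{d}_{i,j}$, and the $\overline{r}_{i,j}$, matching the statement (recall $b_1^{\pm2}$ and $b_j^{\pm 2}\overline{r}_{i,j}$ are expressible via $s_1=b_1a_1^2b_1$ and the $\overline{r}_{i,j}$, exactly as in \thref{rmk:ivsii} and the proof of the corollary after \ref{backtracking}).

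For the relations, Reidemeister--Schreier produces, for each relator $R$ of \thref{thm:mcgs} and each $u\in\{1,b\}$, the rewritten word $uRu^{-1}$ expressed in the Schreier generators. The relators \ref{aiaj}--\ref{a8} of $H[\phi]$ rewrite (for $u=1$) directly to (H1); their $u=b$ conjugates are consequences because $b$ normalizes $H[\phi]$ compatibly. The backtracking relations \ref{backtracking} and edge relations \ref{diffwr} become the relations (H4)--(H6) expressing $\overline{r}_{i,j}^2$ and the conjugates $\overline{r}_{i,j}*x$ in terms of the remaining generators — here one substitutes $b_j^{\pm2}\overline{r}_{i,j}$ by its expression through $s_j$ and $\overline{r}_{i,j}$ and solves for $\overline{r}_{i,j}$-conjugates. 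The triangle relations \ref{triangles}, rewritten with the substitution for the $b$-letters, yield (H8). The braid-type relation \ref{tr} analog is (H7) (which I'd also extract from the edge relations, being careful with the chain case $i=-j+1$). The square, pentagon and hyperelliptic relations \ref{square}, \ref{pentagon}, \ref{hyp} rewrite to relations that I claim are already consequences of (H1)--(H8): indeed \ref{hyp} is purely in admissible twists hence lives in $H[\phi]\subset\Modd(H_g)[\phi]$ and follows from (H1); \ref{square} and \ref{pentagon}, once $b$ and the $b_j^{\pm2}\overline{r}_{i,j}$ are re-expressed, reduce to identities among the $\overline{r}_{i,j}$, $s$, $t_i$, $\overline{d}_{i,j}$ that are derivable from the conjugation relations (H5), (H6) and the triangle relations (H8). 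Finally, the two ``vanishing lantern'' relations (H2) and (H3) are the images of the relations that, in $\Modd(\Sigma_g^1)[\phi]$, say that the total product of all meridian twists and their duals is trivial in the handlebody group — these come from the defining relations of the handlebody mapping class group and appear because passing to $H_g$ kills the boundary twist $t_{\partial\Sigma_g^1}$; concretely they arise from rewriting the relation $t_C^{\,?}$ or, more carefully, are the extra relations needed to present $\Modd(H_g)$ over $\Modd(\Sigma_g^1)$, as in Wajnryb \cite{wajh}, intersected with the spin condition.

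The main obstacle I anticipate is the bookkeeping in the relation-rewriting step: verifying that the $u=b$ conjugates of all relators, and the rewrites of \ref{square}, \ref{pentagon}, \ref{triangles}, produce nothing beyond (H1)--(H8), and in particular pinning down exactly the relations (H2), (H3) — one must check that the additional relations distinguishing $\Modd(H_g)$ from the surface group (the handle-slide and ``disk-swap'' relations of \cite{wajh}) survive intact under the spin restriction and take the stated lantern form $\overline{d}_{\set{\pm1,\dots,\pm g}}(a_1\cdots a_g)^2=1$ via the identity \eqref{dset}. This is a finite but intricate verification, best organized by first proving the generators are as claimed, then checking each family of \thref{thm:mcgs}-relations in turn, and finally invoking a counting/abelianization sanity check against the known $H_1$.
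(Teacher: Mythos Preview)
Your approach has a fatal error at the very first step. You propose to run Reidemeister--Schreier on the inclusion $\Modd(H_g)[\phi]\hookrightarrow\Modd(\Sigma_g^1)[\phi]$, claiming this has index~$2$ with transversal $\{1,b\}$. But the handlebody group sits inside the surface mapping class group with \emph{infinite} index (for $g\ge2$), so Reidemeister--Schreier cannot produce a finite presentation this way. Your supporting claim that $b_j^{\pm2}=t_{\beta_j}^{\pm2}$ extends over $H_g$ is false: on $H_1(\Sigma_g;\Z)$ we have $t_{\beta_j}^2([\alpha_j])=[\alpha_j]+2[\beta_j]$, which does not lie in the Lagrangian $\ker\big(H_1(\Sigma_g)\to H_1(H_g)\big)=\langle[\alpha_1],\dots,[\alpha_g]\rangle$, so $t_{\beta_j}^2$ cannot extend to the handlebody. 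The ``handle slide squared'' you mention is a different element. Consequently none of the subsequent rewriting goes through, and in particular you have no mechanism to produce the genuinely handlebody-specific relations (H2)--(H3), which encode that the curves $\delta_{\{\pm1,\dots,\pm g\}}$ and $\delta_{\{\pm1,\dots,\pm g\}\setminus\{k\}}$ bound disks in $H_g$; these are \emph{not} consequences of anything in $\Modd(\Sigma_g^1)[\phi]$.

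The paper's route is the opposite one: it starts from Wajnryb's finite presentation of the full handlebody group $\Modd(H_g)$ \cite{wajh} and applies Nielsen--Schreier to the finite-index subgroup $\Modd(H_g)[\phi]\le\Modd(H_g)$, exactly parallel to how \thref{prop:stab} was derived from the stabilizer $H$. The transversal is the same $U=\{\prod_{j\in J}a_j:J\subseteq\{1,\dots,g\}\}$ of cardinality $2^g$, and the bulk of the proof is the (now finite) bookkeeping of rewriting Wajnryb's relations (P3), (P4), (P9)--(P12) for $\Modd(H_g)$ after introducing the bar-generators $\overline d_{i,j}$ and $\overline r_{i,j}$ via (\ref{eq:dij}) and (\ref{eq:rij}). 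If you want to salvage your attempt, redirect it to this inclusion.
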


\begin{proof}
We will apply the Nielsen-Schreier method to Wajnryb's presentation of $\Modd(H_g)$ \cite[Theorem 18]{wajh}. First of all, by inspecting his proof, it is easy to see that an equivalent presentation is given by the presentation \cite[Proposition 27]{waj:elem} of the stabilizer of $v_0=\Braket{\alpha_1,\dots,\alpha_g}$ in $\Modd(\Sigma_g^1)$, together with generators $r_{i,j}:=b_ja_jc_{i,j}b_j$ and relations (P3), (P4), (P9), (P10), (P11) and (P12) of \cite[Theorem 18]{wajh}. Indeed, since $v_0$ is a cut-system of meridians for $H_g$, its stabilizers under the action of $\Modd(H_g)$ and $\Modd(\Sigma_g)$ coincide, and relations (P3), (P4) of \cite[Theorem 18]{wajh} come from capping the boundary component of $\Sigma_g^1$ with a disk. Relation (P8) of \cite[Theorem 18]{wajh} is clearly implied by relation (P8) of \cite[Proposition 27]{waj:elem}, which on the other hand still holds in $\Modd(H_g)$. The last four relations of \cite[Theorem 18]{wajh} are derived by studying the action of $\Modd(H_g)$ on a complex of cut-system of meridians.

Now, we proceed exactly as in the proof of \thref{prop:stab}. We introduce new generators $\overline{d}_{i,j}$ and $\overline{r}_{i,j}$, and new relations (\ref{eq:dij}) and (\ref{eq:rij}). The relations coming from \cite[Proposition 27]{waj:elem} change as in the proof of \thref{prop:stab}, while the other relations change as follows.
\begin{enumerate}
\item[\textit{(P3)}] From (\ref{eq:dI}) we get $\overline{d}_{\set{\pm1,\dots,\pm g}}(a_1\dots a_g)^2=1$. 
\item[\textit{(P4)}] Similarly, we obtain $\overline{d}_{\set{\pm1,\dots,\pm g}\setminus \set{k}}(a_1\dots \widehat{a_{\abs{k}}} \dots a_g)^2=1$. 
\item[\textit{(P9)}] By (\ref{eq:rij}), we obtain
\[
\overline{r}_{i,j}^2=\begin{cases}
(a_1\dots a_{j-1})^{-2}s_j\overline{d}_{\set{1,\dots,j}}s_j\overline{d}_{\set{1,\dots,j}}^{-1} & \text{if $i=1$},\\
\big(a_1^2\dots a_{-i}^2 a_{-i+1}\dots a_{j-1}\big)^{-2} s_j\overline{d}_{\set{i,\dots,j}}s_j\overline{d}_{\set{i,\dots,j}}^{-1} & \text{if $i \le -1$}.
\end{cases}
\]
\item[\textit{(P10)}] We know that the $a_i$ commute with each other and with the $\overline{d}_{i,j}$ and $s$, and moreover $t_i*a_i=t_{i+1}*a_i=a_{i+1}$. As a consequence, we see that the $a_i$ commute with all the $s_j$ and the $c_{i,j}$, and that 
\[
z*a_i=a_{g+1-i}, \qquad z_j*a_i=\begin{cases}
a_{g+1-i} & \text{if $i>j$ or $j+i<g+1$},\\
a_{g-i} & \text{if $g+1-j\le i < j$},\\
a_i & \text{if $i=j$}.
\end{cases}
\]
We obtain the relations in the statement, apart from the following:
\begin{enumerate}
\item $\overline{r}_{i,j}*a_j=c_{i,j}$ and $[\overline{r}_{i,j},a_k]=1$ if $k \ne j$.
\end{enumerate}
\item[\textit{(P11)}] By (\ref{eq:rij}), we get the statement
\item[\textit{(P12)}] Again, it suffices to plug in (\ref{eq:rij}) to get the statement.
\end{enumerate}

A Schreier transversal is again given by (\ref{eq:u}). Indeed, notice that by (P9) and (P10)(a) we have
\begin{equation}
\label{eq:rija}
\overline{r}_{i,j}*c_{i,j}=\overline{r}_{i,j}^2*a_j=a_j.
\end{equation}
Hence, in every word in the generators we can move all the $a_i$ to the right, and apply the same reasoning as before. Moreover, the Schreier generators boil down to those in the statement. To see this for the $\overline{r}_{i,j}$, observe that
\[
\overline{u_J\overline{r}_{1,j}^{\pm1}}=\begin{cases}
u_{J\setminus\set{j}} & \text{if} \ j \in J,\\
u_{J\Delta\set{1,\dots,j}} & \text{if} \ j \notin J,
\end{cases}
\]
where by $\Delta$ we denote the symmetric difference, and
\[
\overline{u_J\overline{r}_{i,j}^{\pm1}}=\begin{cases}
u_{J\setminus\set{j}} & \text{if} \ j \in J,\\
u_{J\Delta\set{-i+1,\dots,j}} & \text{if} \ j \notin J
\end{cases}
\]
if $i \le -1$. 

Finally, the relations coming from the stabilizer of $v_0$ change as in \thref{prop:stab}, and the only other relation that changes is (P10)(a), which becomes $[\overline{r}_{i,j},a_k^2]=1$ if $k \ne j$ and 
\[
\overline{r}_{i,j}*a_j^2=c_{i,j}^2=\begin{cases}
    \overline{d}_{\set{1,\dots,j}}^2(a_1^2\dots a_j^2) & \text{if $i=1$},\\
    \overline{d}_{\set{i,\dots,j}}^2(a_1^4\dots a_{-i}^4a_{-i+1}^2\dots a_j^2) & \text{if $i\le -1$}. 
\end{cases}\qedhere
\]
\end{proof}

\begin{corollary}
\thlabel{cor:mhq}
The spin handlebody group $\Modd(H_g)[\phi]$ is isomorphic to $\Z\Braket{a_1^2}\oplus\sfrac{\Z}{2\Z}\Braket{sa_1^2}$ if $g=1$, and is generated by elements $a_1^2,s,\overline{r}_{1,2}$, $t_1$ and $u:=t_1\cdots t_{g-1}$ if $g \ge 2$.
\end{corollary}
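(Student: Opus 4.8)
The plan is to deduce both statements directly from the finite presentation of $\Modd(H_g)[\phi]$ furnished by \thref{thm:mhgq}; there is nothing to prove beyond simplifying that presentation ($g=1$) and exhibiting the claimed small generating set ($g\ge2$).

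For $g=1$ the presentation collapses: there are no generators $t_i$ and no generators $\overline{r}_{i,j}$, so only $a_1^2$, $s$ and $\overline{d}_{-1,1}$ survive. Relations \ref{rij2}--\ref{trirel} are vacuous; relation \ref{hatk} is trivial; the relation $(\mathrm{H2})$ reads $\overline{d}_{-1,1}a_1^2=1$; and among \ref{aiaj}--\ref{a8} the only non-trivial surviving relations are $[a_1^2,\overline{d}_{-1,1}]=1$, $s^2=\overline{d}_{-1,1}a_1^{-2}$, $[s,a_1^2]=1$ and $s*\overline{d}_{-1,1}=\overline{d}_{-1,1}$. Eliminating $\overline{d}_{-1,1}=a_1^{-2}$ by a Tietze move leaves the presentation $\langle a_1^2,s\mid [s,a_1^2]=1,\ s^2=a_1^{-4}\rangle$. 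Putting $x:=a_1^2$ and $y:=sa_1^2$ one checks $s=yx^{-1}$, $[x,y]=1$ and $y^2=s^2x^2=1$, so this group is the abelian group $\langle x,y\mid [x,y]=1,\ y^2=1\rangle$, which equals its own abelianization $\Z\oplus\sfrac{\Z}{2\Z}$ with $x$ of infinite order. This is the first assertion.

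For $g\ge2$ let $G\le \Modd(H_g)[\phi]$ be the subgroup generated by $a_1^2$, $s$, $\overline{r}_{1,2}$, $t_1$ and $u=t_1\cdots t_{g-1}$; I will show that $G$ contains every generator listed in \thref{thm:mhgq}, whence $G=\Modd(H_g)[\phi]$. First, the braid relations \ref{atiti} give $u^{k}* t_1=t_{k+1}$ for $k=1,\dots,g-2$ (the usual computation, using that $t_1$ commutes with $t_3,\dots,t_{g-1}$), so all $t_i\in G$; then relation \ref{sai}, namely $t_i* a_i^2=a_{i+1}^2$, together with $a_1^2\in G$, puts all $a_i^2$ in $G$. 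Next, relation \ref{riti} applied to the pair $(i,j)=(1,2)$ reads $\overline{r}_{1,2}* t_1=(t_1^{-1}*\overline{r}_{1,2})\,\overline{d}_{1,2}^{-1}$, which solves to $\overline{d}_{1,2}=\overline{r}_{1,2}\,t_1^{-1}\,\overline{r}_{1,2}^{-1}\,t_1^{-1}\,\overline{r}_{1,2}\,t_1\in G$. By the remark following \thref{prop:stab}, relations \ref{s2} and \ref{a8} then express every $\overline{d}_{i,j}$ as a word in $\overline{d}_{1,2}$, the $t_i$ and $s$, so all $\overline{d}_{i,j}\in G$; hence $k_j=t_j\overline{d}_{j,j+1}^{-1}\in G$ and $s_j=(k_{j-1}\cdots k_1)* s_1\in G$ for all $j$. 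Finally, the triangle relations \ref{trirel} write $\overline{r}_{1,j}$ (for $j\ge3$) as a word in $\overline{r}_{1,j-1}$, $\overline{r}_{1,2}$ and elements already known to lie in $G$, and then write $\overline{r}_{-1,j}$ and $\overline{r}_{i,j}$ (for $i\le-2$) as words in $\overline{r}_{1,j}$, $\overline{r}_{1,2}$, $\overline{r}_{i+1,j}$ and elements of $G$; a double induction (on $j$, then downward on $i$) therefore places every $\overline{r}_{i,j}$ in $G$. This exhausts the generating set of \thref{thm:mhgq}, completing the proof, and also yields the stated generators by dropping the now-redundant $u$ when $g=2$.

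The delicate point is the passage from $\overline{r}_{1,2}$ to a single $\overline{d}$-generator inside $G$: relations \ref{s2} and \ref{conjr} only tie the various $\overline{d}_{i,j}$ and $\overline{r}_{i,j}$ to one another and never isolate one of them in terms of the five chosen generators, so the whole argument hinges on the observation that relation \ref{riti} at $(i,j)=(1,2)$ actually \emph{solves} for $\overline{d}_{1,2}$ as an explicit word in $t_1$ and $\overline{r}_{1,2}$. Once that is in hand the rest is routine bookkeeping — running the elimination of the $\overline{d}_{i,j}$ from \thref{prop:stab} and unwinding the triangle relations \ref{trirel} inductively — and I expect no further difficulties.
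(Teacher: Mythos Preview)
Your proof is correct and follows essentially the same route as the paper: for $g=1$ you simplify the presentation from \thref{thm:mhgq} to the claimed abelian group, and for $g\ge2$ you show the subgroup $G$ contains all the $t_i$ via \ref{atiti}, all the $a_i^2$ via \ref{sai}, all the $\overline{d}_{i,j}$ via \ref{s2}, \ref{a8} and \ref{riti}, and finally all the $\overline{r}_{i,j}$ via \ref{trirel}. The paper's proof cites exactly these relations but does not spell out the crucial observation you isolate --- that \ref{riti} at $(i,j)=(1,2)$ solves for $\overline{d}_{1,2}$ as a word in $t_1$ and $\overline{r}_{1,2}$ --- so your write-up is in fact more informative on the one point that requires thought.
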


\begin{proof}
The expression for $g=1$ is clear. If $g>2$, the subgroup of $\Modd(H_g)[\phi]$ generated by $a_1^2,s,\overline{r}_{1,2},t_1$ and $u$ contains all the $t_i$ since $u*t_i=t_{i+1}$ by \ref{atiti}, hence all the $a_i^2$ and the $\overline{d}_{i,j}$ by \ref{s2}, \ref{sai}, \ref{a8} and \ref{tr}. Since all the $k_j$ and $s_j$ are equal to products of generators $\overline{d}_{i,j}$, $t_k$ and $s$, \ref{trirel} implies that all the $\overline{r}_{i,j}$ are products of the elements in the statement. \qedhere
\end{proof}

\begin{corollary}
The abelianization of the spin handlebody group is the following:
\[
H_1(\Modd(H_g)[\phi];\Z) \cong \begin{cases}
    \Z \oplus \sfrac{\Z}{2\Z} & \text{if $g=1$},\\
    \Z \oplus \sfrac{\Z}{2\Z} \oplus \sfrac{\Z}{2\Z} & \text{if $g=2$},\\
    \sfrac{\Z}{2\Z} & \text{if $g\ge3$}.
\end{cases}
\]
\end{corollary}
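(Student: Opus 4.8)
The plan is to compute $H_1(\Modd(H_g)[\phi];\Z)$ by abelianising the finite presentation of Theorem \thref{thm:mhgq}, using Corollary \thref{cor:mhq} to cut down the number of generators one has to track. For $g=1$ there is nothing to prove: Corollary \thref{cor:mhq} already gives $\Modd(H_1)[\phi]\cong\Z\langle a_1^2\rangle\oplus\sfrac{\Z}{2\Z}\langle sa_1^2\rangle$, an abelian group, whence $H_1\cong\Z\oplus\sfrac{\Z}{2\Z}$. For $g\ge 2$ the starting point is the generating set $\{a_1^2,s,\overline{r}_{1,2},t_1,u\}$ of Corollary \thref{cor:mhq}, and the claim is that in the abelianisation even fewer classes survive.

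I would carry out the reduction of generators in three waves. First, the braid relations \ref{atiti} force all $t_i$ to a single class $t$ (so $u=t_1\cdots t_{g-1}$ maps to $(g-1)t$), and relation \ref{sai} forces all $a_i^2$ to a single class $A$. Second, I would collapse the generators $\overline{d}_{i,j}$: read additively, the pure braid relations \ref{pb} kill $\overline{d}_{r,j}$ whenever some index lies strictly between $r$ and $j$, and the relations \ref{a8} together with \ref{s2} express the few surviving ones — essentially $\overline{d}_{1,2}$ and $\overline{d}_{-1,1}$, and for $g=2$ also $\overline{d}_{-2,-1}$, since for $g=2$ the relation of \ref{a8} that would absorb $\overline{d}_{-2,-1}$ is unavailable — as explicit $\Z$-combinations of $A$, $s$, $t$. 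Third, I would feed these into the triangle relations \ref{trirel}: in the abelianisation every conjugation $X(\cdot)X^{-1}$ becomes trivial, so \ref{trirel} collapses to identities of the shape $\overline{r}_{1,j}\equiv-\overline{r}_{1,j-1}-\overline{r}_{1,2}+(\text{combination of }s,t,A)$, and similarly for the $\overline{r}_{i,j}$ with $i\le-1$, so recursively all $\overline{r}_{i,j}$ are expressed through $\overline{r}_{1,2},s,t,A$. At this point $H_1(\Modd(H_g)[\phi])$ is a quotient of the free abelian group on $\{s,t,\overline{r}_{1,2}\}$ (together with $A$ and $\overline{d}_{1,2}$, which \ref{s2} lets one eliminate in favour of $s,t$ when $g\ge3$, but for $g=2$ the extra class $\overline{d}_{-2,-1}$ keeps one more generator in play).

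It then remains to impose the surviving relations and run a Smith normal form computation. Relation \ref{rij2} with $j=2$ gives $2\overline{r}_{1,2}\equiv 2s-A$, while (H2) and \ref{hatk} contribute the relations among the $\overline{d}$'s and $A$ coming from $\overline{d}_{\{\pm1,\dots,\pm g\}}(a_1\cdots a_g)^2=1$ and its $k$-deleted versions. For $g\ge3$ there is moreover the compatibility between the triangle relation \ref{trirel} for $\overline{r}_{1,3}$ and the relation \ref{rij2} for $\overline{r}_{1,3}$ (both computing $2\overline{r}_{1,3}$); this relation is absent when $g=2$, and it is exactly what brings the free rank down to $0$, leaving $\sfrac{\Z}{2\Z}$. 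For $g=2$ the same bookkeeping, with the extra surviving class $\overline{d}_{-2,-1}$, yields $\Z\oplus\sfrac{\Z}{2\Z}\oplus\sfrac{\Z}{2\Z}$. The main obstacle is precisely this final assembly: the three waves of reductions are routine once one commits to them, but pinning down the free rank ($1$ for $g=2$ versus $0$ for $g\ge3$) and verifying that the torsion is exactly $\sfrac{\Z}{2\Z}$, respectively $(\sfrac{\Z}{2\Z})^2$, requires using all of \ref{rij2}, (H2), \ref{hatk} and \ref{trirel} without dropping a relation. As a sanity check one can compare the outcome with the known value of $H_1(\Modd(\Sigma_g)[\xi])$ through the surjection obtained by capping the boundary component of $\Sigma_g^1$, and with the $g=1$ formula already in hand.
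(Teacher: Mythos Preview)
Your overall plan --- abelianise the presentation of Theorem~\ref{thm:mhgq} and reduce the generators --- is the paper's plan too, but two of your concrete claims are wrong and would derail the computation.

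First, the pure braid relations \ref{pb} do \emph{not} kill any $\overline{d}_{r,j}$ under abelianisation. Each relation in \ref{pb} has the shape $X^{-1}*\overline{d}=Y*\overline{d}$ (or $[\overline{d},Z]=1$), which becomes the tautology $\overline{d}=\overline{d}$ once commutators vanish. The identifications among the $\overline{d}_{i,j}$ come entirely from \ref{a8}: in the abelianisation, $s*\overline{d}_{-1,j}=\overline{d}_{1,j}$ and $t_k*\overline{d}_{\bullet}=\overline{d}_{\bullet'}$ become equalities, and chasing them shows that \emph{all} $\overline{d}_{i,j}$ with $i+j\ne 0$ collapse to one class $x$, including $\overline{d}_{-2,-1}$ when $g=2$ (use \ref{a8a} to get $\overline{d}_{-2,-1}=\overline{d}_{-2,1}$, then \ref{a8d} with $k=1$ to get $\overline{d}_{-2,1}=\overline{d}_{-1,2}$, then \ref{a8a} again). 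So your ``extra surviving class $\overline{d}_{-2,-1}$ for $g=2$'' is not what accounts for the additional $\sfrac{\Z}{2\Z}$ there.

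Second, you never locate the relations that actually fix the answer. The paper's proof is short because it singles out exactly which relations matter: \ref{conjr}(a) (not (H2) or \ref{hatk}) gives $A=x^{-2}$; \ref{s2} gives $t^2=x^2$; \ref{riti} reduces every $\overline{r}_{i,j}$ to a word in $t,x$; then \ref{rij2} with \ref{riti} for $\overline{r}_{1,2}$ yields $s^2=x^2$, so the abelianisation is generated by $s,t,x$ with $s^2=t^2=x^2$. For $g=2$ nothing further is imposed and one reads off $\Z\oplus(\sfrac{\Z}{2\Z})^2$. For $g\ge 3$ the decisive extra input is \ref{conjr}(f) with $(i,j)=(1,3)$, which forces $x=1$ (hence $s^2=1$), together with \ref{trirel} for $\overline{r}_{1,3}$, which gives $t=x^3s$; the result is $\sfrac{\Z}{2\Z}$. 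Your sketch appeals instead to (H2), \ref{hatk}, and a compatibility between \ref{trirel} and \ref{rij2} for $\overline{r}_{1,3}$; those do not by themselves produce $x=1$, and without \ref{conjr}(f) you will not be able to bring the free rank down to zero for $g\ge 3$.
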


\begin{proof}
In the abelianization, all the $a_i^2$ become equal to an element $y$ by \ref{sai}, all the $\overline{d}_{i,j}$ with $i+j \ne 0$ become equal to an element $x$ by \ref{a8} and all the $t_i$ become equal to an element $t$ by \ref{atiti}. Moreover, all the $\overline{d}_{-i,i}$ are equal by \ref{a8}, and by \ref{s2} we get $\overline{d}_{-1,1}=s^2y$. 

By \ref{conjr}a) and \ref{s2}, we have $y=x^{-2}$ and $t^2=x^2$. Moreover, by \ref{riti} all the $\overline{r}_{i,j}$ become equal to products of $t$ and $x$. Now, by \ref{rij2} and \ref{riti} we get $(tx)^2=\overline{r}_{1,2}^2=x^2s^2$, hence $s^2=x^2$. This shows that the abelianization is generated by $t$, $x$ and $s$. 

If $g=2$, the other relations become superfluous.

If $g \ge 3$, we obtain $t=x^3s$ as a consequence of \ref{trirel} for $\overline{r}_{1,3}$ and \ref{riti}. Moreover, from \ref{conjr}f) for $i=1$ and $j=3$ we obtain $x=1$, and this implies that $s^2=1$.  \qedhere 
\end{proof}

\section{Passing to Dehn twist generators}
\label{dt}

In this section, we apply Tietze moves to the presentation of \thref{thm:mcgs} to find a presentation where all the generators are admissible twists.

\subsection{Fake 3-chains}

By a theorem of Gervais \cite{ger}, every relation in the mapping class group can be written in terms of braids, $3$-chains and lanterns. It is easy to see that a single $3$-chain cannot involve only admissible twists. However, this can be fixed via some lantern substitutions. We call the result a \emph{fake $3$-chain}. 

\begin{proposition}
\thlabel{prop:fake}
Let $\gamma_1,\gamma_2,\gamma_3$ be a $3$-chain on a spin surface $\Sigma_g$, $g \ge 3$. If $\phi(\gamma_i)=1$ for some $i$, it is possible to construct an admissible relation from the $3$-chain relation $C(\gamma_1,\gamma_2,\gamma_3)$ by exactly $6$ lantern substitutions (and various braid substitutions). 
\end{proposition}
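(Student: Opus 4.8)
The $3$-chain relation $C(\gamma_1,\gamma_2,\gamma_3)$ reads $(t_{\gamma_1}t_{\gamma_2}t_{\gamma_3})^4=t_{\delta_1}t_{\delta_2}$, where $\delta_1,\delta_2$ are the two boundary components of a regular neighborhood $N$ of $\gamma_1\cup\gamma_2\cup\gamma_3$ (this neighborhood has genus $1$ since the chain has odd length). The plan is to start from this relation and record which of the occurring twists are non-admissible. Since $[\gamma_3]\equiv[\gamma_1]+[\delta_1]$ in $H_1(\Sigma_g;\sfrac{\Z}{2\Z})$ and $\gamma_1\cdot\gamma_3=0$, \thref{thm:xiq}(2) gives $\phi(\delta_1)=\phi(\delta_2)=\phi(\gamma_1)+\phi(\gamma_3)+1$; by \thref{lem:elements}(2) a twist $t_c$ in the relation is non-admissible exactly when $\phi(c)=1$. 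Because $g\ge 3$ and $N$ has genus $1$, around any $1$-curve that occurs in the relation there is room to embed a copy of the configuration of Figure \ref{fig:lants} with $\alpha$ that curve and all of $\beta,\gamma,y_i,z_i,d_i$ disjoint from everything else in play.

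The main tool is \thref{rem:gdt}: equation \eqref{eq:all} rewrites $t_\alpha^{\pm2}$, for a nonseparating $1$-curve $\alpha$, as a product of admissible twists at the cost of two lantern substitutions, while \eqref{eq:fml} rewrites a fundamental multitwist $t_\beta t_\alpha^{-1}t_\gamma^{-1}$ (with $\alpha,\gamma$ the two $1$-curves of a pair of pants) at the cost of one lantern substitution. The strategy is then: (i) using only braid substitutions $t_xt_yt_x=t_yt_xt_y$, commutations of disjoint twists, and cyclic conjugation of the relator, rewrite $C(\gamma_1,\gamma_2,\gamma_3)$ so that every non-admissible twist it contains appears either inside a square $t_c^{\pm2}$ of a nonseparating $1$-curve or inside a fundamental-multitwist block; (ii) replace each such block by an admissible word via \eqref{eq:all} or \eqref{eq:fml}; (iii) verify that the result involves only admissible twists and that exactly six lantern substitutions were performed.

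For (i)–(iii) I would carry out the short case analysis on the triple $\big(\phi(\gamma_1),\phi(\gamma_2),\phi(\gamma_3)\big)$, which up to the symmetry $\gamma_1\leftrightarrow\gamma_3$ has the five cases $(1,0,0)$, $(0,1,0)$, $(1,1,0)$, $(1,0,1)$, $(1,1,1)$, the value of $\phi(\delta_1)=\phi(\delta_2)$ being determined in each case by the identity above. In the representative case $\phi(\gamma_1)=1$, $\phi(\gamma_2)=\phi(\gamma_3)=0$ (so $\delta_1,\delta_2$ are admissible), one uses $t_{\gamma_1}t_{\gamma_3}=t_{\gamma_3}t_{\gamma_1}$ together with the braid relations $t_{\gamma_1}t_{\gamma_2}t_{\gamma_1}=t_{\gamma_2}t_{\gamma_1}t_{\gamma_2}$ and $t_{\gamma_2}t_{\gamma_3}t_{\gamma_2}=t_{\gamma_3}t_{\gamma_2}t_{\gamma_3}$ to collect the four occurrences of $t_{\gamma_1}$ into squared twists of nonseparating $1$-curves, and then clears each square by \eqref{eq:all}; the other cases are handled the same way after first pushing $t_{\delta_1},t_{\delta_2}$ (when they are $1$-curves) into square form and, where convenient, peeling off a fundamental-multitwist block handled by \eqref{eq:fml}. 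The arithmetic to check throughout is that the braid bookkeeping always consolidates the non-admissible content into precisely three squared twists (counting a fundamental-multitwist block as "half a square" in the exponent count), so that exactly $6$ lantern substitutions are spent and the final relator has the total exponent $6$ required in the main theorem.

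The main obstacle is exactly this uniform count: controlling the braid rewriting so that, regardless of which of the five cases occurs, the non-admissible twists reduce to precisely the amount that \eqref{eq:all} and \eqref{eq:fml} clear in six lantern moves — neither more (which would spoil the "total exponent $6$" bookkeeping) nor fewer — while ensuring that the $1$-curves appearing at each stage are genuinely nonseparating, so that the Figure \ref{fig:lants} configurations embed; this is where the hypothesis $g\ge3$ enters. A subsidiary point, again settled by the genus bound, is that the curves $d_i,y_i,z_i$ produced by successive lantern substitutions can be taken disjoint from the chain curves and from one another, so that the substitutions can be performed independently.
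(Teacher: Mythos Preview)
Your overall strategy—collect the non-admissible twists into squares and fundamental multitwists, then clear them with \eqref{eq:all} and \eqref{eq:fml}—is exactly what the paper does. The gap is in the case reduction and in the bookkeeping you yourself flag as ``the main obstacle''.

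The paper does \emph{not} treat all five spin patterns. It first observes that, modulo braid substitutions, $C(\gamma_1,\gamma_2,\gamma_3)$ is equivalent to $C(t_{\gamma_2}^{-1}(\gamma_1),\gamma_2,\gamma_3)$; applying this (and the $\gamma_1\leftrightarrow\gamma_3$ symmetry) one may always arrange $\phi(\gamma_1)=1$. Then, since twisting along the $1$-curve $\gamma_1$ flips $\phi(\gamma_2)$, the same kind of move lets you \emph{choose} $\phi(\gamma_2)$ freely. So only the dichotomy $\phi(\gamma_3)\in\{0,1\}$ survives, and in each of these two cases the paper exhibits a concrete rewriting with exactly three ``square'' blocks (case $\phi(\gamma_3)=0$: three $t_{\gamma_1}^2$'s; case $\phi(\gamma_3)=1$: two $t_{\gamma_1}^2$'s from $t_{\gamma_2}t_{\gamma_3}t_{\gamma_1}t_{\gamma_2}=t_{t_{\gamma_1}(\gamma_2)}t_{\gamma_1}^2t_{t_{\gamma_3}(\gamma_2)}$ on the left, plus two fundamental multitwists $t_{\delta_i}t_{\gamma_1}^{-1}t_{\gamma_3}^{-1}$ on the right), giving $3\times 2=6$ or $2\times 2+2\times 1=6$ lanterns.

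Without this reduction your representative case $(1,0,0)$ already illustrates the problem: there are only four occurrences of $t_{\gamma_1}$ in $(t_{\gamma_1}t_{\gamma_2}t_{\gamma_3})^4$, and braid relations preserve letter counts, so ``collecting the four occurrences into squared twists'' yields only two squares and hence four lanterns, not six. The uniform count you were worried about is not automatic; it is achieved precisely by first normalising the chain so that the non-admissible content is packaged the same way in every case. Add that reduction step, and the rest of your outline goes through.
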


\begin{proof}
First of all, we reduce to two basic cases. We will often apply tacitly braid substitutions. If $\phi(\gamma_1)=1$ or $\phi(\gamma_3)=1$, we can assume by symmetry that $\phi(\gamma_1)=1$. If $\phi(\gamma_2)=1$, we have
\[
t_{\gamma_1}t_{\gamma_2}t_{\gamma_3}=t_{\gamma_2}t_{\gamma_2}^{-1}t_{\gamma_1}t_{\gamma_2}t_{\gamma_3}=t_{\gamma_2}t_{t_{\gamma_2}^{-1}(\gamma_1)} t_{\gamma_3},
\]
so $C(\gamma_1,\gamma_2,\gamma_3)$ is equivalent modulo braids to $C(t_{\gamma_2}^{-1}(\gamma_1),\gamma_2,\gamma_3)$. Hence, also in this case we can assume that $\phi(\gamma_1)=1$. This allows us to choose freely the value of $\phi(\gamma_2)$ by twisting along $\gamma_1$. 

If $\phi(\gamma_3)=0$, the two boundary components $\delta_1,\delta_2$ of a neighborhood of the $3$-chain are both admissible. Assume that $\phi(\gamma_2)=0$. Notice that we can rewrite the $3$-chain relation as
\[
t_{\gamma_1}^2\,t_{\gamma_2}t_{\gamma_1}^2t_{\gamma_1}\,t_{\gamma_3}t_{\gamma_2}t_{\gamma_1}^2t_{\gamma_2}t_{\gamma_3}=t_{\delta_1}t_{\delta_2}.
\]
By (\ref{eq:all}) we can apply two lantern substitutions for each occurrence of $t_{\gamma_1}^2$ to get an admissible relation.

If $\phi(\gamma_3)=1$, assume that $\phi(\gamma_2)=1$. We can rewrite the $3$-chain relation as
\[
(t_{\gamma_2}t_{\gamma_3}t_{\gamma_1}t_{\gamma_2})^2=t_{\delta_1}t_{\gamma_1}^{-1}t_{\gamma_3}^{-1}\,t_{\delta_2}t_{\gamma_1}^{-1}t_{\gamma_3}^{-1}.
\]
Note that on the right hand side there are two fundamental multitwists, which can be made admissible via a lantern substitution each by (\ref{eq:fml}). For the left hand side, notice that 
\begin{equation}
\label{eq:t}
t_{\gamma_2}t_{\gamma_3}t_{\gamma_1}t_{\gamma_2}=t_{t_{\gamma_1}(\gamma_2)}t_{\gamma_1}^2t_{t_{\gamma_3}(\gamma_2)}.
\end{equation}
Using again (\ref{eq:all}) we conclude.

Observe that in both cases all lanterns have the same sign. \qedhere
\end{proof}

We work out an example in detail.

\begin{example}
\thlabel{ex:fake}
Consider the $3$-chain $\gamma_1,\gamma_2,\gamma_3$ in Figure \ref{fig:fake}. We have the following lantern relators:
\begin{gather*}
L_1:=t_{y_1}t_{z_1}t_{\delta_1}t_{\gamma_1}^{-1}t_{\gamma_3}^{-1}t_{\varepsilon_1}^{-1}t_{\varepsilon_2}^{-1},\\
L_2:=t_{y_2}t_{z_2}t_{\delta_2}t_{\gamma_1}^{-1}t_{\gamma_3}^{-1}t_{\varepsilon_1}^{-1}t_{\varepsilon_3}^{-1}, \\
L_3:=t_{y_3}t_{z_3}t_{\gamma_3}t_{\gamma_1}^{-1}t_{\delta_2}^{-1}t_{y_1}^{-1}t_{\varepsilon_1}^{-1}.
\end{gather*}
Here, $z_1,z_2$ and $z_3$ can be determined from the other curves, and are easily seen to be admissible. Taking the product of the inverses of $L_2$ and $L_3$, we get the relator
\[
A:=t_{\varepsilon_3}t_{\varepsilon_1}t_{z_2}^{-1}t_{y_2}^{-1}t_{\gamma_1}^2t_{z_3}^{-1}t_{y_3}^{-1}t_{\varepsilon_1}t_{y_2}.
\]
Since $L_1$, $L_2$ and $A$ contain as subwords $t_{\delta_1}t_{\gamma_1}^{-1}t_{\gamma_3}^{-1}$, $t_{\delta_2}t_{\gamma_1}^{-1}t_{\gamma_3}^{-1}$ and $t_{\gamma_1}^2$ respectively, and all the other twists that appear are admissible, we can plug some conjugates of $L_1$, $L_2$ and $A^{-1}$ in $C:=C(\gamma_1,\gamma_2,\gamma_3)$ so that all non-admissible twist get canceled out, and we get the fake $3$-chain relator
\[
F:=t_{\varepsilon_1}^{-1}t_{\varepsilon_2}^{-1}t_{y_1}t_{z_1}
\,\big(t_{t_{\gamma_1}(\gamma_2)} 
t_{y_2}t_{z_2}t_{\varepsilon_1}^{-1}t_{\varepsilon_3}^{-1}t_{y_2}^{-1}t_{\varepsilon_1}^{-1}t_{y_3}t_{z_3}
t_{t_{\gamma_3}(\gamma_2)}\big)^2\,
t_{\varepsilon_1}^{-1}t_{\varepsilon_3}^{-1}t_{y_2}t_{z_2}.
\]
\end{example}

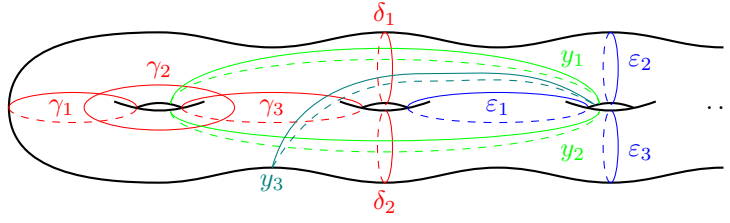
\begin{figure}
\centering
\begin{tikzpicture}
\draw [thick] (0,0) to [out=90, in=-180] (2,1) to [out=0, in=180] (3.5,.8) to [out=0, in=180] (5,1) to [out=0, in=180] (6.5,.8) to [out=0, in=180] (8,1) to [out=0, in=180] (9.5,.8);
\draw [thick] (0,0) to [out=-90, in=180] (2,-1) to [out=0, in=-180] (3.5,-.8) to [out=0, in=180] (5,-1) to [out=0, in=180] (6.5,-.8) to [out=0, in=180] (8,-1) to [out=0, in=180] (9.5,-.8);
\draw [thick] (1.4,.08) to [out=-20, in=-160] (2.6,.08);
\draw [thick] (1.7,0) to [out=20, in=160] (2.3,0);
\draw [red] (1,0) arc (-180:180:1 and .3);
\node [red] at (2,.5) {$\gamma_2$};
\draw [red] (0,0) arc (180:0:.85 and .2);
\draw [red, dashed] (0,0) arc (-180:0:.85 and .2);
\node [red] at (.7,0) {$\gamma_1$};
\draw [red] (2.3,0) arc (180:0:1.2 and .2);
\draw [red, dashed] (2.3,0) arc (-180:0:1.2 and .2);
\node [red] at (3.5,0) {$\gamma_3$};
\draw [thick] (4.4,.08) to [out=-20, in=-160] (5.6,.08);
\draw [thick] (4.7,0) to [out=20, in=160] (5.3,0);
\draw [red] (5,1) arc (90:-90:.1 and .48);
\draw [red, dashed] (5,1) arc (90:270:.1 and .48);
\node [red] at (5,1.25) {$\delta_1$};
\draw [red] (5,-1) arc (-90:90:.1 and .48);
\draw [red, dashed] (5,-1) arc (270:90:.1 and .48);
\node [red] at (5,-1.25) {$\delta_2$};
\draw [blue] (5.3,0) arc (180:0:1.2 and .2);
\draw [blue, dashed] (5.3,0) arc (-180:0:1.2 and .2);
\node [blue] at (6.5,0) {$\varepsilon_1$};
\draw [thick] (7.4,.08) to [out=-20, in=-160] (8.6,.08);
\draw [thick] (7.7,0) to [out=20, in=160] (8.3,0);
\draw [blue] (8,1) arc (90:-90:.1 and .48);
\draw [blue, dashed] (8,1) arc (90:270:.1 and .48);
\node [blue] at (8.4,.6) {$\varepsilon_2$};
\draw [blue] (8,-1) arc (-90:90:.1 and .48);
\draw [blue, dashed] (8,-1) arc (270:90:.1 and .48);
\node [blue] at (8.4,-.6) {$\varepsilon_3$};
\draw [green] (2.15,-.05) arc(181:359:2.85 and .4);
\draw [green, dashed] (2.15,-.05) arc(181:359:2.85 and .55);
\node [green] at (7.5,-.6) {$y_2$};
\draw [green] (2.15,.05) arc(179:1:2.85 and .75);
\draw [green, dashed] (2.15,.05) arc(179:1:2.85 and .6);
\node [green] at (7.5,.65) {$y_1$};
\draw [teal] (3.5,-.8) to [out=75, in=180] (5.5,.45) to [out=0, in=145] (7.8,.03);
\draw [teal, dashed] (3.5,-.8) to [out=60, in=180] (5.5,.35) to [out=0, in=150] (7.8,.03);
\node [teal] at (3.5,-1) {$y_3$};
\node at (9.5,0) {$\dots$};
\end{tikzpicture}
\caption{Curves involved in a fake $3$-chain relation. Here blue and green curves are admissible and red curves are not admissible.}
\label{fig:fake}
\end{figure}

\subsection{First relations in the new generators}

Denote by $b_1,\dots,b_g$, $\xi_1,\dots,\xi_{g-1}$, $\eta_2,\dots,\eta_g$ the Dehn twists along the corresponding curves in Figure \ref{fig:gens}. This will be our new generating set. Let $\phi$ be the unique spin structure on $\Sigma_g^1$ such that all the curves of Figure \ref{fig:gens} are admissible (not only the blue ones).

\begin{remark}
Hamenstädt's system of generating twist for $\Modd(\Sigma_g)[\phi]$ \cite{ham} is different than ours for $g \ge 5$, and has a smaller cardinality. Indeed, our generating set is not an “admissible curve-system" in the sense of \cite[Definition 1.2]{ham}, as its intersection graph is not a tree.
\end{remark} 

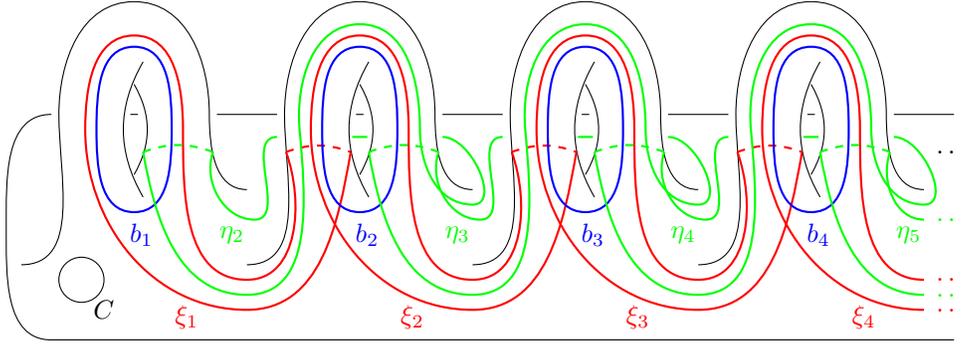
\begin{figure}
\centering
\begin{tikzpicture}
\draw (.5,0) to [out=0, in=-90] (1,2) to [out=90, in=180] (2,3.5) to [out=0, in=90] (3,2) to [out=-90, in=180] (3.5,1);
\draw (2,2.4) to [out=-60, in=60] (2,1.2);
\draw (2.12,2.7) to [out=-120, in=120] (2.12,.9);
\draw (3.5,0) to [out=0, in=-90] (4,2) to [out=90, in=180] (5,3.5) to [out=0, in=90] (6,2) to [out=-90, in=180] (6.5,1);
\draw (5,2.4) to [out=-60, in=60] (5,1.2);
\draw (5.12,2.7) to [out=-120, in=120] (5.12,.9);
\draw (6.5,0) to [out=0, in=-90] (7,2) to [out=90, in=180] (8,3.5) to [out=0, in=90] (9,2) to [out=-90, in=180] (9.5,1);
\draw (8,2.4) to [out=-60, in=60] (8,1.2);
\draw (8.12,2.7) to [out=-120, in=120] (8.12,.9);
\draw (9.5,0) to [out=0, in=-90] (10,2) to [out=90, in=180] (11,3.5) to [out=0, in=90] (12,2) to [out=-90, in=180] (12.5,1);
\draw (11,2.4) to [out=-60, in=60] (11,1.2);
\draw (11.12,2.7) to [out=-120, in=120] (11.12,.9);
\node at (12.9,1.5) {$\dots$};
\draw (.9,2) to [out=180, in=90] (.3,.5) to [out=-90, in=180] (.9,-1) to [out=0, in=180] (12.9,-1);
\draw (1.95,2) -- (2.05,2);
\draw (3.1,2) -- (3.9,2);
\draw (4.95,2) -- (5.05,2);
\draw (6.1,2) -- (6.9,2);
\draw (7.95,2) -- (8.05,2);
\draw (9.1,2) -- (9.9,2);
\draw (10.95,2) -- (11.05,2);
\draw (12.1,2) -- (12.9,2);
\draw [blue, thick] (2,2.9) to [out=0, in=90] (2.5,1.8) to [out=-90, in=0] (2,.7) to [out=180, in=-90] (1.5,1.8) to [out=90, in=180] (2,2.9);
\draw [red, thick, dashed] (4.02,1.5) to [out=20, in=160] (4.88,1.5);
\draw [red, thick] (4.02,1.5) to [out=-70, in=0] (3.5,-.2) to [out=180, in=-90] (2.65,1.8) to [out=90, in=0] (2,3.05) to [out=180, in=90] (1.35,1.8) to [out=-90, in=180] (3.5,-.6) to [out=0,  in=-100] (4.88,1.5);
\draw [green, thick, dashed] (2.12,1.5) to [out=20, in=160] (3.05,1.5);
\draw [green, thick] (2.12,1.5) to [out=-80, in=180] (3.5,-.4) to [out=0, in=-85] (4.2,1.8) to [out=95, in=180] (5,3.2) to [out=0, in=90] (5.8,1.8) to [out=-90, in=180] (6.5,.8) to [out=0, in=0] (6.1,1.7);
\draw [green, thick] (4.9,1.7) -- (5.1,1.7);
\draw [green, thick] (3.9,1.7) to [out=180, in=0] (3.6,.6) to [out=180,in=-110] (3.05,1.5);
\draw [blue, thick] (5,2.9) to [out=0, in=90] (5.5,1.8) to [out=-90, in=0] (5,.7) to [out=180, in=-90] (4.5,1.8) to [out=90, in=180] (5,2.9);
\draw [red, thick, dashed] (7.02,1.5) to [out=20, in=160] (7.88,1.5);
\draw [red, thick] (7.02,1.5) to [out=-70, in=0] (6.5,-.2) to [out=180, in=-90] (5.65,1.8) to [out=90, in=0] (5,3.05) to [out=180, in=90] (4.35,1.8) to [out=-90, in=180] (6.5,-.6) to [out=0,  in=-100] (7.88,1.5);
\draw [green, thick, dashed] (5.12,1.5) to [out=20, in=160] (6.05,1.5);
\draw [green, thick] (5.12,1.5) to [out=-80, in=180] (6.5,-.4) to [out=0, in=-85] (7.2,1.8) to [out=95, in=180] (8,3.2) to [out=0, in=90] (8.8,1.8) to [out=-90, in=180] (9.5,.8) to [out=0, in=0] (9.1,1.7);
\draw [green, thick] (7.9,1.7) -- (8.1,1.7);
\draw [green, thick] (6.9,1.7) to [out=180, in=0] (6.6,.6) to [out=180,in=-110] (6.05,1.5);
\draw [blue, thick] (8,2.9) to [out=0, in=90] (8.5,1.8) to [out=-90, in=0] (8,.7) to [out=180, in=-90] (7.5,1.8) to [out=90, in=180] (8,2.9);
\draw [red, thick, dashed] (10.02,1.5) to [out=20, in=160] (10.88,1.5);
\draw [red, thick] (10.02,1.5) to [out=-70, in=0] (9.5,-.2) to [out=180, in=-90] (8.65,1.8) to [out=90, in=0] (8,3.05) to [out=180, in=90] (7.35,1.8) to [out=-90, in=180] (9.5,-.6) to [out=0,  in=-100] (10.88,1.5);
\draw [green, thick, dashed] (8.12,1.5) to [out=20, in=160] (9.05,1.5);
\draw [green, thick] (8.12,1.5) to [out=-80, in=180] (9.5,-.4) to [out=0, in=-85] (10.2,1.8) to [out=95, in=180] (11,3.2) to [out=0, in=90] (11.8,1.8) to [out=-90, in=180] (12.5,.8) to [out=0, in=0] (12.1,1.7);
\draw [green, thick] (10.9,1.7) -- (11.1,1.7);
\draw [green, thick] (9.9,1.7) to [out=180, in=0] (9.6,.6) to [out=180,in=-110] (9.05,1.5);
\draw [blue, thick] (11,2.9) to [out=0, in=90] (11.5,1.8) to [out=-90, in=0] (11,.7) to [out=180, in=-90] (10.5,1.8) to [out=90, in=180] (11,2.9);
\draw [red, thick] (12.5,-.2) to [out=180, in=-90] (11.65,1.8) to [out=90, in=0] (11,3.05) to [out=180, in=90] (10.35,1.8) to [out=-90, in=180] (12.5,-.6);
\draw [green, thick, dashed] (11.12,1.5) to [out=20, in=160] (12.05,1.5);
\draw [green, thick] (11.12,1.5) to [out=-80, in=180] (12.5,-.4);
\draw [green, thick] (12.5,.6) to [out=180,in=-110] (12.05,1.5);
\node [red] at (12.9,-.2) {$\dots$};
\node [red] at (12.9,-.6) {$\dots$};
\node [green] at (12.9,.6) {$\dots$};
\node [green] at (12.9,-.4) {$\dots$};
\node [blue] at (2.1,.4) {$b_1$};
\node [red] at (2.7,-.7) {$\xi_1$};
\node [green] at (3.3,.4) {$\eta_2$};
\node [blue] at (5.1,.4) {$b_2$};
\node [red] at (5.7,-.7) {$\xi_2$};
\node [green] at (6.3,.4) {$\eta_3$};
\node [blue] at (8.1,.4) {$b_3$};
\node [red] at (8.7,-.7) {$\xi_3$};
\node [green] at (9.3,.4) {$\eta_4$};
\node [blue] at (11.1,.4) {$b_4$};
\node [red] at (11.7,-.7) {$\xi_4$};
\node [green] at (12.3,.4) {$\eta_5$};
\draw (1.6,-.2) arc (0:360:.3 and .3);
\node at (1.6,-.6) {$C$};
\end{tikzpicture}
\caption{Generators $b_1,\dots,b_g$, $\xi_1,\dots,\xi_{g-1}$ and $\eta_2,\dots,\eta_g$. All the colored curves in the picture are admissible.}
\label{fig:gens}
\end{figure}

We first express the generators $\overline{d}_{1,2}$ and $a_1^2$ as products of admissible twists, using explicit embeddings of the subsurface of Figure \ref{fig:lants} in $\Sigma_g^1$. 

\begin{lemma}
\thlabel{lem:da}
Consider the following elements of $\Modd(\Sigma_g^1)[\phi]$ (see Figure \ref{fig:lanterns}):
\begin{gather*}
H_3:=\xi_2\xi_1b_2\eta_3\eta_2b_1^2\eta_2\eta_3b_2\xi_1\xi_2, \quad m_1:=H_3*b_3, \\
m_2:=(\eta_2\eta_3)*b_2, \quad m_3:=(b_1\eta_2\eta_3b_2)*\xi_1, \quad m_4:=(\xi_1b_2\eta_3\eta_2)*b_1.
\end{gather*}
The following relations hold in $\Modd(\Sigma_g^1)[\phi]$:
\begin{enumerate}[label=\textit{(\roman*)}]
\item $\overline{d}_{1,2}=m_1m_2\big((\eta_4m_1m_2\eta_4)*m_3^{-1}\big)m_3^{-1}$;\label{d12}
\item $\overline{d}_{1,2}^{-1}a_1^{-2}=\eta_3b_3\big((\eta_4\eta_3b_3\eta_4)*m_4^{-1}\big)m_4^{-1}$;
\item $a_1^2=m_4\big((\eta_4\eta_3b_3\eta_4)*m_4\big)\eta_3^{-1}b_3^{-1}m_3\big((\eta_4m_1m_2\eta_4)*m_3\big)m_1^{-1}m_2^{-1}$.\label{a2da}
\end{enumerate}
\end{lemma}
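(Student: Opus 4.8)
The plan is to realize all three identities by embedding the surface $N$ of Figure~\ref{fig:lants} — a four-holed sphere with two handles attached — into $\Sigma_g^1$ in a controlled way and then invoking the rewritings (\ref{eq:fml}) and (\ref{eq:all}) of the lantern and squared-twist relations. First I would fix an orientation-preserving embedding $\iota\colon N\hookrightarrow\Sigma_g^1$ supported near the first handles so that the three non-admissible (red) curves $\alpha,\beta,\gamma$ of Figure~\ref{fig:lants} are sent to $\alpha_1$, $\delta_{1,2}$ and $\alpha_2$ respectively; note that $\delta_{1,2},\alpha_1,\alpha_2$ cobound a pair of pants with all three spin values equal to $1$ (homological coherence forces $\phi(\delta_{1,2})=-3\equiv1$), which is exactly the $b=1$ situation discussed in \thref{rem:gdt}. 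Under such an embedding the image of $t_\beta t_\alpha^{-1}t_\gamma^{-1}$ is precisely $d_{1,2}a_1^{-1}a_2^{-1}=\overline{d}_{1,2}$, while a second copy of the configuration makes the image of $t_\alpha^2$ equal to $a_1^2$. The eight remaining curves $d_1,\dots,d_4,y_1,y_2,z_1,z_2$ of Figure~\ref{fig:lants} are carried by $\iota$ to curves disjoint from $\alpha_1\cup\alpha_2$ that differ homologically from admissible curves only by boundary $1$-curves, so by \thref{lem:operations} (computing spin values as in the $1$-$1$-$2$ trick and \thref{cor:15}) they are all admissible, and hence (\ref{eq:fml}) and (\ref{eq:all}) are genuine relations inside $\Modd(\Sigma_g^1)[\phi]$.

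The second step is purely a matter of identifying each of these eight admissible images with a word in the generators of Figure~\ref{fig:gens}. Some are literally generators ($\eta_3$, $b_3$, $\eta_4$, and so on); the others are obtained from a generator by pushing it across one or two neighbouring handles, and such a push-off is a conjugate of the generator by a short word in the $b_i,\xi_i,\eta_i$. Concretely one checks that the relevant images are $H_3$ together with $m_1=H_3*b_3$, $m_2=(\eta_2\eta_3)*b_2$, $m_3=(b_1\eta_2\eta_3b_2)*\xi_1$ and $m_4=(\xi_1b_2\eta_3\eta_2)*b_1$; each is a twist along an admissible curve, being a conjugate of an admissible twist by an element of the spin mapping class group, so spin values need not be recomputed (though they can be). Substituting these identifications into the images of (\ref{eq:fml}) and (\ref{eq:all}), and using that disjoint curves give commuting twists — in particular that $b_3$ commutes with $\eta_3$ and that $m_1$ commutes with $m_2$ — to put the words in the displayed order, yields formula (i) for $\overline{d}_{1,2}$ and, from the companion lantern, formula (ii) for $\overline{d}_{1,2}^{-1}a_1^{-2}$.

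Finally, (iii) follows formally: from (ii) we have $a_1^2\overline{d}_{1,2}=(\overline{d}_{1,2}^{-1}a_1^{-2})^{-1}$, hence $a_1^2=(\overline{d}_{1,2}^{-1}a_1^{-2})^{-1}\overline{d}_{1,2}^{-1}$, and inverting the right-hand sides of (ii) and (i) and again invoking the commutations $[b_3,\eta_3]=1$ and $[m_1,m_2]=1$ reproduces the displayed word. I expect the main obstacle to be the bookkeeping in the first two steps: choosing the embedding $\iota$ so that \emph{all eight} auxiliary curves are simultaneously admissible and simultaneously recognizable as the specific conjugates $H_3,m_1,\dots,m_4$ — the $\eta_4$-conjugation appearing in (i)–(iii) reflects that the lantern relation is being read off across the boundary between the handle carrying the configuration and the next handle — and then verifying that the substitution collapses to exactly the stated words rather than to a merely conjugate or cyclically rotated one. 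These are finite, explicit checks with the curves of Figures~\ref{fig:lants}, \ref{fig:lanterns} and~\ref{fig:gens}, but that is where the care is needed; once they are done, (\ref{eq:fml}), (\ref{eq:all}) and the two commutations do the rest.
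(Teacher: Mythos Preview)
Your approach is essentially the same as the paper's. The paper's proof consists of a single sentence: relations (i) and (ii) are precisely the lantern relations depicted in Figure~\ref{fig:lanterns} (so they hold in $\Modd(\Sigma_g^1)$, hence in $\Modd(\Sigma_g^1)[\phi]$ since both sides lie there), and (iii) follows immediately by combining them. Your embedding of the configuration of Figure~\ref{fig:lants} and appeal to (\ref{eq:fml}) and (\ref{eq:all}) is just a more explicit way of saying the same thing; the commutations $[b_3,\eta_3]=1$ and $[m_1,m_2]=1$ you invoke to pass from the raw product $(\text{ii})^{-1}(\text{i})^{-1}$ to the displayed form of (iii) are exactly the disjointness facts the paper leaves implicit in the word ``immediate''.
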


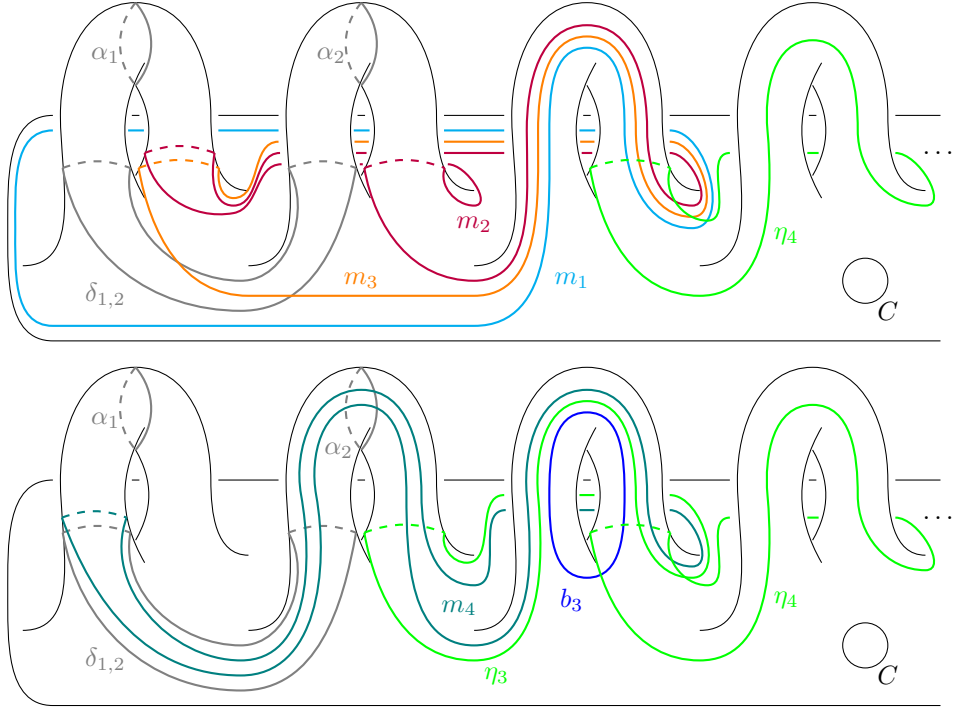
\begin{figure}
\centering
\begin{tikzpicture}
\draw (.5,0) to [out=0, in=-90] (1,2) to [out=90, in=180] (2,3.5) to [out=0, in=90] (3,2) to [out=-90, in=180] (3.5,1);
\draw (2,2.4) to [out=-60, in=60] (2,1.2);
\draw (2.12,2.7) to [out=-120, in=120] (2.12,.9);
\draw (3.5,0) to [out=0, in=-90] (4,2) to [out=90, in=180] (5,3.5) to [out=0, in=90] (6,2) to [out=-90, in=180] (6.5,1);
\draw (5,2.4) to [out=-60, in=60] (5,1.2);
\draw (5.12,2.7) to [out=-120, in=120] (5.12,.9);
\draw (6.5,0) to [out=0, in=-90] (7,2) to [out=90, in=180] (8,3.5) to [out=0, in=90] (9,2) to [out=-90, in=180] (9.5,1);
\draw (8,2.4) to [out=-60, in=60] (8,1.2);
\draw (8.12,2.7) to [out=-120, in=120] (8.12,.9);
\draw (9.5,0) to [out=0, in=-90] (10,2) to [out=90, in=180] (11,3.5) to [out=0, in=90] (12,2) to [out=-90, in=180] (12.5,1);
\draw (11,2.4) to [out=-60, in=60] (11,1.2);
\draw (11.12,2.7) to [out=-120, in=120] (11.12,.9);
\node at (12.7,1.5) {$\dots$};
\draw (.9,2) to [out=180, in=90] (.3,.5) to [out=-90, in=180] (.9,-1) to [out=0, in=180] (12.7,-1);
\draw (1.95,2) -- (2.05,2);
\draw (3.1,2) -- (3.9,2);
\draw (4.95,2) -- (5.05,2);
\draw (6.1,2) -- (6.9,2);
\draw (7.95,2) -- (8.05,2);
\draw (9.1,2) -- (9.9,2);
\draw (10.95,2) -- (11.05,2);
\draw (12.1,2) -- (12.7,2);
\draw [gray, thick] (2,3.5) to [out=-50, in=50] (2,2.4);
\draw [gray, thick, dashed] (2,3.5) to [out=-130, in=130] (2,2.4);
\draw [gray, thick] (5,3.5) to [out=-50, in=50] (5,2.4);
\draw [gray, thick, dashed] (5,3.5) to [out=-130, in=130] (5,2.4);
\draw [gray, thick, dashed] (1.03,1.3) to [out=20, in=160] (1.93,1.3);
\draw [gray, thick, dashed] (4.03,1.3) to [out=20, in=160] (4.93,1.3);
\draw [orange, thick, dashed] (2.04,1.3) to [out=20, in=160] (3.11,1.3);
\draw [gray, thick] (4.03,1.3) to [out=-60, in=0] (3.4,-.2) to [out=180, in=-100] (1.93,1.3);
\draw [gray, thick] (1.03,1.3) to [out=-80, in=180] (3.4,-.6) to [out=0,  in=-100] (4.93,1.3);
\draw [orange, thick] (3.9,1.65) to [out=180, in=0] (3.3,.9) to [out=180,in=-100] (3.11,1.3);
\draw [purple, thick] (3.92,1.5) to [out=180, in=0] (3.3,.8) to [out=180,in=-100] (3.05,1.5);
\draw [purple, thick] (3.94,1.35) to [out=180, in=10] (3.3,.7) to [out=190,in=-80] (2.12,1.5);
\draw [cyan, thick] (1.9,1.8) -- (2.11,1.8);
\draw [cyan, thick] (3.1,1.8) -- (3.9,1.8);
\draw [cyan, thick] (4.9,1.8) -- (5.11,1.8);
\draw [orange, thick] (4.91,1.65) -- (5.1,1.65);
\draw [purple, thick] (4.93,1.5) -- (5.07,1.5);
\draw [purple, thick] (4.98,1.35) -- (5.02,1.35);
\draw [cyan, thick] (6.1,1.8) -- (6.9,1.8);
\draw [orange, thick] (6.1,1.65) -- (6.9,1.65);
\draw [purple, thick] (6.1,1.5) -- (6.9,1.5);
\draw [cyan, thick] (7.9,1.8) -- (8.11,1.8);
\draw [orange, thick] (7.91,1.65) -- (8.1,1.65);
\draw [purple, thick] (7.93,1.5) -- (8.07,1.5);
\draw [cyan, thick]  (9.1,1.8) to [out=0, in=0] (9.4,.5) to [out=180, in=-90] (8.5,1.8) to [out=90, in=0] (8,2.9) to [out=180, in=90] (7.5,1.8) to [out=-90, in=0] (6.5,-.8) -- (.9,-.8) to [out=180, in=-90] (.4,.5) to [out=90, in=180] (.9,1.8);
\draw [orange, thick] (9.1,1.65) to [out=0, in=0] (9.4,.65) to [out=180, in=-90] (8.65,1.8) to [out=90, in=0] (8,3.05) to [out=180, in=90] (7.35,1.8) to [out=-90, in=0] (6.5,-.4) -- (3.5,-.4) to [out=180, in=-80] (2.04,1.3);
\draw [purple, thick, dashed] (2.12,1.5) to [out=20, in=160] (3.05,1.5);
\draw [purple, thick] (5.04,1.3) to [out=-80, in=180] (6.5,-.2) to [out=0, in=-90] (7.2,1.8) to [out=90, in=180] (8,3.2) to [out=0, in=90] (8.8,1.8) to [out=-90, in=180] (9.4,.8) to [out=0, in=0] (9.1,1.5);
\draw [purple, thick, dashed] (5.04,1.3) to [out=20, in=160] (6.11,1.3);
\draw [purple, thick] (6.15,1.35) to [out=0, in=0] (6.5,.8) to [out=180,in=-110] (6.11,1.3);
\draw [green, thick, dashed] (8.04,1.3) to [out=20, in=160] (9.11,1.3);
\draw [green, thick] (8.04,1.3) to [out=-80, in=180] (9.5,-.4) to [out=0, in=-85] (10.4,1.8) to [out=95, in=180] (11,3) to [out=0, in=90] (11.6,1.8) to [out=-90, in=180] (12.5,.8) to [out=0, in=0] (12.1,1.5);
\draw [green, thick] (10.93,1.5) -- (11.08,1.5);
\draw [green, thick] (9.9,1.5) to [out=180, in=0] (9.6,.6) to [out=180,in=-110] (9.11,1.3);
\node [green] at (10.65,.4) {$\eta_4$};
\node [cyan] at (7.8,-.2) {$m_1$};
\node [orange] at (5,-.2) {$m_3$};
\node [purple] at (6.5,.55) {$m_2$};
\node [gray] at (1.6,-.4) {$\delta_{1,2}$};
\node [gray] at (1.6,2.8) {$\alpha_1$};
\node [gray] at (4.6,2.8) {$\alpha_2$};
\draw (12,-.2) arc (0:360:.3 and .3);
\node at (12,-.6) {$C$};
\end{tikzpicture}

\bigskip
\begin{tikzpicture}
\draw (.5,0) to [out=0, in=-90] (1,2) to [out=90, in=180] (2,3.5) to [out=0, in=90] (3,2) to [out=-90, in=180] (3.5,1);
\draw (2,2.4) to [out=-60, in=60] (2,1.2);
\draw (2.12,2.7) to [out=-120, in=120] (2.12,.9);
\draw (3.5,0) to [out=0, in=-90] (4,2) to [out=90, in=180] (5,3.5) to [out=0, in=90] (6,2) to [out=-90, in=180] (6.5,1);
\draw (5,2.4) to [out=-60, in=60] (5,1.2);
\draw (5.12,2.7) to [out=-120, in=120] (5.12,.9);
\draw (6.5,0) to [out=0, in=-90] (7,2) to [out=90, in=180] (8,3.5) to [out=0, in=90] (9,2) to [out=-90, in=180] (9.5,1);
\draw (8,2.4) to [out=-60, in=60] (8,1.2);
\draw (8.12,2.7) to [out=-120, in=120] (8.12,.9);
\draw (9.5,0) to [out=0, in=-90] (10,2) to [out=90, in=180] (11,3.5) to [out=0, in=90] (12,2) to [out=-90, in=180] (12.5,1);
\draw (11,2.4) to [out=-60, in=60] (11,1.2);
\draw (11.12,2.7) to [out=-120, in=120] (11.12,.9);
\node at (12.7,1.5) {$\dots$};
\draw (.9,2) to [out=180, in=90] (.3,.5) to [out=-90, in=180] (.9,-1) to [out=0, in=180] (12.7,-1);
\draw (1.95,2) -- (2.05,2);
\draw (3.1,2) -- (3.9,2);
\draw (4.95,2) -- (5.05,2);
\draw (6.1,2) -- (6.9,2);
\draw (7.95,2) -- (8.05,2);
\draw (9.1,2) -- (9.9,2);
\draw (10.95,2) -- (11.05,2);
\draw (12.1,2) -- (12.7,2);
\draw [gray, thick] (2,3.5) to [out=-50, in=50] (2,2.4);
\draw [gray, thick, dashed] (2,3.5) to [out=-130, in=130] (2,2.4);
\draw [gray, thick] (5,3.5) to [out=-50, in=50] (5,2.4);
\draw [gray, thick, dashed] (5,3.5) to [out=-130, in=130] (5,2.4);
\draw [gray, thick, dashed] (1.03,1.3) to [out=20, in=160] (1.93,1.3);
\draw [gray, thick, dashed] (4.03,1.3) to [out=20, in=160] (4.93,1.3);
\draw [gray, thick] (4.03,1.3) to [out=-60, in=0] (3.4,-.2) to [out=180, in=-100] (1.93,1.3);
\draw [gray, thick] (1.03,1.3) to [out=-80, in=180] (3.4,-.8) to [out=0,  in=-100] (4.93,1.3);
\draw [blue, thick] (8,2.9) to [out=0, in=90] (8.5,1.8) to [out=-90, in=0] (8,.7) to [out=180, in=-90] (7.5,1.8) to [out=90, in=180] (8,2.9);
\draw [teal, thick, dashed] (1.02,1.5) to [out=20, in=160] (1.88,1.5);
\draw [teal, thick] (1.02,1.5) to [out=-70, in=180] (3.4,-.6) to [out=0, in=-80] (4.4,1.8) to [out=100, in=180] (5,3) to [out=0, in=90] (5.6,1.8) to  [out=-90, in=180] (6.5,-.2) to [out=0, in=-85] (7.2,1.8) to [out=95, in=180] (8,3.2) to [out=0, in=90] (8.8,1.8) to [out=-90, in=180] (9.4,.85) to [out=0, in=0] (9.1,1.6);
\draw [teal, thick] (7.9,1.6) -- (8.1,1.6);
\draw [teal, thick] (6.92,1.6) to [out=180, in=0] (6.5,.6) to [out=180,in=-90] (5.8,1.8) to [out=90, in=0] (5,3.2) to [out=180, in=100] (4.2,1.8) to [out=-80, in=0] (3.4,-.4) to [out=180, in=-110] (1.88,1.5);
\draw [green, thick, dashed] (5.04,1.3) to [out=20, in=160] (6.11,1.3);
\draw [green, thick] (5.04,1.3) to [out=-80, in=180] (6.5,-.4) to [out=0, in=-85] (7.35,1.8) to [out=95, in=180] (8,3.05) to [out=0, in=90] (8.65,1.8) to [out=-90, in=180] (9.4,.7) to [out=0, in=0] (9.1,1.8);
\draw [green, thick] (7.9,1.8) -- (8.1,1.8);
\draw [green, thick] (6.9,1.8) to [out=180, in=0] (6.4,.9) to [out=180,in=-110] (6.11,1.3);
\draw [green, thick, dashed] (8.04,1.3) to [out=20, in=160] (9.11,1.3);
\draw [green, thick] (8.04,1.3) to [out=-80, in=180] (9.5,-.4) to [out=0, in=-85] (10.4,1.8) to [out=95, in=180] (11,3) to [out=0, in=90] (11.6,1.8) to [out=-90, in=180] (12.5,.8) to [out=0, in=0] (12.1,1.5);
\draw [green, thick] (10.93,1.5) -- (11.08,1.5);
\draw [green, thick] (9.9,1.5) to [out=180, in=0] (9.6,.6) to [out=180,in=-110] (9.11,1.3);
\node [green] at (10.65,.4) {$\eta_4$};
\node [gray] at (1.6,-.4) {$\delta_{1,2}$};
\node [gray] at (1.6,2.8) {$\alpha_1$};
\node [gray] at (4.7,2.4) {$\alpha_2$};
\node [teal] at (6.3,.3) {$m_4$};
\node [green] at (6.8,-.6) {$\eta_3$};
\node [blue] at (7.8,.4) {$b_3$};
\draw (12,-.2) arc (0:360:.3 and .3);
\node at (12,-.6) {$C$};
\end{tikzpicture}
\caption{Lanterns of \thref{lem:da}. Here, the gray curves have spin value $1$, and the other curves are admissible.}
\label{fig:lanterns}
\end{figure}

\begin{proof}
Relation (iii) is an immediate consequence of (i) and (ii), which are the lantern relations depicted in Figure \ref{fig:lanterns}, and are true in $\Modd(\Sigma_g^1)[\phi]$ by \thref{thm:mcgs}.
\qedhere
\end{proof}

We are now ready to state the presentation. We will use the symbol $R_i(x,y)$ to indicate that elements $x$ and $y$ satisfy an Artin relation of length $i$, i.e. that the words $xyxy\dots$ and $yxyx\dots$ of length $i$ are equal. We will only encounter relations of length $2$, $3$ or $4$. Moreover, to simplify the exposition, we will use various shorthands that have already appeared in the above.

\begin{remark}
\thlabel{rem:sh}
From now on, the symbols $\overline{d}_{1,2}$ and $a_1^2$ will always be used as shorthands for the products of \thref{lem:da}\ref{d12} and \ref{a2da}. We will also use the following shorthands:
\begin{itemize}
\item $s_1:=b_1a_1^2b_1$ and $t_1:=(b_1^{-1}\eta_2b_1)a_1^2(b_2^{-1}\xi_1b_2)$;
\item $a_{i+1}^2:=t_ia_i^2t_i^{-1}$, $s_{i+1}=b_ia_{i+1}^2b_i$ and 
\begin{equation}
\label{tiNE}
t_{i+1}:=(b_{i+1}^{-1}\eta_{i+2}b_{i+1}) a_{i+1}^2(b_{i+2}^{-1}\xi_{i+1}b_{i+2})
\end{equation}
for all $i \ge 1$;
\item $\overline{d}_{-1,1}:=s_1^2a_1^2$, and
\begin{equation}
\label{eq:dijdij}
\overline{d}_{i,j}:=\begin{cases}
\big(t_{i-1}t_{i-2}\dots t_1t_{j-1}t_{j-2}\dots t_2\big)*\overline{d}_{1,2} & \text{if $i>0$},\\
\big(t_{-i-1}^{-1}\dots t_1^{-1}s_1^{-1}t_{j-1}\dots t_2\big)*\overline{d}_{1,2} & \text{if $0<-i<j$},\\
\big(t_{-i-1}^{-1}\dots t_1^{-1}s_1^{-1}t_j\dots t_2\big)*\overline{d}_{1,2} & \text{if $0<j<-i$},\\
\big(t_{-j-1}^{-1}\dots t_1^{-1}t_{-i-1}^{-1}\dots t_2^{-1}s_1^{-1}t_1^{-1}s_1^{-1}\big)*\overline{d}_{1,2} & \text{if $j<0$},\\
\big(t_{j-1}^{-1}\overline{d}_{j-1,j}t_{j-2}^{-1}\overline{d}_{j-2,j-1}\dots t_1^{-1}\overline{d}_{1,2}\big)*\overline{d}_{-1,1} & \text{if $i+j=0$};
\end{cases}
\end{equation}
\item $\overline{d}_{\set{i_1,\dots,i_n}}:=(\overline{d}_{i_1,i_2}\overline{d}_{i_1,i_3}\dots \overline{d}_{i_1,i_n}\overline{d}_{i_2,i_3}\dots \overline{d}_{i_2,i_n}\dots \overline{d}_{i_{n-1},i_n})$;
\item $\overline{r}_{i,j}:=b_ja_j^2\overline{d}_{\set{i,\dots,j}}b_j$.
\end{itemize}
\end{remark}

\begin{theorem}
\thlabel{thm:presD}
If $g\ge4$, the spin mapping class group $\Modd(\Sigma_g^1)[\phi]$ admits a presentation with generators $b_1,\dots,b_g$, $\xi_1,\dots,\xi_{g-1}$, $\eta_2,\dots,\eta_g$ and the following relations:
\begin{enumerate}[label=\textit{(S\arabic*)}, ref=(S\arabic*), series=drel]
\item two generators satisfy $R_2$ or $R_3$ if the corresponding curves are disjoint or intersect once; \label{t01}
\item the $5$-chain $(b_1\eta_2\eta_3b_2\xi_1)^6=b_3m_1$;\label{5c}
\item the hyperelliptic relation 
\[
(b_3\xi_2\xi_1b_2\eta_3\eta_2b_1^2\eta_2\eta_3b_2\xi_1\xi_2b_3)^2=(b_1\eta_2\eta_3b_2\xi_1\xi_2)^{14};\label{hypg}
\]
\item the fake $3$-chain relations $t_1^2=\overline{d}_{1,2}\overline{d}_{-2,-1}$, 
\[
a_1^2(b_1a_1^2b_1)(\eta_2b_1a_1^2b_1\eta_2)=b_2H_2b_2H_2^{-1}, 
\]
where $H_2:=\eta_3\eta_2b_1a_1^2b_1\eta_2\eta_3$, and \label{3c}
\begin{align}
\begin{split}
\label{eq:3c}
\overline{r}_{i,j}^2&=a_1^{-4}\dots a_{-i}^{-4}a_{-i+1}^{-2}\dots a_{j}^{-2} \overline{d}_{\set{i,\dots,j}}^{-2}\cdot\\
&\qquad\qquad \cdot\overline{d}_{\set{i,\dots,j-1}}\big((t_{j-1}\dots t_{-i+1})*\overline{d}_{\set{i-1,\dots,j}}\big)
\end{split}
\end{align}
for all $i,j$ such that $i=1$ or $i<0$, $j+i>0$ and $j-i<g$;
\item $R_3(m_1,\eta_4)$, $[\eta_4,(b_3\xi_2)*m_4]=1$ and $[(\eta_3^{-1}\eta_4^{-1})*m_4,b_1^{-1}*\eta_2]=1$; \label{boh} 
\item $(\eta_4m_1m_2\eta_4)*m_3$ commutes with $b_3$, $\eta_3$ and $\eta_5$, while $(\eta_4\eta_3b_3\eta_4)*m_4$ commutes with $m_1$, $m_2$ and $\eta_5$; \label{lt}
\item $[m_3 \cdot (\eta_4m_1m_2\eta_4)*m_3,m_4\cdot ((\eta_4\eta_3b_3\eta_4)*m_4)]=1$; \label{lad}
\item $\overline{d}_{i,i+1}*b_i=(a_i^{-2}s_i^{-1})*\xi_i$ and $(\overline{d}_{1,2}b_1^{-1})*\eta_2=(a_2^{-2}b_2^{-1})*\xi_1$; \label{dbar}
\item $[a_i^2, (\xi_ib_i^{-1}b_{i+1}\eta_{i+2})*\eta_{i+1}]=1$ and $[\overline{d}_{i-1,i},(\eta_{i}\eta_{i+1})*b_{i}]=1$; \label{fore1}
\item $[\overline{d}_{1,2},a_3^2]=1$, $[\overline{d}_{1,2},\overline{d}_{3,4}]=1$, $[\overline{d}_{1,2},\overline{d}_{-3,-1}]=1$ and $[\overline{d}_{1,2},t_2\overline{d}_{1,2}t_2]=1$; \label{d1234}
\item $R_4(a_1^2,b_1)$, $[b_1*a_1^2,\xi_1*a_1^2]=1$ and $a_1^2$ commutes with $b_2$, $\xi_2$ and $\eta_2$;\label{a2}
\item $b_2 \big((b_1^{-1}\eta_2^{-1})*\overline{d}_{1,2} \overline{d}_{-2,-1}^{-1}\big)=\overline{d}_{-1,1,2}a_1^2 b_2$, $\overline{d}_{-2,-1}*\eta_2=b_2$ and \label{penta}
\begin{equation}
\label{lant}
\overline{d}_{-2,-1}\overline{d}_{-1,1,2} (t_1*\overline{d}_{-1,1,2}) =\overline{d}_{1,2}\overline{d}_{\set{-2,-1,1,2}};
\end{equation}
\item $[a_j^2\overline{d}_{\set{i,\dots,j}},\overline{r}_{i,j}]=1$ and 
\[
\overline{r}_{i,j}\overline{d}_{\set{i,\dots,j}}=a_1^{-4}\dots a_{-i}^{-4}a_{-i+1}^{-2}\dots a_{j-1}^{-2}\overline{d}_{\set{i,\dots,j}}^{-1} \overline{r}_{i,j}
\]
for all $i,j$ such that $i=1$ or $i<0$, $j+i>0$ and $j-i<g$;\label{rijrel}
\item the triangle relations \label{dttri} $a_1^2b_1\overline{d}_{1,2}^{-1}b_1^{-1}\overline{d}_{1,2}^{-2}a_1^{-2}b_1=\overline{d}_{1,2}^{-1}a_2^2$,
\[
b_j^{-1}a_j^2\overline{d}_{\set{1,\dots,j}}b_j=(t_{j-1}\dots t_1\,\zeta^{-1})*(b_1^{-1} \varphi_2\zeta a_1^{2}b_1)
\]
and for $i<0$
\[
b_j^{-1}a_j^2\overline{d}_{\set{i,\dots,j}}b_j=(t_{j-1}\dots t_1\,\zeta^{-1})*(b_1^{-1} \varphi_2\varphi_1a_1^2 b_1),
\]
where $\zeta$, $\varphi_1$ and $\varphi_2$ are the mapping classes defined in \ref{triangles}\ref{trib}-\ref{trie}.
\end{enumerate}
\end{theorem}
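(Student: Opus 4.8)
The plan is to derive the presentation of \thref{thm:presD} from that of \thref{thm:mcgs} by a sequence of Tietze transformations, using \thref{lem:da} to translate between the two generating sets. First I would adjoin to the presentation of \thref{thm:mcgs} the new generators $b_1,\dots,b_g$ (identifying $b_1$ with the already-present generator $b$), $\xi_1,\dots,\xi_{g-1}$ and $\eta_2,\dots,\eta_g$, each with a defining relation expressing it as a word in the generating set $S$ of \thref{thm:mcgs}; such words are read off from the explicit curves of Figures \ref{fig:q} and \ref{fig:gens}. Conversely, \thref{lem:da} writes $a_1^2$ and $\overline{d}_{1,2}$ as explicit words in $b_1,\dots,b_4$, $\xi_1,\xi_2$, $\eta_2,\eta_3,\eta_4$, and feeding this into the shorthands of \thref{rem:sh} — in the order $s_1$, $t_1$, then inductively $a_{i+1}^2$, $s_{i+1}$, $t_{i+1}$ via (\ref{tiNE}), then all the $\overline{d}_{i,j}$ and $\overline{d}_I$, then the $\overline{r}_{i,j}$, and finally $b_1^{\pm2}$ and $b_j^{\pm2}\overline{r}_{i,j}$ — expresses every element of $S$ as a word in $b_1,\dots,b_g$, $\xi_1,\dots,\xi_{g-1}$, $\eta_2,\dots,\eta_g$. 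One may therefore eliminate all of $S$ and reach a presentation on the desired admissible-twist generators.

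It then remains to verify two things. First, that each of \ref{t01}--\ref{dttri} holds in $\Modd(\Sigma_g^1)[\phi]$: here \ref{5c} and \ref{hypg} are a $5$-chain and a $7$-chain (hyperelliptic) relation, \ref{3c} consists of fake $3$-chain relations obtained exactly as in \thref{prop:fake}, \ref{boh}--\ref{fore1} and \ref{penta} are the lantern relations associated with the configurations of Figures \ref{fig:dijlant}, \ref{fig:diijj} and \ref{fig:lanterns}, \ref{t01} collects the braid and commutator relations among admissible twists whose curves are disjoint or meet once, and the remaining relations are direct rewrites — via \thref{lem:da} and \thref{rem:sh} — of relations already established in \thref{thm:mcgs}. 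Second, and more substantial: under the substitutions above, each relation of \thref{thm:mcgs} — namely \ref{aiaj}--\ref{a8}, the backtracking relations \ref{backtracking}, the different-writing relations \ref{diffwr}, and the face relations \ref{triangles}--\ref{hyp} — must be shown to be a consequence of \ref{t01}--\ref{dttri}. The key observation is that once $\overline{d}_{1,2}$ and $a_1^2$ cease to be free generators, the shorthands of \thref{rem:sh} defining $s_i$, $t_i$, $\overline{d}_{i,j}$ and $\overline{r}_{i,j}$ are no longer tautological: relations such as $t_1^2=\overline{d}_{1,2}\overline{d}_{-2,-1}$ together with the identities \ref{dbar}, \ref{fore1} and \ref{a2} are precisely what is needed to check that those shorthands are compatible with \ref{aiaj}--\ref{a8}, while \ref{boh}--\ref{lad} and \ref{d1234} supply the lantern and ladder identities needed to commute and rearrange the $\overline{d}$'s.

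I would organize this second verification in stages: show that \ref{t01} together with \ref{3c}--\ref{dttri} recovers \ref{aiaj}--\ref{a8} and the structural identities $s_i=b_ia_i^2b_i$ and $t_i=(b_i^{-1}\eta_{i+1}b_i)a_i^2(b_{i+1}^{-1}\xi_ib_{i+1})$; then deduce \ref{backtracking} (using \ref{5c} for the $\overline{r}_{i,j}$-part, as in \thref{prop:fake}, and the triangle relations \ref{dttri} to identify the $s_j$); then \ref{diffwr}, most of which is commutation and so reduces to \ref{t01}, \ref{fore1}, \ref{d1234} and \ref{rijrel}; and finally the face relations — \ref{square} via \ref{a2} and the shorthands, \ref{pentagon} via \ref{penta}, \ref{hyp} as a restatement of \ref{hypg}, and \ref{triangles} via \ref{dttri}, whose derivation reuses the $h$-product bookkeeping from the proof of \thref{thm:mcgs}.

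The main obstacle is the sheer size of this last verification rather than any single conceptual difficulty; the delicate points are the propagation of the lantern identities of \thref{lem:da} through the inductive definitions of the $t_i$ and $\overline{d}_{i,j}$, and the matching of the triangle $h$-products of \ref{triangles} against their Dehn-twist form \ref{dttri}. The hypothesis $g\ge4$ is what makes the relevant configurations available: the lantern substitutions of \thref{lem:da}, the inductive shorthand (\ref{tiNE}) for $t_{i+1}$, and the relations \ref{boh}--\ref{dbar} reference handles with indices up to $i+2$, while \ref{5c} and \ref{hypg} require the admissible $5$-chain and $7$-chain subsurfaces; the case $g=3$, where by the introduction no admissible generating set can have a forest as intersection graph, needs a larger generating set and is treated separately.
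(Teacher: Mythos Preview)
Your proposal is correct and follows essentially the same approach as the paper: apply Tietze moves to the presentation of \thref{thm:mcgs}, adjoin the admissible twists $b_i,\xi_i,\eta_i$ with defining relations (the paper uses $b_{k+1}=(t_k\overline{d}_{k,k+1}^{-1})*b_k$, $\xi_k=\overline{d}_{k,k+1}^{-1}*b_k$, $\eta_{k+1}=t_k^{-1}*b_k$), eliminate the old generators via \thref{lem:da} and the shorthands of \thref{rem:sh}, and then verify that each relation \ref{aiaj}--\ref{hyp} follows from \ref{t01}--\ref{dttri}. A few of your attributions are slightly off --- for instance \ref{boh}--\ref{fore1} are Artin/commutator relations rather than lanterns, and \ref{backtracking} is obtained in the paper from \ref{rijrel} (not \ref{5c}), with the last part of \ref{diffwr}\ref{dwb*} reducing to the fake $3$-chain \ref{3c} --- but the overall strategy and the organization into stages (first recover \ref{aiaj}--\ref{a8} via auxiliary identities, then the edge relations, then the face relations) match the paper exactly.
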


\begin{remark}
Relation \ref{dttri} is almost a restatement of \ref{triangles}. However, the conjugates of the elements on the right hand side are often familiar mapping classes. For example, the relation \ref{triangles}\ref{tric} becomes 
\[
b_j^{-1}a_j^2\overline{d}_{\set{i,\dots,j}}b_j=\xi_j^{-1} \overline{d}_{j,j+1}\overline{d}_{i,\dots,j+1}a_j^2a_{j+1}^2 \xi_j.
\]
\end{remark}

We are going to prove \thref{thm:presD} by applying Tietze moves to the presentation of \thref{thm:mcgs}. First of all, we add to the presentation of \thref{thm:mcgs} the following generators:
\begin{gather}
b_{k+1}:=(t_k\overline{d}_{k,k+1}^{-1})*b_k, \label{bi} \\
\xi_k:=\overline{d}_{k,k+1}^{-1}*b_k, \quad \eta_{k+1}:=t_k^{-1}*b_k. \label{xietai}
\end{gather}
Notice that in this enlarged presentation, the relations of \thref{rem:sh} hold true by \thref{thm:mcgs}. This implies that the $b_i$, $\xi_i$ and $\eta_i$ generate $\Modd(\Sigma_g^1)[\phi]$, and the “old" generators can be removed from the presentation. Nonetheless, we will still keep them as shorthands, as explained in \thref{rem:sh}. 

Then, we add relations \ref{t01}-\ref{dttri}. We are going to prove that relations (\ref{bi}), (\ref{xietai}) and \ref{aiaj}-\ref{hyp} are consequence of \ref{t01}-\ref{dttri}.

We start deriving additional relations from \ref{t01}-\ref{a2}, in order to prove that these are sufficient. As a first step, we prove some basic relations involving $a_1^2$ and $\overline{d}_{1,2}$. 

\begin{lemma}
\thlabel{lem:darel}
The following relations are consequences of \ref{t01}-\ref{a2}:
\begin{enumerate}[label=\textit{(\roman*)}, ref=(\roman*)]
    \item $R_3(m_1,\xi_2)$, $R_3(m_1,\eta_4)$ and $m_1$ commutes with all the other generators; \label{m1}
    \item $R_3(m_2,b_1)$, $R_3(m_2,\xi_1)$, $R_3(m_2,b_2)$, $R_3(m_2,\eta_2)$, $R_3(m_2,\eta_4)$ and $m_2$ commutes with all the other generators and $m_1$; \label{m2}
    \item for $i=3,4$ we have $R_3(m_i,b_1)$, $R_3(m_i,\xi_1)$, $R_3(m_i,\xi_2)$, $R_3(m_i,\eta_4)$, and $m_i$ commutes with all the other generators, $m_1$ and $m_2$; \label{m34} 
    \item $\overline{d}_{1,2}$ commutes with all $b_k$, $\xi_k$ and $\eta_k$ for $k \ge 3$ and with $m_1$, $m_2$, and $\overline{d}_{1,2}*\xi_1=b_1$; \label{dbarvs}
    \item $a_1^2$ commutes with $\overline{d}_{1,2}$, $m_1$, $m_2$, $m_3$, $m_4$ and all $b_k$, $\xi_k$ and $\eta_k$ for $k \ge 2$, and $R_4(a_1^2,b_1)$, $R_4(a_1^2,\xi_1)$ hold. \label{a2vsall}
\end{enumerate}
\end{lemma}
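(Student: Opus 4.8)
The strategy is entirely computational: each of the five items has the form ``certain conjugates of the $m_i$ by generators satisfy prescribed Artin relations / commute'', and the tool for verifying these is always the same principle — \emph{two Dehn twists $t_c$ and $t_d$ satisfy $R_2$ iff $c,d$ are disjoint and $R_3$ iff $\abs{c\cap d}=1$}, which is available to us as relation \ref{t01} (and which holds in the ambient group $\Modd(\Sigma_g^1)[\phi]$ for the underlying curves). So the first thing I would do is fix, once and for all, explicit curves representing $m_1,\dots,m_4$, $\overline{d}_{1,2}$ and $a_1^2$: namely, the curves $m_1,\dots,m_4$ drawn in Figure \ref{fig:lanterns}, the curve $\delta_{1,2}$ of that figure for $\overline{d}_{1,2}$, and for $a_1^2$ the product from \thref{lem:da}\ref{a2da}. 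The point is that $m_1,\dots,m_4$ are \emph{honest} Dehn twists (along the colored curves of Figure \ref{fig:lanterns}), defined as conjugates of generators, so each relation in items (i)–(iii) of \thref{lem:darel} reduces to an intersection-pattern computation and then an invocation of \ref{t01}. Concretely, I would tabulate the geometric intersection numbers of each of $m_1,\dots,m_4$ with each generator $b_k,\xi_k,\eta_k$ and with each other from Figures \ref{fig:gens} and \ref{fig:lanterns}, and read off (i), (ii), (iii): e.g. $m_1$ is (a conjugate of) $b_3$ supported away from $b_k,\xi_k,\eta_k$ for $k\le 2$, meets $\xi_2$ and $\eta_4$ once, and is disjoint from everything else, giving $R_3(m_1,\xi_2)$, $R_3(m_1,\eta_4)$, and commutation with the rest.

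For item (iv), the plan is to \emph{use} parts (i)–(iii) rather than re-examine curves: substitute the expression $\overline{d}_{1,2}=m_1m_2\big((\eta_4m_1m_2\eta_4)*m_3^{-1}\big)m_3^{-1}$ from \thref{lem:da}\ref{d12}, and check commutation with a generator $x\in\{b_k,\xi_k,\eta_k: k\ge 3\}\cup\{m_1,m_2\}$ factor by factor. Most of these follow immediately from (i)–(iii) together with relation \ref{lt} (which says exactly that $(\eta_4m_1m_2\eta_4)*m_3$ commutes with $b_3,\eta_3,\eta_5$) — the one genuinely nontrivial input is \ref{lt}, and possibly \ref{boh}/\ref{lad} for the interaction of the two lantern blocks; I would also use that conjugation by $x$ commutes with the ``$\eta_4 m_1 m_2\eta_4 *$'' operation when $x$ commutes with $\eta_4,m_1,m_2$. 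The identity $\overline{d}_{1,2}*\xi_1=b_1$ should come out of the same substitution once one knows how $m_1,\dots,m_4$ conjugate $\xi_1$ (this is essentially the lantern relation of Figure \ref{fig:lanterns} read backwards, already recorded in \thref{lem:da}).

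Item (v) is the analogue for $a_1^2$, using \thref{lem:da}\ref{a2da} to write $a_1^2$ as a product of the blocks $m_4$, $(\eta_4\eta_3 b_3\eta_4)*m_4$, $\eta_3^{-1}b_3^{-1}$, $m_3$, $(\eta_4m_1m_2\eta_4)*m_3$, $m_1^{-1}m_2^{-1}$, and then commuting a generator past this word factor by factor, invoking (i)–(iv), \ref{lt}, \ref{lad}, and the part of \ref{a2} asserting $R_4(a_1^2,b_1)$, $[b_1*a_1^2,\xi_1*a_1^2]=1$ and commutation of $a_1^2$ with $b_2,\xi_2,\eta_2$. The commutation $[a_1^2,\overline{d}_{1,2}]=1$ can then be deduced either directly from \ref{d1234}'s ambient companion or by combining (iv) and (v) on the common generators; I would instead get it cheaply from the fact that $\delta_{1,2}$ and $\alpha_1$ are disjoint, i.e. from \ref{a2} again after rewriting. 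The $R_4(a_1^2,\xi_1)$ relation follows from $R_4(a_1^2,b_1)$ by applying the automorphism realizing $\overline{d}_{1,2}*\xi_1=b_1$ from (iv).

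The main obstacle is purely bookkeeping: correctly reading all the pairwise geometric intersection numbers off Figures \ref{fig:gens} and \ref{fig:lanterns}, and then, in (iv) and (v), being careful that when I ``commute $x$ past $w*y$'' I really do have $x$ commuting with every letter of $w$ \emph{and} with $y$, so that relations \ref{lt}, \ref{lad}, \ref{boh} are applied with the right conjugating words. There is no conceptual difficulty — every relation claimed is one that provably holds in $\Modd(\Sigma_g^1)[\phi]$ because it is a statement about disjointness or single intersection of explicit curves, and \ref{t01}–\ref{a2} have been arranged precisely to supply the needed instances — but the verification that the listed generators in \ref{lt}, \ref{lad}, \ref{boh}, \ref{a2} suffice (rather than a larger list) is where care is required.
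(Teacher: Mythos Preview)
Your overall strategy matches the paper's: items (i)–(iii) are handled by rewriting each $m_i$ as a conjugate $w*x$ of a generator and reducing the desired Artin relation to one between generators via \ref{t01}, and items (iv)–(v) are then deduced by substituting the expressions of \thref{lem:da} and commuting factor by factor using (i)–(iii) together with \ref{lt}, \ref{lad}, \ref{a2}. The paper carries this out exactly as you describe, including deriving $R_4(a_1^2,\xi_1)$ from $R_4(a_1^2,b_1)$ via conjugation by $\overline{d}_{1,2}^{-1}$ and the identity $\overline{d}_{1,2}*\xi_1=b_1$.

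There are, however, two places where your plan is incomplete. First, your claim that items (i)–(iii) reduce entirely to \ref{t01} is not quite right for $m_1$. The commutations $[m_1,b_3]=1$ and $[m_1,\xi_3]=1$ do \emph{not} follow from \ref{t01} alone by any obvious conjugation trick: $m_1=H_3*b_3$ with $H_3$ beginning and ending in $\xi_2$, and $\xi_2$ braids nontrivially with both $b_3$ and $\xi_3$, so you cannot simply push $b_3$ or $\xi_3$ past $H_3$. The paper uses \ref{5c} here, rewriting $m_1=b_3^{-1}(b_1\eta_2\eta_3b_2\xi_1)^6$; since every factor on the right commutes with $b_3$ and $\xi_3$ by \ref{t01}, the commutations follow. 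You never invoke \ref{5c}, so this step is missing. (Similarly $R_3(m_1,\eta_4)$ is taken directly from \ref{boh}, which you mention only in passing for later items.)

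Second, for $[a_1^2,\overline{d}_{1,2}]=1$ in (v), your ``cheap'' disjointness argument does not work: neither $a_1^2$ nor $\overline{d}_{1,2}$ is a single generator in this presentation, so \ref{t01} and \ref{a2} do not apply directly, and the factors of the two words do not all pairwise commute. The paper obtains this commutation from \ref{lad}, which is precisely the statement that the two nontrivial ``blocks'' $m_3\cdot(\eta_4m_1m_2\eta_4)*m_3$ and $m_4\cdot(\eta_4\eta_3b_3\eta_4)*m_4$ commute; combined with the commutations already established in (i)–(iv), this yields $[a_1^2,\overline{d}_{1,2}]=1$. You list \ref{lad} among your tools but then propose to bypass it for exactly the relation it is designed to supply.
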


\begin{proof}
\begin{enumerate}[label=\textit{(\roman*)}]
\item We prove this point in some detail, as in the following we will often perform similar computations more tacitly. If $k$ is at least $4$, generators $b_k$, $\xi_k$ and $\eta_{k+1}$ commute with all generators appearing in the definition of $m_1$ by \ref{t01}. It is also easy to see that all the generators appearing in the definition of $m_1$ except for $\xi_2$ and $b_3$ again commute with $m_1$ by \ref{t01}. As an example, we have
\begin{align*}
\xi_1*m_1&=(\xi_2\xi_1\xi_2b_2\eta_3\eta_2b_1^2\eta_2\eta_3b_2\xi_1\xi_2)*b_3=\\
&=(\xi_2\xi_1b_2\eta_3\eta_2b_1^2\eta_2\eta_3b_2\xi_1\xi_2\xi_1)*b_3=m_1.
\end{align*}
For $\xi_2$, we have
\begin{align*}
m_1*\xi_2&=(H_3\,b_3\, \xi_2^{-1}\xi_1^{-1}b_2^{-1}\eta_3^{-1}\eta_2^{-1}b_1^{-2}\eta_2^{-1}\eta_3^{-1}b_2^{-1}\xi_1^{-1})*\xi_2=\\
&=(\xi_1\,H_3\,b_3\, \xi_2^{-1}\xi_1^{-1}b_2^{-1}\eta_3^{-1}\eta_2^{-1}b_1^{-2}\eta_2^{-1}\eta_3^{-1}b_2^{-1})*\xi_1=\dots=\\
&=(\xi_1b_2\eta_3\eta_2b_1^2\eta_2\eta_3b_2\,H_3\,b_3)*\xi_2=\\
&=(\xi_1b_2\eta_3\eta_2b_1^2\eta_2\eta_3b_2\,\xi_2\xi_1\xi_2b_2\eta_3\eta_2 b_1^2\eta_2\eta_3b_2 \xi_1)*b_3=\\
&=(\xi_1b_2\eta_3\eta_2b_1^2\eta_2\eta_3b_2\xi_1\xi_2\xi_1)*b_3= \xi_2^{-1}*m_1.
\end{align*}
From \ref{5c} we get $m_1=b_3^{-1}(b_1\eta_2\eta_3b_2\xi_1)^6$, and all the generators appearing here commute with both $b_3$ and $\xi_3$ by \ref{t01}. Finally, the desired relation for $\eta_4$ is just \ref{boh}.
\item Clearly, $m_2$ commutes with $\xi_k$ for $k \ge 2$, $b_n$ for $n\ge 3$ and $\eta_m$ for $m \ge 5$. Moreover, $\eta_3*m_2=m_2$ by \ref{t01}. In order to prove the $R_3$ relations, we rearrange the curves in the definition of $m_2$ by \ref{t01}. For example, to show $R_3(m_2,b_1)$, observe that $m_2=(b_2^{-1}\eta_3^{-1})*\eta_2$, so the desired relation is just $(b_2^{-1}\eta_3^{-1})*R_3(b_1,\eta_2)$. Finally, the fact that $m_2*m_1=m_1$ follows from \ref{m1}.
\item The commutators follow easily from \ref{t01} as above. The relation $R_3$ follow from \ref{t01} using the same trick as in \ref{m2}.
\item Clearly, $\overline{d}_{1,2}$ commutes with $b_k$ for $k \ge 4$, $\xi_n$ for $n\ge 3$, $\eta_m$ for $m \ge 6$ and $m_1$, $m_2$ by \ref{m1}, \ref{m2}, \ref{m34} and \ref{t01}. Moreover, by \ref{lt} it also commutes with $\eta_3$ and $\eta_5$. For $\eta_4$, we have
\begin{align*}
&\overline{d}_{1,2}*\eta_4=(m_1m_2\eta_4m_1m_2\eta_4m_3^{-1}\eta_4^{-1} m_2^{-1}m_1^{-1}\eta_4^{-1}m_3^{-1})*\eta_4=\\
&\qquad=(m_1m_2\eta_4m_1m_2\eta_4m_3^{-1}\eta_4^{-1})*m_3=(m_1\eta_4m_2\eta_4m_1)*\eta_4=\eta_4.
\end{align*}
For $\xi_1$, observe that $m_3^{-1}*\xi_1=(b_1\eta_2\eta_3b_2\xi_1^{-1}b_2^{-1})*\xi_1=b_1*m_2=m_2^{-1}*b_1$. As a consequence,
\begin{align*}
&\overline{d}_{1,2}*\xi_1=(m_1m_2\eta_4m_1m_2\eta_4m_3^{-1}\eta_4^{-1} m_2^{-1}m_1^{-1}\eta_4^{-1}m_2^{-1})*b_1=\\
&\qquad=(\eta_4m_1m_2\eta_4m_3^{-1}\eta_4^{-1}m_2^{-1})*b_1=(\eta_4m_1m_2m_3^{-1}\eta_4^{-1})*\xi_1=b_1.
\end{align*}
\item That $a_1^2$ commutes with $\overline{d}_{1,2}$ is a consequence of \ref{lad}. Moreover, it follows from \ref{lt} and the preceding points that $a_1^2$ commutes also with $b_k$, $\xi_k$ and $\eta_k$ for $k \ge 3$, and with all the $m_i$. The other relations are \ref{a2} except for $a_1^2\xi_1a_1^2\xi_1=\xi_1a_1^2\xi_1a_1^2$, which is obtained conjugating by $\overline{d}_{1,2}^{-1}$ the corresponding relation for $b_1$. \qedhere
\end{enumerate}
\end{proof}

From \thref{lem:darel}, we now start deriving more complex relations. We will often use the shorthands 
\[
E_i:=b_{i+1}^{-1}*\xi_i, \qquad N_{i+1}:=b_i^{-1}*\eta_{i+1}. 
\] 
It is an immediate consequence of \ref{t01} that the following relations hold:
\begin{itemize}
\item $R_3(E_i,E_{i+1})$, $R_3(E_i,\xi_i)$, $R_3(E_i,\eta_{i+2})$ and $E_i$ satisfies the same relations $R_2$ and $R_3$ as $\xi_i$ with all the other generators;
\item $R_3(N_i,N_{i+1})$, $R_3(N_i,\eta_i)$, $R_3(N_i,\xi_{i-2})$ and $N_i$ satisfies the same relations $R_2$ and $R_3$ as $\eta_i$ with all the other generators;
\item $R_2(E_i,N_j)$ for all $i,j$.
\end{itemize}
Note that $t_i=N_{i+1}a_i^2E_i$ by (\ref{tiNE}). 

\begin{enumerate}[label=\textbf{(E\arabic*)}, ref=(E\arabic*), series=extra]
    \item $R_2(a_i^2,X_j)$ for $i \ne j$ and $R_4(a_i^2,X_i)$ for $i \ge 1$, where $X \in \set{b,\xi,\eta}$; \label{a2vs}
    \item $t_i*\eta_{i+1}=b_i$, $t_i*\xi_i=b_{i+1}$; \label{tiact}
    \item $a_{i+1}^2=(\eta_{i+1}b_ib_{i+1}^{-1}\xi_i^{-1})*a_i^2$ and $t_i*a_{i+1}^2=a_i^2$; \label{altai}
\end{enumerate}

\begin{proof}[Proof of \ref{a2vs}-\ref{altai}]
We do induction on $i$. For $i=1$, \ref{a2vs} is \thref{lem:darel}\ref{a2vsall}. We prove that for fixed $i\ge1$, \ref{tiact} and \ref{altai} follow from $\ref{a2vs}$ for indices up to $i$. Relations $R_2(a_i^2,\eta_{i+1})$ and $R_2(a_i^2,b_{i+1})$ imply that
\begin{gather*}
t_i*\eta_{i+1}=(b_i^{-1}\eta_{i+1}b_i)*\eta_{i+1}=b_i,\\
t_i*\xi_i=\big((b_i^{-1}\eta_{i+1}b_i)a_i^2\big)*b_{i+1}=b_{i+1}.
\end{gather*}
Using also $R_4(a_i^2,\xi_i)$, we get
\begin{align*}
a_{i+1}^2&=t_i*a_i^2=(b_i^{-1}\eta_{i+1}b_ib_{i+1}^{-1}a_i^2\xi_i)*a_i^2=\\
&=(\eta_{i+1}b_i\eta_{i+1}^{-1}b_{i+1}^{-1}\xi_i^{-1})*a_i^2=(\eta_{i+1}b_ib_{i+1}^{-1}\xi_i^{-1})*a_i^2. 
\end{align*}
Moreover, relations $R_4(a_i^2,E_i)$ and $R_4(a_i^2,N_{i+1})$ follow easily from \ref{a2vs} and \ref{t01}, hence
\begin{align*}
t_i*a_{i+1}^2=t_i^2*a_i^2&=(N_{i+1}a_i^2N_{i+1}E_ia_i^2E_i)*a_i^2=\\
&=(N_{i+1}a_i^2N_{i+1})*a_i^2=a_i^2. 
\end{align*}

To start the induction, we also need to prove \ref{a2vs} for $i=2$. Clearly, it holds for $b_k$, $\xi_k$ and $\eta_{k+1}$ when $k \ge 3$. From \ref{tiact} and \ref{altai} for $i=1$ we get
\begin{gather*}
R_2(a_2^2,\xi_1)=t_1^{-1}*R_2(a_1^2,b_2), \quad R_2(a_2^2,b_1)=t_1*R_2(a_1^2,\eta_2), \\
R_4(a_2^2,\eta_2)=t_1^{-1}*R_4(a_1^2,b_1),\quad R_4(a_2^2,b_2)=t_1*R_4(a_1^2,\xi_1).
\end{gather*}
Finally, by \ref{altai} we have
\begin{gather*}
R_2(a_2^2,\eta_3)=(\eta_2b_1b_2^{-1}\xi_1^{-1})*R_2(a_1^2,m_4), \\
R_4(a_2^2,\xi_2)=(\eta_2b_1b_2^{-1}\xi_1^{-1})*R_4(a_1^2,\xi_2^{-1}*\xi_1),
\end{gather*}
so by \thref{lem:darel}\ref{a2vsall} and \ref{a2vs} for $i=1$ we conclude. Then, relations \ref{tiact} and \ref{altai} for $i=2$ follow by the above arguments. 

Assume now that all relations have been proved for some $i\ge 2$. By the above, it suffices to prove that \ref{a2vs} holds for $i+1$. It is clearly true for $b_k$ when $k \ge i+2$ or $k \le i-1$ and for $\xi_{\ell}$, $\eta_{\ell+1}$ when $\ell \ge i+2$ or $\ell \le i-2$. Relations $R_2(a_{i+1}^2,\xi_i)$, $R_2(a_{i+1}^2,b_i)$, $R_4(a_{i+1}^2,\eta_{i+1})$ and $R_4(a_{i+1}^2,b_{i+1})$ are conjugates by $t_i^{\pm1}$ of relations for $a_i^2$. For the remaining relations, observe first that 
\begin{gather*}
R_4(a_{i+1}^2,\xi_{i+1})=(\eta_{i+1}b_ib_{i+1}^{-1}\xi_i^{-1})*R_4(a_i^2,\xi_{i+1}^{-1}*\xi_i),\\
R_2(a_{i+1}^2,\xi_{i-1})=\big((\eta_{i+1}b_ib_{i+1}^{-1}\xi_i^{-1})(\eta_{i}b_{i-1}b_{i}^{-1}\xi_{i-1}^{-1})\big)*R_2(a_{i-1}^2,\xi_i),\\
R_2(a_{i+1}^2,\eta_{i})=\big((\eta_{i+1}b_ib_{i+1}^{-1}\xi_i^{-1})(\eta_{i}b_{i-1}b_{i}^{-1}\xi_{i-1}^{-1})\big)*R_2(a_{i-1}^2,\eta_{i+1}).
\end{gather*}
For $\eta_{i+2}$, we use \ref{fore1}:
\[
(\eta_{i+1}b_ib_{i+1}^{-1}\xi_i^{-1})*R_2\big(a_i^2,(\xi_ib_i^{-1}b_{i+1}\eta_{i+2})*\eta_{i+1}\big)=R_2(a_{i+1}^2,\eta_{i+2}).\qedhere
\]
\end{proof}

Notice that (\ref{bi}) and (\ref{xietai}) follow immediately from \ref{tiact} and \thref{lem:darel}\ref{dbarvs}.

\begin{enumerate}[resume*=extra]
    \item $[a_i^2,a_j^2]=1$, $s_j*a_i^2=a_i^2$ for all $i,j$ and $t_k*a_i^2=a_i^2$ for $i \ne k,k+1$. \label{a2a}
\end{enumerate}

\begin{proof} 
For $i=1$, observe that $s_1*a_1^2=a_1^2$ is exactly $R_4(a_1^2,b_1)$, and
\[
[a_1^2,a_2^2]=[a_1^2,(\eta_2b_1b_2^{-1}\xi_1^{-1})*a_1^2]=(\eta_2b_2^{-1}\xi_1^{-1})*[\xi_1*a_1^2,b_1*a_1^2]=1
\]
follows from \ref{altai} and \ref{a2}. Using \ref{a2vs} and induction we get $s_k*a_1^2=a_1^2$, $t_k*a_1^2=a_1^2$ and $[a_1^2,a_{k+1}^2]=1$ for $k \ge 2$. 

For $i \ge2$, the proof is similar: $s_j*a_i^2=a_i^2$ for $j\le i$, $t_k*a_i^2=a_i^2$ for $k<i-1$ and $t_{i-1}*a_{i+1}^2=a_{i+1}^2$ follow from \ref{a2vs} and induction, so we get
\[
R_2(a_i^2,a_{i+1}^2)=t_{i-1}*R_2(a_{i-1}^2,a_{i+1}^2)
\]
and the other relations follow as in the case $i=1$. \qedhere
\end{proof}

From \ref{a2vs} we also obtain relations $R_4(a_i^2,E_{i-1})$, $R_4(a_i^2,E_i)$, $R_4(a_i^2,N_i)$, $R_4(a_i^2,N_{i+1})$ and $R_2(a_i^2,E_j)$, $R_2(a_i^2,N_{j+1})$ for every $j\ne i,i-1$. It is also useful to observe that 
\begin{equation}
\label{eq:altt}
t_i=E_ia_{i+1}^2N_{i+1}.
\end{equation}
Indeed, by \ref{altai} and $R_4(a_i^2,N_{i+1})$ we have
\[
E_ia_{i+1}^2N_{i+1}=E_it_i^{-1}a_i^2t_iN_{i+1}=a_i^{-2}N_{i+1}^{-1}a_i^2N_{i+1}a_i^2N_{i+1}E_i=N_{i+1}a_i^2E_i.
\]

\begin{enumerate}[resume*=extra]
\item $[\overline{d}_{1,2},t_j]=1$ for all $j \ne 2$ and $[\overline{d}_{i,j},a_k^2]=1$ for all $i,j,k$. \label{diti}
\end{enumerate}

\begin{proof}
We first show that $[\overline{d}_{1,2},t_1]=1$. By \ref{dbar}, we have $\overline{d}_{1,2}*N_2=a_2^{-2}*E_1$. Moreover, by (\ref{bi}), we have $\overline{d}_{1,2}*b_2=t_1*b_1$. Now, using \thref{lem:darel}\ref{dbarvs}, we get
\begin{align}
\begin{split}
\label{d12E}
\overline{d}_{1,2}*E_1&=(t_1b_1^{-1}t_1^{-1})*b_1=(N_2a_1^2E_1b_1^{-1}E_1^{-1}a_1^{-2}N_2^{-1})*b_1=\\
&=(N_2a_1^2b_1^{-1}a_1^{-2})*\eta_2=(N_2a_1^2)*N_2=a_1^{-2}*N_2.
\end{split}
\end{align}
As a consequence,
\begin{align*}
\overline{d}_{1,2}*t_1&=a_2^{-2}E_1a_2^2N_2a_1^2=a_2^{-2}E_1t_1a_1^2t_1^{-1}N_2a_1^2=\\
&=a_2^{-2}N_2E_1a_1^2E_1a_1^2E_1^{-1}=a_2^{-2}N_2a_1^2E_1a_1^2=a_2^{-2}t_1a_1^2=t_1.
\end{align*}

Since from \ref{d1234} we have $[\overline{d}_{1,2},a_3^2]=1$, the rest then follows from \ref{a2a} and \thref{lem:darel}\ref{dbarvs}. \qedhere
\end{proof}

As a consequence of \ref{diti} and (\ref{bi}), we see that 
\begin{equation}
\label{d12b2}
\overline{d}_{1,2}*b_2=t_1*b_1=(b_1^{-1}\eta_2a_1^{-2})*b_1=s_1^{-1}*\eta_2.
\end{equation}

\begin{enumerate}[resume*=extra]
    \item $t_i*t_{i+1}=t_{i+1}^{-1}*t_i$ for all $i$, $[t_i,t_k]=1$ if $\abs{i-k}>1$ and $[t_i,s_j]=1$ for $i \ne j$. \label{titi}
\end{enumerate}

\begin{proof}
All the relations follow easily from \ref{a2vs} and \ref{a2a} except for the first one, which can be rewritten using (\ref{eq:altt}) as 
\[
(t_i*N_{i+2})a_i^2(t_i*E_{i+1})=(t_{i+1}^{-1}*N_{i+1})a_i^2(t_{i+1}^{-1}*E_i). 
\]
We are going to prove that $t_i*N_{i+2}=t_{i+1}^{-1}*N_{i+1}$ and $t_i*E_{i+1}=t_{i+1}^{-1}*E_i$. We have $t_i*N_{i+2}=(N_{i+1}a_i^2E_i)*N_{i+2}=N_{i+1}*N_{i+2}$ and 
\begin{align*}
t_{i+1}^{-1}*N_{i+1}&=(E_{i+1}^{-1}a_{i+1}^{-2}N_{i+2}^{-1})*N_{i+1}=\\
&=(E_{i+1}^{-1}N_{i+1}a_i^2E_ia_i^2E_i^{-1}a_i^{-2}N_{i+1}^{-1}N_{i+1})*N_{i+2}=N_{i+1}*N_{i+2}.
\end{align*}
For the other equality, we have
\begin{align*}
t_{i+1}^{-1}*E_{i}&=(E_{i+1}^{-1}a_{i+1}^{-2})*E_i=(E_{i+1}^{-1}E_i^{-1}a_i^{-2}N_{i+1}^{-1}a_i^{-2}N_{i+1}a_i^2E_i)*E_i=\\
&=(E_{i+1}^{-1}E_i^{-1}N_{i+1}a_i^{-2}N_{i+1}^{-1})*E_i=(N_{i+1}E_{i+1}^{-1}E_i^{-1}a_i^{-2})*E_i=\\
&=(N_{i+1}E_{i+1}^{-1}a_i^2)*E_i=t_i*E_{i+1}. \qedhere
\end{align*}
\end{proof}

\begin{enumerate}[resume*=extra]
\item For $i>0$, we have
\begin{gather*}
\overline{d}_{i,i+1}=(t_{i-1}t_it_{i-2}t_{i-1}\dots t_1t_2)*\overline{d}_{1,2}, \\ \overline{d}_{-i-1,-i}=(t_{i-1}^{-1}t_i^{-1}t_{i-2}^{-1}t_{i-1}^{-1}\dots t_1^{-1}t_2^{-1})*\overline{d}_{-2,-1}.
\end{gather*}
As a consequence, we have 
\[
\overline{d}_{i,i+1}=(t_{i-1}t_i)*\overline{d}_{i-1,i}, \quad \overline{d}_{-i-1,-i}=(t_{i-1}^{-1}t_i^{-1})*\overline{d}_{-i,-i+1}
\]
for $i>1$, and $t_k*\overline{d}_{i,i+1}=\overline{d}_{i,i+1}$ if $\abs{k-i} \ne 1$. \label{ditiii}
\end{enumerate}

\begin{proof}
This follows easily from (\ref{eq:dijdij}), \ref{diti} and \ref{titi}. \qedhere
\end{proof}

Recall that $H_2:=\eta_3\eta_2b_1a_1^2b_1\eta_2\eta_3$.

\begin{enumerate}[resume*=extra]
    \item $H_2*b_2=H_2^{-1}*b_2$ and $[H_2*b_2,b_2]=1$.
\end{enumerate}

\begin{proof}
Applying repeatedly relations \ref{t01} and \ref{a2vs}, we see that $H_2$ and $b_2$ both commute with $a_1^2(b_1a_1^2b_1)(\eta_2b_1a_1^2b_1\eta_2)$. Then, using \ref{3c} we can do the same proof as \cite[(9)]{waj:elem}. \qedhere
\end{proof}

\begin{enumerate}[resume*=extra]
\item $s_1t_1s_1t_1=t_1s_1t_1s_1$. \label{siti}
\end{enumerate}

\begin{proof}
We have to prove that $(s_1t_1)*s_1=t_1^{-1}*s_1$. Since 
\[
(s_1t_1)*a_1^2=s_1*a_2^2=a_2^2=t_1^{-1}*a_1^2
\]
by \ref{a2a}, it suffices to prove that $(st_1)*b_1$ is equal to $t_1^{-1}*b_1=\eta_2$. We have
\[
(s_1t_1)*b_1=(b_1a_1^2b_1b_1^{-1}\eta_2b_1a_1^2E_1)*b_1=(b_1\eta_2a_1^2b_1a_1^2)*b_1=\eta_2.\qedhere
\]
\end{proof}

\begin{enumerate}[resume*=extra]
\item $s_1t_1s_1=t_1w_1=w_1t_1$, where $w_1:=(\eta_2a_2^2N_2a_1^2N_2\eta_2)$. \label{w1}
\end{enumerate}

\begin{proof} The idea is similar to that of \cite[(10)]{waj:elem}. By \ref{tiact} and \ref{a2vs}, we have
\[
t_1w_1=t_1\eta_2t_1^{-1}a_1^2t_1N_2a_1^2\eta_2b_1=b_1a_1^2t_1\eta_2b_1a_1^2b_1=b_1a_1^2b_1t_1b_1a_1^2b_1=s_1t_1s_1.
\]
By \ref{siti}, $st_1s=t_1st_1st_1^{-1}$, so the other equality follows. \qedhere
\end{proof}

\begin{enumerate}[resume*=extra]
\item \label{t2} $t_1^2=\overline{d}_{1,2}\overline{d}_{-2,-1}$, $[\overline{d}_{1,2},\overline{d}_{-2,-1}]=1$ and 
\[
\overline{d}_{-2,-1}=w_i^{-1}*\overline{d}_{1,2}=w_i*\overline{d}_{1,2}
\]
for $i=1,2,3$, where
\[
w_2:=N_3a_2^2N_2a_1^2N_2N_3, \quad w_3:=E_2E_1a_1^2E_1a_2^2E_2. 
\]
\end{enumerate}

\begin{proof}
The first relation is \ref{3c}, and the second relation follows easily. For the third one, we have $\overline{d}_{-2,-1}=(st_1s)^{-1}*\overline{d}_{1,2}=(t_1w_1)^{-1}*\overline{d}_{1,2}=w_1^{-1}*\overline{d}_{1,2}$ by \ref{diti} and \ref{w1}. On the other hand,
\[
w_1*\overline{d}_{1,2}=w_1*(t_1^2\overline{d}_{-2,-1}^{-1})=t_1^2 (w_1*\overline{d}_{-2,-1}^{-1})=t_1^2\overline{d}_{1,2}^{-1}=\overline{d}_{-2,-1}. 
\]
Conjugating by $m_2=(\eta_2\eta_3)*b_2$ we obtain the last relation for $i=2$. Indeed, notice first that
\[
w_1=\eta_2N_2a_1^2E_1a_1^2E_1^{-1}N_2\eta_2, \quad w_2=N_3N_2a_1^2E_1a_1^2E_1^{-1}N_2N_3;
\]
then, it suffices to apply \thref{lem:darel}\ref{dbarvs}. Since $m_2$ commutes with $\overline{d}_{1,2}$ and $t_1$, it also commutes with $\overline{d}_{-2,-1}=t_1^2\overline{d}_{1,2}^{-1}$. 

For $i=3$, we claim that $w_3$ is the result of conjugation of $w_2$ by 
\[
\psi:=\big((E_2\eta_4\overline{d}_{1,2})*N_3\big)^{-1}\cdot \eta_4\overline{d}_{1,2}. 
\]
First of all, by (\ref{d12b2}) we get $\overline{d}_{1,2}*N_3=(\eta_3s_1^{-1})*\eta_2$. This clearly implies that relations $R_3(\eta_4,\overline{d}_{1,2}*N_3)$ and $R_2(E_2,\overline{d}_{1,2}*N_3)$ hold. As a consequence,
\[
\psi*N_3=\big(E_2\eta_4\cdot (\overline{d}_{1,2}*N_3)^{-1}(E_2*\eta_4)^{-1})\big)*(\overline{d}_{1,2}*N_3)=(E_2\eta_4E_2)*\eta_4=E_2.
\]

Now, applying \thref{lem:darel}\ref{m34} we have
\begin{align*}
\overline{d}_{1,2}^{-1}*E_2&=(\eta_4\eta_3b_3\eta_4m_4^{-1} \eta_4^{-1}b_3^{-1}\eta_3^{-1}\eta_4^{-1}m_4^{-1})*\xi_2=(\eta_4m_4^{-1}\eta_3b_3 \eta_4^{-1}b_3^{-1}\eta_3^{-1}\eta_4^{-1}m_4^{-1})*\xi_2=\\
&=(\eta_4m_4^{-1}\eta_3 \eta_4^{-1}\eta_3^{-1}b_3^{-1}\eta_4^{-1}m_4^{-1})*\xi_2=(m_4^{-1}\eta_4^{-1}m_4\eta_3^{-1}b_3^{-1}\eta_4^{-1}b_3\xi_2)*m_4.
\end{align*}
Since by \ref{boh} $\eta_4$ commutes with $(b_3\xi_2)*m_4$, we get
\[
\overline{d}_{1,2}^{-1}*E_2=(m_4^{-1}\eta_4^{-1}m_4\xi_2)*m_4=m_4^{-1}*\xi_2
\]
Hence, by \ref{boh} and by \ref{dbar} we have
\begin{align*}
\psi*N_2&=(E_2\eta_4\overline{d}_{1,2}\,N_3^{-1}\,\overline{d}_{1,2}^{-1} \eta_4^{-1}E_2^{-1}\,a_2^{-2})*E_1=(E_2\overline{d}_{1,2}\,N_3^{-1}\eta_4^{-1}N_3\,\overline{d}_{1,2}^{-1} E_2^{-1}\,a_2^{-2})*E_1=\\
&=(\overline{d}_{1,2}\xi_2m_4\xi_2^{-1}\,N_3^{-1}\eta_4^{-1}N_3\, \xi_2m_4^{-1}\xi_2^{-1}\overline{d}_{1,2}^{-1}\,a_2^{-2})*E_1=\\
&=(\overline{d}_{1,2}b_2^{-1}\xi_2m_4\,\eta_3^{-1}\eta_4^{-1}\eta_3 \,m_4^{-1})*N_2=(\overline{d}_{1,2}b_2^{-1}\xi_2)*N_2=a_2^{-2}*E_1.
\end{align*}
This proves that $\psi*w_2=w_3$. Now, $\psi$ is easily seen to commute with $\overline{d}_{1,2}$ and $t_1$, hence also with $\overline{d}_{-2,-1}$. \qedhere
\end{proof}

\begin{enumerate}[resume*=extra]
    \item $\overline{d}_{i+1,i+2}=(t_i^{-1}t_{i+1}^{-1})*\overline{d}_{i,i+1}$ for all $i=1,\dots,g-2$. \label{diti2}
\end{enumerate}

\begin{proof}
By \ref{ditiii}, it suffices to prove that $(t_{i+1}t_i^2t_{i+1})*\overline{d}_{i,i+1}=\overline{d}_{i,i+1}$. We do induction on $i$. For $i=1$, notice that
\[
t_2t_1^2t_2=N_3a_2^2E_2N_2a_1^2E_1N_2a_1^2E_1N_3a_2^2E_2=N_3a_2^2N_2a_1^2N_2N_3E_2E_1a_1^2E_1a_2^2E_2=w_2w_3.
\]
Thus, by \ref{t2} we have $(t_2t_1^2t_2)*\overline{d}_{1,2}=(w_2w_3)*\overline{d}_{1,2}=w_2*\overline{d}_{-2,-1}=\overline{d}_{1,2}$. For the inductive step see \cite[(13)]{waj:elem}. \qedhere
\end{proof}

\begin{enumerate}[resume*=extra]
    \item $t_i^2=\overline{d}_{i,i+1}\overline{d}_{-i-1,-i}$ for all $i=1,\dots,g-1$. \label{ti2}
\end{enumerate}

\begin{proof} 
See \cite[(14)]{waj:elem}. \qedhere
\end{proof}

Notice that so far we have obtained relations \ref{aiaj}, \ref{atiti}, \ref{s2}, \ref{atisi}, \ref{asiti} and \ref{sai} as consequences of \ref{t01}-\ref{dttri}.

\subsection{Further relations}
\label{reldij}

Now we derive relations \ref{pb} and \ref{a8} from \ref{t01}-\ref{dttri}. Many steps of the proof are similar to those in \cite{waj:elem}. Observe that the notion of symmetry considered by Wajnryb does not really apply to our context, essentially as a consequence of the asymmetry in (\ref{tiNE}). However, it is easy to adapt Wajnryb's arguments to the extra cases.

\begin{enumerate}[resume*=extra]
    \item $[b_1,\overline{d}_{-2,2}]=1$. \label{b22}
\end{enumerate}

\begin{proof}
As in \cite[(16)]{waj:elem}, we find that $\overline{d}_{-2,2}=a_2^2((\overline{d}_{1,2}t_1^{-1})*s_1^2)$. We have to prove that $b_1$ commutes with $(\overline{d}_{1,2}t_1^{-1})*s_1^2$. We have
\[
(b_1\overline{d}_{1,2}t_1^{-1})*s_1^2=(\overline{d}_{1,2}\xi_1\,E_1a_1^{-2}N_2^{-1})*s^2=(\overline{d}_{1,2}E_1\,b_2a_1^{-2}N_2^{-1})*s^2=(\overline{d}_{1,2}t_1^{-1})*s^2. \qedhere
\]
\end{proof}

\begin{enumerate}[resume*=extra]
    \item $\xi_k=(b_{k+1}t_{k-1}t_kb_k^{-1})*\xi_{k-1}$ and $\eta_{k+1}=(b_kt_{k-1}t_kb_{k-1}^{-1})*\eta_k$ for all $k\ge2$. \label{xiketak}
\end{enumerate}

\begin{proof}
The statement is equivalent to $E_k=(t_{k-1}t_k)*E_{k-1}$ and $N_{k+1}=(t_{k-1}t_k)*N_k$. Using (\ref{eq:altt}), we get
\[
(t_{k-1}t_k)*E_{k-1}=(N_ka_{k-1}^2E_{k-1}E_ka_{k+1}^2N_{k+1})*E_{k-1}=(N_ka_{k-1}^2E_{k-1}E_k)*E_{k-1}=E_k.
\]
Since $N_{i+1}=t_iE_i^{-1}a_i^{-2}$ for all $i$, the second relation follows from \ref{titi} and the first one. \qedhere
\end{proof}

\begin{enumerate}[resume*=extra]
\item $\overline{d}_{i,i+1}*E_i=a_i^{-2}*N_{i+1}$ and $\overline{d}_{i,i+1}*N_{i+1}=a_{i+1}^{-2}*E_{i}$ for all $i$. \label{dEN}
\end{enumerate}

\begin{proof}
We do induction on $i$. The base case is \ref{dbar} and (\ref{d12E}). In general, by \ref{ditiii} and \ref{xiketak}, we obtain
\[
\overline{d}_{i,i+1}*E_i=(t_{i-1}t_i\overline{d}_{i-1,i}t_i^{-1}t_{i-1}^{-1})*E_i=(t_{i-1}t_i\overline{d}_{i-1,i})*(E_{i-1})=(t_{i-1}t_ia_{i-1}^{-2})*N_i=a_i^{-2}*N_{i+1}
\]
and
\[
\overline{d}_{i,i+1}*N_{i+1}=(t_{i-1}t_i\overline{d}_{i-1,i}t_i^{-1}t_{i-1}^{-1})*N_{i+1}=(t_{i-1}t_i\overline{d}_{i-1,i})*N_{i}=(t_{i-1}t_ia_i^{-2})*E_{i-1}=a_{i+1}^{-2}*E_{i}. \qedhere
\]
\end{proof}

\begin{enumerate}[resume*=extra]
\item $[s_k,\overline{d}_{i,i+1}]=1$ if $k \ne i,i+1$ and $[s_i,\overline{d}_{i,i+1}s_i\overline{d}_{i,i+1}]=1$. \label{den}
\end{enumerate}

\begin{proof}
For the first relation, it is enough to show that $\overline{d}_{i,i+1}*b_k=b_k$ if $k \ne i,i+1$. Clearly $\overline{d}_{i,i+1}$ commutes with $b_k$ for $k \ge i+2$. For the rest, we do induction on $i$. The base case is \thref{lem:darel}\ref{dbarvs}. Suppose that $\overline{d}_{i-1,i}*b_k=b_k$ for $k<i-1$. Then also $\overline{d}_{i,i+1}*b_k=b_k$ for $k<i-1$ by \ref{ditiii}. For $k=i-1$, it is enough to prove that $\overline{d}_{i-1,i}$ commutes with $(t_{i}^{-1}t_{i-1}^{-1})*b_{i-1}$. Observe that by (\ref{eq:altt}) we have
\[
(t_{i}^{-1}t_{i-1}^{-1})*b_{i-1}=(N_{i+1}^{-1}a_{i+1}^{-2}E_{i}^{-1})*\eta_{i}=N_{i+1}^{-1}*\eta_{i}=(\eta_{i}\eta_{i+1})*b_{i},
\]
so the desired relation is \ref{fore1}.

The second relation follows from \ref{dbar}. Indeed, we have
\[
(s_i\overline{d}_{i,i+1})*b_i=(b_ia_i^2b_ia_i^{-2}b_i^{-1}a_i^{-2})*\xi_i= a_i^{-2}*\xi_i=(a_i^{-2}\overline{d}_{i,i+1}^{-1})*b_i,
\]
and then it is straightforward to conclude. \qedhere
\end{proof}

\begin{enumerate}[resume*=extra]
\item $[t_j,\overline{d}_{i,i+1}t_j\overline{d}_{i,i+1}]=1$ for $j=i\pm1$. \label{td}
\end{enumerate}

\begin{proof}
We do induction on $i$. The base case is \ref{d1234}. Then, as a consequence of \ref{titi} and \ref{ditiii}, we have
\[
[t_i,\overline{d}_{i+1,i+2}t_i\overline{d}_{i+1,i+2}]=(t_it_{i+1}t_i)*[t_{i+1},\overline{d}_{i,i+1}t_{i+1}\overline{d}_{i,i+1}]
\]
and
\[
[t_{i+2},\overline{d}_{i+1,i+2}t_{i+2}\overline{d}_{i+1,i+2}]=(t_{i}t_{i+1}t_{i+2})*[t_{i+1},\overline{d}_{i,i+1}t_{i+1}\overline{d}_{i,i+1}]. \qedhere
\]
\end{proof}

\begin{lemma}
\thlabel{lem:a8}
Relations \ref{a8} follow from \ref{t01}-\ref{dttri}.
\end{lemma}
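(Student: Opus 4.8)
The plan is to follow Wajnryb's treatment of the analogous relations in \cite{waj:elem}, modified to account for the asymmetry of the definition (\ref{tiNE}). The starting point is that, by the shorthands (\ref{eq:dijdij}), every generator $\overline{d}_{i,j}$ with $i+j\neq 0$ is an explicit conjugate of $\overline{d}_{1,2}$ by a word in the $t_k^{\pm 1}$ and $s_1^{\pm 1}$, while $\overline{d}_{-i,i}$ is assembled from $\overline{d}_{-1,1}$ and the chain generators $\overline{d}_{k,k+1}$. Consequently each relation in \ref{a8} rewrites as an identity between conjugates of $\overline{d}_{1,2}$ and $\overline{d}_{-1,1}$, and it suffices to verify these using the relations \ref{t01}--\ref{dttri} together with the consequences derived above: chiefly \ref{diti} (so that $\overline{d}_{1,2}$ commutes with every $t_j$, $j\neq 2$), \ref{titi}, \ref{a2a}, \ref{diti2}, \ref{ditiii}, \ref{dEN}, \ref{den}, \ref{td}, \ref{b22}, and the pure braid relations \ref{pb} obtained just before.

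I would first dispatch the ``stable'' families. In \ref{a8b} with $\abs{i},\abs{j}\notin\{k,k+1\}$, and in the commutation assertions of \ref{a8a} and \ref{a8f}, the conjugating element ($t_k$ or $s_1$) commutes with every factor of the defining word of $\overline{d}_{i,j}$ --- with the $t$'s and $s_1$ by \ref{titi}, and with $\overline{d}_{1,2}$ by \ref{diti} --- so the relation is immediate. The first case of \ref{a8b}, $t_k*\overline{d}_{k,k+1}=\overline{d}_{k,k+1}$, reduces via \ref{ditiii} and \ref{diti2} to $t_1*\overline{d}_{1,2}=\overline{d}_{1,2}$ and $t_1*\overline{d}_{-2,-1}=\overline{d}_{-2,-1}$, the first being \ref{diti} and the second its $w_1$-conjugate through \ref{t2}. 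The shift relations $t_k*\overline{d}_{k,j}=\overline{d}_{k+1,j}$ in \ref{a8c}, and the symmetric statements for negative indices in \ref{a8f}, go by induction on $j-k$: the case $j=k+1$ is the base case just discussed, and the inductive step combines \ref{diti2} with the braid relations \ref{titi} in the manner of \cite[(13)]{waj:elem}.

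The delicate cases are \ref{a8d}, \ref{a8e} and the boundary instances of \ref{a8c}, \ref{a8f} in which an index equals $-k-1$ or $k$, since there the source or target is a generator $\overline{d}_{-i,i}$ given by the last branch of (\ref{eq:dijdij}) rather than a conjugate of $\overline{d}_{1,2}$ alone. Here I would reduce to $k=1$ by conjugating with a chain element and invoking \ref{td}, so that the statements become $t_1*\overline{d}_{-2,1}=\overline{d}_{-1,2}$ and $t_1*\overline{d}_{-2,2}=\overline{d}_{1,2}*\overline{d}_{-1,1}$, and then carry out the computation for $k=1$ using: the formula $\overline{d}_{-2,2}=a_2^2\big((\overline{d}_{1,2}t_1^{-1})*s_1^2\big)$ from the proof of \ref{b22}; the commutation $[b_1,\overline{d}_{-2,2}]=1$ of \ref{b22}; the rules \ref{dEN}, \ref{den} governing the action of $\overline{d}_{i,i+1}$ on $E_i$ and $N_{i+1}$; and, for the contributions of $\overline{d}_{-1,1,2}$ and $\overline{d}_{\set{-2,-1,1,2}}$, the pentagon relations (\ref{lant}) and the first two relations of \ref{penta} together with the relevant instances of \ref{pb}. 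The main obstacle is not any single identity but the bookkeeping: because the asymmetry of (\ref{tiNE}) forbids Wajnryb's symmetry shortcut, the positive-index and negative-index halves of \ref{a8c}, \ref{a8d}, \ref{a8e}, \ref{a8f} must be checked separately, and one must keep careful track of which conjugating word carries $\overline{d}_{1,2}$ (or $\overline{d}_{-1,1}$) to which $\overline{d}_{i,j}$; once the reductions are in place, each remaining verification is a short word computation of the type performed throughout \thref{lem:darel}. This completes the proof.
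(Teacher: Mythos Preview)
Your overall strategy---follow the argument of \cite[Lemma~33]{waj:elem}, using the consequences \ref{diti}, \ref{titi}, \ref{ditiii}, \ref{diti2}, \ref{b22}, \ref{dEN}, \ref{den}, \ref{td} established above---is exactly what the paper does; its proof consists of the single sentence that Wajnryb's argument goes through verbatim because all the ingredients have already been derived.

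There is, however, a genuine problem with your treatment of the ``delicate cases''. You list ``the pure braid relations \ref{pb} obtained just before'' among your tools, but at this point in the paper \ref{pb} has \emph{not} been established: the paper proves \ref{a8} first (this lemma), and only afterwards derives \ref{pb} via \ref{prepb} and \thref{lem:a8bis}. So invoking \ref{pb} here is circular. Similarly, bringing in the pentagon relations of \ref{penta} and the quantities $\overline{d}_{-1,1,2}$, $\overline{d}_{\set{-2,-1,1,2}}$ is misguided: no three- or four-element $\overline{d}_I$ occurs anywhere in \ref{a8}, and these are not needed. For instance, your target identity $t_1*\overline{d}_{-2,2}=\overline{d}_{1,2}*\overline{d}_{-1,1}$ for \ref{a8e} at $k=1$ is immediate from the last line of the definition (\ref{eq:dijdij}), which gives $\overline{d}_{-2,2}=(t_1^{-1}\overline{d}_{1,2})*\overline{d}_{-1,1}$, together with $[\overline{d}_{1,2},t_1]=1$ from \ref{diti}; no further machinery is required. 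The other ``delicate'' cases reduce similarly once you unwind (\ref{eq:dijdij}) and use \ref{td}, \ref{den}, \ref{b22}. So your plan is sound, but prune the tool list to what is actually available and actually needed.
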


\begin{proof}
The proof is the same as that \cite[Lemma 33]{waj:elem}; notice that all the relations needed are either \ref{t01}-\ref{dttri} or have been proved above. \qedhere
\end{proof}

Now we move on to relations \ref{pb}. 

\begin{enumerate}[resume*=extra]
    \item $[\overline{d}_{i,j},\overline{d}_{-1,1}]=1$ if $i,j \ne \pm1$, and $[\overline{d}_{i,j},\overline{d}_{k,k+1}]=1$ if $i,j \ne k,k+1$. \label{prepb}
\end{enumerate}

\begin{proof}
See \cite[(20)]{waj:elem}. Recall that $\overline{d}_{1,2}$ commutes with $\overline{d}_{3,4}$ and $\overline{d}_{-3,-1}$ by \ref{d1234}. \qedhere
\end{proof}

\begin{lemma}[{\cite[Lemma 34]{waj:elem}}]
\thlabel{lem:a8bis}
The following relations hold:
\begin{enumerate}[label=\textit{(\alph*)}]
    \item $t_k^{-1}\overline{d}_{k,k+1}$ commutes with $\overline{d}_{i,j}$ if $i,j \ne \pm k,\pm(k+1)$; \label{k8a}
    \item $t_k^{-1}\overline{d}_{k,k+1}$ commutes with $\overline{d}_{k,k+1}$ and $\overline{d}_{-k-1,-k}$;\label{k8b}
    \item $(t_k^{-1}\overline{d}_{k,k+1})*\overline{d}_{i,\pm k}=\overline{d}_{i,\pm k\pm1}$ if $i \ne -k-1$ and $i+k\ne0$;\label{k8c}
    \item $(t_k^{-1}\overline{d}_{k,k+1})*\overline{d}_{\pm k,j}=\overline{d}_{\pm k\pm 1,j}$ if $j \ne k+1$ and $-k+j \ne 0$;\label{k8d}
    \item $(t_k^{-1}\overline{d}_{k,k+1})*\overline{d}_{-k,k}=\overline{d}_{-k-1,k+1}$;\label{k8e}
\end{enumerate}
\end{lemma}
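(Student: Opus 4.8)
### Proof proposal for \thref{lem:a8bis}

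The plan is to mirror the proof of \cite[Lemma 34]{waj:elem} as closely as possible, using the battery of ``extra'' relations \ref{a2vs}--\ref{prepb} that have already been derived from \ref{t01}--\ref{dttri}, together with \ref{a8} (now available via \thref{lem:a8}) and the pure-braid-free relations \ref{aiaj}, \ref{atiti}, \ref{s2}, \ref{sai}. The key structural observation, just as in Wajnryb, is that the element $k_k := t_k^{-1}\overline{d}_{k,k+1}$ — which in our shorthand notation is exactly the element $k_k$ already introduced before \thref{rem:sh} — conjugates the ``slot-$k$'' generators $\overline{d}_{i,j}$ in a clean way: it shifts the index $k$ to $k+1$ (and $-k$ to $-k-1$), and leaves untouched the $\overline{d}_{i,j}$ whose index set avoids $\{\pm k,\pm(k+1)\}$. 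So the whole lemma is really a set of commutation/conjugation identities for a single conjugating element, and each part is proved by writing $k_k$ out in terms of the $b_\ell,\xi_\ell,\eta_\ell$ (via $t_k = N_{k+1}a_k^2 E_k$ from \eqref{tiNE} and the shorthand for $\overline{d}_{k,k+1}$) and pushing it past the corresponding $\overline{d}_{i,j}$.

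The steps I would carry out, in order. First, reduce everything to the case $k=1$: by \ref{ditiii} and \ref{xiketak} (together with \ref{diti2}), each $\overline{d}_{k,k+1}$ and each $t_k$ is a conjugate of $\overline{d}_{1,2}$, $\overline{d}_{-2,-1}$, $t_1$ by a fixed product of $t_j$'s, and by \ref{diti}, \ref{titi} this product of $t_j$'s can be arranged to also carry $\overline{d}_{i,j}$ to the appropriate $\overline{d}_{i',j'}$; hence each statement (a)--(e) for general $k$ is the conjugate of the corresponding statement for $k=1$. Second, for $k=1$: part (a) is that $k_1 = t_1^{-1}\overline{d}_{1,2}$ commutes with $\overline{d}_{i,j}$ when $i,j \notin \{\pm1,\pm2\}$; since $\overline{d}_{1,2}$ commutes with such $\overline{d}_{i,j}$ by \thref{lem:darel}\ref{dbarvs} and \ref{prepb}, and $t_1$ commutes with them by \ref{diti} (noting that $t_1$ only moves slots $1,2$, cf. \ref{a8b}), part (a) is immediate. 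Part (b) for $k=1$ is that $t_1^{-1}\overline{d}_{1,2}$ commutes with $\overline{d}_{1,2}$ and with $\overline{d}_{-2,-1}$; the first is \ref{diti} ($[\overline{d}_{1,2},t_1]=1$), and the second follows from \ref{diti} plus $[\overline{d}_{1,2},\overline{d}_{-2,-1}]=1$ (which is \ref{t2}) once one checks $t_1$ commutes with $\overline{d}_{-2,-1}$, again via \ref{diti} applied after rewriting $\overline{d}_{-2,-1}=t_1^2\overline{d}_{1,2}^{-1}$ from \ref{t2}. Third, parts (c), (d), (e) for $k=1$ are the genuine conjugation identities: here I would invoke \ref{a8c}, \ref{a8d}, \ref{a8e}, \ref{a8f} directly — for instance $(t_1^{-1}\overline{d}_{1,2})*\overline{d}_{1,j} = k_1 * \overline{d}_{1,j}$, and \ref{a8c} states $t_1 * \overline{d}_{1,j} = \overline{d}_{2,j}$ for $j\ge 3$, so combined with $[\overline{d}_{1,2},\overline{d}_{2,j}]$-type relations from \ref{pb} (available once \ref{pb} is proved, or directly from the geometric picture of the curves $\delta_{1,2},\delta_{1,j},\delta_{2,j}$) one gets the stated shift; the $-k$ cases and the special diagonal case \ref{a8e} giving $\overline{d}_{-2,2}\mapsto$ the two-term product are handled the same way, using \ref{b22} for the behaviour of $\overline{d}_{-2,2}$.

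The main obstacle I anticipate is bookkeeping rather than conceptual: keeping straight, across the reduction to $k=1$, exactly which conjugating word $t_{j_1}^{\pm}\cdots t_{j_r}^{\pm}$ takes a given $\overline{d}_{i,j}$ to a $\overline{d}_{i',j'}$ with indices in the ``safe'' range, and verifying that this word commutes appropriately — this is where Wajnryb's symmetry argument is usually invoked, and as noted in the paragraph before \thref{lem:a8bis} that symmetry ``does not really apply to our context'' because of the asymmetry in \eqref{tiNE} between $E_i$ and $N_{i+1}$. So I would treat the positive-index slots and the negative-index slots (i.e.\ the $t_k * \overline{d}$ versus $t_k^{-1}* \overline{d}$ behaviour) as two separate, explicit computations, each time falling back on \ref{diti}, \ref{titi}, \ref{ditiii}, \ref{diti2}, \ref{dEN}, \ref{den}, \ref{td} and the relevant clause of \ref{a8}, rather than appealing to a symmetry. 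Since every one of these inputs is among \ref{t01}--\ref{dttri} or has already been deduced from them above, no new geometric input is needed, and the lemma follows.
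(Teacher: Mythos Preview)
Your overall architecture --- reduce to $k=1$ by conjugating with products of $t_j$'s, then handle (a) and (b) via \ref{diti}, \ref{t2}, \ref{prepb} --- is sound, and matches what the citation to Wajnryb intends. The gap is in your treatment of (c) and (d). You write that after applying \ref{a8c} to get $t_1*\overline{d}_{1,j}=\overline{d}_{2,j}$ you would ``combine with $[\overline{d}_{1,2},\overline{d}_{2,j}]$-type relations from \ref{pb}''. This fails on two counts. First, it is circular: in the paper's logical order \ref{pb} is deduced \emph{after} and \emph{using} \thref{lem:a8bis} (see the lemma immediately following it, whose proof cites \cite[(21)--(24)]{waj:elem}). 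Second, the specific commutator you name is not even true: the curves $\delta_{1,2}$ and $\delta_{2,j}$ share the hole labelled $2$ and genuinely intersect, so $\overline{d}_{1,2}$ and $\overline{d}_{2,j}$ do not commute, and no clause of \ref{pb} asserts that they do. The ``geometric picture'' escape hatch you mention is not available either, since the whole point of this section is a formal derivation from \ref{t01}--\ref{dttri}.

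The missing idea is to exploit the \emph{second} expression for the conjugating element. From \ref{ti2} one has $t_k^2=\overline{d}_{k,k+1}\overline{d}_{-k-1,-k}$, and since $[t_k,\overline{d}_{k,k+1}]=1$ by \ref{diti} this gives
\[
t_k^{-1}\overline{d}_{k,k+1}\;=\;\overline{d}_{k,k+1}t_k^{-1}\;=\;\overline{d}_{-k-1,-k}^{-1}\,t_k.
\]
Now for each case of (c)--(e) you choose whichever of these two forms makes the \ref{a8}-step land in the range where \ref{prepb} applies. For instance, for $(t_1^{-1}\overline{d}_{1,2})*\overline{d}_{1,j}$ with $j\ge 3$ use the form $\overline{d}_{-2,-1}^{-1}t_1$: then \ref{a8c} gives $t_1*\overline{d}_{1,j}=\overline{d}_{2,j}$, and \ref{prepb} gives $[\overline{d}_{-2,-1},\overline{d}_{2,j}]=1$ since $2,j\notin\{-2,-1\}$, so the result is $\overline{d}_{2,j}$ as required. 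The negative-index cases of (c), (d) use the form $\overline{d}_{1,2}t_1^{-1}$ together with \ref{a8c}, \ref{a8f} and again \ref{prepb}. Case (e) for $k=1$ is literally the defining equation for $\overline{d}_{-2,2}$ in \eqref{eq:dijdij}, and the general $k$ follows by the same conjugation you set up for the reduction step. With this one adjustment your argument goes through without touching \ref{pb}.
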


\begin{lemma}
Relations \ref{pb} follow from \ref{t01}-\ref{dttri}.
\end{lemma}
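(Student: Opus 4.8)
The plan is to follow the blueprint of Wajnryb's corresponding lemma \cite[Lemma 35]{waj:elem}, adapting each of the four pure braid relations \ref{pba}--\ref{pbd} to the present generators, and leaning on the auxiliary relations \ref{td}--\ref{prepb} and especially the manipulations of $t_k^{-1}\overline{d}_{k,k+1}$ recorded in \thref{lem:a8bis}. The key observation is that, just as in Wajnryb's setting, the generators $\overline{d}_{i,j}$ satisfy exactly the pure braid relations of a suitable strand configuration once one knows how they are permuted by the elements $t_k^{-1}\overline{d}_{k,k+1}$; the conjugation formulas \ref{k8a}--\ref{k8e} of \thref{lem:a8bis} provide precisely this action, and relations \ref{prepb} provide the base commutations.

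First I would reduce everything to a small number of ``seed'' relations by exploiting conjugation. Using \ref{ditiii}, \ref{diti2} and \ref{a8} (already established in \thref{lem:a8}), every $\overline{d}_{i,j}$ is a conjugate of $\overline{d}_{1,2}$ (or of $\overline{d}_{-1,1}$) by a product of $t_k$'s and $s_k$'s. Hence any instance of \ref{pba}--\ref{pbd} involving indices $r<s<i<j$ (or the analogous orderings) can be moved, by conjugating the whole relation, to a ``standard position'' involving only finitely many consecutive indices; concretely, as in \cite[Lemma 35]{waj:elem}, it suffices to verify \ref{pba} for the pairs such as $\{\overline{d}_{1,2},\overline{d}_{3,4}\}$, $\{\overline{d}_{1,2},\overline{d}_{2,3}\}$, \ref{pbb} and \ref{pbc} for $\{\overline{d}_{1,2},\overline{d}_{2,3}\}$-type triples, and \ref{pbd} for a single four-index configuration, since conjugation by the $t_k$'s then propagates these to all index patterns. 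Relation \ref{d1234} already gives $[\overline{d}_{1,2},\overline{d}_{3,4}]=1$ and $[\overline{d}_{1,2},t_2\overline{d}_{1,2}t_2]=1$, which are the first seeds.

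Then I would carry out the seed verifications. For \ref{pba} with ``disjoint'' indices ($r<s<i<j$), the relevant commutation follows from \ref{prepb} together with \thref{lem:a8bis}\ref{k8a}: express $\overline{d}_{r,s}$ via an appropriate conjugate and use that $t_k^{-1}\overline{d}_{k,k+1}$ commutes with the distant $\overline{d}_{i,j}$. For the ``nested'' case $i<r<s<j$ of \ref{pba}, and for \ref{pbb}, \ref{pbc}, I would rewrite the relevant $\overline{d}$'s using \ref{k8c}--\ref{k8e}, which turn the desired identity into a consequence of \ref{td} (the length-$3$ braid-type relation between $t_j$ and $\overline{d}_{i,i+1}t_j\overline{d}_{i,i+1}$) and \ref{den}. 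The $3$-chain relations \ref{ti2} and the identities \ref{dEN} of $\overline{d}_{i,i+1}$ on the $E_i$, $N_{i+1}$ will be used to clear the $a_i^2$'s that appear along the way, exactly as in Wajnryb's computation. Finally \ref{pbd}, the commutator $[\overline{d}_{i,j},\overline{d}_{r,j}^{-1}*\overline{d}_{r,s}]=1$ for $r<i<s<j$, I would deduce from \ref{pbb}--\ref{pbc} by the standard pure-braid identity $\overline{d}_{r,j}^{-1}*\overline{d}_{r,s}=\overline{d}_{s,j}*\overline{d}_{r,s}$ (relation \ref{pbc} itself) combined with \ref{pba}: once the conjugate is rewritten as a product of $\overline{d}_{s,j}$ and $\overline{d}_{r,s}$, both of which commute with $\overline{d}_{i,j}$ by the already-established disjoint and nested cases, the claim drops out.

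The main obstacle I expect is purely bookkeeping: tracking the powers of the $a_i^2$ (and the occasional $s_i$) that are generated when one conjugates $\overline{d}_{i,j}$ by a long word in the $t_k$'s, and making sure these cancel. In Wajnryb's paper the symmetry of the generators keeps this under control, but as the authors note (see the remark before \ref{b22}) that symmetry is broken here by the asymmetric formula \eqref{tiNE}; so I would need to carefully use \ref{a2vs}, \ref{a2a}, \ref{diti} and \ref{diti2} to verify that $a_i^2$ and $\overline{d}_{i,j}$ (and $t_k$, $\overline{d}_{i,j}$ for $k\neq i,i\pm1$) commute in every intermediate expression, so that the $a$-powers can always be pushed aside and matched. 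No genuinely new idea is required beyond what is in \thref{lem:a8bis}; the work is to organize the conjugations so that every step reduces to one of the already-proven relations \ref{t01}--\ref{dttri} or to \ref{b22}--\ref{prepb}.
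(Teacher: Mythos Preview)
Your overall strategy matches the paper's, which simply defers to Wajnryb's items (21)--(24) in \cite{waj:elem}; the reduction to seed cases via conjugation by words in the $t_k$'s and the use of \thref{lem:a8bis} and \ref{prepb} is exactly right for \ref{pba}--\ref{pbc}.

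However, your treatment of \ref{pbd} contains a genuine error. You claim that after rewriting $\overline{d}_{r,j}^{-1}*\overline{d}_{r,s}=\overline{d}_{s,j}*\overline{d}_{r,s}$ via \ref{pbc}, both factors $\overline{d}_{s,j}$ and $\overline{d}_{r,s}$ commute with $\overline{d}_{i,j}$ ``by the already-established disjoint and nested cases''. But with $r<i<s<j$, neither commutation holds: $\overline{d}_{s,j}$ and $\overline{d}_{i,j}$ share the index $j$, so \ref{pba} does not apply; and $\overline{d}_{r,s}$ and $\overline{d}_{i,j}$ are in the \emph{linked} configuration $r<i<s<j$, which is precisely the case \ref{pbd} is about and is covered by neither the disjoint ($r<s<i<j$) nor the nested ($i<r<s<j$) case of \ref{pba}. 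Relation \ref{pbd} is an independent relation in the pure braid presentation and cannot be deduced formally from \ref{pba}--\ref{pbc}.

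The correct approach, as in Wajnryb's (24), is the same as for the other three: conjugate the general instance of \ref{pbd} down to a single seed configuration with small consecutive indices and verify that seed directly using the explicit relations already in hand (in particular \ref{d1234} and \ref{prepb}, together with the transport formulas of \thref{lem:a8bis}).
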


\begin{proof}
See \cite[(21), (22), (23), (24)]{waj:elem}. \qedhere
\end{proof}

To conclude the proof of \thref{thm:mcgs}, we only have to deal with the relations coming from the action of $\Modd(\Sigma_{g,1})[\phi]$ on the edges of $X_g$. 

We start from relations \ref{diffwr}. Notice that we have
\[
(t_1s_1t_1)*b_1=(t_1b_1\eta_2a_1^2b_1a_1^2)*b_1=(t_1)*\eta_2=b_1
\]
and 
\[
\overline{d}_{-1,1}\overline{d}_{-1,2}\overline{d}_{1,2}a_1^2=a_1^4s_1\overline{d}_{1,2}s_1\overline{d}_{1,2},
\]
so by \ref{dbar} and \ref{den} we get 
\[
(a_1^4s_1\overline{d}_{1,2}s_1\overline{d}_{1,2})*b_1=(a_1^2s_1\overline{d}_{1,2})*\xi_1=(a_1^2s_1)*b_1=b_1.
\]
This proves \ref{dwb1}, and \ref{dwb2} follows immediately. It is easy to see that relations \ref{dwbj} follow from the above. For \ref{dwb*}, notice that the first relation is equivalent to
\begin{equation}
\label{adr}
[a_j^2\overline{d}_{i,\dots,j},\overline{r}_{i,j}]=1,
\end{equation}
which is the first half of \ref{rijrel}. We will deal with the last part of \ref{dwb*} later on.

Consider now relation \ref{backtracking}. We deal only with the case $i<0$, as the other is similar. By (\ref{adr}), we get
\[
b_j^{+2}\overline{r}_{i,j}\overline{d}_{\set{i,\dots,j}}a_j^2 b_j^{-2}\overline{r}_{i,j}=b_j^{+2}\overline{d}_{\set{i,\dots,j}}a_j^2 \overline{r}_{i,j}b_j^{-2}\overline{r}_{i,j}=b_j^{+2}\overline{d}_{\set{i,\dots,j}}a_j^2b_j\overline{d}_{\set{i,\dots,j}}a_j^2\overline{d}_{\set{i,\dots,j}}a_j^2b_j.
\]
Hence, writing out the definition of $s_j$, the relation becomes
\[
b_j\overline{d}_{\set{i,\dots,j}}a_j^2 b_ja_j^2\overline{d}_{\set{i,\dots,j}}^2=a_1^{-4}\dots a_{-i}^{-4}a_{-i+1}^{-2}\dots a_{j-1}^{-2} a_j^2b_ja_j^2\overline{d}_{\set{i,\dots,j}}b_j.
\]
Applying once more (\ref{adr}), we obtain the second half of \ref{rijrel}.

Consider now relation \ref{diffwr}\ref{dwb*} for $i<0$. Applying \ref{backtracking}, we have
\begin{align*}
s_ja_j^{-2}\overline{d}_{\set{1,\dots,j}}^{-1}\overline{r}_{i,j}^{-1}b_j^{-2}a_j^2s_j\overline{d}_{\set{i,\dots,j}}b_j^{2}\overline{r}_{i,j}&=a_1^4\dots a_{-i}^4a_{-i+1}^2\dots a_{j-1}^2 s_jb_j^{-2}\overline{r}_{i,j}s_j^{-1}b_j^{2}\overline{r}_{i,j}=\\
&=a_1^4\dots a_{-i}^4a_{-i+1}^2\dots a_{j-1}^2 \overline{r}_{i,j}^2.
\end{align*}
The relation becomes
\[
\overline{r}_{i,j}^2=a_1^{-4}\dots a_{-i}^{-4}a_{-i+1}^{-2}\dots a_{j}^{-2} \overline{d}_{\set{i,\dots,j}}^{-2}\cdot\overline{d}_{\set{i,\dots,j-1}}\big((t_{j-1}\dots t_{-i+1})*\overline{d}_{\set{i-1,\dots,j}}\big),
\]
which is \ref{3c}.

For the triangles, we have already observed that relations \ref{dttri} easily imply \ref{triangles}.

Relation \ref{square} follows from \ref{t2}:
\[
(a_1^2b_1t_1\overline{d}_{1,2}^{-1}a_1^2b_1t_1\overline{d}_{-2,-1}^{-1})^2=(a_1^2b_1a_2^2b_2t_1\overline{d}_{1,2}^{-1}t_1\overline{d}_{-2,-1}^{-1})^2=(a_1^2b_1a_2^2b_2)^2=a_1^2s_1a_2^2s_2.
\]

Observe that applying \thref{lem:darel}\ref{dbarvs}, \ref{tiact} and \ref{t2}, relation \ref{pentagon} simplifies as follows:
\begin{align*}
&s_2b_2^{-2}\overline{r}_{1,2} \overline{d}_{-2,-1}\overline{d}_{1,2}^{-1} b_1 t_1a_1^2 b_1 \overline{d}_{-2,-1}^{-3}t_1 b_1 t_1 b_1 \overline{d}_{\set{-2,-1,1,2}}^{-1}=\\
&\quad=b_2a_2^2\overline{d}_{1,2}a_2^2b_2 \overline{d}_{-2,-1}\overline{d}_{1,2}^{-1} b_1 t_1a_1^2 b_1 \overline{d}_{1,2}t_1^{-1}\overline{d}_{-2,-1}^{-2}t_1 \eta_2  b_1 \overline{d}_{\set{-2,-1,1,2}}^{-1}=\\
&\quad=b_2a_2^2\overline{d}_{1,2}a_2^2b_2 \overline{d}_{-2,-1}\overline{d}_{1,2}^{-1} b_1 \overline{d}_{1,2}a_2^2 b_2 \overline{d}_{-2,-1}^{-2}t_1 \eta_2  b_1 \overline{d}_{\set{-2,-1,1,2}}^{-1}=\\
&\quad=b_2a_2^2\overline{d}_{1,2}a_2^2b_2 \overline{d}_{-2,-1} \xi_1 a_2^2 b_2 t_1^{-1}\overline{d}_{1,2} \overline{d}_{-2,-1}^{-1} \eta_2  b_1 \overline{d}_{\set{-2,-1,1,2}}^{-1}=\\
&\quad=b_2a_2^2\overline{d}_{1,2}a_2^2b_2 \overline{d}_{-2,-1} a_2^2a_1^{-2} \eta_2 b_2 b_1^{-1}\eta_2^{-1}
t_1^{-1}\overline{d}_{1,2} \overline{d}_{-2,-1}^{-1} \eta_2  b_1 \overline{d}_{\set{-2,-1,1,2}}^{-1}.
\end{align*}
Now, applying \ref{penta}, this last term is equal to
\begin{align*}
&b_2a_2^2\overline{d}_{1,2}a_2^2b_2 \overline{d}_{-2,-1} a_2^2a_1^{-2} \eta_2 b_2 b_1^{-1}\eta_2^{-1}
t_1^{-1}\overline{d}_{1,2} \overline{d}_{-2,-1}^{-1} \eta_2  b_1 \overline{d}_{\set{-2,-1,1,2}}^{-1}=\\
&\quad=b_2a_2^2\overline{d}_{1,2}a_2^2b_2 \overline{d}_{-2,-1} a_2^2a_1^{-2} \eta_2 \overline{d}_{-1,1,2}a_1^2 b_2  \overline{d}_{\set{-2,-1,1,2}}^{-1}=\\
&\quad=b_2a_2^2\overline{d}_{1,2}a_2^2b_2 a_2^2 b_2 \overline{d}_{-2,-1}\overline{d}_{-1,1,2}   \overline{d}_{\set{-2,-1,1,2}}^{-1} b_2=\\
&\quad=b_2a_2^2\overline{d}_{1,2}b_2 a_2^2 b_2a_2^2 \overline{d}_{1,2} (t_1*\overline{d}_{-1,1,2})^{-1} b_2=\overline{r}_{1,2} a_2^2 \overline{r}_{1,2} (t_1*\overline{d}_{-1,1,2})^{-1}.
\end{align*}
Observe that by \ref{rijrel} we have
\[
\overline{r}_{1,2} a_2^2 \overline{r}_{1,2}=\overline{r}_{1,2} a_2^2\overline{d}_{1,2}\overline{d}_{1,2}^{-1} \overline{r}_{1,2}=\overline{r}_{1,2}^2 a_1^2a_2^2\overline{d}_{1,2}^2,
\]
and by \ref{3c} we know that 
\[
\overline{r}_{1,2}^2=a_1^{-2}a_2^{-2}\overline{d}_{1,2}^{-2}(t_1*\overline{d}_{-1,1,2}).
\]
This proves \ref{pentagon}.

Finally, \ref{hyp} follows easily from \ref{hypg}.

\section{The case of closed surfaces}

In this section, we finally determine a finite presentation of the spin mapping class group of a closed surface. There is a standard procedure to relate the mapping class group of $\Sigma_g^1$ and that of $\Sigma_g$: first we cap off the boundary component $\partial$ with a once-marked disk, obtaining a surface $\Sigma_{g,1}$, and then we forget the marked point $p$. These two steps correspond to two well-known exact sequences of groups (see \cite[Proposition 3.19 and Theorem 4.6]{primer}):
\begin{gather*}
1 \longrightarrow \Braket{t_{\partial}} \longrightarrow \Modd(\Sigma_g^1) \overset{Cap}{\longrightarrow} \Modd(\Sigma_{g,1}) \longrightarrow 1, \\
1 \longrightarrow \pi_1(\Sigma_g,p) \overset{Push}{\longrightarrow} \Modd(\Sigma_{g,1}) \overset{Forget}{\longrightarrow} \Modd(\Sigma_g) \longrightarrow 1.
\end{gather*}
The second sequence is known as the Birman exact sequence. By \cite[Fact 4.7]{primer}, the kernel of $Forget$ is generated by mapping classes of the form $t_{\gamma}t_{\gamma'}^{-1}$, where $\gamma$ and $\gamma'$ bound an annulus containing $p$, i.e. Birman's “spin maps" \cite{birmanes}. 

Consider now an even spin structure $\phi$ on $\Sigma_g^1$. By our assumptions, $\phi(\partial)=1$, so $\Sigma_{g,1}$ and $\Sigma_g$ inherit well-defined spin structures, which we still denote by $\phi$. Moreover, it is clear that the maps $Cap$ and $Forget$ restrict to surjections between the stabilizer subgroups. Since all the mapping classes in both kernels are easily seen to preserve $\phi$, we get analogous exact sequences
\begin{gather}
1 \longrightarrow \Braket{t_{\partial}} \longrightarrow \Modd(\Sigma_g^1)[\phi] \overset{Cap}{\longrightarrow} \Modd(\Sigma_{g,1})[\phi] \longrightarrow 1, \label{cap} \\
1 \longrightarrow \pi_1(\Sigma_g,p) \overset{Push}{\longrightarrow} \Modd(\Sigma_{g,1})[\phi] \overset{Forget}{\longrightarrow} \Modd(\Sigma_g)[\phi] \longrightarrow 1.\label{bes}
\end{gather}

We can now obtain a presentation for $\Modd(\Sigma_g)[\phi]$, where $\phi$ is the even spin structure of the preceding sections.

\begin{theorem}
\thlabel{thm:presDC}
The spin mapping class group $\Modd(\Sigma_g)[\phi]$ admits a presentation with generators $b_1,\dots,b_g$, $\xi_1,\dots,\xi_{g-1}$, $\eta_2,\dots,\eta_g$ and relations \ref{t01}-\ref{dttri} and the following:
\begin{enumerate}[resume*=drel]
    \item $\overline{d}_{\set{-g,\dots,g}}a_1^2\dots a_g^2=1$; \label{Delta}
    \item $\overline{d}_{\set{-g,\dots,-1}}=\overline{d}_{\set{1,\dots,g}}$.\label{push}
\end{enumerate}
\end{theorem}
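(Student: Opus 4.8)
The strategy is the standard one of pushing the presentation of $\Modd(\Sigma_g^1)[\phi]$ from \thref{thm:presD} through the two exact sequences (\ref{cap}) and (\ref{bes}), using the presentation-extension results for central extensions and for the Birman exact sequence. First, applying $Cap$ to (\ref{cap}): since $t_\partial$ is central in $\Modd(\Sigma_g^1)[\phi]$ and $Cap$ is surjective with kernel $\Braket{t_\partial}$, a presentation of $\Modd(\Sigma_{g,1})[\phi]$ is obtained from \thref{thm:presD} by adding one relation expressing $t_\partial$ as a word in the generators and setting it equal to $1$. Concretely, $t_\partial$ is the boundary twist, and by the chain relation it equals the left-hand side of a $2g$-chain or $(2g+1)$-chain relation in the curves of Figure \ref{fig:gens}; but more usefully, $\delta=\delta_{\{\pm1,\dots,\pm g\}}$ is isotopic to $\partial$ (up to the usual correction), so the relation $d_{\set{-g,\dots,g}}=t_\partial$ combined with (\ref{eq:dI}) gives $\overline{d}_{\set{-g,\dots,g}}(a_1\dots a_g)^2 = t_\partial$; killing $t_\partial$ yields relation \ref{Delta} after noting $(a_1\dots a_g)^2$ abelianizes correctly to $a_1^2\dots a_g^2$ modulo the already-derived commutation relations \ref{aiaj}. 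I should double-check the exact identification $\delta \simeq \partial$ against \thref{rem:h2} and the genus-$0$ complement condition there; this is the first place a sign or a stray twist could sneak in.

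Second, applying the Birman exact sequence (\ref{bes}): a presentation of $\Modd(\Sigma_g)[\phi]$ is obtained from that of $\Modd(\Sigma_{g,1})[\phi]$ by adding, for each of the $2g$ standard generators of $\pi_1(\Sigma_g,p)$, the relation that kills the image under $Push$ of that generator. By \cite[Fact 4.7]{primer} the image of $Push$ is generated by the spin maps $t_\gamma t_{\gamma'}^{-1}$ where $\gamma,\gamma'$ cobound an annulus around $p$; picking loops dual to a symplectic basis, $Push(x_i)$ and $Push(y_i)$ become such spin maps, and in our coordinates these are exactly the fundamental-type products that the generators $\overline{r}_{i,j}$ and the $\overline{d}$'s encode. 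The plan is to show that modulo \ref{t01}--\ref{dttri} (and \ref{Delta}), all $2g$ of these point-pushing relations are consequences of the single relation \ref{push}: the loop around the $i$-th handle gives $Push(\alpha_i\text{-loop}) = \overline{d}_{\set{-i,\dots\text{(around handle $i$)}}}$-type expression, and the change-of-coordinates action of $\Modd(\Sigma_g)[\phi]$ (which is already generated by our twists) carries one such relation to all the others — so it suffices to impose one, and \ref{push} is a clean representative after simplification using \ref{a8}/\ref{pb}-type identities already available. The identity $\overline{d}_{\set{-g,\dots,-1}} = \overline{d}_{\set{1,\dots,g}}$ reflects that the two curves $\delta_{\{1,\dots,g\}}$ and $\delta_{\{-1,\dots,-g\}}$ are isotopic in $\Sigma_g$ once $\partial$ is capped (they cobound the capped region), i.e. it is precisely the point-pushing relation for one generator of $\pi_1$.

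The bulk of the work — and the main obstacle — is the bookkeeping showing that (a) no \emph{new} generators are needed (the generators $b_i,\xi_i,\eta_i$ already generate, which is immediate since $Forget$ is surjective and sends our generators to twists along the images), and (b) the $2g$ a priori point-pushing relations collapse to just \ref{Delta} and \ref{push}. For (b) one wants: among the $2g$ relators $\{Push(\text{basis loop})=1\}$, one loop of $\alpha$-type gives \ref{Delta} (the loop that, pushed, wraps all handles — equivalently encircles the old boundary), one loop of $\beta$-type gives \ref{push}, and the remaining $2g-2$ are obtained from these two by conjugating by elements that realize the $\Modd(\Sigma_g)[\phi]$-action permuting the handles; since those elements are words in $b_i,\xi_i,\eta_i$ and the conjugation relations are already derivable from \ref{t01}--\ref{dttri}, the extra relators are redundant. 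This is exactly the mechanism Wajnryb uses in \cite{waj:elem} to pass from $\Sigma_g^1$ to $\Sigma_g$, and I expect to follow that template closely; the genuinely delicate point is getting the two surviving relators into the stated normal forms \ref{Delta} and \ref{push} rather than some conjugate, which requires invoking the already-proven relations \ref{aiaj}, \ref{pb}, \ref{a8}, and the triangle relations \ref{dttri} to massage $(a_1\dots a_g)^2$ and the long $\overline{d}_{\set{\cdots}}$ words. Once the normal forms are pinned down, the theorem follows by two applications of the standard lemma that a presentation of a group extension $1\to N\to E\to Q\to 1$ with $E$ finitely presented and $N$ finitely generated is obtained from that of $E$ by adding finitely many relators killing generators of $N$.
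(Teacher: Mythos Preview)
Your overall strategy---push the presentation of \thref{thm:presD} through the two exact sequences (\ref{cap}) and (\ref{bes})---is exactly what the paper does, and your treatment of the capping step is essentially correct: $\partial=\delta_{\set{-g,\dots,g}}$, so $t_\partial=\overline{d}_{\set{-g,\dots,g}}a_1^2\cdots a_g^2$, and killing it yields \ref{Delta}.

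There is, however, a genuine gap in your handling of the Birman step. You claim that among the $2g$ point-pushing relators, one ``$\alpha$-type'' loop yields \ref{Delta} and one ``$\beta$-type'' loop yields \ref{push}, with the rest following by conjugation. This is wrong on two counts. First, \ref{Delta} is \emph{not} a point-pushing relation: it comes purely from the capping sequence (the loop encircling the old boundary is nullhomotopic in $\Sigma_g$, so its push is already trivial). Second, and more importantly, conjugation by $\Modd(\Sigma_{g,1})[\phi]$ preserves the spin value of the curves $\gamma,\gamma'$ in a spin map $t_\gamma t_{\gamma'}^{-1}$, so the spin maps fall into \emph{two} orbits---one with $\phi(\gamma)=1$ and one with $\phi(\gamma)=0$---and your conjugation argument can at best reduce the Birman relators to two, not one. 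Relation \ref{push} (equivalently $d_{\set{1,\dots,g}}d_{\set{-g,\dots,-1}}^{-1}=1$) handles the spin-$1$ orbit. You still owe an argument that the spin-$0$ orbit is redundant.

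The paper closes this gap with a short direct computation rather than conjugation: choosing the admissible pair $\beta_g,\beta_g'$ of Figure~\ref{fig:spin} as a spin-$0$ representative, one checks that $\beta_g'=\big(\overline{d}_{\set{-g,\dots,-1}}^{-1}\overline{d}_{\set{1,\dots,g}}\big)(\beta_g)$, so once \ref{push} holds this mapping class is the identity and $\beta_g'=\beta_g$, whence $t_{\beta_g}t_{\beta_g'}^{-1}=1$ follows. This is the missing idea in your sketch.
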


\begin{proof}
Notice that $\partial=\delta_{\set{-g,\dots,g}}$ in the notation of Figure \ref{fig:diijj}. Hence, 
\[
t_{\partial}=d_{\set{-g,\dots,g}}=\overline{d}_{\set{-g,\dots,g}}a_1^2\dots a_g^2
\]
by (\ref{dset}), and modding out $\Modd(\Sigma_g^1)[\phi]$ by \ref{Delta} we get $\Modd(\Sigma_{g,1})[\phi]$ by (\ref{cap}).

In order to obtain $\Modd(\Sigma_g)[\phi]$, we have to mod out by the subgroup generated by mapping classes $t_{\gamma}t_{\gamma'}^{-1}$ as above. Clearly, the action of $\Modd(\Sigma_g^1)[\phi]$ on these elements has two orbits, according to the spin value of $\gamma$ and $\gamma'$. A couple of $\gamma,\gamma'$ with spin value $1$ is given by $\delta_{\set{-g,\dots,-1}}$ and $\delta_{\set{1,\dots,g}}$, and relation \ref{push} implies that
\[
d_{\set{1,\dots,g}}d_{\set{-g,\dots,-1}}^{-1}=\overline{d}_{\set{1,\dots,g}}\overline{d}_{\set{-g,\dots,-1}}^{-1}=1.
\]
A couple of admissible $\gamma,\gamma'$ is given by the curves $\beta_g$ and $\beta_g'$ of Figure \ref{fig:spin}. Notice that
\[
\beta_g'=\big(\overline{d}_{\set{-g,\dots,-1}}^{-1}\overline{d}_{\set{1,\dots,g}}\big)(\beta_g)=\beta_g
\]
by \ref{push}. This concludes the proof. \qedhere
\end{proof}

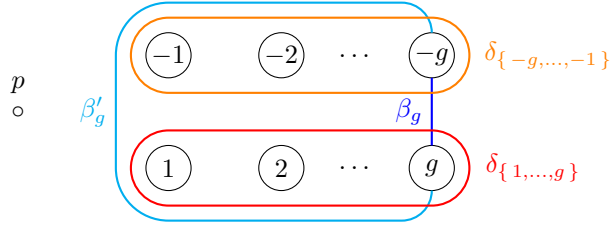
\begin{figure}
\centering
\begin{tikzpicture}
\draw (1.5,.5) circle (3mm);
\draw (1.5,2) circle (3mm);
\draw (3,.5) circle (3mm);
\draw (3,2) circle (3mm);
\node at (4,.5) {$\dots$};
\node at (4,2) {$\dots$};
\draw (5,.5) circle (3mm);
\draw (5,2) circle (3mm);
\node at (1.5,.5) {$1$};
\node at (3,.5) {$2$};
\node at (5,.5) {$g$};
\node at (1.5,2) {$-1$};
\node at (3,2) {$-2$};
\node at (5,2) {$-g$};
\draw [blue, thick] (5,.8) -- (5,1.7);
\draw [cyan, thick] (5,.2) arc (0:-90:.4 and .4) -- (1.5,-.2) arc (-90:-180:.7 and .7) -- (.8,2) arc (180:90:.7 and .7) -- (4.6,2.7) arc(90:0:.4 and .4);
\draw [red, thick] (5,0) -- (1.5,0) arc (270:90:.5 and .5) -- (5,1) arc (90:-90:.5 and .5);
\draw [orange, thick] (5,1.5) -- (1.5,1.5) arc (270:90:.5 and .5) -- (5,2.5) arc (90:-90:.5 and .5);
\node [blue] at (4.7,1.25) {$\beta_g$};
\node [cyan] at (.5,1.25) {$\beta_g'$};
\node [red] at (6.35,.5) {$\delta_{\set{1,\dots,g}}$};
\node [orange] at (6.55,2) {$\delta_{\set{-g,\dots,-1}}$};
\node at (-.5,1.6) {$p$};
\node at (-.5,1.25) {$\circ$};
\end{tikzpicture}
\caption{Pairs of curves that bound an annulus containing the marked point $p$.}
\label{fig:spin}
\end{figure}

\begin{remark}
\thlabel{rem:0lant}
By \cite{ger}, all the relations appearing in \thref{thm:presD} and \thref{thm:presDC} can be expressed in terms of braids, chains and lanterns. This is obvious for \ref{t01} and \ref{5c}. It follows from the remarks after (\ref{hyp3}) that \ref{hypg} is the combination of two positive $7$-chains and two negative $6$-chains. Moreover, \ref{boh}-\ref{a2} are Artin relations, so they are clearly products of lanterns with total exponent zero modulo braids. 

For the remaining relations, recall from \thref{rem:dI} that if $\abs{I}=n$, then $\overline{d}_I$ can be written as a product of $\overline{d}_{i,j}$ using exactly $(n-1)(n-2)/2$ lanterns. Since each $\overline{d}_{i,j}$ can be written as a product of admissible twists by taking the product with a negative lantern, we see that $\overline{d}_{I}$ can be factored using $1-\abs{I}$ lanterns, counted with multiplicity. For example, according to \thref{prop:fake} a fake $3$-chain involves a $3$-chain and $6$ lanterns. This is clear for the first two relations of \ref{3c}. For (\ref{eq:3c}), we have $2(3-(j-i))$ lanterns in the left hand side, and $-2(j-i)$ lanterns on the right hand side, so this remains true.

Counting the lanterns with signs in this way, we see that every relation appearing in \ref{penta}-\ref{triangles} and \ref{push} involves a total amount of zero lanterns. Notice that (\ref{lant}) comes from a lantern in $\Modd(\Sigma_g)$, so it involves an extra (positive) lantern. 

For \ref{push}, we obtain a total of $1-2g+2g=1$ lantern. Notice that since the curve $\delta_{\set{-g,\dots,g}}$ bounds a disk on $\Sigma_g$, the remaining lantern has a trivial boundary component.
\end{remark}

\begin{corollary}[Randal-Williams \cite{rw2}, Sierra \cite{sierra}]
\label{cor:abh}
\thlabel{cor:highab}
The abelianization of the even spin mapping class group is $\sfrac{\Z}{4\Z}$ for $g \ge 4$.
\end{corollary}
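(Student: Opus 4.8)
The plan is to read $H_1$ off the finite presentation of \thref{thm:presDC}, following the pattern already used above for the spin handlebody group. First I would note that the generators $b_1,\dots,b_g$, $\xi_1,\dots,\xi_{g-1}$, $\eta_2,\dots,\eta_g$ are all Dehn twists about nonseparating $0$-curves; by the spin change of coordinates principle, $\Modd(\Sigma_g)[\phi]$ acts transitively on such curves, so these twists are pairwise conjugate in $\Modd(\Sigma_g)[\phi]$ and hence have a common image $c$ in the abelianization. As they generate, $H_1(\Modd(\Sigma_g)[\phi])$ is cyclic, generated by $c$.

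Next I would unwind the shorthands of \thref{rem:sh} in the abelianization. By \thref{lem:da}, the element $a_1^2$ of \thref{lem:da}\ref{a2da} is a product of admissible twists with total exponent $1+1-1-1+1+1-1-1=0$, and $\overline{d}_{1,2}$ of \thref{lem:da}\ref{d12} has total exponent $1+1-1-1=0$; since conjugation does not change the class in $H_1$, it follows from (\ref{eq:dijdij}) that $[a_i^2]=[\overline{d}_{i,j}]=0$, and hence $[s_i]=[t_i]=2c$, for all indices. Abelianizing the first fake $3$-chain relation of \ref{3c}, namely $t_1^2=\overline{d}_{1,2}\overline{d}_{-2,-1}$, then yields $4c=0$; abelianizing \ref{5c} and \ref{hypg} gives the compatible relations $28c=0$ and $56c=0$, while every other relation of \thref{thm:presDC} becomes vacuous (each is a commutator, a conjugation $x\,{*}\,y=z$ with $y,z$ conjugate, or a braid/chain relation with balanced exponents). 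Thus $H_1(\Modd(\Sigma_g)[\phi])$ is a quotient of $\sfrac{\Z}{4\Z}$.

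For the reverse inequality I would exhibit a surjection $\psi\colon\Modd(\Sigma_g)[\phi]\to\sfrac{\Z}{4\Z}$ sending each generator $b_i,\xi_i,\eta_i$ to $1$. To check that $\psi$ is well defined one must verify that each defining relator of \thref{thm:presDC}, written as a word in these generators, has exponent sum divisible by $4$: by \thref{rem:0lant} every such relation is a product of braid relators (exponent $0$), lantern relators whose exponents cancel, and chain relators, and the $5$-chains of \ref{5c}, the combination of $7$- and $6$-chains in \ref{hypg}, and the fake $3$-chains of \ref{3c} (including (\ref{eq:3c})) all have exponent sum $\equiv 0\pmod 4$ — a short computation using the expansions above. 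Since $\psi(b_1)=1$, the map $\psi$ is onto, so $H_1$ surjects onto $\sfrac{\Z}{4\Z}$; combined with the previous paragraph this gives $H_1(\Modd(\Sigma_g)[\phi])\cong\sfrac{\Z}{4\Z}$.

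The main obstacle is the bookkeeping in the last step: confirming that all of the (rather long) relators of \thref{thm:presDC} — once the shorthands $a_i^2$, $s_i$, $t_i$, $\overline{d}_{i,j}$, $\overline{d}_I$, $\overline{r}_{i,j}$ are fully expanded into admissible twists — have exponent sum divisible by $4$. Most relations are commutators or conjugations and are automatically balanced, and \thref{rem:0lant} already organizes the remainder into braids, canceling lanterns, and a short list of chains; care is only needed for the fake $3$-chain relations and for \ref{Delta} and \ref{push}, where the lantern count is nonzero (the extra lantern in \ref{push} having a nullhomotopic boundary component, hence exponent $0$). Alternatively, this step can be bypassed by citing that the surjection onto $\sfrac{\Z}{4\Z}$ is part of the homology computations of Harer, Randal-Williams and Sierra, with our presentation supplying the matching upper bound.
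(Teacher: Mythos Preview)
Your proposal is correct and follows essentially the same route as the paper. The paper abelianizes the presentation of \thref{thm:presDC} directly: relation \ref{t01} identifies all generators with a single class $x$, \thref{lem:da} makes $a_i^2$ and $\overline{d}_{i,j}$ vanish so that $s_j,t_j,\overline{r}_{1,j}$ become $2x$ and $\overline{d}_{-j,j}$ becomes $4x$, and then \ref{3c} yields $4x=0$ while the remaining relations \ref{rijrel}--\ref{push} impose nothing new. Your argument differs only in presentation: you invoke the spin change of coordinates to identify the generator classes (rather than reading this off \ref{t01}), and you separate the computation into an upper bound ($4c=0$ from one relation) and a lower bound (the surjection onto $\sfrac{\Z}{4\Z}$), whereas the paper does both at once by tracking what every relation contributes. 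Either way the content is the same exponent--sum bookkeeping, and your use of \thref{rem:0lant} to organise the check is exactly in the spirit of the paper.
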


\begin{proof}
Take the presentation of \thref{thm:presD} and add all commutators. Relation \ref{t01} implies that all the generators become equal to some $x$. Relations \ref{boh}-\ref{a2} become trivial. From relation \ref{5c} we find $28x=0$, while relation \ref{hypg} gives $56x=0$. 

For the remaining relations, we have to write the classes of $s_i$, $t_i$, $a_i^2$, $\overline{d}_{I}$ and $\overline{r}_{i,j}$ in the abelianization. By \thref{lem:da}, $\overline{d}_{1,2}$ and $a_1^2$ become zero, and so do their conjugates $a_i^2$ and $\overline{d}_{i,j}$ with $i \ne j$. As a consequence, we get $s_j=t_j=\overline{r}_{1,j}=2x$ for all $j$. Finally, $\overline{d}_{-1,1}$ and its conjugates $\overline{d}_{-j,j}$ become equal to $4x$. Hence, $\overline{d}_{\set{i,\dots,j}}=4\,\abs{i}\,x$ and $\overline{r}_{i,j}=4\,\abs{i}\,x+2$ for $i<0$ and $j>-i$. 

Hence, relations \ref{3c} give $4x=0$, while \ref{rijrel}-\ref{push} do not give other restriction to the order of $x$. This concludes the proof. \qedhere
\end{proof}

\subsection{Surfaces of low genus}

In this section, we give presentations of the even spin mapping class group for surfaces of genus $1$, $2$ and $3$. We start from the case of the torus, where a presentation can be derived by hand.

\begin{proposition}
\thlabel{prop:g1}
The even spin mapping class group of a torus is given by
\[
\Modd(\Sigma_1)[\phi]=\Braket{a^2,b|(a^2b)^2=(ba^2)^2,\,(a^2b)^4=1},
\]
where $a^2=t_{\alpha_1}^2$ and $b=t_{\beta_1}$ in the notation of Figure \ref{fig:q}.
\end{proposition}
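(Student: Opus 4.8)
The plan is to work directly with the well-known presentation of $\Modd(\Sigma_1) \cong SL(2,\Z)$ and restrict to the even spin mapping class group as an explicit finite-index subgroup. First I would recall that $\Modd(\Sigma_1)$ is generated by $a := t_{\alpha_1}$ and $b := t_{\beta_1}$, with presentation $\langle a,b \mid aba = bab,\ (ab)^6 = 1\rangle$, and that the action on $H_1(\Sigma_1;\Z)$ identifies it with $SL(2,\Z)$, sending $a$ and $b$ to the standard elementary matrices. On the torus there are exactly three spin structures up to isotopy: the even one $\phi$ fixed in Figure~\ref{fig:q} (with $\phi(\alpha_1) = \phi(\beta_1) = 1$, the two generators being $1$-curves) and the odd one. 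Under the $SL(2,\Z)$-action on quadratic enhancements (\thref{thm:xiq}), the even spin structure has a stabilizer of index $3$: the three cosets correspond to the values of $\phi$ on the three primitive homology classes mod $2$, i.e. to the nonzero elements of $H_1(\Sigma_1;\Z/2)$. Thus $\Modd(\Sigma_1)[\phi]$ is the index-$3$ subgroup stabilizing the quadratic form of Arf invariant $0$.

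Next I would identify this subgroup concretely. Since $b = t_{\beta_1}$ is twist-linear and $\beta_1$ is a $1$-curve, $b$ does not preserve $\phi$ (by \thref{lem:elements}(2), as $1\cdot\phi(\beta_1) = 1 \not\equiv 0$), but $b^2$ does; likewise $a^2$ preserves $\phi$ while $a$ does not. So $a^2$ and $b$... wait — $b$ alone does not preserve $\phi$. I would instead note that the statement asserts generation by $a^2 = t_{\alpha_1}^2$ and $b = t_{\beta_1}$; since $b \notin \Modd(\Sigma_1)[\phi]$, the claim must be that the group $\langle a^2, b \mid (a^2 b)^2 = (b a^2)^2,\ (a^2 b)^4 = 1\rangle$ — let me re-read — in fact the correct reading is that $\Modd(\Sigma_1)[\phi]$ is generated by $a^2$ and $s := b a^2 b$, or by $a^2$ and $b a^2 b^{-1}$; but the proposition as written uses $b$ directly. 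The resolution is that in genus $1$ the Birman/capping subtleties are absent and one checks directly: the subgroup of $SL(2,\Z)$ generated by $\begin{pmatrix}1&0\\2&1\end{pmatrix}$ (the image of $a^2$) and $\begin{pmatrix}1&1\\0&1\end{pmatrix}$ (the image of $b$) has index $3$ and equals $\Modd(\Sigma_1)[\phi]$, because $b$ sends the class $[\alpha_1]$ to $[\alpha_1]$ and $[\beta_1]$ to $[\beta_1]$... I would instead verify it acts trivially on the set of spin structures of a fixed Arf invariant by a direct computation on $H_1(\Sigma_1;\Z/2)$, noting $b$ fixes $[\alpha_1] \bmod 2$ and $a^2$ fixes $[\beta_1] \bmod 2$, and that both preserve the value-$1$ locus of the quadratic form since squared twists and $b$ (which fixes $[\beta_1]$ with $\phi(\beta_1)=1$) do.

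With the subgroup pinned down, I would obtain the presentation via the Reidemeister–Schreier method applied to the presentation $\langle a, b \mid aba = bab,\ (ab)^6 = 1\rangle$ with Schreier transversal $\{1, a, a^2\}$ (using that $[\Modd(\Sigma_1):\Modd(\Sigma_1)[\phi]] = 3$ and that $a$ has order dividing its action mod the subgroup). The Schreier generators are $b$, $aba^{-1}$... more precisely $a \cdot 1 \cdot a^{-1}$-type elements; rewriting $a^3$ (which lies in the subgroup, being $t_{\alpha_1}^3$, and equals $a^2 \cdot a$ — here one uses $a^3 \in \Modd(\Sigma_1)[\phi]$, true since $3\phi(\alpha_1)$... no: $3 \cdot 1 = 3 \not\equiv 0 \bmod 2$; so rather $a^2$ is the generator and the transversal element is chosen so cosets close up). I would carefully set the transversal to $\{1, b, b^2\}$ instead, since $b$ has order $3$ in the coset space (the three cosets being permuted cyclically by $b$ as it permutes the three spin structures), giving Schreier generators $a^2$, $b^3$, and conjugates $b a^2 b^{-1}$, $b^2 a^2 b^{-2}$. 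Rewriting the two relators $aba = bab$ and $(ab)^6=1$ over this transversal and simplifying via Tietze moves should collapse everything to the two generators $a^2$ and $b$... (with $b^3$ expressed in terms of them) and the two stated relations $(a^2 b)^2 = (b a^2)^2$ and $(a^2 b)^4 = 1$.

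The main obstacle I anticipate is the bookkeeping in the Reidemeister–Schreier rewriting: correctly determining the coset action of $a$ and $b$ on the three-element coset space, choosing a transversal that makes the computation tractable, and then performing enough Tietze moves to reach exactly the stated two-generator two-relator form rather than some larger equivalent presentation. A cross-check I would run at the end: the group $\langle a^2, b \mid (a^2b)^2=(ba^2)^2,\ (a^2b)^4=1\rangle$ should have order... it is infinite (it is index $3$ in $SL(2,\Z)$, which is virtually free), so I would instead check that abelianizing gives $\Z \oplus \Z/4\Z$, matching the $g=1$ case of Corollary~\ref{cor:abh}'s companion statement; indeed setting $a^2 = x$, $b = y$ abelian, the first relation gives $2x+2y = 2x+2y$ (vacuous) — I would need to recheck, but the expected abelianization $\Z\oplus\Z/4\Z$ from the relation $(xy)^4 = 1$ with one free direction is the consistency test that the presentation is correct.
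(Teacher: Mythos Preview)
Your overall strategy (identify the index-$3$ subgroup of $SL(2,\Z)$ and apply Reidemeister--Schreier) is exactly what the paper does in one line: after showing generation by $a^2$ and $b$, ``the statement follows easily using the Nielsen--Schreier method.'' The substantive content of the paper's proof is the generation step, done by an explicit induction on word length using the braid relation to push all odd powers of $a$ to one side.

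However, your execution has a concrete error that derails the computation. You assert that $\phi(\alpha_1)=\phi(\beta_1)=1$, but the paper's spin structure (fixed at the start of Section~\ref{fp}) has $\phi(\alpha_1)=1$ and $\phi(\beta_1)=0$. Hence $b=t_{\beta_1}$ \emph{is} admissible and does lie in $\Modd(\Sigma_1)[\phi]$, by \thref{lem:elements}(2). All of your back-and-forth about whether $b$ can be a generator, and your proposed transversal $\{1,b,b^2\}$, stem from this misreading: since $b\in\Modd(\Sigma_1)[\phi]$, those three elements lie in a single coset and cannot be a transversal. A correct transversal is $\{1,a,ab\}$ (or $\{1,a,ba\}$): one checks directly via twist-linearity that $a$ and $ab$ send $\phi$ to the other two even spin structures on the torus. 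With this fixed, the Reidemeister--Schreier rewriting is short and does produce the stated two-relator presentation; your abelianization cross-check then goes through as you intended.
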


\begin{proof}
It suffices to show that $\Modd(\Sigma_1)[\phi]$ is generated by $a^2$ and $b$; then the statement follows easily using the Nielsen-Schreier method. 

An element $\varphi\in\Modd(\Sigma_1)$ can be represented by a word $a^{k_1}b^{\ell_1} \dots a^{k_n}b^{\ell_n}$, for some integers $k_1,\ell_1,\dots,k_n,\ell_n$. If $\varphi$ preserves $\phi$, we prove that $\varphi$ can be written as a product of $a^2$ and $b$ by induction on $n$. The base case is \thref{lem:elements}(b). For the inductive step, observe that if $\varphi$ preserves $\phi$ then also $\varphi b^{-\ell_n}=a^{k_1}b^{\ell_1} \dots b^{\ell_{n-1}}a^{k_n}$ does. If $k_n$ is even, we conclude by induction. Otherwise,
\[
\varphi b^{-\ell_n}a^{-k_n+1}=a^{k_1}b^{\ell_1} \dots a^{k_{n-1}}b^{\ell_{n-1}}abb^{-1}=a^{k_1}b^{\ell_1} \dots a^{k_{n-1}+1}ba^{\ell_{n-1}}b^{-1}
\]
preserves $\phi$. If $\ell_{n-1}$ is even, we conclude by induction. Otherwise, 
\[
\varphi b^{-\ell_n}a^{-k_n+1}ba^{-\ell_{n-1}+1}=a^{k_1}b^{\ell_1} \dots b^{\ell_{n-2}}a^{k_{n-1}+1}ba=a^{k_1}b^{\ell_1} \dots b^{\ell_{n-2}+1}ab^{k_{n-1}+1}
\]
preserves $\phi$, and we conclude by induction. \qedhere
\end{proof}

A similar reasoning can be done for genus $2$, but in this case we apply \thref{thm:mcgs} and \thref{thm:presDC}.

\begin{corollary}
\thlabel{cor:g2}
The even spin mapping class group $\Modd(\Sigma_2)[\phi]$ admits a presentation with generators $a_1^2$, $b_1$, $t_1$, $\overline{d}_{1,2}$, and the following relations:
\begin{enumerate}[label=(\roman*)]
\item $[a_1^2,t_1*a_1^2]=1$, $[a_1^2,\overline{d}_{1,2}]=1$, $[t_1*a_1^2,b_1]=1$ and $[b_1,(t_1\overline{d}_{1,2}^{-1})*b_1]=1$;\label{aaaa}
\item $R_4(a_1^2,b_1)$ and $R_4(b_1a_1^2b_1,t_1)$;\label{r4}
\item $t_1^2=\overline{d}_{1,2}^2$; \label{t2d2}
\item $R_2(t_1,\overline{d}_{1,2})$ and $R_4(b_1a_1^2b_1,\overline{d}_{1,2})$; \label{rd12}
\item $b_1=(t_1b_1a_1^2b_1t_1)*b_1$, $[a_1^2\overline{d}_{1,2}, b_1\overline{d}_{1,2}a_1^2b_1]=1$ and 
\[
t_1b_1\overline{d}_{1,2}a_1^2b_1\overline{d}_{1,2}
=a_1^{-2}t_1\overline{d}_{1,2}^{-1}b_1\overline{d}_{1,2}a_1^2b_1;\label{r}
\]
\item $(t_1\overline{d}_{1,2}^{-1}b_1\overline{d}_{1,2}a_1^2b_1\overline{d}_{1,2}t_1^{-1})^2=\overline{d}_{1,2}^{-2}a_1^{-2}(t_1*a_1^2)^{-2}$ \label{r3c};
\item $b_1^{-1}a_1^2b_1\overline{d}_{1,2}^{-1}b_1^{-1}\overline{d}_{1,2}^{-2}a_1^{-2}b_1=\overline{d}_{1,2}^{-1}(t_1*a_1^2)$ and \label{tri2}
\[
\overline{d}_{1,2}^{-1}b_1^{-1}\overline{d}_{1,2}a_1^2b_1\overline{d}_{1,2}=b_1^{-1} \big((t_1^{-1}b_1^{-1}a_1^{-2}b_1^{-1})*\overline{d}_{1,2}\big) a_1^2b_1;
\]
\item $t_1\overline{d}_{1,2}^{-1}b_1\overline{d}_{1,2}a_1^4b_1\overline{d}_{1,2}t_1^{-1} b_1 t_1a_1^2 b_1 \overline{d}_{1,2}^{-1}t_1^{-1} b_1 t_1 b_1 a_1^2(t_1*a_1^2)=1$;\label{penta2}
\item $\overline{d}_{1,2}=(b_1a_1^2b_1t_1b_1a_1^2b_1)*\overline{d}_{1,2}$;\label{push2}
\item $\overline{d}_{-2,-1,1,2}a_1^2(t_1*a_1^2)=1$.
\end{enumerate}
\end{corollary}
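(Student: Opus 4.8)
The plan is to specialise the machinery of Sections~\ref{fp}--\ref{dt} to $g=2$. I would start from the presentation of $\Modd(\Sigma_2^1)[\phi]$ given by \thref{thm:mcgs}: for $g=2$ its generating set $S$ consists of the generators $a_1^2,a_2^2,s,t_1$ and $\overline{d}_{i,j}$ (with $i,j\in\{\pm1,\pm2\}$, $i<j$) of $H[\phi]$ coming from \thref{prop:stab}, together with the edge generators $b_1$, $b_1^{\pm2}$ and $b_2^{\pm2}\overline{r}_{1,2}$ — the constraints $i\le1$, $i+j\ge1$, $j-i\le g$ leave only the pair $(i,j)=(1,2)$, so this is the complete list. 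Two features of genus $2$ streamline the presentation drastically: there is no genus-$3$ subsurface, so $X_2$ has no hyperelliptic $2$-cells (cf.\ \thref{rem:h2}) and the relations \ref{hyp} are vacuous; and there is no $t_2$, so \ref{atiti} and \ref{atisi} collapse to trivialities while the stabilizer computations of Subsections~\ref{stab} and \ref{dw} degenerate, in particular no swap generator $z_j$ occurs because of the boundary component $C$.

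Next I would perform Tietze eliminations to cut $S$ down to $\{a_1^2,b_1,t_1,\overline{d}_{1,2}\}$: set $a_2^2:=t_1*a_1^2$ by \ref{sai}; $s:=b_1a_1^2b_1$ by the first relation of \ref{backtracking}; express the five remaining generators $\overline{d}_{-2,-1},\overline{d}_{-2,1},\overline{d}_{-2,2},\overline{d}_{-1,1},\overline{d}_{-1,2}$ in terms of $\overline{d}_{1,2},a_1^2,s,t_1$ via \ref{s2} and \ref{a8} (as noted after \thref{prop:stab}); eliminate $b_1^{\pm2}$ using \ref{backtracking} and \ref{triangles}; and eliminate $b_2^{\pm2}\overline{r}_{1,2}$ using the $(1,2)^{\mp}$ triangle relations in \ref{triangles} and the corresponding case of \ref{backtracking}. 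Substituting these shorthands into \ref{aiaj}--\ref{pentagon} and simplifying — with small identities such as $a_1^2b_1s^{-1}=b_1^{-1}$, which turns the first relation of \ref{triangles} into relation (vii) — should produce a presentation of $\Modd(\Sigma_2^1)[\phi]$ whose relations are (i)--(viii). Finally, adjoining the $g=2$ instances of \ref{Delta} and \ref{push}, namely $\overline{d}_{\{-2,-1,1,2\}}a_1^2a_2^2=1$ and $\overline{d}_{\{-2,-1\}}=\overline{d}_{\{1,2\}}$ (which after the substitutions read as (x) and (ix) respectively), yields the presentation of $\Modd(\Sigma_2)[\phi]$ via the exact sequences \eqref{cap} and \eqref{bes}.

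The main obstacle is the bookkeeping in the second step: one must verify that after the substitutions each relation among \ref{aiaj}--\ref{pentagon} is a consequence of (i)--(viii), and conversely that (i)--(viii) imply all of them — that is, establish a genuine Tietze equivalence rather than a literal identity. Several relations do not reduce mechanically: the pure-braid relations \ref{pb} among the six generators $\overline{d}_{i,j}$, and the ``different writings'' relations \ref{diffwr}, must be traded (using the elimination relations) for the $R_4(s,\overline{d}_{1,2})$ and $R_2(t_1,\overline{d}_{1,2})$ of (iv) and the relations (v)--(vi), which amounts to running the short arguments of Subsection~\ref{reldij} — those establishing \ref{den}, \ref{td} and \thref{lem:a8} — in the degenerate genus-$2$ case. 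This is routine but requires care, as does confirming that no face orbit of $X_2$ has been overlooked once the spurious edge orbits flagged in the remark of Subsection~\ref{act} are discarded. The first and third steps are immediate once the general theorems are in hand.
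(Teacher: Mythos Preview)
Your overall strategy matches the paper's: specialise \thref{thm:mcgs} to $g=2$, Tietze-eliminate down to $a_1^2,b_1,t_1,\overline{d}_{1,2}$, and impose \ref{Delta} and \ref{push}. But there is a genuine ordering error. You claim that relations (i)--(viii) already present $\Modd(\Sigma_2^1)[\phi]$, and that (ix), (x) are only adjoined afterwards. This is false: relation (iii), $t_1^2=\overline{d}_{1,2}^{\,2}$, does \emph{not} hold in $\Modd(\Sigma_2^1)[\phi]$. There the second part of \ref{s2} reads $t_1^2=\overline{d}_{1,2}\overline{d}_{-2,-1}$, and the curves $\delta_{1,2}$ and $\delta_{-2,-1}$ cobound an annulus containing the boundary component $C$, so they are not isotopic in $\Sigma_2^1$; hence $\overline{d}_{-2,-1}\ne\overline{d}_{1,2}$ in that group. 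The equality $\overline{d}_{-2,-1}=\overline{d}_{1,2}$ is precisely \ref{push}, i.e.\ your relation (ix). Similarly, (viii) as written has $a_1^2(t_1*a_1^2)$ where the original pentagon relation \ref{pentagon} has $\overline{d}_{\{-2,-1,1,2\}}^{-1}$, and $\overline{d}_{-2,-1}^{-3}t_1$ has been replaced using $\overline{d}_{-2,-1}=\overline{d}_{1,2}$; these substitutions use (x) and (ix).

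The fix is simple and is exactly what the paper does: add \ref{Delta} and \ref{push} \emph{before} carrying out the simplifications. The paper's proof states this explicitly (``Relation \ref{s2} reduces to \ref{t2d2} by \ref{push2}''), and then tracks which (A)-relations collapse to which of (i)--(x). With that correction your outline is the paper's argument; the bookkeeping you flag in the last paragraph is handled there by pointing back to Subsection~\ref{reldij} for \ref{pb}/\ref{a8} and \ref{square}, just as you propose.
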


\begin{proof}
Write the presentation of \thref{thm:mcgs} for $g=2$, add relations \ref{Delta} and \ref{push} and remove $a_2^2=t_1*a_1^2$ and $s_1=b_1a_1^2b_1$. We can eliminate all $\overline{d}_{i,j}$ except for $\overline{d}_{1,2}$ using \ref{s2} and \ref{a8}. Then, from relations \ref{aiaj}, \ref{atiti}, \ref{asiti} and \ref{sai} only \ref{aaaa} and \ref{r4} survive. Relation \ref{s2} reduces to \ref{t2d2} by \ref{push2}. Relations \ref{a8} and \ref{pb} reduce to \ref{rd12} as in Section \ref{reldij}. Relations \ref{backtracking} and \ref{diffwr} become \ref{r} and \ref{r3c}, while \ref{square} follows from \ref{t2d2} as in Subsection \ref{reldij}. Finally, \ref{triangles} and \ref{pentagon} reduce to \ref{tri2} and \ref{penta2}. \qedhere
\end{proof}

Finally, we turn to genus $3$. In this case, we are able to prove that the even spin mapping class group is generated by admissible Dehn twists, which was not previously known. Note that by \cite{ham}, the intersection graph of the curves of the generating set cannot be a tree. 

Our generating set will be given by the Dehn twists of \thref{thm:presD} along with $z_1,z_2$, that are the twists along the corresponding curves of Figure \ref{fig:zi}. We get the following restatement of \thref{lem:darel}.

\begin{lemma}
\thlabel{lem:daz}
The following relations hold in $\Modd(\Sigma_3)[\phi]$, in the notation of \thref{lem:darel} and Figure \ref{fig:zi}:
\begin{enumerate}[label=\textit{(\roman*)}]
\item $\overline{d}_{1,2}=b_3m_2z_1^{-1}m_3^{-1}$;
\item $\overline{d}_{1,2}^{-1}a_1^{-2}=\eta_3b_3z_2^{-1}m_4^{-1}$;
\item $a_1^2=m_4z_2\eta_3^{-1}b_3^{-1}m_3z_1b_3^{-1}m_2^{-1}$.
\end{enumerate}
\end{lemma}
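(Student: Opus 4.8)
\textbf{Proof proposal for \thref{lem:daz}.}

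The plan is to recognize all three identities as consequences of the lantern relations of \thref{lem:da}, once the mapping classes appearing there are re-expressed using the new generators $z_1,z_2$ and some Artin (braid/commutation) relations. The point of introducing $z_1$ and $z_2$ is precisely to absorb the awkward conjugated factors $(\eta_4m_1m_2\eta_4)*m_3^{-1}$ and $(\eta_4\eta_3b_3\eta_4)*m_4^{-1}$ from \thref{lem:da}\ref{d12} and \ref{a2da}: when $g=3$ the curve $\eta_4$ does not exist, so these conjugations collapse, and what remains of the lantern is a short word. Concretely, for (i) I would start from the lantern relation of the top picture in Figure \ref{fig:lanterns}, specialized to $\Sigma_3^1$ (where the tail of the surface is empty), and identify the fourth curve of that lantern with $\delta_{z_1}$; then \thref{lem:da}\ref{d12} reduces to $\overline{d}_{1,2}=m_1m_2 z_1^{-1}m_3^{-1}$, and using $m_1=H_3*b_3$ together with the fact that in genus $3$ the conjugating word $H_3$ simplifies (the curves $\xi_2,\xi_1,b_2,\eta_3,\eta_2$ in $H_3$ act on $b_3$ in a controlled way, by \ref{t01}-type relations), one rewrites $m_1m_2 = b_3 m_2$ after the appropriate commutations. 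This gives $\overline{d}_{1,2}=b_3 m_2 z_1^{-1}m_3^{-1}$ as claimed. Statement (ii) is handled symmetrically from the bottom picture of Figure \ref{fig:lanterns}: the fourth curve of that lantern is $\delta_{z_2}$, and \thref{lem:da} part (ii) becomes $\overline{d}_{1,2}^{-1}a_1^{-2}=\eta_3 b_3 z_2^{-1} m_4^{-1}$ directly, with no further simplification needed. Statement (iii) is then purely formal: it is the product of the inverse of (i) and the inverse of (ii), i.e. $a_1^2 = (\overline{d}_{1,2}^{-1}a_1^{-2})^{-1}(\overline{d}_{1,2})^{-1}$, and substituting the two expressions just obtained yields $a_1^2=m_4 z_2 \eta_3^{-1}b_3^{-1}\,m_3 z_1 b_3^{-1} m_2^{-1}$ after cancelling the common $\overline{d}_{1,2}$ — exactly as in the passage from \thref{lem:da}\ref{d12}, \ref{a2da} to \thref{lem:darel}\ref{a2vsall}.

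In writing this up I would first record the embedding of the genus-$2$ lantern subsurface of Figure \ref{fig:lants} into $\Sigma_3^1$ that makes the twists $z_1,z_2$ admissible, so that the curves $\delta_{z_1}$, $\delta_{z_2}$ are unambiguous; then state the two lantern relations they give (valid in $\Modd(\Sigma_3)[\phi]$ by \thref{thm:mcgs}), and finally do the short bookkeeping to match them against \thref{lem:da}. The main obstacle will be the genus-$3$ simplification of $m_1=H_3*b_3$ into a word that visibly combines with $m_2$ to give $b_3 m_2$: this requires carefully tracking which of the generators in $H_3$ commute with $b_3$ (or satisfy a braid relation with it) and which genuinely move it, and it is the one place where a small explicit computation with the curves of Figure \ref{fig:gens} and Figure \ref{fig:zi} is unavoidable. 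Everything else is either a direct reading of a lantern relation off a picture or the elementary algebraic manipulation turning (i) and (ii) into (iii), so once that simplification is pinned down the proof is essentially a one-paragraph verification.
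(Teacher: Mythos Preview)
Your overall plan is the right one and matches the paper: (i) and (ii) are lantern relations read from the two configurations in Figure~\ref{fig:zi}, and (iii) is obtained by multiplying the inverses of (i) and (ii). The paper's argument is exactly this, parallel to the proof of \thref{lem:da}.

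The gap is in your route to (i). You propose to start from \thref{lem:da}\ref{d12}, replace the conjugated factor by $z_1^{-1}$, and then reduce $m_1m_2$ to $b_3m_2$ using ``\ref{t01}-type relations''. That last step cannot be carried out as you describe: $H_3$ does not commute with $b_3$ (both $\xi_2$ and $\eta_3$ meet $\beta_3$ once), so no combination of braid and commutation relations from \ref{t01} yields $m_1=H_3*b_3=b_3$. What is really happening is geometric, not algebraic: in the \emph{closed} genus-$3$ surface, the fourth boundary curve of the lantern sphere --- the one that for $g\ge 4$ underlies $m_1$ --- is isotopic to $\beta_3$, and the interior curve that for $g\ge 4$ underlies $(\eta_4m_1m_2\eta_4)*m_3$ is the new curve $z_1$. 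Figure~\ref{fig:zi} records precisely this configuration, and the paper simply reads off the resulting lantern. Your treatment of (ii) is fine (only the interior curve changes, to $z_2$), and your derivation of (iii) is fine; for (i) you should replace the attempted algebraic simplification of $H_3$ by the direct geometric identification of the seven lantern curves on $\Sigma_3$, as in Figure~\ref{fig:zi}.
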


\begin{figure}
\centering
\begin{tikzpicture}
\draw (.5,0) to [out=0, in=-90] (1,2) to [out=90, in=180] (2,3.5) to [out=0, in=90] (3,2) to [out=-90, in=180] (3.5,1);
\draw (2,2.4) to [out=-60, in=60] (2,1.2);
\draw (2.12,2.7) to [out=-120, in=120] (2.12,.9);
\draw (3.5,0) to [out=0, in=-90] (4,2) to [out=90, in=180] (5,3.5) to [out=0, in=90] (6,2) to [out=-90, in=180] (6.5,1);
\draw (5,2.4) to [out=-60, in=60] (5,1.2);
\draw (5.12,2.7) to [out=-120, in=120] (5.12,.9);
\draw (6.5,0) to [out=0, in=-90] (7,2) to [out=90, in=180] (8,3.5) to [out=0, in=90] (9,2) to [out=-90, in=180] (9.5,1);
\draw (8,2.4) to [out=-60, in=60] (8,1.2);
\draw (8.12,2.7) to [out=-120, in=120] (8.12,.9);
\draw (.9,2) to [out=180, in=90] (-.5,.5) to [out=-90, in=180] (.9,-1) to [out=0, in=180] (9.1,-1) to [out=0, in=-90] (10.5,.5) to [out=90, in=0] (9.1,2);
\draw (1.95,2) -- (2.05,2);
\draw (3.1,2) -- (3.9,2);
\draw (4.95,2) -- (5.05,2);
\draw (6.1,2) -- (6.9,2);
\draw (7.95,2) -- (8.05,2);
\draw [gray, thick] (2,3.5) to [out=-50, in=50] (2,2.4);
\draw [gray, thick, dashed] (2,3.5) to [out=-130, in=130] (2,2.4);
\draw [gray, thick] (5,3.5) to [out=-50, in=50] (5,2.4);
\draw [gray, thick, dashed] (5,3.5) to [out=-130, in=130] (5,2.4);
\draw [gray, thick, dashed] (1.03,1.3) to [out=20, in=160] (1.93,1.3);
\draw [gray, thick, dashed] (4.03,1.3) to [out=20, in=160] (4.93,1.3);
\draw [orange, thick, dashed] (2.04,1.3) to [out=20, in=160] (3.11,1.3);
\draw [gray, thick] (4.03,1.3) to [out=-60, in=0] (3.4,-.2) to [out=180, in=-100] (1.93,1.3);
\draw [gray, thick] (1.03,1.3) to [out=-80, in=180] (3,-.6) to [out=0, in=-110] (4.93,1.3);
\draw [orange, thick] (3.9,1.65) to [out=180, in=0] (3.3,.9) to [out=180,in=-100] (3.11,1.3);
\draw [purple, thick] (3.92,1.5) to [out=180, in=0] (3.3,.8) to [out=180,in=-100] (3.05,1.5);
\draw [purple, thick] (3.94,1.35) to [out=180, in=10] (3.3,.7) to [out=190,in=-80] (2.12,1.5);
\draw [cyan, thick] (1.9,1.8) -- (2.11,1.8);
\draw [cyan, thick] (3.1,1.8) -- (3.9,1.8);
\draw [cyan, thick] (4.9,1.8) -- (5.11,1.8);
\draw [orange, thick] (4.91,1.65) -- (5.1,1.65);
\draw [purple, thick] (4.93,1.5) -- (5.07,1.5);
\draw [purple, thick] (4.98,1.35) -- (5.02,1.35);
\draw [cyan, thick] (6.1,1.8) -- (6.9,1.8);
\draw [orange, thick] (6.1,1.65) -- (6.9,1.65);
\draw [purple, thick] (6.1,1.5) -- (6.9,1.5);
\draw [cyan, thick] (7.9,1.8) -- (8.11,1.8);
\draw [orange, thick] (7.91,1.65) -- (8.1,1.65);
\draw [purple, thick] (7.93,1.5) -- (8.07,1.5);
\draw [cyan, thick]  (9.1,1.8) to [out=0, in=0] (9.4,.5) to [out=180, in=-90] (8.5,1.8) to [out=90, in=0] (8,2.9) to [out=180, in=90] (7.5,1.8) to [out=-90, in=0] (6.5,-.55) -- (5.5,-.55) to [out=180, in=-100] (4.88,1.5);
\draw [cyan, thick, dashed] (4.02,1.5) to [out=20, in=160] (4.88,1.5);
\draw [cyan, thick] (4.02,1.5) to [out=-60, in=0] (3.5,-.8) -- (.9,-.8) to [out=180, in=-90] (0,.5) to [out=90, in=180] (.9,1.8);
\draw [orange, thick] (9.1,1.65) to [out=0, in=0] (9.4,.65) to [out=180, in=-90] (8.65,1.8) to [out=90, in=0] (8,3.05) to [out=180, in=90] (7.35,1.8) to [out=-90, in=0] (6.5,-.4) -- (3.5,-.4) to [out=180, in=-80] (2.04,1.3);
\draw [purple, thick, dashed] (2.12,1.5) to [out=20, in=160] (3.05,1.5);
\draw [purple, thick] (5.04,1.3) to [out=-80, in=180] (6.5,-.2) to [out=0, in=-90] (7.2,1.8) to [out=90, in=180] (8,3.2) to [out=0, in=90] (8.8,1.8) to [out=-90, in=180] (9.4,.8) to [out=0, in=0] (9.1,1.5);
\draw [purple, thick, dashed] (5.04,1.3) to [out=20, in=160] (6.11,1.3);
\draw [purple, thick] (6.15,1.35) to [out=0, in=0] (6.5,.8) to [out=180,in=-110] (6.11,1.3);
\draw [blue, thick] (8,2.75) to [out=0, in=90] (8.35,1.8) to [out=-90, in=0] (8,.85) to [out=180, in=-90] (7.65,1.8) to [out=90, in=180] (8,2.75);
\node [cyan] at (7.6,-.2) {$z_1$};
\node [orange] at (4.7,-.6) {$m_3$};
\node [purple] at (6.5,.55) {$m_2$};
\node [gray] at (1.6,-.4) {$\delta_{1,2}$};
\node [gray] at (1.55,2.8) {$\alpha_1$};
\node [gray] at (4.55,2.8) {$\alpha_2$};
\node [blue] at (7.8,.6) {$b_3$};
\end{tikzpicture}

\bigskip
\begin{tikzpicture}
\draw (.5,0) to [out=0, in=-90] (1,2) to [out=90, in=180] (2,3.5) to [out=0, in=90] (3,2) to [out=-90, in=180] (3.5,1);
\draw (2,2.4) to [out=-60, in=60] (2,1.2);
\draw (2.12,2.7) to [out=-120, in=120] (2.12,.9);
\draw (3.5,0) to [out=0, in=-90] (4,2) to [out=90, in=180] (5,3.5) to [out=0, in=90] (6,2) to [out=-90, in=180] (6.5,1);
\draw (5,2.4) to [out=-60, in=60] (5,1.2);
\draw (5.12,2.7) to [out=-120, in=120] (5.12,.9);
\draw (6.5,0) to [out=0, in=-90] (7,2) to [out=90, in=180] (8,3.5) to [out=0, in=90] (9,2) to [out=-90, in=180] (9.5,1);
\draw (8,2.4) to [out=-60, in=60] (8,1.2);
\draw (8.12,2.7) to [out=-120, in=120] (8.12,.9);
\draw (.9,2) to [out=180, in=90] (-.5,.5) to [out=-90, in=180] (.9,-1) to [out=0, in=180] (9.1,-1) to [out=0, in=-90] (10.5,.5) to [out=90, in=0] (9.1,2);
\draw (1.95,2) -- (2.05,2);
\draw (3.1,2) -- (3.9,2);
\draw (4.95,2) -- (5.05,2);
\draw (6.1,2) -- (6.9,2);
\draw (7.95,2) -- (8.05,2);
\draw [gray, thick] (2,3.5) to [out=-50, in=50] (2,2.4);
\draw [gray, thick, dashed] (2,3.5) to [out=-130, in=130] (2,2.4);
\draw [gray, thick] (5,3.5) to [out=-50, in=50] (5,2.4);
\draw [gray, thick, dashed] (5,3.5) to [out=-130, in=130] (5,2.4);
\draw [gray, thick, dashed] (1.03,1.3) to [out=20, in=160] (1.93,1.3);
\draw [gray, thick, dashed] (4.03,1.3) to [out=20, in=160] (4.93,1.3);
\draw [gray, thick] (4.03,1.3) to [out=-60, in=0] (3.4,-.2) to [out=180, in=-100] (1.93,1.3);
\draw [gray, thick] (1.03,1.3) to [out=-80, in=180] (3.4,-.8) to [out=0,  in=-100] (4.93,1.3);
\draw [blue, thick] (8,2.75) to [out=0, in=90] (8.35,1.8) to [out=-90, in=0] (8,.85) to [out=180, in=-90] (7.65,1.8) to [out=90, in=180] (8,2.75);
\draw [teal, thick, dashed] (1.02,1.5) to [out=20, in=160] (1.88,1.5);
\draw [teal, thick] (1.02,1.5) to [out=-70, in=180] (3.4,-.7) to [out=0, in=-80] (4.65,1.8) to [out=100, in=180] (5,2.8) to [out=0, in=90] (5.4,1.8) to  [out=-90, in=180] (6.5,-.2) to [out=0, in=-85] (7.2,1.8) to [out=95, in=180] (8,3.2) to [out=0, in=90] (8.8,1.8) to [out=-90, in=180] (9.4,.85) to [out=0, in=0] (9.1,1.6);
\draw [teal, thick] (7.9,1.6) -- (8.1,1.6);
\draw [teal, thick] (6.92,1.55) to [out=210, in=0] (6.5,.6) to [out=180,in=-90] (5.85,1.8) to [out=90, in=0] (5,3.25) to [out=180, in=100] (4.2,1.8) to [out=-80, in=0] (3.4,-.3) to [out=180, in=-110] (1.88,1.5);
\draw [magenta, thick] (7.9,1.75) -- (8.1,1.75);
\draw [magenta, thick] (6.91,1.7) to [out=200, in=0] (6.4,.75) to [out=180, in=-100] (6.05,1.5);
\draw [magenta, thick, dashed] (5.12,1.5) to [out=20, in=160] (6.05,1.5);
\draw [magenta, thick] (5.12,1.5) to [out=70, in=0] (5,2.95) to [out=180, in=100] (4.5,1.8) to [out=-80, in=0] (3.4,-.6) to [out=180, in=-70] (1.01,1.7);
\draw [magenta, thick, dashed] (1.85,1.7) to [out=160, in=20] (1.01,1.7);
\draw [magenta, thick] (1.85,1.7) to [out=-120, in=180] (3.4,-.4) to [out=0, in=-80] (4.35,1.8) to [out=100, in=180] (5,3.1) to [out=0, in=90] (5.7,1.8) to  [out=-90, in=180] (6.5,-.35) to [out=0, in=-85] (7.35,1.8) to [out=95, in=180] (8,3.05) to [out=0, in=90] (8.65,1.8) to [out=-90, in=180] (9.4,.7) to [out=0, in=0] (9.1,1.75);
\draw [green, thick, dashed] (5.04,1.3) to [out=20, in=160] (6.11,1.3);
\draw [green, thick] (5.04,1.3) to [out=-80, in=180] (6.5,-.5) to [out=0, in=-85] (7.5,1.8) to [out=95, in=180] (8,2.9) to [out=0, in=90] (8.5,1.8) to [out=-90, in=180] (9.4,.55) to [out=0, in=0] (9.1,1.9);
\draw [green, thick] (7.9,1.9) -- (8.1,1.9);
\draw [green, thick] (6.9,1.9) to [out=180, in=0] (6.4,.9) to [out=180,in=-110] (6.11,1.3);
\node [gray] at (1.6,-.4) {$\delta_{1,2}$};
\node [gray] at (2,3.7) {$\alpha_1$};
\node [gray] at (5,3.7) {$\alpha_2$};
\node [magenta] at (1.5,2) {$z_2$};
\node [teal] at (6.3,.3) {$m_4$};
\node [green] at (7.2,-.6) {$\eta_3$};
\node [blue] at (7.9,.6) {$b_3$};
\end{tikzpicture}
\caption{Additional generators for $\Modd(\Sigma_3)$ and corresponding lanterns. Here, the gray curves have spin value 1, and the other curves are admissible.}
\label{fig:zi}
\end{figure}
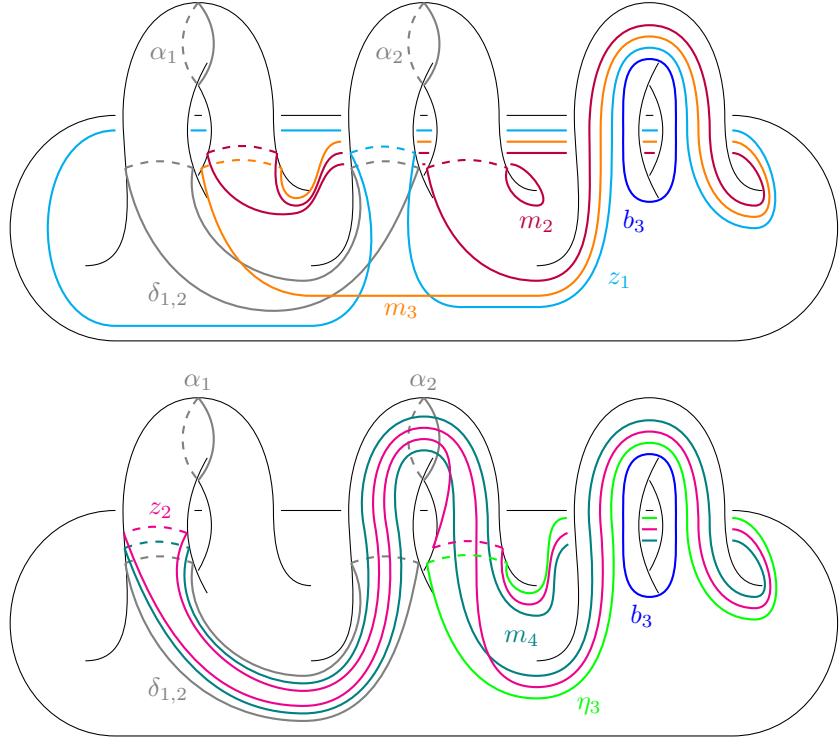

We obtain the following immediate consequence of \thref{thm:presDC} (recall also \thref{rem:sh}).

\begin{corollary}
\thlabel{cor:pres3}
The even spin mapping class group $\Modd(\Sigma_3)[\phi]$ admits a presentation with generators $b_1,b_2,b_3$, $\xi_1,\xi_{2}$, $\eta_2,\eta_3$, $z_1,z_2$ and the following relations:
\begin{enumerate}[label=(\roman*)]
\item obvious commutativity and braid relations between the generators;
\item relations \ref{5c}, \ref{3c} and \ref{dbar}-\ref{dttri};
\item the hyperelliptic relation $(b_3\xi_2\xi_1b_2\eta_3\eta_2b_1^2\eta_2\eta_3b_2\xi_1\xi_2b_3)^2=1$;
\item $[z_2,m_2]=1$ and $[m_3z_1,m_4z_2]=1$.
\end{enumerate}
\end{corollary}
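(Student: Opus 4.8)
The plan is to obtain the presentation by Tietze transformations from the presentation of $\Modd(\Sigma_3)[\phi]$ produced by the strategy of \thref{thm:mcgs} together with Section \ref{dt}. The only point at which that strategy does not run verbatim in genus $3$ is the passage to admissible twists in Subsection \ref{reldij}: the subsurface of Figure \ref{fig:lants} does not embed in $\Sigma_3$ with nonseparating image, so the expressions for $\overline{d}_{1,2}$ and $a_1^2$ furnished by \thref{lem:da} are unavailable. Accordingly I would introduce the two extra generators $z_1$ and $z_2$, add them with defining relations expressing them as words in $b_1,b_2,b_3,\xi_1,\xi_2,\eta_2,\eta_3$ (obtained by combining \thref{lem:da} with \thref{lem:daz}), and then use \thref{lem:daz} — whose identities record genuine lantern relations in $\Modd(\Sigma_3)[\phi]$ — to substitute the shorter expressions $\overline{d}_{1,2}=b_3m_2z_1^{-1}m_3^{-1}$ and $a_1^2=m_4z_2\eta_3^{-1}b_3^{-1}m_3z_1b_3^{-1}m_2^{-1}$ for the corresponding shorthands throughout. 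This substitution is legitimate because the two expressions for each shorthand are equal in the presented group.

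With the shorthands redefined, the next step is to re-run Subsection \ref{reldij}. Relations \ref{5c}, \ref{3c} and \ref{dbar}--\ref{dttri} are relations of \thref{thm:presD} that make sense and hold for $g=3$, and every derivation in Subsection \ref{reldij} producing \ref{pb}, \ref{a8} and the face relations from them uses only generators with index $\le 3$. What must be reproved is \thref{lem:darel} (and the handful of later steps that invoke the nonexistent generators $\eta_4$, $\eta_5$, $b_4$, $\xi_3$, $t_3$): namely, that $\overline{d}_{1,2}*\xi_1=b_1$, that $\overline{d}_{1,2}$ commutes with $b_3,\xi_2,\eta_3,m_1,m_2$, that $a_1^2$ commutes with $\overline{d}_{1,2}$, with each $m_i$, and with $b_i,\xi_i,\eta_i$ for $i\ge2$, together with the Artin relations $R_4(a_1^2,b_1)$ and $R_4(a_1^2,\xi_1)$. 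I would extract these directly from the listed relations, with $[z_2,m_2]=1$ and $[m_3z_1,m_4z_2]=1$ playing the roles that \ref{lt} and \ref{lad} play in higher genus. Once \thref{lem:darel} is in place, the chain \thref{lem:darel}$\to\cdots\to$\thref{lem:a8bis} and the treatment of the faces go through unchanged, so all relations of \thref{prop:stab} and \thref{thm:mcgs} follow.

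It remains to handle the two closed-surface relations. For the $7$-chains relevant to the closed surface (those with a genus-$0$ complementary piece disjoint from $\partial\Sigma_3^1$, cf. \thref{rem:h2}), the curve $\delta$ of relation \ref{hyp} bounds a disk in $\Sigma_3$, so $t_\delta=1$ and \ref{hypg} collapses to $(b_3\xi_2\xi_1b_2\eta_3\eta_2b_1^2\eta_2\eta_3b_2\xi_1\xi_2b_3)^2=1$, which is relation (iii). Finally, relations \ref{Delta} and \ref{push} of \thref{thm:presDC} are consequences of the relations already present: $\delta_{\set{-3,\dots,3}}$ bounds a disk in $\Sigma_3$, so the corresponding trivial-boundary chain and lantern identities are derivable from (iii) together with \ref{penta}, \ref{rijrel} and \ref{dttri}, exactly as in \thref{rem:0lant}. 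Combining all of the above yields the stated presentation.

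The main obstacle is the second paragraph: carrying out the shorthand substitution and re-proving \thref{lem:darel} in genus $3$. With the extra room provided by the higher-genus generators gone, the basic commutation and braid relations for $\overline{d}_{1,2}$ and $a_1^2$ must be deduced by hand from $[z_2,m_2]=1$, $[m_3z_1,m_4z_2]=1$ and the relations of type \ref{t01}, and one has to check carefully that no step of Subsection \ref{reldij} secretly relied on a generator that is absent when $g=3$.
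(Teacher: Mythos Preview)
Your overall strategy matches the paper's: the corollary is obtained by running the machinery of Section~\ref{dt} in genus~$3$, replacing the genus-$\geq 4$ lanterns of \thref{lem:da} by those of \thref{lem:daz}, and substituting the relations (iv) for \ref{boh}--\ref{lad}. The paper itself records this as an ``immediate consequence of \thref{thm:presDC}'' and leaves the details to the reader, so your plan to spell out the Tietze moves and re-derive \thref{lem:darel} is exactly what is needed.

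There is, however, a genuine gap in your third paragraph. You assert that \ref{Delta} and \ref{push} are consequences of relations (i)--(iv), citing \thref{rem:0lant}. But \thref{rem:0lant} is an accounting remark about how many lanterns appear in each relation; it says nothing about derivability. You must actually exhibit \ref{Delta} and \ref{push} as consequences of (i)--(iv), or else show directly that the presentation (i)--(iv) defines the same group as the one coming from \thref{thm:mcgs} plus the capping/forgetting relations. Concretely: \ref{Delta} asserts $\overline d_{\{-3,\dots,3\}}a_1^2a_2^2a_3^2=1$ and \ref{push} asserts $\overline d_{\{-3,-2,-1\}}=\overline d_{\{1,2,3\}}$, and both must be produced from the listed relations in the abstract group, not merely observed to hold in $\Modd(\Sigma_3)[\phi]$. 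One route is to compare (iii) with \ref{hypg} and with the $5$-chain \ref{5c} (recall $m_1=H_3*b_3$ is defined in genus~$3$), but this must be carried out explicitly; the argument is not in \thref{rem:0lant}. Without this step, you have only shown that the group presented by (i)--(iv) surjects onto $\Modd(\Sigma_3)[\phi]$, not that the map is injective.
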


We can then compute the abelianization of the even spin mapping class group in every genus. Sierra \cite{sierra} has obtained similar results for $\Modd(\Sigma_g^1)[\phi]$ using GAP.

\begin{corollary}
\label{cor:ab}
\thlabel{cor:lowab}
The abelianization of $\Modd(\Sigma_g)[\phi]$ for $g \le 3$ is the following:
\[
H_1\big(\Modd(\Sigma_g)[\phi];\Z\big)=\begin{cases}
\Z\oplus\sfrac{\Z}{4\Z} & \text{if $g=1$},\\
\Z\oplus\sfrac{\Z}{2\Z} & \text{if $g=2$},\\
\sfrac{\Z}{4\Z} & \text{if $g=3$}.
\end{cases}
\]
\end{corollary}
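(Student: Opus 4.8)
The plan is to handle the three genera one at a time, in each case abelianizing the finite presentation already available --- \thref{prop:g1} for $g=1$, \thref{cor:g2} for $g=2$, \thref{cor:pres3} for $g=3$ --- and reading off the abelian group from the Smith normal form of the resulting relation matrix. For $g=1$ this is immediate: writing $x=[t_{\alpha_1}^2]$ and $y=[t_{\beta_1}]$, the relation $(a^2b)^2=(ba^2)^2$ abelianizes trivially while $(a^2b)^4=1$ gives $4x+4y=0$, so the abelianization is $\Z^2/\langle(4,4)\rangle\cong\Z\oplus\sfrac{\Z}{4\Z}$, after changing basis to $x+y,\,y$.

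For $g=3$ I would run the argument of \thref{cor:highab} essentially verbatim. The braid relations in \thref{cor:pres3}(i) force $b_1,b_2,b_3,\xi_1,\xi_2,\eta_2,\eta_3$ to a common class $x$; using \thref{lem:daz}(i)--(iii) one then checks that $[\overline{d}_{1,2}]=[a_1^2]=0$ and that $[z_1]=[z_2]=x$ as well, so the abelianization is cyclic, generated by $x$. Feeding these values into the shorthands of \thref{rem:sh}, each relation of \thref{cor:pres3}(ii)--(iv) becomes a scalar multiple of $x$ set equal to $0$: the relation \ref{5c} and the hyperelliptic relation only yield $28x=0$, the new commutators $[z_2,m_2]=[m_3z_1,m_4z_2]=1$ are trivial, and the fake $3$-chain relations \ref{3c} yield $4x=0$, with all remaining relations consistent. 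Hence $H_1\cong\sfrac{\Z}{4\Z}$.

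The genus-$2$ computation is the real work, and the main obstacle, since here $a_1^2$ and $b_1$ no longer collapse to a single class. One must carry four classes $\alpha=[a_1^2]$, $\beta=[b_1]$, $\tau=[t_1]$, $\delta=[\overline{d}_{1,2}]$, first expressing every auxiliary class --- $[s_1]=2\beta+\alpha$, $[\overline{d}_{-2,-1}]=\delta$, $[\overline{d}_{-1,1}]=2[s_1]+\alpha$, $[\overline{d}_{-2,2}]=[\overline{d}_{-1,1}]$, $[\overline{d}_{\{-2,-1,1,2\}}]$, and the conjugates of $a_1^2$ by $t_1$ --- via \thref{rem:sh}, \thref{lem:da} and \ref{eq:dijdij}, and only then abelianizing the relations of \thref{cor:g2}. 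The commutator and Artin relations \thref{cor:g2}\ref{aaaa}--\ref{rd12} become trivial or give $2\tau=2\delta$; the fake $3$-chain relation \thref{cor:g2}\ref{r3c}, the triangle relations \thref{cor:g2}\ref{tri2}, the pentagon-type relations \thref{cor:g2}\ref{penta2}--\ref{push2}, and the final relation $\overline{d}_{\{-2,-1,1,2\}}a_1^2(t_1*a_1^2)=1$ together cut the rank down and pin the torsion. Reducing the resulting integer matrix to Smith normal form should yield $\Z\oplus\sfrac{\Z}{2\Z}$, in agreement with Harer and with Taherkhani's group $H_7$ noted in the introduction.

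The delicate point throughout the $g=2$ case is bookkeeping: one has to distinguish consistently between the $\overline{d}_{1,2}$-type classes (all equal to $\delta$) and the $\overline{d}_{-1,1}$-type classes (all equal to $[\overline{d}_{-1,1}]$), and every shorthand of \thref{rem:sh} must be fully expanded before passing to the abelianization, since otherwise spurious relations can appear or genuine ones be missed. Once the abelianized matrices are assembled correctly, each of the three Smith normal form computations is routine and produces the groups in the statement.
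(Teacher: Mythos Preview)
Your approach for $g=1$ and $g=3$ is essentially the same as the paper's. One small wrinkle in your $g=3$ argument: the equality $[z_1]=[z_2]=x$ does not come from \thref{lem:daz} (those are definitions of the shorthands $\overline{d}_{1,2}$ and $a_1^2$, not independent relations) but from the braid relations in \thref{cor:pres3}(i), since $z_1,z_2$ each intersect some other generator curve once. Once that is in place, \thref{lem:daz} then gives $[\overline{d}_{1,2}]=[a_1^2]=0$, and the rest of your $g=3$ computation goes through exactly as you say.

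For $g=2$ your plan is correct in principle but you stop at ``should yield'', and the bookkeeping you describe is heavier than necessary. The paper avoids assembling a full relation matrix by picking out three relations that do all the work. Abelianizing the third relation in \thref{cor:g2}\ref{r} gives $\alpha+2\delta=0$; abelianizing \thref{cor:g2}\ref{penta2} gives $\tau+6\beta+5\alpha=0$; so the abelianization is generated by $\beta=[b_1]$ and $\delta=[\overline{d}_{1,2}]$ alone. Then abelianizing \thref{cor:g2}\ref{r3c} and substituting $\alpha=-2\delta$ yields $4\beta-6\delta=0$, i.e.\ $2(2\beta-3\delta)=0$, and one checks that every remaining relation is already implied. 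This gives $\Z^2/\langle(4,-6)\rangle\cong\Z\oplus\sfrac{\Z}{2\Z}$ without any Smith normal form computation on a large matrix. Your brute-force route would reach the same answer, but as written it is a plan rather than a proof.
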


\begin{proof}
We start from $g=1$. Consider the presentation of \thref{prop:g1} and add the commutator $[a^2,b]=1$. Relation $R_4(a^2,b)$ becomes redundant, and $a^2b$ has order $4$. This implies the statement. 

For $g=2$, we start from the presentation of \thref{cor:g2} and add all commutators. Relation \ref{penta2} gives $t_1=(b_1^6a_1^{10})^{-1}$, and from \ref{r} we get $a_1^2=\overline{d}_{1,2}^{-2}$. Hence, the abelianization is generated by $d:=\overline{d}_{1,2}$ and $b:=b_1$. Relation \ref{r3c} yields $(b^2d^{-3})^2=1$, and all the other relations become superfluous.

For $g=3$ the proof is exactly the same as that of \thref{cor:highab}. \qedhere
\end{proof}

\end{document}